\documentclass[12pt]{article}
\usepackage{amssymb,amsmath,amsthm,tikz,multirow}
\usetikzlibrary{arrows,calc}

\title{Tilings of the Sphere by Congruent Pentagons II: Edge Combination $a^3b^2$}
\author{Erxiao Wang\thanks{Research was supported by ZJNU Shuang-Long Distinguished Professorship Fund No. YS304319159.}, 
Zhejiang Normal University \\
Min Yan\thanks{Research was supported by Hong Kong RGC General Research Fund 16303515 and 16305920.}, 
Hong Kong University of Science and Technology}

\usepackage[hidelinks, hyperindex]{hyperref}

\hyperref[sec:function]{}

\newcommand{\mc}{\mathcal}

\newcommand{\thin}{\hspace{0.1em}\rule{0.7pt}{0.8em}\hspace{0.1em}}
\newcommand{\thick}{\hspace{0.1em}\rule{1.5pt}{0.8em}\hspace{0.1em}}

\newcommand{\pentagon}{\tikz \foreach \a in {0,...,4} \draw[rotate=72*\a] (18:0.17) -- (90:0.17);}

\newcommand{\arcThroughThreePoints}[4][]{
\coordinate (middle1) at ($(#2)!.5!(#3)$);
\coordinate (middle2) at ($(#3)!.5!(#4)$);
\coordinate (aux1) at ($(middle1)!1!90:(#3)$);
\coordinate (aux2) at ($(middle2)!1!90:(#4)$);
\coordinate (center) at ($(intersection of middle1--aux1 and middle2--aux2)$);
\draw[#1] 
 let \p1=($(#2)-(center)$),
      \p2=($(#4)-(center)$),
      \n0={veclen(\p1)},       % Radius
      \n1={atan2(\y1,\x1)}, % angles
      \n2={atan2(\y2,\x2)},
      \n3={\n2>\n1?\n2:\n2+360}
    in (#2) arc(\n1:\n3:\n0);
}

\newtheorem{theorem}{Theorem}
\newtheorem{lemma}[theorem]{Lemma}

\newtheorem{proposition}[theorem]{Proposition}
\newtheorem*{theorem*}{Theorem}

\theoremstyle{definition}
\newtheorem*{definition*}{Definition}
\newtheorem*{case*}{Case}
\newtheorem*{subcase*}{Subcase}

\theoremstyle{remark}

\numberwithin{equation}{section}

\begin{document}

\maketitle

\begin{abstract}
There are fifteen edge-to-edge tilings of the sphere by congruent pentagons with the edge combination $a^3b^2$:  five one-parameter families of pentagonal subdivision tilings, and ten flip modifications of three special cases of two pentagonal subdivision tilings. 

{\it Keywords}: 
Spherical tiling, Pentagon, Classification.
\end{abstract}

\section{Introduction}

In an edge-to-edge tiling of the sphere by congruent pentagons, the pentagon can have five possible edge combinations \cite[Lemma 9]{wy1}: $a^2b^2c,a^3bc,a^3b^2,a^4b,a^5$. We classified tilings for $a^2b^2c$ and $a^3bc$ in \cite{wy1}. In this paper, we classify tilings for $a^3b^2$. 

By \cite[Lemma 9]{wy1}, we know the pentagon is given by Figure \ref{pentagon}, with normal edge $a$ and thick edge $b$. We always denote the angles by $\alpha,\beta,\gamma,\delta,\epsilon$ as indicated. The second picture is the flip of the first. We call the first arrangement of angles {\em positively} oriented, and the second arrangement {\em negatively} oriented. The angles determine the orientation. Conversely, the edge lengths and the orientation also determine the angles. The orientation gives a convenient way of presenting the tiling without indicating the angles.

\begin{figure}[htp]
\centering
\begin{tikzpicture}[>=latex,scale=1]

%% tile orientation

\foreach \a in {0,...,4}
\fill[gray!50, xshift=3cm, rotate=72*\a]
	(0,0) -- (18:1) -- (90:1);

\foreach \a in {0,1,2}
\foreach \b in {0,1}
{
\begin{scope}[xshift=3*\b cm, rotate=-72*\a]

\draw
	(18:1) -- (-54:1);

\node at (-18:1) {\small $a$};

\end{scope}
}

\foreach \a in {0,1}
\foreach \b in {0,1}
{
\begin{scope}[xshift=3*\b cm, rotate=72*\a]

\draw[line width=1.5]
	(18:1) -- (90:1);

\node at (54:1) {\small $b$};

\end{scope}
}

\node at (90:0.75) {$\alpha$};
\node at (162:0.75) {$\beta$};
\node at (15:0.75) {$\gamma$};
\node at (234:0.75) {$\delta$};
\node at (-54:0.75) {$\epsilon$};

\node at (0,0) {$+$};

\begin{scope}[xshift=3 cm]

\node at (90:0.75) {$\alpha$};
\node at (14:0.75) {$\beta$};
\node at (162:0.75) {$\gamma$};
\node at (-54:0.75) {$\delta$};
\node at (234:0.75) {$\epsilon$};

\node at (0,0) {$-$};

\end{scope}

\end{tikzpicture}
\caption{Pentagon with the edge combination $a^3b^2$.}
\label{pentagon}
\end{figure}

\begin{theorem*}
Edge-to-edge tilings of the sphere by congruent pentagons with the edge combination $a^3b^2$ ($a,b$ distinct) are the following:
\begin{enumerate}
\item Five one parameter families of the pentagonal subdivisions of the Platonic solids.
\item Three families of tilings, obtained as the flip modifications of three special cases of two pentagonal subdivision tilings, and each family has the same unique pentagon:
\begin{itemize}
\item $T(4\beta\gamma\epsilon^2,2\epsilon^4)$, $T(4\beta^2\gamma^2,2\epsilon^4)$, $24$ tiles.
\item $T(5\beta\gamma\epsilon^3,7\epsilon^5)$, $T(10\beta\gamma\epsilon^3,2\epsilon^5)$, $T(2\beta^2\gamma^2\epsilon, 6\beta\gamma\epsilon^3,4\epsilon^5)$,  \newline
$T(6\beta^2\gamma^2\epsilon, 3\beta\gamma\epsilon^3,3\epsilon^5)$, $60$ tiles.
\item $T(5\beta\gamma\epsilon^2,5\delta\epsilon^3,7\epsilon^5)$,
$T(10\beta\gamma\epsilon^2,10\delta\epsilon^3,2\epsilon^5)$, $T(10\beta\gamma\epsilon^2,6\delta\epsilon^3,4\epsilon^5)$, \newline 
$T(15\beta\gamma\epsilon^2,3\delta\epsilon^3,3\epsilon^5)$, $60$ tiles.
\end{itemize}
\end{enumerate}
\end{theorem*}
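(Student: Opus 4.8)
The plan is to combine the global constraints imposed by the topology of the sphere with a systematic local analysis of how the pentagon can fit together around each vertex, in the spirit of \cite{wy1}. First I fix notation: label the five angles $\alpha,\beta,\gamma,\delta,\epsilon$ in cyclic order around the pentagon, with the edges carrying the combination $a^3b^2$. Since $a\neq b$, an edge-to-edge tiling must match an $a$-edge only with an $a$-edge and a $b$-edge only with a $b$-edge, so the two $b$-edges are either adjacent or separated by a single $a$-edge. This yields exactly two possible edge arrangements of the pentagon, and each arrangement fixes which angles are flanked by $(a,a)$, $(a,b)$, or $(b,b)$; I would carry both arrangements in parallel throughout.

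The first block of work establishes the numerical constraints. By Gauss--Bonnet the angle sum of one pentagon exceeds $3\pi$ by its area, and if $f$ denotes the number of tiles, Euler's formula gives $E=\tfrac{5}{2}f$ and $V=2+\tfrac{3}{2}f$; summing the relation ``the angles at a vertex total $2\pi$'' over all vertices ties $f$ to the pentagon's angle sum. Combined with the counting identity that each angle label occurs exactly $f$ times among all vertices, this produces the balance relations that any admissible distribution of vertex types must satisfy. In parallel I would record the size inequalities among $\alpha,\dots,\epsilon$ forced by the edge-length relations of a spherical pentagon; these bound how many copies of each angle can meet at a vertex and hence cut the anglewise vertex combinations summing to $2\pi$ down to a finite candidate list.

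The heart of the argument is the local case analysis. Using the adjacent edge-and-angle bookkeeping of \cite{wy1} (the AAD technique), I would track around each vertex not merely which angles meet but the cyclic order of their $a$/$b$ edges, since edge-matching rules out many combinations that are admissible on angle grounds alone. Starting from a single tile and propagating the forced adjacencies, one branches on the type of a chosen anchor vertex together with the two edge arrangements; each branch either closes up into a consistent global tiling or collapses under a clash between the balance relations, the edge-matching constraint, and the finiteness of $V$. I expect the surviving outcomes to split into two regimes: a flexible regime in which the pentagon retains a free parameter and the tiling is the pentagonal subdivision of one of the five platonic solids, giving the five one-parameter families; and a rigid regime in which the vertex relations force special angle values (for example $\epsilon=\tfrac{\pi}{2}$ from a vertex $\epsilon^4$, or $\epsilon=\tfrac{2\pi}{5}$ from a vertex $\epsilon^5$), underlying the $24$- and $60$-tile tilings, whose counts are accounted for by the symmetry groups of the cube/octahedron and the dodecahedron/icosahedron.

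The main obstacle is controlling the combinatorial explosion while keeping the analysis provably exhaustive: two edge arrangements, many candidate vertex types, and long forced-propagation chains mean the bookkeeping must be organized around a small set of anchor configurations so that no case is missed or counted twice. A secondary but essential difficulty is geometric realizability---for each surviving combinatorial tiling I must verify that there really exists a spherical pentagon with the prescribed edges and angles by solving the resulting trigonometric relations and checking the parameter ranges, since the counting and AAD arguments certify only combinatorial consistency. Finally I would confirm that the constructed tilings are pairwise distinct and that the subdivision families are genuinely one-parameter, thereby pinning the classification down to exactly the listed fifteen tilings.
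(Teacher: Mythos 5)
Your outline follows the standard framework for this problem (Euler/Gauss--Bonnet counting, the balance of angle occurrences, a finite anglewise vertex candidate list, adjacent-angle propagation, then a geometric realizability check), and in that broad sense it points in the same direction as the paper. But as written it has two genuine gaps that would prevent the case analysis from being provably exhaustive. First, ``branching on the type of a chosen anchor vertex'' is not a terminating strategy by itself: the paper's actual anchor is the combinatorial fact (Lemma \ref{basic}, imported from the first paper of the series) that some tile has four degree~$3$ vertices and a fifth vertex of degree $3$, $4$ or $5$. It is the \emph{partial neighborhood of this special tile} --- which admits only eleven edge-congruent arrangements (Figure \ref{edge_nhd}) --- that makes the branching finite, and each branch is then killed or resolved by asking which \emph{triple} of degree-$3$ vertices it forces. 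Second, your proposal never isolates the geometric input that makes those triples tractable: Lemma \ref{geometry1}, asserting that for a simple non-symmetric pentagon $\beta>\gamma$ is equivalent to $\delta<\epsilon$. This single inequality is what cuts the possible coexisting degree-$3$ vertices with one $b$-edge down to three pairs, and without it the ``candidate list'' you describe is far too large to close by numerics alone. (A minor point: the edge arrangement of the pentagon is in fact unique --- the two $b$-edges adjacent --- by the cited Lemma~9 of the first paper, so carrying two arrangements in parallel is unnecessary.)

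The second substantive gap is on the construction side. You attribute the $24$- and $60$-tile counts to the symmetry groups of the platonic solids and implicitly treat the rigid tilings as subdivisions, but most of the ten sporadic tilings in the statement are \emph{not} pentagonal subdivisions: they arise from a single special pentagon (pinned down by solving the trigonometric relations in Propositions \ref{special1} and \ref{special2}) that happens to admit several inequivalent global gluings. Enumerating these requires a dedicated argument --- in the paper, an analysis of the two possible triangle subdivisions, the compatible neighborhood tilings $N(\cdot)$ around degree-$4$/$5$ vertices, and the ways extended neighborhoods and ``ribbons'' can be reglued or reflected --- together with a proof that no further gluings exist. Your proposal's realizability step (solve the trigonometry, check parameter ranges) addresses existence of the pentagon but not this multiplicity of tilings by the same pentagon, which is where a large part of the content of the theorem lives.
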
 

\subsubsection*{Pentagonal Subdivision Tiling}

We introduced the pentagonal subdivision in \cite[Section 3.1]{wy1}, and studied its combinatorial structure in \cite{yan2}. The pentagonal subdivisions of the Platonic solids are all the edge-to-edge tilings of the sphere by congruent pentagons with the edge combination $a^2b^2c$. The pentagon in such a tiling is the center of Figure \ref{reduction}, with $n=3$ for tetrahedron, $n=4$ for octahedron (or cube), and $n=5$ for icosahedron (or dodecahedron). Moreover, the sum of three unlabelled angles should be $2\pi$. The number of tiles is respectively $f=12,24,60$ for $n=3,4,5$. The pentagonal subdivision tilings allow two free parameters, and therefore form two dimensional moduli \cite{wy3}.

The classification in \cite{wy1} assumes distinct edge lengths $a,b,c$. However, the pentagonal subdivision construction allows some edge lengths to become equal. This leads to four possible reductions: $c=a\ne b$, $c=b\ne a$, $a=b\ne c$, $a=b=c$. The first three reductions allow one free parameter and correspond to three lines in the moduli. The last reduction has no free parameter and corresponds to points in the moduli.

\begin{figure}[htp]
\centering
\begin{tikzpicture}[>=latex,scale=1]

%%% a^2b^2c

\foreach \a in {0,...,4}
\draw[rotate=72*\a]
	(18:1) -- (90:1);
\node at (126:1) {\small $a$};
\node at (198:1) {\small $a$};

\node at (54:1) {\small $b$};
\node at (-18:1) {\small $b$};

\node at (-90:1) {\small $c$};

\draw[shift={(162:1)}]
	(36:0.2) arc (36:-72:0.2);
\node at (162:0.5) {\small $\frac{2}{3}\pi$};

\fill[shift={(18:1)}]
	(144:0.2) arc (144:252:0.2) -- (0,0);
\node at (18:0.5) {\small $\frac{2}{n}\pi$};

%%% a=c

\draw[thick, ->] 
	(-1.5,0) -- node[above] {\small $c=a$} (-2.5,0);

\begin{scope}[xshift=-4cm]

\foreach \a in {1,2,3}
\draw[rotate=72*\a]
	(18:1) -- (90:1);

\foreach \a in {0,-1}
\draw[line width=1.5,rotate=72*\a]
	(18:1) -- (90:1);

\draw[shift={(162:1)}]
	(36:0.2) arc (36:-72:0.2);
\node at (162:0.5) {\small $\frac{2}{3}\pi$};

\fill[shift={(18:1)}]
	(144:0.2) arc (144:252:0.2) -- (0,0);
\node at (18:0.5) {\small $\frac{2}{n}\pi$};

\end{scope}

%%% b=c

\draw[thick, ->] 
	(1.5,0) -- node[above] {\small $c=b$} (2.5,0);

\begin{scope}[xshift=4cm]

\foreach \a in {0,-1,-2}
\draw[rotate=72*\a]
	(18:1) -- (90:1);

\foreach \a in {1,2}
\draw[line width=1.5,rotate=72*\a]
	(18:1) -- (90:1);

\draw[shift={(162:1)}]
	(36:0.2) arc (36:-72:0.2);
\node at (162:0.5) {\small $\frac{2}{3}\pi$};

\fill[shift={(18:1)}]
	(144:0.2) arc (144:252:0.2) -- (0,0);
\node at (18:0.5) {\small $\frac{2}{n}\pi$};

\end{scope}

\end{tikzpicture}
\caption{Reduction of pentagonal subdivision tiling.}
\label{reduction}
\end{figure}

Figure \ref{reduction} shows that the reductions $c=a\ne b$ and $c=b\ne a$ give tilings with the edge combination $a^3b^2$, subject to relabelling of the edges. The two reductions are the same for $n=3$ and are different for $n=4,5$. This gives the five families in the first part of the main theorem. Figure \ref{subdivision_tiling} gives their pictures, with vertices of degree $>3$ indicated by $\bullet$. The angles are determined by two facts: all angles at $\bullet$ are $\frac{2}{n}\pi$, and angles in every tile are arranged as (the left or right of) Figure \ref{reduction}.

\begin{figure}[htp]
\centering
\begin{tikzpicture}[>=latex,scale=1]

%% f=12

\foreach \a in {0,...,4}
\draw[rotate=-72*\a]
	(-54:0.33) -- (18:0.33) -- (18:0.65) -- (-18:0.8) -- (-54:0.65)
	(54:0.8) -- (54:1.2) -- (-18:1.2);

\draw[line width=1.5]
	(18:0.33) -- (90:0.33) -- (162:0.33)
	(90:0.33) -- (90:0.65)
	(18:0.65) -- (-18:0.8) -- (-54:0.65)
	(-18:0.8) -- (-18:1.2)
	(198:0.8) -- (234:0.65) -- (270:0.8)
	(234:0.33) -- (234:0.65)
	(54:1.2) -- (126:1.2) -- (198:1.2)
	(126:1.2) -- (126:0.8);

%% f=24

\foreach \a in {0,1,2,3}
{

\foreach \b in {1,2}
{
\begin{scope}[xshift=3.5*\b cm, rotate=90*\a]

\draw
	(0,0) -- (0.4,0) -- (0.6,0.25) -- (0.25,0.6) -- (0,0.4)
	(-15:1) -- (15:1) -- (45:1) -- (75:1)
	(0.6,0.25) -- (15:1)
	(0.6,-0.25) -- (-15:1)
	(0:1.3) -- (30:1.3) -- (60:1.3) -- (90:1.3)
	(-15:1) -- (0:1.3)
	(45:1) -- (30:1.3)
	(60:1.3) -- (60:1.6);

\fill
	(0,0) circle (0.1)
	(75:1) circle (0.1)
	(60:1.6) -- ++(-75:0.1) arc (-75:-165:0.1);

\end{scope}
}

\draw[line width=1.5, xshift=3.5cm, rotate=90*\a]
	(0,0) -- (0.4,0)
	(-45:1) -- (-15:1) -- (15:1)
	(0.6,-0.25) -- (-15:1) -- (0:1.3)
	(60:1.3) -- (60:1.6);

\node at (3.5,-1.7) {\small $c=a\ne b$};

\draw[line width=1.5, xshift=7cm, rotate=90*\a]
	(0.4,0) -- (0.6,0.25) -- (0.25,0.6)
	(0.6,0.25) -- (15:1)
	(0:1.3) -- (30:1.3) -- (60:1.3)
	(45:1) -- (30:1.3);

\node at (7,-1.7) {\small $c=b\ne a$};
	
}

%% f=60

\begin{scope}[shift={(0.75cm,-4.5cm)}]

\foreach \a in {0,...,4}
{

\foreach \b in {0,1}
{
\begin{scope}[xshift=5.5*\b cm, rotate=72*\a]

\draw
	(0,0) -- (18:0.45) -- (36:0.7) -- (72:0.7) -- (90:0.45)
	(0:0.7) -- (6:1.1)
	(30:1.1) -- (36:0.7)
	(6:1.1) -- (30:1.1) -- (54:1.1) -- (78:1.1)
	(-9:1.4) -- (6:1.1) -- (21:1.4) -- (39:1.4) -- (54:1.1)
	(-9:1.4) -- (0:1.6) -- (12:1.6) -- (21:1.4)
	(39:1.4) -- (48:1.6) -- (63:1.4)
	(12:1.6) -- (30:1.9) -- (48:1.6)
	(54:1.9) -- (30:1.9) -- (6:1.9) -- (-18:1.9) -- (0:1.6)
	(6:1.9) -- (-6:2.2)
	(30:1.9) -- (42:2.2)
	(-54:2.2) -- (-30:2.2) -- (-6:2.2) -- (18:2.2) -- (18:2.5);

\fill
	(0,0) circle (0.1)
	(6:1.1) circle (0.1)
	(30:1.9) circle (0.1)
	(18:2.5) -- ++(234:0.1) arc (234:162:0.1);

\end{scope}
}

%% c=a

\draw[line width=1.5, rotate=72*\a]
	(0,0) -- (18:0.45)
	(0:0.7) -- (6:1.1)
	(-18:1.1) -- (6:1.1) -- (30:1.1)
	(-9:1.4) -- (6:1.1) -- (21:1.4)
	(12:1.6) -- (30:1.9) -- (48:1.6)
	(54:1.9) -- (30:1.9) -- (6:1.9)
	(30:1.9) -- (42:2.2)
	(90:2.2) -- (90:2.5);

\node at (0,-2.6) {\small $c=a\ne b$};

%% c=b

\draw[line width=1.5, xshift=5.5cm, rotate=72*\a]
	(18:0.45) -- (36:0.7) -- (72:0.7)
	(30:1.1) -- (36:0.7)
	(21:1.4) -- (39:1.4) -- (54:1.1)
	(39:1.4) -- (48:1.6)
	(-9:1.4) -- (0:1.6) -- (12:1.6)
	(-18:1.9) -- (0:1.6)
	(-30:2.2) -- (-6:2.2) -- (18:2.2)
	(6:1.9) -- (-6:2.2);

\node at (5.5,-2.6) {\small $c=b\ne a$};
	
}

\end{scope}
	
\end{tikzpicture}
\caption{Pentagonal subdivision tilings for $a^3b^2$.}
\label{subdivision_tiling}
\end{figure}

The reduction $a=b\ne c$ (almost equilateral) will be studied in \cite{ly1,ly2}, and the reduction $a=b=c$ will be studied in \cite{awy}.

\subsubsection*{Flip Modification}

The flip modification is applied only to the reduction $c=b\ne a$. The tiles in the pentagonal subdivision tilings are given by the right of Figure \ref{reduction}. This means that, in terms of the pentagon in Figure \ref{pentagon}, we have $\alpha=\frac{2}{3}\pi$ and $\epsilon=\frac{2}{n}\pi$. 

On the left of Figure \ref{subdivision2} are the regular triangular faces of the regular octahedron and icosahedron. The middle is a pentagonal subdivision of one regular triangular face. If we replace all triangular faces by this pentagonal subdivision, then we get tilings of the sphere on the right, which are the reductions $c=b\ne a$ in Figure \ref{subdivision_tiling}. 

\begin{figure}[htp]
\centering
\begin{tikzpicture}[>=latex,scale=1]

% octahedron

\begin{scope}[yshift=4.2cm]

\foreach \a in {0,...,3}
\foreach \b in {0,1}
\draw[xshift=7*\b cm,rotate=90*\a]
	(0,0) -- (1.6,1.6)
	(0.9,0.9) -- (0.9,-0.9)
	;

\foreach \a in {0,...,3}
\draw[line width=1.5, xshift=7 cm,rotate=90*\a]
	(0.6,0.6) -- (0.6,0) -- (0.3,-0.3)
	(0.6,0) -- (0.9,-0.3)
	(-1.4,1.4) -- (0,1.3) -- (1.2,1.2)
	(-0.3,0.9) -- (0,1.3);
	
\end{scope}

%% subdivision

\foreach \a in {0,1,2}
{
\begin{scope}[xshift=3.5 cm, yshift=2.7cm, rotate=120*\a]

\draw
	(-30:1.2) -- (90:1.2);
	
\draw[line width=1.5]
	(0,0) -- (0.693,0);

\node at (60:0.2) {\scriptsize $\alpha$};
\node at (0.4,0.2) {\scriptsize $\beta$};
\node at (0.6,-0.2) {\scriptsize $\gamma$};
\node at (0.2,0.6) {\scriptsize $\delta$};
\node at (-30:0.95) {\scriptsize $\epsilon$};

\end{scope}
}

\draw[very thick, ->]
	(2.3,1.7) -- (4.7,1.7);
\node at (3.5,1.4) {subdivision};

%% icosahedron

\foreach \a in {0,...,4}
\foreach \b in {0,1}
\draw[xshift=7*\b cm,rotate=72*\a]
	(0,0) -- (-18:0.9) -- (54:0.9) -- (18:1.8) -- (-18:0.9)
	(90:1.8) -- (18:1.8) -- (18:2.5)
	;

\foreach \a in {0,...,4}
\draw[line width=1.5, xshift=7 cm,rotate=72*\a]
	(54:0.3) -- (0,0.485) -- (126:0.6)
	(0,0.485) -- ++(54:0.3)
	(0,1.085) -- (-0.176,0.728)
	(-0.176,1.443) -- (0,1.085) -- (0.353,1.085)
	(54:1.271) -- (0.176,1.443)
	(126:1.271) -- (-0.353,1.085)
	(54:1.271) -- (1.141,0.971)
	(18:2) -- (54:1.8) -- (90:2.2)
	(54:1.8) -- (0.571,1.385);
	
\end{tikzpicture}
\caption{Pentagonal subdivision for the reduction $c=b\ne a$.}
\label{subdivision2}
\end{figure}

Each vertex $V=\epsilon^4$ ($\epsilon^5$) of the octahedron (icosahedron) has a neighbourhood $N(V)$ consisting of four (five) triangular faces. After the subdivision, the neighbourhood becomes the disk tiling $N(\epsilon^4)$ or $N(\epsilon^5)$ in the top of Figure \ref{e-nd}. The disk tiling is the ``extended neighbourhood'' of the center vertex $\epsilon^4$ or $\epsilon^5$. We also indicate the vertex in the original octahedron or icosahedron by $\bullet$ in Figure \ref{e-nd}.

In the middle of Figure \ref{e-nd}, we indicate the angles and edges along the boundary of $N(\epsilon^4)$ or $N(\epsilon^5)$. Inside the pentagonal subdivision tiling, we also indicate the angles and edges on the other side of the boundary. 

If the equality $\delta=2\epsilon$ is satisfied by the pentagonal subdivision of the octahedron, then we know the angle sums of $\beta\gamma\epsilon^2$ and $\delta\epsilon^2$ are $2\pi$. Therefore we may flip $N(\epsilon^4)$ along the line $L_{\delta=2\epsilon}$ to get a new tiling of the sphere. See the first in the bottom of Figure \ref{e-nd}.

If the equality $\beta+\gamma=2\epsilon$ is satisfied by the pentagonal subdivision of the icosahedron, then we know the angle sums of $\beta\gamma\epsilon^3$ and $\delta\epsilon^2$ are $2\pi$. Therefore we may flip $N(\epsilon^5)$ along the line $L_{\beta+\gamma=2\epsilon}$ to get a new tiling of the sphere. See the second in the bottom of Figure \ref{e-nd}.

If the equality $\delta=2\epsilon$ is satisfied by the pentagonal subdivision of the icosahedron, then we know the angle sums of $\beta\gamma\epsilon^2$ and $\delta\epsilon^3$ are $2\pi$. Therefore we may flip $N(\epsilon^5)$ along the line $L_{\delta=2\epsilon}$ to get a new tiling of the sphere. See the third in the bottom of Figure \ref{e-nd}.

\begin{figure}[htp]
\centering
\begin{tikzpicture}[>=latex,scale=1]

\begin{scope}[yshift=4.2cm]

%%% nd of e^4

\foreach \a in {0,...,3}
{
\begin{scope}[rotate=90*\a]

\draw
	(0,0) -- (0.6,0) -- (1,0.5) -- (0.5,1) -- (0,0.6)
	(0.5,1) -- (0.5,1.6) -- (-0.5,1.6) -- (-0.5,1) 
	(0.5,1.6) -- (1.3,1.3) -- (1.6,0.5);
	
\draw[line width=1.5]
	(0.6,0) -- (1,0.5) -- (0.5,1)
	(-0.5,1) -- (-0.5,1.6);

\fill
	(0,0) circle (0.1)
	(0.5,1.6) circle (0.1);
	
\node at (0.75,0.45) {\small $\alpha$}; 
\node at (0.47,0.75) {\small $\beta$};
\node at (0.52,0.2) {\small $\gamma$};
\node at (0.2,0.52) {\small $\delta$};
\node at (0.15,0.15) {\small $\epsilon$};

\node at (-0.33,1.05) {\small $\alpha$};
\node at (0,0.85) {\small $\beta$};
\node at (-0.33,1.43) {\small $\gamma$};
\node at (0.33,1.05) {\small $\delta$};
\node at (0.33,1.43) {\small $\epsilon$}; 

\node at (-0.7,1.05) {\small $\alpha$};
\node at (-0.7,1.35) {\small $\beta$}; 
\node at (-1.05,0.7) {\small $\gamma$}; 
\node at (-1.15,1.15) {\small $\delta$};
\node at (-1.35,0.7) {\small $\epsilon$};
	
\end{scope}
}

\node at (0,-1.9) {$N(\epsilon^4)$};

%%% nd of e^5

\begin{scope}[xshift=5cm]

\foreach \a in {0,...,4}
{
\begin{scope}[rotate=72*\a]

\draw
	(0,0) -- (90:0.7) -- (70:1.2) -- (75:1.8)
	(-18:2) -- (3:1.8) -- (33:1.8) -- (54:2)
	;

\draw[line width=1.5]
	(90:0.7) -- (110:1.2) -- (142:1.2)
	(110:1.2) -- (105:1.8);

\fill
	(0,0) circle (0.1)
	(3:1.8) circle (0.1);
	
\node at (43:1) {\small $\alpha$};
\node at (65:0.95) {\small $\beta$};
\node at (75:0.65) {\small $\delta$};
\node at (33:0.65) {\small $\gamma$};
\node at (54:0.25) {\small $\epsilon$};

\node at (102:1.25) {\small $\alpha$};
\node at (90:1) {\small $\beta$};
\node at (100:1.6) {\small $\gamma$};
\node at (78:1.25) {\small $\delta$};
\node at (80:1.6) {\small $\epsilon$};

\node at (44:1.35) {\small $\alpha$};
\node at (41:1.65) {\small $\beta$};
\node at (64:1.35) {\small $\gamma$};
\node at (54:1.8) {\small $\delta$};
\node at (69:1.65) {\small $\epsilon$};

\end{scope}
}

\node at (1.6,-1.8) {$N(\epsilon^5)$};

\end{scope}

\end{scope}

%%%% flip for octahedron

\foreach \a in {0,...,3}
\foreach \b/\c in {0/0,-1.5/-4}
{
\begin{scope}[xshift=\b cm, yshift=\c cm, rotate=90*\a]

\draw
	(-15:1.3) -- (15:1.3) -- (45:1.3) -- (75:1.3)
	;

\draw[line width=1.5]
	(45:1.3) -- (45:1.6);

\node at (75:1.47) {\scriptsize $\epsilon^2$};

\node at (15:1.45) {\scriptsize $\delta$};

\node at (52:1.45) {\scriptsize $\beta$};	
\node at (38:1.45) {\scriptsize $\gamma$};

\end{scope}
}

%% original

\foreach \a in {0,...,3}
{
\begin{scope}[rotate=90*\a]

\draw
	(75:1.3) -- (75:1)
	;

\draw[line width=1.5]
	(15:1.3) -- (15:1);

\node at (80:1.15) {\scriptsize $\epsilon$};	
\node at (70:1.15) {\scriptsize $\epsilon$};
\node at (45:1.15) {\scriptsize $\delta$};
\node at (24:1.1) {\scriptsize $\beta$};	
\node at (6:1.1) {\scriptsize $\gamma$};

\end{scope}
}

\draw[gray]
	(-30:1.7) -- (-30:-1.7);

\node at (150:2.1) {\small $L_{\delta=2\epsilon}$};

\draw[line width=4pt, ->]
	(-0.5,-1.6) -- ++(-0.4,-0.8);
\node at (-1.4,-1.7) {\small $\delta=2\epsilon$};

%% flipped

\foreach \a in {0,...,3}
{
\begin{scope}[xshift=-1.5 cm, yshift=-4 cm, rotate=90*\a]

\draw
	(45:1.3) -- (45:1)
	;

\draw[line width=1.5]
	(15:1.3) -- (15:1);

\node at (50:1.15) {\scriptsize $\epsilon$};	
\node at (40:1.15) {\scriptsize $\epsilon$};
\node at (75:1.15) {\scriptsize $\delta$};
\node at (6:1.1) {\scriptsize $\beta$};	
\node at (24:1.1) {\scriptsize $\gamma$};

\end{scope}
}

\node at (-1.5,-3.7) {flipped};
\node at (-1.5,-4.3) {$N(\epsilon^4)$};

%%%% flip for icosahedron

\foreach \a in {0,...,4}
\foreach \b/\c in {5/0,3/-4,7/-4}
{
\begin{scope}[xshift=\b cm, yshift=\c cm, rotate=72*\a]

\draw
	(6:1.5) -- (30:1.5) -- (54:1.5) -- (78:1.5) 
	;

\draw[line width=1.5]
	(54:1.5) -- (54:1.8);

\node at (6:1.67) {\scriptsize $\epsilon^3$};
\node at (30:1.65) {\scriptsize $\delta$};
	
\node at (49:1.65) {\scriptsize $\gamma$};
\node at (60:1.65) {\scriptsize $\beta$};

\end{scope}
}

%% original	

\begin{scope}[xshift=5cm]

\foreach \a in {0,...,4}
{
\begin{scope}[rotate=72*\a]

\draw
	(6:1.5) -- (6:1.2)
	;

\draw[line width=1.5]
	(30:1.5) -- (30:1.2);
	
\node at (0:1.35) {\scriptsize $\epsilon$};
\node at (12:1.35) {\scriptsize $\epsilon$};

\node at (23:1.35) {\scriptsize $\gamma$};
\node at (38:1.35) {\scriptsize $\beta$};

\node at (54:1.35) {\scriptsize $\delta$};

\end{scope}
}

\draw[gray]
	(-18:2) -- (-18:-2)
	(30:2) -- (30:-2);
	
\node at (-15:2.65) {\small $L_{\beta+\gamma=2\epsilon}$};
\node at (30:2.3) {\small $L_{\delta=2\epsilon}$};

\draw[line width=4pt, ->]
	(-1,-1.6) -- ++(-0.4,-0.8);
\node at (-2.2,-1.7) {\small $\beta+\gamma=2\epsilon$};

\draw[line width=4pt, ->]
	(1,-1.6) -- ++(0.4,-0.8);
\node at (1.9,-1.7) {\small $\delta=2\epsilon$};

\foreach \a in {-2,2}
{
\node at (\a,-3.7) {flipped};
\node at (\a,-4.3) {$N(\epsilon^5)$};
}

\end{scope}

%% flipped \beta+\gamma=2\epsilon

\foreach \a in {0,...,4}
{
\begin{scope}[shift={(3cm,-4cm)}, rotate=72*\a]

\draw
	(30:1.5) -- (30:1.2)
	;

\draw[line width=1.5]
	(6:1.5) -- (6:1.2);

\node at (-2:1.35) {\scriptsize $\beta$};
\node at (14:1.35) {\scriptsize $\gamma$};

\node at (24:1.35) {\scriptsize $\epsilon$};
\node at (36:1.35) {\scriptsize $\epsilon$};

\node at (54:1.35) {\scriptsize $\delta$};

\end{scope}
}

%% flipped \delta=2\epsilon

\foreach \a in {0,...,4}
{
\begin{scope}[shift={(7cm,-4cm)}, rotate=72*\a]

\draw
	(54:1.5) -- (54:1.2)
	;

\draw[line width=1.5]
	(30:1.5) -- (30:1.2);

\node at (22:1.35) {\scriptsize $\beta$};
\node at (36:1.35) {\scriptsize $\gamma$};

\node at (59:1.35) {\scriptsize $\epsilon$};
\node at (49:1.35) {\scriptsize $\epsilon$};

\node at (6:1.35) {\scriptsize $\delta$};

\end{scope}
}

\end{tikzpicture}
\caption{Flips of the extended neighbourhoods $N(\epsilon^4)$ and $N(\epsilon^5)$.}
\label{e-nd}
\end{figure}

Since the pentagonal subdivision tiling for the reduction $c=b\ne a$ allows one free parameter, the introduction of one additional equality $\delta=2\epsilon$ or $\beta+\gamma=2\epsilon$ actually uniquely determines the pentagon. We calculate the pentagon in detail in the second step of the proof of Proposition \ref{special1}. 

The octahedron is the union of two $N(V)$ along an ``equator''. Therefore the $c=b\ne a$ reduction of the pentagonal subdivision of the octahedron is the union of two $N(\epsilon^4)$ along the equator. The edges and angles along the equator are given by the first in the middle of Figure \ref{e-nd}. In case $\delta=2\epsilon$ is satisfied, we may flip one $N(\epsilon^4)$ to get the first of Figure \ref{subdivision_tiling_modify24}, or flip both $N(\epsilon^4)$ to get the second of Figure \ref{subdivision_tiling_modify24}. We note that the flip of two $N(\epsilon^4)$ actually changes the vertex $\beta\gamma\epsilon^2$ to $\beta^2\gamma^2$. We indicate the vertex $\epsilon^4$ by $\bullet$, and indicate vertices $\beta\gamma\epsilon^2$ and $\beta^2\gamma^2$ by $\circ$. 

\begin{figure}[htp]
\centering
\begin{tikzpicture}[>=latex,scale=1]

\foreach \a in {0,...,11}
{
\fill[gray!50, rotate=30*\a]
	(0,0) -- (-15:1) -- (15:1);

\fill[gray!50, xshift=3.5 cm, rotate=30*\a]
	(0,0) -- (0:1.6) -- (30:1.6);
}	
	
\foreach \a in {0,1,2,3}
{

\foreach \b in {0,1,2}
\draw[xshift=3.5*\b cm, rotate=90*\a]
	(0,0) -- (0.4,0) -- (0.6,0.25) -- (0.25,0.6) -- (0,0.4)
	(-15:1) -- (15:1) -- (45:1) -- (75:1)
	(0.6,0.25) -- (15:1)
	(0.6,-0.25) -- (-15:1)
	(-30:1.3) -- (0:1.3) -- (30:1.3) -- (60:1.3);

\foreach \b in {0,1}
\draw[xshift=3.5*\b cm, rotate=90*\a]
	(-15:1) -- (0:1.3)
	(45:1) -- (30:1.3)
	(60:1.3) -- (60:1.6);

\foreach \b in {0,1}
\draw[line width=1.5, xshift=3.5*\b cm, rotate=90*\a]
	(0.6,0.25) -- (0.25,0.6) -- (0,0.4)
	(0.6,-0.25) -- (-15:1);

\draw[line width=1.5, rotate=90*\a]
	(0:1.3) -- (30:1.3) -- (60:1.3)
	(45:1) -- (30:1.3);

\draw[line width=1.5, xshift=3.5 cm, rotate=90*\a]
	(-30:1.3) -- (0:1.3) -- (30:1.3)
	(-15:1) -- (0:1.3);

\draw[xshift=7cm, rotate=90*\a]	
	(30:1.3) -- (30:1.6);

\draw[line width=1.5, xshift=7cm, rotate=90*\a]
	(0.4,0) -- (0.6,0.25) -- (0.25,0.6)
	(0.6,0.25) -- (15:1)(-30:1.3) -- (0:1.3) -- (30:1.3)
	(15:1) -- (0:1.3);

}

\foreach \a in {0,1,2,3}
{

\begin{scope}[rotate=90*\a]

\filldraw[fill=white]
	(75:1) circle (0.1);

\fill
	(60:1.6) -- ++(-75:0.1) arc (-75:-165:0.1);

\end{scope}

\begin{scope}[xshift=3.5cm, rotate=90*\a]

\filldraw[fill=white]
	(75:1) circle (0.1);

\fill
	(60:1.6) -- ++(-75:0.1) arc (-75:-165:0.1);
	
\end{scope}

\begin{scope}[xshift=7cm, rotate=90*\a]

\filldraw[fill=white]
	(15:1) circle (0.1);

\draw
	(45:1) -- (60:1.3);

\fill
	(120:1.6) -- ++(-15:0.1) arc (-15:-105:0.1);
	
\end{scope}

}

\foreach \b in {0,1,2}
\fill
	(3.5*\b,0) circle (0.1);

\node at (0,-1.8) 
	{$T(4\beta\gamma\epsilon^2\circ,
	2\epsilon^4\bullet)$};
\node at (5.25,-1.8) 
	{$T(4\beta^2\gamma^2\circ,
	2\epsilon^4\bullet)$};

\end{tikzpicture}
\caption{Two flip modifications of the subdivision of the octahedron.}
\label{subdivision_tiling_modify24}
\end{figure}

The shaded tiles in Figure \ref{subdivision_tiling_modify24} indicate the fact that the flip changes the positive orientation (the first of Figure \ref{pentagon}) to the negative orientation (the second of Figure \ref{pentagon}). We may flip the whole second tiling again, so that all tiles become positively oriented. This is the third of Figure \ref{subdivision_tiling_modify24}, considered as the same tiling as the second. The two tilings in Figure \ref{subdivision_tiling_modify24} form the first family in the second part of the main theorem. 

Next we consider the flip modifications of the $c=b\ne a$ reduction of the pentagonal subdivision of the icosahedron. We may find one, two, or even three non-overlapping $N(V)$ in the icosahedron. There are altogether four possibilities, given by Figure \ref{mod60A}, and with the choices of $V$ indicated by $\bullet$. For each possibility, under the additional assumption $\beta+\gamma=2\epsilon$ or $\delta=2\epsilon$, we may apply the corresponding flip of $N(\epsilon^5)$ in Figure \ref{e-nd} to these $N(V)$. 

\begin{figure}[htp]
\centering
\begin{tikzpicture}[>=latex,scale=1]

\foreach \a in {0,...,4}
\foreach \b in {0,...,3}
\fill[gray!50, xshift=3*\b cm, rotate=72*\a]
	(0,0) -- (-18:0.5) -- (54:0.5);

\foreach \a in {0,...,4}
\fill[gray!50, xshift=3 cm, rotate=72*\a]
	(18:1) -- (18:1.3) -- (90:1.3) -- (90:1);	

\foreach \b/\c in {2/0,3/1,3/-1}
\fill[gray!50, xshift=3*\b cm, rotate=72*\c]
	(162:1.3) -- (162:1) -- (126:0.5) -- (54:0.5) -- (18:1) -- (18:1.3) -- (90:1.3);	
	
\foreach \a in {0,...,4}
\foreach \b in {0,...,3}
\draw[xshift=3*\b cm,rotate=72*\a]
	(0,0) -- (-18:0.5) -- (54:0.5) -- (18:1) -- (-18:0.5)
	(90:1) -- (18:1) -- (18:1.3)
	;

\foreach \b in {0,...,3}
\fill
	(3*\b,0) circle (0.1);

\foreach \a in {0,...,4}
\fill[xshift=3 cm,rotate=72*\a]
	(90:1.3) -- ++(-54:0.1) arc (-54:-126:0.1);

\fill
	(6,1) circle (0.1);
	
\fill[xshift=9cm]
	(18:1) circle (0.1)
	(162:1) circle (0.1);	

\end{tikzpicture}
\caption{Non-overlapping $N(\bullet)$ in the icosahedron.}
\label{mod60A}
\end{figure}

Similar to the two ways of drawing $T(4\beta^2\gamma^2,2\epsilon^4)$ in Figure \ref{subdivision_tiling_modify24}, we may flip the whole tiling again, when too many tiles become shaded (i.e., negatively oriented). This means the second and fourth of Figure \ref{mod60A} are turned into the first and second of Figure \ref{mod60B}. For the case $\beta+\gamma=2\epsilon$, the four flip modifications of the pentagonal subdivision of the icosahedron are given by the first four of Figure \ref{subdivision_tiling_modify60A}, corresponding to the first of Figure \ref{mod60A}, the first of Figure \ref{mod60B}, the third of Figure \ref{mod60A}, and the second of Figure \ref{mod60B}. We note that all four tilings are combinatorially still the pentagonal subdivision of the icosahedron. These form the second family in the second part of the main theorem.

\begin{figure}[htp]
\centering
\begin{tikzpicture}[>=latex,scale=1]

\begin{scope}[xshift=-6.2 cm]

\foreach \a in {0,...,4}
\fill[gray!50, rotate=72*\a]
	(-18:0.5) -- (18:1) -- (90:1) -- (54:0.5);

\fill[gray!50, xshift=3 cm]
	(54:0.5) -- (126:0.5) -- (90:1)
	(-54:1.3) -- (-54:1) -- (-18:0.5) -- (-90:0.5) -- (-162:0.5) -- (-126:1) -- (-126:1.3);
		
\foreach \a in {0,...,4}
\foreach \b in {0,1}
\draw[xshift=3*\b cm, rotate=72*\a]
	(0,0) -- (-18:0.5) -- (54:0.5) -- (18:1) -- (-18:0.5)
	(90:1) -- (18:1) -- (18:1.3)
	;

\fill 
	(0,0) circle (0.1)
	(3,0) circle (0.1);
	
\foreach \a in {0,...,4}
\fill[rotate=72*\a]
	(90:1.3) -- ++(-54:0.1) arc (-54:-126:0.1);

\fill[xshift=3cm]
	(18:1) circle (0.1)
	(162:1) circle (0.1);
			
\end{scope}

%% 1

\foreach \a in {1,-1}
\foreach \b in {1,-1}
\fill[gray!50, xscale=\a, yscale=\b]
	 (0,0.25) -- (0.3,0.5) -- (1.3,0) -- (1.6,0) -- (0,1.3);

\foreach \a in {1,-1}
\foreach \b in {1,-1}
\draw[xscale=\a, yscale=\b]
	(0,0) -- (0,0.25) -- (0.5,0) -- (0.3,0.5) -- (0,0.25)
	(0.5,0) -- (1.3,0) -- (0,1) -- (0,1.3)
	(1.3,0) -- (0.3,0.5) -- (-0.3,0.5) -- (0,1) -- (1.3,0);

\fill 
	(0.5,0) circle (0.1)
	(-0.5,0) circle (0.1);
		
%% 2

\begin{scope}[shift={(3.4 cm,0.5 cm)}, yscale=-1]
	
\foreach \a in {0,1,2}
\fill[gray!50, rotate=120*\a]
	(30:0.2) -- (150:0.2) -- (0,0)
	(-30:1.5) -- (30:0.5) -- (90:1.5) -- (90:1.8) -- (-30:1.8);

\foreach \a in {0,1,2}
\draw[rotate=120*\a]
	(30:0.2) -- (150:0.2)
	(30:0.2) -- (-30:0.7) -- (-90:0.2)
	(30:0.2) -- (30:0.5) -- (90:0.7) -- (150:0.5)
	(90:0.7) -- (90:1.5) -- (30:0.5) -- (-30:1.5) -- (90:1.5)
	;

\foreach \a in {0,1,2}
\fill[rotate=120*\a] 
	(90:0.7) circle (0.1);
	
\end{scope}

\end{tikzpicture}
\caption{Alternative views of Figure \ref{mod60A}.}
\label{mod60B}
\end{figure}

The third family in the second part of the main theorem is the four flip modifications for the case $\delta=2\epsilon$. The modifications are no longer combinatorially pentagonal subdivisions. Moreover, we notice the first and second of Figure \ref{mod60A} have the five fold symmetry, the third of Figure \ref{mod60A} has the two fold symmetry, and the fourth of Figure \ref{mod60A} has the three fold symmetry. The later two symmetries are made explicit by being redrawn as the third and fourth of Figure \ref{mod60B} (and whole flips are applied afterwards).

\begin{figure}[htp]
\centering
\begin{tikzpicture}[>=latex,scale=1]

\foreach \a in {0,...,14}
{
\foreach \c in {0,1}
\fill[gray!50, yshift= -5.6*\c cm, rotate=24*\a]
	(0,0) -- (6:1.1) -- (30:1.1);

\fill[gray!50, xshift=5.5cm, rotate=24*\a]
	(6:1.1) -- (30:1.1) -- (30:1.9) -- (6:1.9);
}

\fill[gray!50, yshift=-5.6cm]
	(78:1.1) -- (63:1.4) -- (48:1.6) -- (30:1.9) -- (42:2.2)  -- (18:2.2) -- (18:2.5) -- (42:2.5) -- (66:2.5) -- (90:2.5) -- (114:2.5) -- (138:2.5) -- (162:2.5) -- (162:2.2) -- (186:2.2) -- (174:1.9) -- (156:1.6) -- (165:1.4);
	
\fill[gray!50, xshift=5.5cm, yshift=-5.6cm]
	(78:1.9) -- (96:1.6) -- (87:1.4) -- (102:1.1) -- (78:1.1) -- (54:1.1) -- (30:1.1) -- (45:1.4) -- (60:1.6)
	(-90:1.1) -- (-66:1.1) -- (-42:1.1) -- (-57:1.4) -- (-48:1.6) -- (-66:1.9) -- (-78:2.2) -- (-102:2.2) -- (-126:2.2) -- (-150:2.2) -- (-138:1.9) -- (-156:1.6) -- (189:1.4) -- (174:1.1) --  (198:1.1) -- (222:1.1) -- (246:1.1)
	;
	
%% edge

\foreach \a in {0,...,4}
\foreach \b/\c/\d in {0/0/1,1/0/-1,0/1/1,1/1/-1}
{
\begin{scope}[shift={(5.5*\b cm, -5.6*\c cm)}, xscale=\d, rotate=72*\a]

\draw
	(0,0) -- (18:0.45) -- (36:0.7) -- (72:0.7) -- (90:0.45)
	(0:0.7) -- (6:1.1)
	(30:1.1) -- (36:0.7)
	(6:1.1) -- (30:1.1) -- (54:1.1) -- (78:1.1)
	(-9:1.4) -- (6:1.1) -- (21:1.4) -- (39:1.4) -- (54:1.1)
	(-9:1.4) -- (0:1.6) -- (12:1.6) -- (21:1.4)
	(39:1.4) -- (48:1.6) -- (63:1.4)
	(12:1.6) -- (30:1.9) -- (48:1.6)
	(54:1.9) -- (30:1.9) -- (6:1.9) -- (-18:1.9) -- (0:1.6)
	(6:1.9) -- (-6:2.2)
	(30:1.9) -- (42:2.2)
	(-54:2.2) -- (-30:2.2) -- (-6:2.2) -- (18:2.2) -- (18:2.5);

\fill
	(0,0) circle (0.1);

\end{scope}
}

\foreach \a in {0,...,4}
{

% 1

\begin{scope}[rotate=72*\a]

\draw[line width=1.5]
	(36:0.7) -- (72:0.7) -- (90:0.45)
	(0:0.7) -- (6:1.1)
	(21:1.4) -- (39:1.4) -- (54:1.1)
	(39:1.4) -- (48:1.6)
	(-9:1.4) -- (0:1.6) -- (12:1.6)
	(-18:1.9) -- (0:1.6)
	(-30:2.2) -- (-6:2.2) -- (18:2.2)
	(6:1.9) -- (-6:2.2);

\fill
	(30:1.9) circle (0.1)
	(18:2.5) -- ++(234:0.1) arc (234:162:0.1);
	
\filldraw[fill=white]
	(6:1.1) circle (0.1);

\end{scope}

\node at (0,-2.7) 
	{$T(5\beta\gamma\epsilon^3\circ,
	7\epsilon^5\bullet)$};
	
% 2

\begin{scope}[xshift=5.5cm, rotate=72*\a]

\draw[line width=1.5]
	(18:0.45) -- (36:0.7) -- (72:0.7)
	(30:1.1) -- (36:0.7)
	(87:1.4) -- (69:1.4) -- (54:1.1)
	(69:1.4) -- (60:1.6)
	(45:1.4) -- (36:1.6) -- (24:1.6)
	(54:1.9) -- (36:1.6)
	(-30:2.2) -- (-6:2.2) -- (18:2.2)
	(6:1.9) -- (-6:2.2)
	;

\fill
	(18:2.5) -- ++(234:0.1) arc (234:162:0.1);
	
\filldraw[fill=white]
	(6:1.9) circle (0.1)
	(30:1.1) circle (0.1);
	
\end{scope}

\node at (5.5,-2.7) 
	{$T(10\beta\gamma\epsilon^3\circ,
	2\epsilon^5\bullet)$};

}

% 3

\begin{scope}[yshift=-5.6cm]

\foreach \a in {0,...,4}
\draw[line width=1.5,rotate=72*\a]
	(36:0.7) -- (72:0.7) -- (90:0.45)
	(0:0.7) -- (6:1.1);
	
\foreach \a in {2,3,4}
\draw[line width=1.5,rotate=72*\a]	
	(21:1.4) -- (39:1.4) -- (54:1.1)
	(39:1.4) -- (48:1.6)
	(63:1.4) -- (72:1.6) -- (84:1.6)
	(54:1.9) -- (72:1.6)
	(42:2.2) -- (66:2.2) -- (90:2.2)
	(78:1.9) -- (66:2.2);

\draw[line width=1.5]
	(111:1.4) -- (93:1.4) -- (78:1.1)
	(93:1.4) -- (84:1.6)
	(150:1.1) -- (135:1.4) -- (120:1.6)
	(135:1.4) -- (144:1.6)
	(21:1.4) -- (39:1.4) -- (48:1.6)
	(39:1.4) -- (54:1.1)
	(78:1.9) -- (54:1.9) -- (30:1.9)
	(54:1.9) -- (72:1.6)
	(174:1.9) -- (150:1.9) -- (126:1.9)
	(150:1.9) -- (138:2.2)
	(66:2.2) -- (90:2.2) -- (114:2.2)
	(90:2.2) -- (90:2.5);

\foreach \a in {0,...,4}
\filldraw[fill=white,rotate=72*\a]
	(18:2.5) -- ++(234:0.1) arc (234:162:0.1) -- cycle;

\fill
	(102:1.9) circle (0.1)
	(-42:1.9) circle (0.1)
	(-114:1.9) circle (0.1);
	
\filldraw[fill=white]
	(6:1.1) circle (0.1)
	(-66:1.1) circle (0.1)
	(-138:1.1) circle (0.1)
	(30:1.9) circle (0.1)
	(174:1.9) circle (0.1);

\filldraw[fill=gray]
	(78:1.1) circle (0.1)
	(150:1.1) circle (0.1);

\node at (0,-2.7) 
	{$T(2\beta^2\gamma^2\epsilon{\color{gray} \bullet},
	6\beta\gamma\epsilon^3\circ,
	4\epsilon^5\bullet)$};
	
\end{scope}

% 4
	
\begin{scope}[xshift=5.5cm, yshift=-5.6cm]

\foreach \a in {0,...,4}
\draw[line width=1.5,rotate=72*\a]
	(18:0.45) -- (36:0.7) -- (72:0.7)
	(30:1.1) -- (36:0.7)
	;

\foreach \a in {0,1}
\draw[line width=1.5, rotate=144*\a]
	(30:1.1) -- (15:1.4) -- (24:1.6)
	(15:1.4) -- (-3:1.4)
	(54:1.1) -- (69:1.4) -- (60:1.6)
	(87:1.4) -- (69:1.4)
	(-42:1.1) -- (-27:1.4) -- (-12:1.6)
	(-27:1.4) -- (-36:1.6)
	(15:1.4) -- (-3:1.4)
	(78:1.9) -- (54:1.9) -- (30:1.9)
	(54:1.9) -- (36:1.6)
	(-18:1.9) -- (-42:1.9) -- (-66:1.9)
	(-42:1.9) -- (-30:2.2)
	(-6:2.2) -- (18:2.2) -- (42:2.2)
	(18:2.2) -- (18:2.5);

\draw[line width=1.5]
	(-90:1.1) -- (-75:1.4) -- (-84:1.6)
	(-57:1.4) -- (-75:1.4)
	(-90:1.9) -- (-108:1.6) -- (-120:1.6)
	(-108:1.6) -- (-99:1.4)
	(-78:2.2) -- (-102:2.2) -- (-126:2.2)
	(-102:2.2) -- (-114:1.9)
	;

\foreach \a in {0,...,4}
\filldraw[fill=gray,rotate=72*\a]
	(18:2.5) -- ++(234:0.1) arc (234:162:0.1) -- cycle;

\fill
	(6:1.9) circle (0.1)
	(150:1.9) circle (0.1);
	
\filldraw[fill=white]
	(-114:1.1) circle (0.1)
	(-66:1.9) circle (0.1)
	(-138:1.9) circle (0.1);

\filldraw[fill=gray]
	(-42:1.1) circle (0.1)
	(30:1.1) circle (0.1)
	(102:1.1) circle (0.1)
	(174:1.1) circle (0.1)
	(78:1.9) circle (0.1);

\node at (0,-2.7) 
	{$T(6\beta^2\gamma^2\epsilon{\color{gray} \bullet},
	3\beta\gamma\epsilon^3\circ,
	3\epsilon^5\bullet)$};

\end{scope}

%% 5 & 6

\begin{scope}[yshift=-11.2 cm]

\foreach \a in {0,...,14}
{
\fill[gray!50, rotate=24*\a]
	(0,0) -- (6:1.1) -- (30:1.1);

\fill[gray!50, xshift=5.5cm, rotate=24*\a]
	(6:1.1) -- (30:1.1) -- (30:1.9) -- (6:1.9);
}

%% edge

\foreach \a in {0,...,4}
\foreach \b in {0,1}
\draw[xshift=5.5*\b cm, rotate=72*\a]
	(0,0) -- (18:0.45) -- (36:0.7) -- (72:0.7) -- (90:0.45)
	(0:0.7) -- (6:1.1)
	(30:1.1) -- (36:0.7)
	(6:1.1) -- (30:1.1) -- (54:1.1) -- (78:1.1);

\fill
	(0,0) circle (0.1)
	(5.5,0) circle (0.1);

\foreach \a in {0,...,4}
{
	
% 1

\begin{scope}[rotate=72*\a]

\draw[line width=1.5]
	(36:0.7) -- (72:0.7) -- (90:0.45)
	(0:0.7) -- (6:1.1);

\draw[rotate=-24]
	(-9:1.4) -- (6:1.1) -- (21:1.4) -- (39:1.4) -- (54:1.1)
	(-9:1.4) -- (0:1.6) -- (12:1.6) -- (21:1.4)
	(39:1.4) -- (48:1.6) -- (63:1.4)
	(12:1.6) -- (30:1.9) -- (48:1.6)
	(54:1.9) -- (30:1.9) -- (6:1.9) -- (-18:1.9) -- (0:1.6)
	(6:1.9) -- (-6:2.2)
	(30:1.9) -- (42:2.2)
	(-54:2.2) -- (-30:2.2) -- (-6:2.2) -- (18:2.2) -- (18:2.5);

\draw[line width=1.5,rotate=-24]
	(21:1.4) -- (39:1.4) -- (54:1.1)
	(39:1.4) -- (48:1.6)
	(-9:1.4) -- (0:1.6) -- (12:1.6)
	(-18:1.9) -- (0:1.6)
	(-30:2.2) -- (-6:2.2) -- (18:2.2)
	(6:1.9) -- (-6:2.2);

\end{scope}

% 2

\begin{scope}[xshift=5.5 cm, rotate=72*\a]

\draw[line width=1.5]
	 (18:0.45) -- (36:0.7) -- (72:0.7)
	(36:0.7) -- (30:1.1);

\draw[rotate=24, xscale=-1]
	(-9:1.4) -- (6:1.1) -- (21:1.4) -- (39:1.4) -- (54:1.1)
	(-9:1.4) -- (0:1.6) -- (12:1.6) -- (21:1.4)
	(39:1.4) -- (48:1.6) -- (63:1.4)
	(12:1.6) -- (30:1.9) -- (48:1.6)
	(54:1.9) -- (30:1.9) -- (6:1.9) -- (-18:1.9) -- (0:1.6);

\draw[line width=1.5,rotate=24, xscale=-1]
	(21:1.4) -- (39:1.4) -- (54:1.1)
	(39:1.4) -- (48:1.6)
	(-9:1.4) -- (0:1.6) -- (12:1.6)
	(-18:1.9) -- (0:1.6);
		
\draw[rotate=-24]	
	(6:1.9) -- (-6:2.2)
	(30:1.9) -- (42:2.2)
	(-54:2.2) -- (-30:2.2) -- (-6:2.2) -- (18:2.2) -- (18:2.5);

\draw[line width=1.5,rotate=-24]
	(-30:2.2) -- (-6:2.2) -- (18:2.2)
	(6:1.9) -- (-6:2.2);

\end{scope}	
	
}

\foreach \a in {0,...,4}
{

% 1

\begin{scope}[rotate=72*\a]

\fill
	(6:1.9) circle (0.1)
	(-6:2.5) -- ++(210:0.1) arc (210:138:0.1);
	
\filldraw[fill=gray]
	(-18:1.1) circle (0.1);
	
\filldraw[fill=white]
	(30:1.1) circle (0.1);
	
\end{scope}

\node at (0,-2.7) 
	{$T(5\beta\gamma\epsilon^2\circ,
	5\delta\epsilon^3{\color{gray} \bullet},
	7\epsilon^5\bullet)$};

% 2

\begin{scope}[xshift=5.5cm, rotate=72*\a]

\fill
	(-6:2.5) -- ++(210:0.1) arc (210:138:0.1);
	
\filldraw[fill=gray]
	(-18:1.1) circle (0.1)
	(30:1.9) circle (0.1);
	
\filldraw[fill=white]
	(6:1.1) circle (0.1)
	(6:1.9) circle (0.1);
	
\end{scope}

\node at (5.5,-2.7) 
	{$T(10\beta\gamma\epsilon^2\circ,
	10\delta\epsilon^3{\color{gray} \bullet},
	2\epsilon^5\bullet)$};

}

\end{scope}

\end{tikzpicture}
\caption{Six flip modifications of the subdivision of the icosahedron.}
\label{subdivision_tiling_modify60A}
\end{figure}

For the case $\delta=2\epsilon$, therefore, we choose to draw the flip modifications according to the first of Figure \ref{mod60A}, and the first, third and fourth of Figure \ref{mod60B}. These are the fifth and sixth of Figure \ref{subdivision_tiling_modify60A}, and the first and second of Figure \ref{subdivision_tiling_modify60B}. We may also redraw the third and fourth of Figure \ref{subdivision_tiling_modify60A}, as the third and fourth of Figure \ref{subdivision_tiling_modify60B}, to explicitly reveal the two fold and three fold symmetries.

\begin{figure}[htp]
\centering
\begin{tikzpicture}[>=latex,scale=1]

%%% flip a pair	

%%% 7 & 3

\foreach \a in {-1,1}
\foreach \b in {-1,1}
\foreach \c in {0,1}
\fill[gray!50, yshift=-6*\c cm, xscale=\a,yscale=\b]
	(0,0.6) -- (1.2,1.2) -- (2.1,0) -- (2.7,0) arc (0:90:2.7);

\foreach \a in {-1,1}
\foreach \b in {-1,1}
\foreach \c in {0,1}
\draw[yshift=-6*\c cm, xscale=\a,yscale=\b]
	(0,0) -- (0,0.6) -- (1.2,1.2) -- (2.1,0)
	(0,1.2) -- (1.2,1.2) -- (0,2.1) -- (0,2.1) -- (0,2.7)
	(0,0) circle (2.1);

\foreach \a in {-1,1}
\foreach \c in {0,1}
\draw[line width=1.5, yshift=-6*\c cm, scale=\a]
	(-0.4,0.8) -- (0,1) -- (0.8,1)
	(0,1) -- (-0.4,1.2)
	(0.4,1.2) -- (0,1.5) -- (-0.8,1.5)
	(0,1.5) -- (0.4,1.8)
	(1.8,0.4) to[out=90,in=-50] 
	(35:1.85) to[out=130,in=0]  
	(0.8,1.5)
	(35:1.85) to[out=50,in=240] (60:2.1)
	(1.5,-0.8) to[out=-90,in=40] 
	(-55:1.85) to[out=220,in=0] (0.4,-1.8)
	(-55:1.85) to[out=-40,in=150] (-30:2.1)
	(30:2.1) to[out=-30,in=75]
	(-15:2.3) to[out=255,in=0] 
	(-60:2.1)
	(-15:2.3) to[out=15,in=-45]
	(45:2.4) to[out=135,in=0] 
	(0,2.4);	

\foreach \a in {-1,1}
\foreach \c in {0,1}
\fill
	(0.9*\a,-6*\c) circle (0.1)
	(0,2.1*\a-6*\c) circle (0.1);

%% 7
		
\foreach \a in {-1,1}
{
\begin{scope}[scale=\a]

\foreach \b in {-1,1}
\draw[yscale=\b]
	(0,0.2) -- (0.3,0.2) -- (0.55,0.6)
	(0.3,0.2) -- (0.55,0) -- (0.9,0) -- (0.8,0.4)
	(0.9,0) -- (1.2,0.25)
	(0.4,0.8) -- (0.55,0.6) -- (0.8,0.4) -- (0.95,0.8)
	(0.8,1) -- (0.95,0.8) -- (1.2,0.65)
	(1.5,0.8) -- (1.2,0.65) -- (1.2,0.25) -- (1.5,0.25)
	(1.8,0.4) -- (1.5,0.25) -- (1.5,0);

\draw[line width=1.5]
	(0.4,0.8) -- (0.55,0.6) -- (0.8,0.4)
	(0.3,0.2) -- (0.55,0.6) 
	(0,-0.2) -- (0.3,-0.2) -- (0.55,-0.6)
	(0.3,-0.2) -- (0.55,0)
	(1.5,0.8) -- (1.2,0.65) -- (1.2,0.25)
	(0.95,0.8) -- (1.2,0.65)
	(0.8,-0.4) -- (0.95,-0.8) -- (0.8,-1)
	(0.95,-0.8) -- (1.2,-0.65)
	(1.5,0.25) -- (1.5,-0.25) -- (1.8,-0.4)
	(1.2,-0.25) -- (1.5,-0.25);

\filldraw[fill=white]
	(0.8,1) circle (0.1)
	(0.4,-0.8) circle (0.1)
	(1.8,0.4) circle (0.1)
	(1.5,-0.8) circle (0.1);	

\filldraw[fill=gray]
	(1.2,1.2) circle (0.1)
	(1.2,-1.2) circle (0.1)
	(2.1,0) circle (0.1);

\end{scope}
}

\filldraw[fill=white]
	(0,0.2) circle (0.1)
	(0,-0.2) circle (0.1);
	
\node at (0,-3.05) 
	{$T(10\beta\gamma\epsilon^2\circ, 
	6\delta\epsilon^3{\color{gray} \bullet},
	4\epsilon^5\bullet)$ };
	
%% 3

\begin{scope}[yshift=-6 cm]

\foreach \a in {-1,1}
{
\begin{scope}[scale=\a]

\draw
	(0.9,0) -- (0,0.6) 
	(0.9,0) -- (1.2,1.2)
	(0.9,0) -- (0,-0.6) 
	(0.9,0) -- (1.2,-1.2)  
	(0.9,0) -- (2.1,0)
	(0,-0.2) -- (0.3,0) -- (0.6,-0.2)
	(0.3,0) -- (0.3,0.4)
	(0.4,0.8) -- (0.7,0.6) -- (0.6,0.2)
	(0.7,0.6) -- (1.1,0.8)
	(0.8,-1) -- (0.7,-0.6) -- (1,-0.4)
	(0.7,-0.6) -- (0.3,-0.4)
	(1,0.4) -- (1.4,0.4) -- (1.5,0.8)
	(1.4,0.4) -- (1.7,0)
	(1.3,0) -- (1.4,-0.4) -- (1.8,-0.4)
	(1.4,-0.4) -- (1.1,-0.8);
	
\draw[line width=1.5]
	(0.6,0.2) -- (0,0.6) 
	(1,0.4) -- (1.2,1.2)
	(0.6,-0.2) -- (0,-0.6) 
	(1,-0.4) -- (1.2,-1.2)  
	(1.3,0) -- (2.1,0)
	(0.3,0) -- (0.3,0.4)
	(0.7,0.6) -- (1.1,0.8)
	(0.7,-0.6) -- (0.3,-0.4)
	(1.4,0.4) -- (1.7,0)
	(1.4,-0.4) -- (1.1,-0.8);

\filldraw[fill=white]
	(1.2,1.2) circle (0.1)
	(1.2,-1.2) circle (0.1)
	(2.1,0) circle (0.1);
	
\end{scope}
}

\filldraw[fill=gray]
	(0,0.6) circle (0.1)
	(0,-0.6) circle (0.1);	
	
\node at (0,-3.05) 
	{$T(2\beta^2\gamma^2\epsilon{\color{gray} \bullet},
	6\beta\gamma\epsilon^3\circ,
	4\epsilon^5\bullet)$};	

\end{scope}

%%% flip a triple

\begin{scope}[xshift=6cm]

%% 8

\foreach \a in {0,1,2} 
\fill[gray!50, rotate=120*\a]
	(0,0) -- (90:0.5) -- (50:0.5) -- (10:0.5) -- (-30:0.5)
	(150:2.4) -- (90:2) -- (30:2.4) -- (30:2.7) arc (30:150:2.7) ;

\foreach \a in {0,1,2} 
{
\begin{scope}[rotate=120*\a]

\draw
	(-30:0.5) -- (-30:2) -- (30:2.4) -- (90:2)
	(-30:0.5) -- (10:0.5) -- (50:0.5) -- (90:0.5);
	
\foreach \x in {-1,1}
\draw[xscale=\x]
	(-30:1.5) -- (1,-1)
	(-30:1) -- (0.55,-0.8)
	(1.2,-1.43) -- (0.9,-1.3)
	(0.6,-1.91) -- (0.3,-1.7) -- (-0.3,-1.7)
	(-70:0.5) -- (0.3,-0.7) -- (0.55,-0.8) -- (0.6,-1) -- (1,-1) -- (0.9,-1.3) -- (0.5,-1.4) -- (0.3,-1.7)
	(0.3,-0.7) -- (0,-0.9) -- (0,-1.3)
	(0,-1.3) -- (0.6,-1)
	(0,-1.3) -- (0.5,-1.4)
	(0,0) circle (2.4);

\draw[line width=1.5]
	(0,0) -- (50:0.5)
	(-110:0.5) -- (-0.3,-0.7) -- (-0.55,-0.8)
	(-0.3,-0.7) -- (0,-0.9)
	(0.3,-0.7) -- (0.55,-0.8) -- (0.6,-1)
	(-30:1) -- (0.55,-0.8)
	(1,-1) -- (0.9,-1.3) -- (0.5,-1.4)
	(1.2,-1.43) -- (0.9,-1.3)
	(0.3,-1.7) -- (-0.3,-1.7) -- (-0.6,-1.91)
	(-0.5,-1.4) -- (-0.3,-1.7)
	(-0.6,-1) -- (-1,-1) -- (-0.9,-1.3)
	(-150:1.5) -- (-1,-1)
	(0.6,-1.91) to[out=0, in=-60] (1.84,-0.32)
	(-10:2.4) to[out=200,in=0] (1.7,-1.32)
	(-50:2.4) -- (-50:2.7);
	
\end{scope}
}

\foreach \a in {0,1,2} 
{
\begin{scope}[rotate=120*\a]	
	
\fill
	(0,-1.3) circle (0.1);

\filldraw[fill=white]
	(50:0.5) circle (0.1)
	(90:1) circle (0.1)
	(90:1.5) circle (0.1)
	(-1.2,-1.43) circle (0.1) 
	(0.6,-1.91) circle (0.1);	

\filldraw[fill=gray]
	(30:2.4) circle (0.1);
	
\end{scope}
}

\node at (0,-3.05) 
	{$T(15\beta\gamma\epsilon^2\circ,
	3\delta\epsilon^3{\color{gray} \bullet},
	3\epsilon^5\bullet)$};

\begin{scope}[yshift=-6cm]

%% 8

\foreach \a in {0,1,2} 
\fill[gray!50, rotate=120*\a]
	(0,0) -- (90:0.5) -- (50:0.5) -- (10:0.5) -- (-30:0.5)
	(150:2.4) -- (90:2) -- (30:2.4) -- (30:2.7) arc (30:150:2.7) ;

\foreach \a in {0,1,2} 
{
\begin{scope}[rotate=120*\a]

\draw
	(-30:0.5) -- (-30:2) -- (30:2.4) -- (90:2)
	(-30:0.5) -- (10:0.5) -- (50:0.5) -- (90:0.5)
	(-30:0.5) -- (0.4,-0.6) -- (0.3,-0.9) -- (0,-1.3)
	(-150:0.5) -- (-0.4,-0.6) -- (-0.3,-0.9) -- (0,-1.3)
	(-30:2) -- (0,-1.3)
	(0,-2.4) -- (0,-1.3)
	(-150:2) -- (0,-1.3)
	(0.4,-0.6) -- (0,-0.8) -- (-0.3,-0.9)
	(-110:0.5) -- (0,-0.8)
	(0.3,-0.9) -- (0.722,-0.85) -- (-30:1)
	(1.155,-1.1) -- (0.722,-0.85)
	(-0.577,-1.2) -- (-0.722,-0.85) -- (-150:1.5)
	(-0.4,-0.6) -- (-0.722,-0.85)
	(1.155,-1.467) -- (0.577,-1.567) -- (0.577,-1.2)
	(0,-2.033) -- (0.577,-1.567)
	(0,-1.667) -- (-0.577,-1.567) -- (-0.577,-1.933)
	(-1.155,-1.1) -- (-0.577,-1.567)
	(0,0) circle (2.4);

\draw[line width=1.5]
	(0,0) -- (50:0.5)
	(-30:0.5) -- (0.4,-0.6) -- (0.3,-0.9) 
	(-150:0.5) -- (-0.4,-0.6) -- (-0.3,-0.9) 
	(0.4,-0.6) -- (0,-0.8)
	(1.732,-1) -- (0.577,-1.2)
	(-1.732,-1) -- (-0.577,-1.2)
	(1.155,-1.1) -- (0.722,-0.85)
	(-0.4,-0.6) -- (-0.722,-0.85)
	(-90:2.4) -- (-90:1.67)
	(0,-2.033) -- (0.577,-1.567)
	(-1.155,-1.1) -- (-0.577,-1.567)
	(0.6,-1.91) to[out=0, in=-45] (1.84,-0.32)
	(-10:2.4) to[out=200,in=-30] (1.7,-1.35)
	(-50:2.4) -- (-50:2.7);
		
\end{scope}
}

\foreach \a in {0,1,2} 
{
\begin{scope}[rotate=120*\a]	
	
\fill
	(0,-1.3) circle (0.1);

\filldraw[fill=white]
	(30:2.4) circle (0.1);	

\filldraw[fill=gray]
	(-30:0.5) circle (0.1)
	(-30:2) circle (0.1);
	
\end{scope}
}

\node at (0,-3.05) 
	{$T(6\beta^2\gamma^2\epsilon{\color{gray} \bullet},
	3\beta\gamma\epsilon^3\circ,
	3\epsilon^5\bullet)$};
	
\end{scope}

\end{scope}

\end{tikzpicture}
\caption{Four flip modifications of the subdivision of the icosahedron.}
\label{subdivision_tiling_modify60B}
\end{figure}

\subsubsection*{Companion}

In the second part of the main theorem, the pentagons in the second and third families are related as in the second and third of Figure \ref{calculation_division}. The second family subdivides a triangular face of icosahedron by using the thick lines, and the third family by using the dashed lines. According to the end of \cite[Section 3.1]{wy1}, this means that the two corresponding pentagonal subdivision tilings are the {\em companions} of each other. The angles $\alpha',\beta',\gamma',\delta',\epsilon'$ of the companion pentagon are related to the angles $\alpha,\beta,\gamma,\delta,\epsilon$ of the original pentagon by
\[
\alpha'=\alpha,\;
\beta'+\gamma'=\delta,\;
\beta+\gamma=\delta',\;
\delta+\delta'=2\pi,\;
\epsilon'=\epsilon.
\]
Moreover, the orientation $\alpha'\to \beta'\to \gamma'\to \delta'\to \epsilon'$ is opposite to the orientation $\alpha\to \beta\to \gamma\to \delta\to \epsilon$.

The flip modification and the companion operation are compatible. The standard pentagonal subdivision of the icosahedron is the union of twenty $n(\alpha^3)$ (the middle of Figure \ref{subdivision2}, and the first of Figure \ref{special1C}), and each $N(\epsilon^5)$ consists of five $n(\alpha^3)$. The companion operation is applied to each $n(\alpha^3)$, by changing thick edges to dashed edges, like the middle of Figure \ref{companion}. The operation does not change $n(\alpha^3)$ as a spherical polygon. On the other hand, the flip modification is applied to $N(\epsilon^5)$, by effectively redistributing the five constituent $n(\alpha^3)$, without changing $N(\epsilon^5)$ as a spherical polygon. Therefore we may apply the companion operation to each $n(\alpha^3)$ in the flip modified tiling, and still get a tiling. 

Figure \ref{companion} shows the companion operation on the flip modification tiling $T(4\beta\gamma\epsilon^2,2\epsilon^4)$ in the first of Figure \ref{subdivision_tiling_modify24}. The shaded and non-shaded regions are switched because the companion operation changes the orientation.

\begin{figure}[htp]
\centering
\begin{tikzpicture}[>=latex,scale=1]

%% left

\foreach \a in {0,...,11}
{
\fill[gray!50, rotate=30*\a]
	(0,0) -- (-15:1) -- (15:1);
}	
	
\foreach \a in {0,1,2,3}
{

\draw[rotate=90*\a]
	(0,0) -- (0.4,0) -- (0.6,0.25) -- (0.25,0.6) -- (0,0.4)
	(-15:1) -- (15:1) -- (45:1) -- (75:1)
	(0.6,0.25) -- (15:1)
	(0.6,-0.25) -- (-15:1)
	(-30:1.3) -- (0:1.3) -- (30:1.3) -- (60:1.3)
	(-15:1) -- (0:1.3)
	(45:1) -- (30:1.3)
	(60:1.3) -- (60:1.6);

\draw[line width=1.5, rotate=90*\a]
	(0.6,0.25) -- (0.25,0.6) -- (0,0.4)
	(0.6,-0.25) -- (-15:1)
	(0:1.3) -- (30:1.3) -- (60:1.3)
	(45:1) -- (30:1.3);
	
}

\foreach \a in {0,1,2,3}
{
\begin{scope}[rotate=90*\a]

\filldraw[fill=white]
	(75:1) circle (0.1);

\fill
	(60:1.6) -- ++(-75:0.1) arc (-75:-165:0.1);

\end{scope}
}

\fill
	(0,0) circle (0.1);

%% companion operation

\draw[very thick,->]
	(1.8,0) -- (3.2,0);

\foreach \a in {0,1,2}
{	
\begin{scope}[shift={(2.5cm,0.6cm)}, rotate=120*\a]

\draw
	(-30:0.693) -- (90:0.693);

\draw[line width=1.5]
	(0,0) -- (0.4,0);
	
\draw[dash pattern=on 1pt off 1pt]
	(0,0) -- (-0.4,0);

\end{scope}
}	
	
%% right

\begin{scope}[xshift=5cm]

\foreach \a in {0,...,11}
{
\fill[gray!50, rotate=30*\a]
	(-15:1) -- (15:1) -- (0:1.6) -- (-30:1.6);
}

\foreach \a in {0,1,2,3}
{

\draw[rotate=90*\a]
	(-0.25,0.6) -- (0,0.4) -- (0,0)
	(-15:1) -- (15:1) -- (45:1) -- (75:1)
	(0.6,0.25) -- (15:1)
	(-15:1) -- (0:1.3) -- (-30:1.3) -- (-30:1.6);

\draw[dash pattern=on 1pt off 1pt,rotate=90*\a]
	(0.4,0) -- (0.25,0.6) -- (-0.25,0.6)
	(0.25,0.6) -- (45:1)
	(15:1) -- (30:1.3) -- (90:1.3)
	(30:1.3) to[out=-40, in=40] (-30:1.3);

}

\end{scope}	
			
\end{tikzpicture}
\caption{Companion of flip modification tiling.}
\label{companion}
\end{figure}

For the pentagonal subdivision of the icosahedron, the companion operation exchanges $\beta+\gamma$ and $\delta$. Therefore the operation changes the flip with respect to $L_{\beta+\gamma=2\epsilon}$ (used for the second family) to the flip with respect to $L_{\delta=2\epsilon}$ (used for the third family). Then the companion operation changes the four tilings in the second family to the four tilings in the third family.

We do not include the companion of the first family in the main theorem, because the first family is the same as its own companion. For example, the flip of the whole right tiling in Figure \ref{companion} is combinatorially the same as the left one. Geometrically, they are also the same because the pentagon is given by the first of Figure \ref{calculation_division}. By $\beta+\gamma=\delta=\pi$, the companion has the same pentagon.

\subsubsection*{Outline of the Paper}

The classification for the edge combinations $a^2b^2c$ and $a^3bc$ in \cite{wy1} is mainly the analysis of the neighbourhood of a special tile with four degree $3$ vertices and the fifth vertex of degree $3,4$ or $5$. While this is still a key technique of this paper, the classification for the edge combination $a^3b^2$ is more about allowable combinations of angles at degree $3$ vertices in a tiling. This is illustrated in the series of Propositions \ref{special1} through \ref{special9}. The classification of pentagonal tilings of the sphere cannot be accomplished by a single technique. Depending on the edge combination, we actually have a spectrum of techniques, with geometrical constraints and spherical trigonometry playing more and more important roles as the edges become less and less varied. In the third of our series \cite{awy}, the classification for the equilateral case is actually dominated by calculations using spherical trigonometry. 

This paper is organized as follows. Section \ref{constraint1} is a key geometrical property for spherical pentagons that will become extremely useful throughout our classification project. Section \ref{basic_facts} develops basic techniques needed for the classification work. This includes general results from \cite{wy1} and some technical results specific to the edge combination $a^3b^2$. Sections \ref{special1tiling} and \ref{special2tiling} prove two key classification results, which provide all the special tilings in our main theorem. Section \ref{general} gives seven cases of angle combinations at degree $3$ vertices that do not admit tilings. The final Section \ref{classification} analyses the neighbourhood of a special tile and completes the classification.

We provide precise calculation when the pentagon is unique. There are two such cases. The first is the pentagon suitable for the flip modification, which appears in Step 2 of the proof of Proposition \ref{special1}. The second is the symmetric pentagon suitable for tiling, which appears in Section \ref{symmetric}. 

The values of angles and edge lengths are expressed as multiples of $\pi$. When the multiple is a rational fraction, such as $\alpha=\frac{2}{3}\pi$, we mean the precise value. When the multiple is a decimal expression, such as $\beta=0.38831\pi$, we mean an approximate value. In other words, $0.38831\pi\le\beta < 0.38832\pi$.

\subsubsection*{Acknowledgement}

We would like to thank Ka Yue Cheuk and Ho Man Cheung. Some of their initial work on the pentagonal subdivision tilings are included in this paper.

\section{Non-symmetric Pentagon}
\label{constraint1}

The pentagon in Figure \ref{geom} has two pairs of edges of equal lengths, denoted $a$ and $b$ (we allow $a=b$). The following is \cite[Lemma 21]{gsy}. 

\begin{lemma}\label{geometry}
For the pentagon in the second of Figure \ref{geom}, we have $\beta=\gamma$ if and only of $\delta=\epsilon$.
\end{lemma}

\begin{figure}[htp]
\centering
\begin{tikzpicture}

\draw
	(0,0.7) -- node[fill=white,inner sep=1] {\small $b$}
	(-1,0) -- node[fill=white,inner sep=1] {\small $a$}
	(-0.7,-1) -- 
	(0.7,-1) -- node[fill=white,inner sep=1] {\small $a$}
	(1,0) -- node[fill=white,inner sep=1] {\small $b$}
	cycle;

\node at (0,0.45) {\small $\alpha$};	
\node at (-0.7,-0.1) {\small $\beta$};
\node at (0.75,-0.15) {\small $\gamma$};	
\node at (-0.55,-0.8) {\small $\delta$};	
\node at (0.55,-0.8) {\small $\epsilon$};	

\end{tikzpicture}
\caption{Pentagon with two pairs of equal edges.}
\label{geom}
\end{figure}

Of course the equalities in Lemma \ref{geometry} mean that the pentagon is symmetric. If we know the pentagon is simple, then we have the following stronger result.

\begin{lemma}\label{geometry1}
If the spherical pentagon in Figure \ref{geom} is simple and has two pairs of edges of equal lengths $a$ and $b$, then $\beta>\gamma$ is equivalent to $\delta<\epsilon$.
\end{lemma}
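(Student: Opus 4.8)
The plan is to exploit the reflection symmetry behind \cite[Lemma 21]{gsy}. Label the pentagon $P_1P_2P_3P_4P_5$ as in Figure~\ref{geom}, with apex angle $\alpha$ at $P_1$, angles $\beta,\delta$ at $P_2,P_3$ on one side and $\gamma,\epsilon$ at $P_5,P_4$ on the other, so that the reflection fixing $P_1$ swaps $\beta\leftrightarrow\gamma$ and $\delta\leftrightarrow\epsilon$. The two equal edges at the apex are $P_1P_2=P_1P_5=b$, the next pair is $P_2P_3=P_4P_5=a$, and the base $P_3P_4$ has some fixed length $c$ (the roles of $a,b$ and the value of $c$ play no role, so the argument applies verbatim when $a=b$ or $c=a$, i.e.\ to the genuine $a^3b^2$ pentagon). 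First I would cut off the isosceles triangle $P_1P_2P_5$ along the diagonal $d:=P_2P_5$. Its base angles are equal, $\angle P_1P_2P_5=\angle P_1P_5P_2=:\theta$, and simplicity lets me write $\beta=\theta+\beta'$, $\gamma=\theta+\gamma'$ with $\beta'=\angle P_5P_2P_3$, $\gamma'=\angle P_2P_5P_4$, so that $\beta>\gamma\iff\beta'>\gamma'$. This reduces everything to the quadrilateral $P_2P_3P_4P_5$, which has one pair of equal opposite sides $P_2P_3=P_4P_5=a$ and interior angles $\beta',\delta,\epsilon,\gamma'$.

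The heart of the proof is to compare the two diagonals $f:=P_2P_4$ and $e:=P_3P_5$ of this quadrilateral through the spherical law of cosines in the four subtriangles. Computing $\cos f$ from $\triangle P_2P_3P_4$ (included angle $\delta$ at $P_3$) and from $\triangle P_2P_4P_5$ (included angle $\gamma'$ at $P_5$), and $\cos e$ from $\triangle P_3P_4P_5$ (included angle $\epsilon$ at $P_4$) and from $\triangle P_2P_3P_5$ (included angle $\beta'$ at $P_2$), the terms $\cos a\cos c$ and $\cos a\cos d$ cancel in the two differences, leaving the single identity
\begin{equation*}
\cos f-\cos e=\sin a\,\sin c\,(\cos\delta-\cos\epsilon)=\sin a\,\sin d\,(\cos\gamma'-\cos\beta').
\end{equation*}
Since $a,c,d\in(0,\pi)$ all the sines are positive, so the three expressions share a common sign. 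Using that $\cos$ is strictly decreasing on $(0,\pi)$, the middle equality gives $\delta<\epsilon\iff\cos f>\cos e\iff f<e$, and the right equality gives $\beta'>\gamma'\iff\cos f>\cos e\iff f<e$. Chaining these with $\beta>\gamma\iff\beta'>\gamma'$ yields precisely $\beta>\gamma\iff\delta<\epsilon$.

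The delicate point, which I expect to be the main obstacle, is the justification of the additive splittings $\beta=\theta+\beta'$, $\gamma=\theta+\gamma'$ and of the four-triangle decomposition: one must know that the diagonals $P_2P_5$, $P_2P_4$, $P_3P_5$ lie inside the pentagon and subdivide the interior angles additively, and that each interior angle $\delta,\epsilon$ equals the corresponding subtriangle angle (which fails if a vertex is reflex). This is immediate when the pentagon is convex, but ``simple'' alone does not force convexity, so I would need to dispose of the reflex configurations. A clean route would be a continuity argument anchored at the symmetric pentagon $\beta=\gamma$, $\delta=\epsilon$: there $f=e$ by symmetry, the displayed identity forces $f=e$ exactly on the symmetric locus, and hence the sign of $f-e$ (equivalently of $\beta-\gamma$ and of $\epsilon-\delta$) is locally constant and cannot change without passing through that locus. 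Transporting the equivalence by deforming any admissible simple pentagon to the symmetric one, within the space of simple pentagons with the prescribed edge lengths, would then complete the proof.
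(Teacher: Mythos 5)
Your reduction to the quadrilateral $P_2P_3P_4P_5$ by slicing off the isosceles cap $P_1P_2P_5$ is exactly the paper's first step, and your law-of-cosines identity
\begin{equation*}
\cos f-\cos e=\sin a\,\sin c\,(\cos\delta-\cos\epsilon)=\sin a\,\sin d\,(\cos\gamma'-\cos\beta')
\end{equation*}
is a genuinely different and very clean way to dispose of the quadrilateral \emph{when it is convex}. The problem is that the convex case is not where the difficulty lies, and the gap you flag at the end is not a technicality you can wave away: it is the actual content of the lemma. Interior angles of a simple spherical polygon live in $(0,2\pi)$, and your final step ``$\delta<\epsilon\iff\cos\delta>\cos\epsilon$'' uses monotonicity of $\cos$ on $(0,\pi)$. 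If, say, $\delta=0.6\pi$ and $\epsilon=1.5\pi$, then $\cos\delta<\cos\epsilon$ even though $\delta<\epsilon$, so the identity (which, note, still \emph{holds} for reflex angles since $\cos\theta=\cos(2\pi-\theta)$) simply ceases to detect the order of the angles. The same failure afflicts $\beta',\gamma'$. You also need $a,c,d<\pi$ for the signs of the sines, and none of these bounds is automatic: the paper must separately treat $a>\pi$, $DE>\pi$, and the position of $D,E$ relative to the diagonal (your additive splitting $\beta=\theta+\beta'$ is only one of three configurations; the paper also gets $\beta-\beta'=\gamma-\gamma'$ with the opposite sign convention, and a crossing case handled directly). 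Essentially all of the paper's proof of its quadrilateral lemma --- the stereographic projection, the $2$-gons, the complementation tricks --- exists to handle angles and edges exceeding $\pi$.

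The continuity argument you propose as a repair does not close the gap. First, it is circular: the identity whose sign you want to propagate was derived under the additive-splitting and non-reflex hypotheses, so you cannot use it to control configurations where those hypotheses fail. Second, you would need the space of simple spherical pentagons with the prescribed edge lengths (and, for your purposes, with a controlled diagonal decomposition) to be path-connected with the symmetric pentagon in every component, and that the quantities $\beta-\gamma$, $\epsilon-\delta$, $f-e$ vanish only on the symmetric locus along the path; neither is established, and the second is essentially equivalent to the lemma itself. So the proposal proves the lemma for convex pentagons by a nicer route than the paper, but leaves the non-convex cases --- which the paper treats as the main event --- genuinely open.
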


By simple polygon, we mean the boundary does not intersect itself. By \cite[Lemma 1]{gsy}, in an edge-to-edge tiling of the sphere by congruent pentagons, the pentagon must be simple.

To simplify the discussion, we always use strict inequalities in this section. For example, this means that the opposite of $\alpha>\pi$ is $\alpha<\pi$. The special case of equalities can be easily analysed and therefore will be omitted.

Unlike the plane, the concept of the {\em interior} of a simple polygon on the sphere is relative. Once we designate the ``inside'', then we also have the ``outside''. Moreover, the inside is the ``outside of the outside''. We note that the reformulation of Lemma \ref{geometry1} in terms of the outside is equivalent to the original formulation in terms of the inside. 

We will also use the sine law and the following well known result in the spherical trigonometry: If a spherical triangle has angles $\alpha,\beta,\gamma<\pi$ and corresponding edges $a,b,c$ opposite to the angles, then $\alpha>\beta$ if and only if $a>b$.

Let $A,B,C,D,E$ be the vertices at angles $\alpha,\beta,\gamma,\delta,\epsilon$. Among the two great arcs connecting $B$ and $C$, we take the one with length $<\pi$ to form the edge $BC$, and indicate $BC$ by the dashed line in Figure \ref{geom2}. Since $AB$ and $AC$ intersect only at one point $A$ and have the same length $b$, we get $b<\pi$. Since all three edges $AB,AC,BC$ have lengths $<\pi$, by the sine law, among the two triangles bounded by the three edges, one has all three angles $<\pi$. We denote this triangle by $\triangle ABC$. The angle $\angle BAC$ of $\triangle ABC$ is either $\alpha$ of the pentagon, or its complement $2\pi-\alpha$. Since the inside and outside versions of the lemma are equivalent, we will always assume $\alpha=\angle BAC<\pi$ in the subsequent discussion.

\begin{figure}[htp]
\centering
\begin{tikzpicture}

%%% 1

\draw
	(0,1) -- (-1.2,-1) -- (-0.3,-1.5) -- (0.3,-0.5) -- (1.2,-1) -- cycle;
\draw[dashed]
	(1.2,-1) -- (-1.2,-1)
	(0.3,-0.5) -- ++(-0.9,0.5);

\node at (0,1.2) {\small $A$};
\node at (-1.4,-1) {\small $B$};
\node at (1.4,-1) {\small $C$};
\node at (-0.3,-1.7) {\small $D$};
\node at (0.05,-0.55) {\small $E$};
\node at (0.1,-1.2) {\small $F$};
\node at (-0.8,0) {\small $G$};

\node[fill=white,inner sep=1] at (0.4,0.3) {\small $b$};
\node[fill=white,inner sep=1] at (-0.4,0.3) {\small $b$};
\node[fill=white,inner sep=1] at (-0.7,-1.3) {\small $a$};
\node[fill=white,inner sep=1] at (0.5,-0.6) {\small $a$};

\node at (0,0.7) {\small $\alpha$};
\node at (-0.9,-0.9) {\small $\beta$};
\node at (0.75,-0.6) {\small $\gamma$};
\node at (-0.35,-1.25) {\small $\delta$};
\node at (0.3,-0.35) {\small $\epsilon$};

%%% 2

\begin{scope}[xshift=3.5cm]

\draw
	(0,1) -- (-1.2,-0.2) -- (-0.8,-1.5) -- (0.8,-1.5) -- (1.2,-0.2) -- cycle;
\draw[dashed]
	(1.2,-0.2) -- (-1.2,-0.2);

\node at (0,1.2) {\small $A$};
\node at (-1.4,-0.2) {\small $B$};
\node at (1.4,-0.2) {\small $C$};
\node at (-0.9,-1.65) {\small $D$};
\node at (0.9,-1.65) {\small $E$};

\node[fill=white,inner sep=1] at (-1,-0.9) {\small $a$};
\node[fill=white,inner sep=1] at (1,-0.9) {\small $a$};
\node[fill=white,inner sep=1] at (-0.5,0.5) {\small $b$};
\node[fill=white,inner sep=1] at (0.5,0.5) {\small $b$};

\node at (0,0.7) {\small $\alpha$};  
\node at (-0.65,-1.3) {\small $\delta$};
\node at (0.7,-1.3) {\small $\epsilon$};  
%\node at (-0.8,0) {\small $\theta$};
%\node at (0.75,0) {\small $\theta$};
\node at (-0.9,-0.4) {\small $\beta'$};
\node at (0.95,-0.4) {\small $\gamma'$};

\end{scope}

%%% 3

\begin{scope}[xshift=7cm]

\draw
	(0,1) -- (-1.2,-1.5) -- (-0.35,-0.7) -- (0.35,-0.7) -- (1.2,-1.5) -- cycle;
\draw[dashed]
	(1.2,-1.5) -- (-1.2,-1.5);
	
\node at (0,1.2) {\small $A$};
\node at (-1.4,-1.5) {\small $B$};
\node at (1.4,-1.5) {\small $C$};

\node at (0,0.65) {\small $\alpha$};
\node at (-0.8,-0.95) {\small $\beta$};
\node at (0.75,-0.9) {\small $\gamma$};
\node at (-0.3,-0.5) {\small $\delta$};
\node at (0.3,-0.55) {\small $\epsilon$};
		
\node at (-0.7,-1.3) {\small $\beta'$};
\node at (0.6,-1.25) {\small $\gamma'$};
\node at (-0.3,-0.9) {\small $\delta'$};
\node at (0.25,-0.9) {\small $\epsilon'$};

\node[fill=white,inner sep=1] at (0.5,0) {\small $b$};
\node[fill=white,inner sep=1] at (-0.5,0) {\small $b$};

\end{scope}

\end{tikzpicture}
\caption{Reduce Lemma \ref{geometry1} to Lemma \ref{geometry2}.}
\label{geom2}
\end{figure}

The pentagon is obtained by choosing $D,E$, and then connecting $B$ to $D$, $C$ to $E$, and $D$ to $E$ by great arcs. By $BC<\pi$, we know $BC$ intersects $BD$ and $CE$ only at $B$ and $C$. Moreover, $BC$ and $DE$ intersect at no more than one point. 

Suppose $BC$ and $DE$ intersect at one point $F$. Then one of $D,E$ is inside $\triangle ABC$ and one is outside (we omit the ``equality case'' of $D$ or $E$ being on $BC$). The first of Figure \ref{geom2} shows the case $D$ is outside $\triangle ABC$ and $E$ is inside $\triangle ABC$. Then by $BC$ intersecting $BD$ and $CE$ only at $B$ and $C$, and the isosceles triangle $\triangle ABC$, we get 
\[
\beta>\angle ABC=\angle ACB> \gamma.
\]
On the other hand, by $AC=b<\pi$ and $BC<\pi$, the prolongation of $CE$ intersects the boundary of $\triangle ABC$ at a point $G$ on $AB$. By $AB<\pi$, $\alpha<\pi$, $\gamma<\angle ACB<\pi$, and applying the sine law to $\triangle ACG$, we get $CG<\pi$. Therefore $a=CE<CG<\pi$. By $a<\pi$, $BF<BC<\pi$, $CF<BC<\pi$, $\angle BFD=\angle CFE<\pi$, and applying the sine law to $\triangle BDF$ and $\triangle CEF$, we get $\angle BDF<\pi$ and $\angle CEF<\pi$. Therefore
\[
\delta=\angle BDF<\pi<2\pi-\angle CEF=\epsilon.
\] 
This proves that $D$ outside and $E$ inside imply $\beta>\gamma$ and $\delta<\epsilon$. Similarly, $D$ inside and $E$ outside imply $\beta<\gamma$ and $\delta>\epsilon$. 

Suppose $BC$ and $DE$ are disjoint. By the simple pentagon, we know $BC$ does not intersect $AB$ and $AC$. Therefore either both $B,C$ are outside $\triangle ABC$, as in the second of Figure \ref{geom2}, or both $B,C$ are inside $\triangle ABC$, as in the third of Figure \ref{geom2}. By the isosceles triangle $\triangle ABC$, we get $\beta-\beta'=\gamma-\gamma'$ in the second picture, and $\beta+\beta'=\gamma+\gamma'$ in the third picture. Therefore we get $\beta>\gamma$ if and only if $\beta'>\gamma'$ in the second picture, and $\beta>\gamma$ if and only if $\beta'<\gamma'$ in the third picture. Moreover, in the third picture, we have $\delta<\epsilon$ if and only if $\delta'>\epsilon'$. In both cases, the proof of Lemma \ref{geometry1} is then reduced to the proof of the following similar result for quadrilaterals.

\begin{lemma}\label{geometry2}
If the spherical quadrilateral in Figure \ref{geom3} is simple and has a pair of equal edges $a$, then $\beta>\gamma$ is equivalent to $\delta<\epsilon$.
\end{lemma}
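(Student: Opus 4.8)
The plan is to cut the quadrilateral into spherical triangles by its two diagonals, apply the law of cosines to each, and then combine the resulting identities so that every unknown length cancels. Write the quadrilateral of Figure \ref{geom3} in cyclic order as $BDEC$, with angles $\beta,\delta,\epsilon,\gamma$ at $B,D,E,C$ and with the equal edges $BD=CE=a$; the remaining two edges are then $BC$ and $DE$, and the two diagonals are $CD$ and $BE$. Note that the equivalence to be proved compares the two endpoints $\beta,\gamma$ of the edge $BC$ against the two endpoints $\delta,\epsilon$ of the edge $DE$, so these are exactly the edges that should survive the cancellation.

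First I would draw the diagonal $CD$, splitting the quadrilateral into $\triangle BDC$ and $\triangle DEC$. Computing $\cos CD$ by the spherical law of cosines in each triangle and using $BD=CE=a$ gives
\[\cos a\cos BC+\sin a\sin BC\cos\beta=\cos CD=\cos a\cos DE+\sin a\sin DE\cos\epsilon.\]
Repeating with the diagonal $BE$, which splits the quadrilateral into $\triangle BDE$ and $\triangle BEC$, gives
\[\cos a\cos DE+\sin a\sin DE\cos\delta=\cos BE=\cos a\cos BC+\sin a\sin BC\cos\gamma.\]
In each case the angle recorded is the full quadrilateral angle, because the diagonal avoids the two vertices whose angles it uses. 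Adding the two identities cancels the terms $\cos a\cos BC$ and $\cos a\cos DE$, and dividing by $\sin a>0$ leaves the single clean relation
\[\sin BC\,(\cos\beta-\cos\gamma)=\sin DE\,(\cos\epsilon-\cos\delta).\]

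To finish, I observe that $BC$ and $DE$ are edges of a simple quadrilateral and so have length in $(0,\pi)$, whence $\sin BC,\sin DE>0$. The displayed relation then forces $\cos\beta-\cos\gamma$ and $\cos\epsilon-\cos\delta$ to share the same sign. Since cosine is strictly decreasing on $(0,\pi)$, the chain $\beta>\gamma\iff\cos\beta-\cos\gamma<0\iff\cos\epsilon-\cos\delta<0\iff\delta<\epsilon$ yields exactly the asserted equivalence.

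The main obstacle is the non-convex case, where a reflex interior angle can push a diagonal outside the quadrilateral. I expect two places to need care. The law-of-cosines step is in fact robust: the triangle formed by the minor arcs records the non-reflex angle $2\pi-\theta$ in place of a reflex quadrilateral angle $\theta$, but $\cos(2\pi-\theta)=\cos\theta$, so the two displayed identities, and hence the combined relation, hold verbatim. The delicate point is the final monotonicity step, since there the sign of a cosine difference only tracks the order of the angles when they lie in $(0,\pi)$. I would dispose of this exactly as in the treatment of Lemma \ref{geometry1}, using the equivalence between the inside and outside formulations to reduce to the case in which the compared angles are less than $\pi$. I would also record, as was done for $BC<\pi$ in the pentagon discussion, why each edge (and each diagonal) has length less than $\pi$, which is what guarantees $\sin BC,\sin DE>0$ and the nondegeneracy of the four sub-triangles.
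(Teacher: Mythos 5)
Your two law-of-cosines identities are correct as stated whenever the four edges are minor arcs, and adding them does yield
\[
\sin BC\,(\cos\beta-\cos\gamma)=\sin DE\,(\cos\epsilon-\cos\delta),
\]
which disposes of the convex case (all four angles in $(0,\pi)$, all edges of length $<\pi$) more cleanly than the paper's synthetic argument. But the argument has a genuine gap in the non-convex cases, and your proposed repairs do not close it. First, the claim that the edges of a simple spherical quadrilateral ``have length in $(0,\pi)$'' is false: the paper's own proof of this lemma opens with the case $a>\pi$ and separately reduces $DE>\pi$ to $DE<\pi$ only under the extra hypothesis $\delta,\epsilon<\pi$. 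When an edge is a major arc the identity survives (the sign flips in $\sin$ and in the cosine of the angle cancel), but $\sin BC$ or $\sin DE$ is then \emph{negative}, so the two cosine differences have \emph{opposite} signs, and your final deduction reverses orientation. Second, and more seriously, the monotonicity step needs both compared angles in $(0,\pi)$, and the inside/outside swap replaces \emph{every} angle $\theta$ by $2\pi-\theta$ simultaneously; it therefore cannot normalize a configuration in which $\beta$ and $\gamma$ straddle $\pi$ (e.g. $\beta<\pi<\gamma$, which occurs in the paper's Figure \ref{geom5}), nor one in which $\beta,\gamma>\pi$ while $\delta,\epsilon<\pi$ (the paper's first subcase of Figure \ref{geom6}). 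In the straddling case $\cos\beta-\cos\gamma$ can have either sign, so the identity determines nothing about $\delta,\epsilon$; in the case $\beta,\gamma\in(\pi,2\pi)$ the cosine is increasing, so the naive reading of the identity would give the \emph{negation} of the lemma unless one also knows that $BC$ is there a major arc. The upshot is that the signs of $\sin BC$, $\sin DE$ and the positions of the four angles relative to $\pi$ are correlated through the geometry, and establishing those correlations is precisely the case analysis the paper carries out (impossibility of $\beta,\delta>\pi$, $\gamma,\epsilon<\pi$; triviality of $\beta,\epsilon>\pi$, $\gamma,\delta<\pi$; etc.). Your identity is a useful supplement but does not by itself replace that analysis, and the two shortcuts you invoke to avoid it are not available.
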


\begin{figure}[htp]
\centering
\begin{tikzpicture}

% quad

\draw
	(-1.2,0.6) 
	-- node[fill=white,inner sep=1] {\small $a$}
	(-0.8,-0.6)
	-- (0.8,-0.6)
	-- node[fill=white,inner sep=1] {\small $a$}
	(1.2,0.6)
	-- cycle;

\node at (-1.4,0.6) {\small $B$};
\node at (1.4,0.6) {\small $C$};
\node at (-1,-0.6) {\small $D$};
\node at (1,-0.6) {\small $E$};

\node at (-0.85,0.35) {\small $\beta$};
\node at (0.9,0.35) {\small $\gamma$};
\node at (-0.65,-0.4) {\small $\delta$};
\node at (0.65,-0.4) {\small $\epsilon$};

\end{tikzpicture}
\caption{Quadrilateral with a pair of equal edges.}
\label{geom3}
\end{figure}

Similar to pentagon, we note that the reformulation of Lemma \ref{geometry2} in terms of the outside is equivalent to the original formulation in terms of the inside.

To prove Lemma \ref{geometry2}, we use the conformally accurate way of drawing great circles on the sphere. Let the circle $\Gamma$ be the stereographic projection (from the north pole to the tangent space of the south pole) of the equator. Then antipodal points on the equator are projected to antipodal points on $\Gamma$. We denote the antipodal point of $P$ by $P^*$. Since the intersection of any great arc with the equator is a pair of antipodal points on the equator, the great circles of the sphere are in one-to-one correspondence with the circles (and straight lines) on the plane that intersect $\Gamma$ at pairs of antipodal points.

\begin{proof}
Suppose $a>\pi$. In Figure \ref{geom4}, we draw great circles $\bigcirc BPP^*$ and $\bigcirc CPP^*$ containing the two $a$-edges. The two circles intersect at a pair of antipodal points $P,P^*$ and divide the sphere into four $2$-gons. Since $a>\pi$, and the quadrilateral is simple, we know $P$ and $P^*$ lie in different $a$-edges. Up to symmetry, therefore, there are two ways the four vertices $B,C,D,E$ of the two $a$-edges can be located, described by the first and second of Figure \ref{geom4}. Moreover, by $a>\pi$, the antipodal point $B^*$ of $B$ lies in the $a$-edge $BP^*D$.

In the first of Figure \ref{geom4}, we consider two great arcs connecting $B$ and $C$. One great arc (the solid one) is completely contained in the indicated $2$-gon. The other great arc (the dashed one) intersects the $a$-edge $BP^*D$ at the antipodal point $B^*$ and therefore cannot be an edge of the quadrilateral. We conclude that the edge $BC$ is the solid one. By the same reason, the edge $DE$ is also the solid one, that is completely contained in the indicated $2$-gon. Then the picture implies
\[
\beta<\pi<\gamma,\quad
\delta>\pi>\epsilon.
\]
Similar argument gives the $BC$ edge and $DE$ edges of the quadrilateral in the second of Figure \ref{geom4}, and we get the same inequalities above.

This completes the proof for the case $a>\pi$. Next we may assume $a<\pi$. 

\begin{figure}[htp]
\centering
\begin{tikzpicture}[scale=0.9]

%%% left

\draw[dotted]
	(0,0) circle (1);
	
\coordinate (P) at (0,1);
\coordinate (Q) at (0,-1);
\coordinate (P1) at (160:1);
\coordinate (Q1) at (-20:1);
\coordinate (P2) at (180:1);
\coordinate (Q2) at (0:1);

\node at (0.05,1.25) {\small $P$};	
\node at (0,-1.3) {\small $P^*$};
\node at (45:1.2) {\small $\Gamma$};	

% left circle

\begin{scope}[xshift=-1cm]

\coordinate (B) at (100:1.414);
\coordinate (D) at (10:1.414);
\coordinate (BB) at (-30:1.414);

\node at (105:1.6) {\small $B$};
\node at (110:1.2) {\small $\beta$};
\node at (0:1.6) {\small $D$};
\node at (20:1.3) {\small $\delta$};
\node at (-20:1.65) {\small $B^*$};

\end{scope}

% right circle

\begin{scope}[xshift=1cm]

\coordinate (C) at (186.3:1.414);
\coordinate (E) at (-54.1:1.414);

\node at (-50:1.6) {\small $E$};
\node at (-45:1.25) {\small $\epsilon$};

\end{scope}

% quadrilateral

\begin{scope}
    \clip (-2.5,-1.8) rectangle (2.5,1.8);

\arcThroughThreePoints{B}{BB}{D};
\arcThroughThreePoints[dashed]{D}{P}{B};

\arcThroughThreePoints{E}{P}{C};
\arcThroughThreePoints[dashed]{C}{Q}{E};

\arcThroughThreePoints{B}{P1}{C};
\arcThroughThreePoints[dashed]{C}{Q1}{B};

\arcThroughThreePoints{E}{Q2}{D};
\arcThroughThreePoints[dashed]{D}{Q2}{E};

\end{scope}

\node[fill=white,inner sep=2] at (-2.4,0) {\small $a$};
\node[fill=white,inner sep=2] at (2.4,0) {\small $a$};

\node[fill=white,inner sep=0] at (-0.57,0.25) {\small $C$};
\node[fill=white,inner sep=1] at (-0.35,-0.35) {\small $\gamma$};

%%% right

\begin{scope}[xshift=5.5cm]

\draw[dotted]
	(0,0) circle (1);
	
\coordinate (P) at (0,1);
\coordinate (Q) at (0,-1);
\coordinate (P1) at (135:1);
\coordinate (Q1) at (-45:1);
\coordinate (P2) at (180:1);
\coordinate (Q2) at (0:1);
	
% left circle

\begin{scope}[xshift=-1cm]

\coordinate (B) at (100:1.414);
\coordinate (D) at (10:1.414);
\coordinate (BB) at (-30:1.414);

\node at (95:1.6) {\small $B$};
\node at (110:1.55) {\small $\beta$};
\node at (5:1.2) {\small $D$};
\node at (15:1.6) {\small $\delta$};

\end{scope}

% right circle

\begin{scope}[xshift=1cm]

\coordinate (C) at (170:1.414);
\coordinate (EE) at (-54.1:1.414);
\coordinate (E) at (-69.4:1.414);

\node at (160:1.3) {\small $\epsilon$};
\node at (180:1.57) {\small $E$};

\end{scope}

% quadrilateral

\begin{scope}
    \clip (-2.5,-1.8) rectangle (2.5,1.8);

\arcThroughThreePoints{B}{BB}{D};
\arcThroughThreePoints[dashed]{D}{P}{B};

\arcThroughThreePoints{E}{P}{C};
\arcThroughThreePoints[dashed]{C}{Q}{E};

\arcThroughThreePoints{E}{P1}{B};
\arcThroughThreePoints[dashed]{B}{P1}{E};

\arcThroughThreePoints{D}{EE}{C};
\arcThroughThreePoints[dashed]{C}{EE}{D};

\end{scope}

\node[fill=white,inner sep=2] at (-2.4,0) {\small $a$};
\node[fill=white,inner sep=2] at (2.4,0) {\small $a$};

\node[fill=white,inner sep=0] at (1.9,-1.33) {\small $C$};
\node[fill=white,inner sep=1] at (1.3,-1.3) {\small $\gamma$};

\end{scope}

%%% DE < pi

\begin{scope}[xshift=10cm]

\draw
	(0,0) circle (1.4);

\coordinate (D) at (0,1.4);
\coordinate (DD) at (0,-1.4);
\coordinate (E) at (-60:1.4);
\coordinate (EE) at (120:1.4);

\coordinate (B) at (50:0.8);

\coordinate (C) at (200:0.52);

\coordinate (P) at (220:1.4);
\coordinate (Q) at (40:1.4);

% quadrilateral

\begin{scope}
    \clip (-1.5,-1.8) rectangle (1.5,1.8);
    
\arcThroughThreePoints{B}{DD}{D};
\arcThroughThreePoints[dashed]{D}{DD}{B};

\arcThroughThreePoints{B}{P}{C};
\arcThroughThreePoints[dashed]{C}{P}{B};

\arcThroughThreePoints{C}{EE}{E};
\arcThroughThreePoints[dashed]{E}{EE}{C};

\end{scope}

\node at (0.1,1.6) {\small $D$};
\node at (0.1,-1.6) {\small $D^*$};
\node at (93:1.2) {\small $\delta$};

\node[fill=white,inner sep=1] at (40:0.95) {\small $B$};
\node at (65:0.7) {\small $\beta$};

\node at (215:0.3) {\small $C$};
\node[fill=white,inner sep=1] at (195:0.68) {\small $\gamma$};

\node at (-55:1.2) {\small $E$};
\node at (-75:1.25) {\small $\epsilon$};

\end{scope}
           
\end{tikzpicture}
\caption{Lemma \ref{geometry2}: $a>\pi$, and $DE<\pi$ in case $\delta,\gamma<\pi$.}
\label{geom4}
\end{figure}

We argue that, if $\delta,\epsilon<\pi$, then we may further assume $DE<\pi$. Suppose $DE>\pi$. We draw the great circle $\bigcirc DE$ containing $DE$ in the third of Figure \ref{geom4}. By $\delta,\epsilon<\pi$, the part of $DB$ near $D$ and the part of $EC$ near $E$ lie in the same hemisphere $H$ bounded by the circle $\bigcirc DE$. If $DB$ is not in $H$, then it passes through the antipodal point $D^*$ of $D$. By $DB>\pi$, we know $D^*$ lies on $DB$, contradicting the simple quadrilateral. Therefore $DB$ lies in $H$. By the same reason, $EC$ also lies in $H$.  By simple quadrilateral, this implies $BC$ also lies in $H$. 

Since all three edges connecting $D,B,C,E$ lie in $H$, we have the complement of the quadrilateral in $H$. The complement is still a quadrilateral, which differs from the original one by replacing $DE$ with the other great arc (of length $2\pi-DE<\pi$) connecting $D$ and $E$, and by replacing $\beta,\gamma,\delta,\epsilon$ with $2\pi-\beta,2\pi-\gamma,\pi-\delta,\pi-\epsilon$. Moreover, we still have $\pi-\delta,\pi-\epsilon<\pi$. It is easy to see that the lemma for the new quadrilateral is equivalent to the lemma for the original quadrilateral.

Now we prove the lemma by dividing into two cases.
\begin{enumerate}
\item At least three angles $<\pi$. By considering the complement quadrilateral, this also covers the case of at least three angles $>\pi$. 
\item Two angles $>\pi$ and two angles $<\pi$. 
\end{enumerate}

Suppose at least three angles $<\pi$. Up to symmetry, we may assume $\beta,\delta,\epsilon<\pi$. By the earlier argument, we may further assume that $a<\pi$ and $DE<\pi$. In Figure \ref{geom5}, we draw the great circles $\bigcirc DB$ and $\bigcirc DE$ containing $DB$ and $DE$. The two great circles divide the sphere into four $2$-gons. By $\delta<\pi$, we may assume that $\delta$ is an angle of the middle $2$-gon. By $DB=a<\pi$ and $DE<\pi$, $B$ and $E$ lie on the two edges of the middle $2$-gon. By $\beta,\epsilon<\pi$, we know $BC$ and $EC$ are inside the middle $2$-gon. We also prolong the $a$-edge $EC$ to intersect the boundary of middle $2$-gon at $T$. The two pictures in Figure \ref{geom5} refer to $\gamma>\pi$ and $\gamma<\pi$.

\begin{figure}[htp]
\centering
\begin{tikzpicture}

%%% left

\draw[dashed]
	(-0.5,0) circle (1.6)
	(0.5,0) circle (1.6);
	
\draw[dotted]
	(0,0) circle (1.52);
			
\coordinate (DD) at (0,1.52);
\coordinate (D) at (0,-1.52);
\coordinate (C) at (-0.108,-0.27);
\coordinate (P1) at (145:1.52);
\coordinate (Q1) at (-35:1.52);
\coordinate (P2) at (200:1.52);
\coordinate (Q2) at (20:1.52);

\node at (0.05,1.75) {\small $D^*$};	
\node at (0,-1.7) {\small $D$};	
\node at (0,-1.3) {\small $\delta$};

% left circle

\begin{scope}[xshift=-0.5cm]

\coordinate (E) at (10:1.6);

\node at (15:1.4) {\small $E$};
\node at (3:1.45) {\small $\epsilon$};

\end{scope}

% right circle

\begin{scope}[xshift=0.5cm]

\coordinate (B) at (160:1.6);

\node at (153:1.4) {\small $B$};
\node at (175:1.45) {\small $\beta$};

\end{scope}

% quadrilateral

\begin{scope}
    \clip (-2.5,-1.8) rectangle (2.5,1.8);

\arcThroughThreePoints{D}{DD}{E};
\arcThroughThreePoints{B}{DD}{D};

\arcThroughThreePoints{B}{P1}{C};
\arcThroughThreePoints[dashed]{C}{P1}{B};

\arcThroughThreePoints{C}{Q2}{E};
\arcThroughThreePoints[dashed]{E}{Q2}{C};

\end{scope}

\node at (-0.05,-0.05) {\small $C$};
\node at (-0.15,-0.5) {\small $\gamma$};
\node[fill=white,inner sep=0] at (-1.25,-0.5) {\small $T$};

\node[fill=white,inner sep=2] at (0.4,-0.1) {\small $a$};
\node[fill=white,inner sep=2] at (-0.7,-1) {\small $a$};

%%% right

\begin{scope}[xshift=5.5cm]

\draw[dashed]
	(-0.5,0) circle (1.6)
	(0.5,0) circle (1.6);
\draw[dotted]
	(0,0) circle (1.52);
			
\coordinate (DD) at (0,1.52);
\coordinate (D) at (0,-1.52);
\coordinate (C) at (0.05,1.07);
\coordinate (P1) at (145:1.52);
\coordinate (Q1) at (-35:1.52);
\coordinate (P2) at (200:1.52);
\coordinate (Q2) at (20:1.52);

\node at (0.05,1.75) {\small $D^*$};	
\node at (0,-1.7) {\small $D$};	
\node at (0,-1.3) {\small $\delta$};

% left circle

\begin{scope}[xshift=-0.5cm]

\coordinate (E) at (10:1.6);

\node at (14:1.8) {\small $E$};
\node at (10:1.45) {\small $\epsilon$};

\end{scope}

% right circle

\begin{scope}[xshift=0.5cm]

\coordinate (B) at (160:1.6);

\node at (160:1.8) {\small $B$};
\node at (160:1.45) {\small $\beta$};

\node at (135:1.8) {\small $T$};
\node at (-75:1.4) {\small $T^*$};

\end{scope}

% quadrilateral

\begin{scope}
    \clip (-2.5,-1.8) rectangle (2.5,1.8);

\arcThroughThreePoints{D}{DD}{E};
\arcThroughThreePoints{B}{DD}{D};

\arcThroughThreePoints{C}{P2}{B};
\arcThroughThreePoints[dashed]{B}{P2}{C};

\arcThroughThreePoints{E}{P1}{C};
\arcThroughThreePoints[dashed]{C}{P1}{E};

\end{scope}

\node at (0.1,1.25) {\small $C$};
\node at (0,0.85) {\small $\gamma$};

\node[fill=white,inner sep=2] at (0.6,0.8) {\small $a$};
\node[fill=white,inner sep=2] at (-1,-0.6) {\small $a$};

\end{scope}
           
\end{tikzpicture}
\caption{Lemma \ref{geometry2}: At least three angles $>\pi$.}
\label{geom5}
\end{figure}

In the first picture, we have $DT<DB=a=EC<ET$. Since all angles of $\triangle DET$ are $<\pi$, this implies that $\epsilon=\angle DET<\angle EDT=\delta$. We also have $\beta<\pi<\gamma$. 

In the second picture, the great circles $\bigcirc DB$ and $\bigcirc EC$ containing the two $a$-edges intersect at antipodal points $T$ and $T^*$, and the two antipodal points do not lie on the two $a$-edges. This implies $BT+a+DT^*=\pi=CT+a+ET^*$. Since all angles in $\triangle BCT^*$ and $\triangle DET$ are $<\pi$, we then have 
\[
\beta>\gamma
\iff a+ET^*>a+DT^*
\iff a+CT<a+BT
\iff \delta<\epsilon.
\]

Suppose two angles $>\pi$ and two angles $<\pi$. Then up to symmetry, we need to consider the following three subcases:
\begin{enumerate}
\item $\beta,\gamma>\pi$ and $\delta,\epsilon<\pi$.
\item $\beta,\delta>\pi$ and $\gamma,\epsilon<\pi$.
\item $\beta,\epsilon>\pi$ and $\gamma,\delta<\pi$.
\end{enumerate}

In the first subcase, by the earlier argument, we may additionally assume $a<\pi$ and $DE<\pi$. We draw great circles $\bigcirc BD$ and $\bigcirc CE$ containing the two $a$-edges. Since $a<\pi$ and the two $a$-edges do not intersect, we may assume that they are inside the two edges of the middle $2$-gon bounded by $\bigcirc BD$ and $\bigcirc CE$, as in the first of Figure \ref{geom6}. By $DE<\pi$, we know $DE$ lies inside the middle $2$-gon. By $\delta,\epsilon<\pi$, the two angles also lie in the middle $2$-gon. Then by $\beta,\gamma>\pi$, we know $BC$ lies outside the middle $2$-gon. If we replace the $BC$ edge by the other great arc $c$ connecting the two points, then we get a new quadrilateral with angles $\beta-\pi,\gamma-\pi,\delta,\epsilon$. Therefore the lemma for the new quadrilateral is equivalent to the lemma for the original quadrilateral. All four angles of the new quadrilateral are $<\pi$. By the first case, the lemma holds for the new quadrilateral. This completes the proof of the first subcase.

\begin{figure}[htp]
\centering
\begin{tikzpicture}

%%% left

\draw[dashed]
	(-0.5,0) circle (1.6)
	(0.5,0) circle (1.6);

\fill 
	(0.96,-0.65) circle (0.03);
	
\coordinate (AA) at (0,1.52);
\coordinate (A) at (0,-1.52);
\coordinate (E) at (0.96,-0.65);
\coordinate (B) at (-0.96,0.65);
\coordinate (P1) at (145:1.52);
\coordinate (Q1) at (-35:1.52);
\coordinate (P2) at (200:1.52);
\coordinate (Q2) at (20:1.52);

%\node at (0.05,1.75) {\small $D^*$};	
%\node at (0,-1.7) {\small $D$};	

% left circle

\begin{scope}[xshift=-0.5cm]

\coordinate (C) at (40:1.6);

\node at (36:1.4) {\small $C$};
\node at (-25:1.8) {\small $E$};
\node at (-22:1.45) {\small $\epsilon$};

\end{scope}

% right circle

\begin{scope}[xshift=0.5cm]

\coordinate (D) at (220:1.6);

\node at (160:1.45) {\small $B$};
\node at (220:1.8) {\small $D$};
\node at (217:1.4) {\small $\delta$};

\end{scope}

% quadrilateral

\begin{scope}
    \clip (-2.5,-1.8) rectangle (2.5,1.8);

\arcThroughThreePoints{E}{AA}{C};
\arcThroughThreePoints{B}{AA}{D};

\arcThroughThreePoints{D}{P2}{E};
\arcThroughThreePoints[dashed]{E}{P2}{D};

\arcThroughThreePoints{B}{Q2}{C};
\arcThroughThreePoints[dashed]{C}{Q2}{B};

\end{scope}

\node[fill=white,inner sep=0] at (-0.85,0.85) {\small $\beta$};
\node[fill=white,inner sep=0] at (0.6,1.1) {\small $\gamma$};

\node[fill=white,inner sep=2] at (1.05,0.3) {\small $a$};
\node[fill=white,inner sep=2] at (-1.05,-0.3) {\small $a$};

\node[fill=white,inner sep=2] at (-0.1,1.1) {\small $c$};

%%% right

\begin{scope}[xshift=5cm]

\draw
	(0,0) circle (1.6);
	
\coordinate (EE) at (0,1.6);
\coordinate (E) at (0,-1.6);
\coordinate (CC) at (200:1.6);
\coordinate (C) at (20:1.6);
\coordinate (B) at (0.7,0);
\coordinate (D) at (-0.31,0.98);
\coordinate (P) at (130:1.6);
\coordinate (Q) at (-50:1.6);		

% quadrilateral

\begin{scope}
    \clip (-1.8,-1.8) rectangle (1.8,1.8);

\arcThroughThreePoints{B}{CC}{C};
\arcThroughThreePoints[dashed]{C}{CC}{B};

\arcThroughThreePoints{D}{EE}{E};
\arcThroughThreePoints[dashed]{E}{EE}{D};

\arcThroughThreePoints{B}{P}{D};
\arcThroughThreePoints[dashed]{D}{P}{B};

\end{scope}

\node at (92:1.8) {\small $E^*$};	
\node at (-94:1.8) {\small $E$};	
\node at (16:1.8) {\small $C$};
\node at (205:1.85) {\small $C^*$};
\node at (0.8,0.3) {\small $B$};
\node at (-0.4,1.25) {\small $D$};

\node at (0.62,-0.25) {\small $\beta$};
\node at (10:1.45) {\small $\gamma$};
\node at (-0.25,0.7) {\small $\delta$};
\node at (-87:1.45) {\small $\epsilon$};

\node[fill=white,inner sep=2] at (-30:1.6) {\small $a$};
\node[fill=white,inner sep=1] at (0.3,0.5) {\small $a$};

\end{scope}
         
\end{tikzpicture}
\caption{Lemma \ref{geometry2}: Two angles $<\pi$ and two angles $>\pi$.}
\label{geom6}
\end{figure}

In the second subcase, we may again assume $a<\pi$. We draw the circle $\bigcirc CE$ containing the $a$-edge $CE$, as in the second of Figure \ref{geom6}. By $\gamma,\epsilon<\pi$, the part of $CB$ near $C$ and the part of $ED$ near $E$ lie in the same hemisphere $H_1$ bounded by $\bigcirc CE$. The prolongations of $CB$ and $ED$ intersect at a point inside the hemisphere $H_1$. Since $CB$ and $ED$ do not intersect, we must have either $B,C$ lie in the same hemisphere $H_2$ bounded by the circle $\bigcirc DEE^*$, or $D,E$ lie in the same hemisphere bounded by the circle $\bigcirc BCC^*$. Without loss of generality, we may assume the first scenario happens, as in the second of Figure \ref{geom6}. By $a<\pi$, we also know that both $BC$ and $CE$ lie in $H_2$, as in the second of Figure \ref{geom6}. Then $BD=a<\pi$ implies that $BD$ also lies inside $H_1$. This implies that $\delta$ is the angle between an edge $BD$ inside $H_2$ and the boundary $\bigcirc DEE^*$ of $H_2$. Such angle is always $<\pi$. This contradicts the assumption that $\delta>\pi$.

Finally, the third subcase is consistent with the conclusion of lemma.
\end{proof}

\section{Basic Technique}
\label{basic_facts}

We use the notations and techniques in \cite[Section 2]{wy1}, and add some discussion specific to the edge combination $a^3b^2$.

By \cite[Lemma 1]{gsy}, the pentagon is simple. By \cite[Lemma 9]{wy1}, the pentagon is given by Figure \ref{pentagon}. There is one $b^2$-angle $\alpha$, two $ab$-angles $\beta,\gamma$, and two $a^2$-angles $\delta,\epsilon$. By \cite[Lemma 4]{wy1}, we have the {\em angle sum equation for pentagon}
\[
\alpha+\beta+\gamma+\delta+\epsilon
=(3 + \tfrac{4}{f})\pi.
\]

A {\em special tile} has four degree $3$ vertices, and the fifth vertex has degree $3,4$ or $5$. We call them respectively $3^5$-tile, $3^44$-tile, or $3^45$-tile. Then we may combine \cite[Lemmas 1, 2, 3]{wy1} and the idea of \cite[Proposition 5]{gsy} to get the following. Pentagonal subdivision is introduced in \cite[Section 3.1]{wy1}.

\begin{lemma}\label{basic}
An edge-to-edge tiling of the sphere by pentagons has a special tile.
\begin{enumerate}
\item The number of tiles $f\ge 12$ and is even. Moreover, if $f=12$, then the tiling is the dodecahedron, i.e., the pentagonal subdivision of the tetrahedron.
\item If there is no $3^5$-tile, then $f\ge 24$. Moreover, if $f=24$, then every tile is a $3^44$-tile, and the tiling is the pentagonal subdivision of the octahedron (and cube).
\item If there is no $3^5$-tile and no $3^44$-tile, then $f\ge 60$. Moreover, if $f=60$, then every tile is a $3^45$-tile, and the tiling is the pentagonal subdivision of the icosahedron (and dodecahedron).
\end{enumerate}
\end{lemma}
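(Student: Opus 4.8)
The plan is to combine Euler's formula with a vertex-degree count, reserving the geometric identification of the extremal tilings for the cited structural results. Writing $f,e,v$ for the numbers of tiles, edges, and vertices, edge-to-edgeness together with the pentagonal shape gives $2e=5f$, so $f$ is automatically even, and Euler's formula then yields $v=2+\tfrac32 f$ and $e=\tfrac52 f$. Since every vertex has degree $\ge 3$, letting $v_k$ be the number of degree-$k$ vertices, the two relations $\sum_k v_k=v$ and $\sum_k k\,v_k=2e$ subtract to the single identity
\[
\sum_{k\ge 4}(k-3)\,v_k=\tfrac12 f-6,
\]
which is the backbone of the argument. The existence of a $3^5$-, $3^44$-, or $3^45$-tile is \cite[Lemma 1]{wy1}. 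For part (1) the left-hand side is nonnegative, so $f\ge 12$, with equality exactly when every vertex has degree $3$; such a tiling with $f=12$ is combinatorially the dodecahedron, and its identification with the pentagonal subdivision of the tetrahedron is where \cite[Proposition 5]{gsy} enters.

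For part (2), if there is no $3^5$-tile then every tile carries at least one vertex of degree $\ge 4$, so counting incidences between tiles and their degree-$\ge 4$ vertices gives $\sum_{k\ge 4}k\,v_k\ge f$. I would then feed in the elementary inequality $k\le 4(k-3)$, valid for all $k\ge 4$ with equality only at $k=4$, to obtain $f\le\sum_{k\ge4}k\,v_k\le 4\sum_{k\ge4}(k-3)v_k=4(\tfrac12 f-6)=2f-24$, hence $f\ge 24$. Equality throughout forces every high vertex to have degree exactly $4$ and every tile to contain exactly one such vertex, i.e.\ every tile is a $3^44$-tile; the identification with the subdivision of the octahedron again rests on \cite[Lemmas 2 and 3]{wy1}.

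For part (3) the difficulty is that degree-$4$ vertices still contribute to the excess while weakening the incidence bound, so the naive degree-$5$ analogue of the trick above fails. Assuming in addition no $3^44$-tile, any tile with a single high vertex must have that vertex of degree $\ge 5$, so I would split the tiles into those with at least two high vertices and a set $B$ consisting of those with a unique high vertex, necessarily of degree $\ge 5$. Counting high-vertex incidences gives $\sum_{k\ge4}k\,v_k\ge 2f-|B|$, while assigning each member of $B$ to its unique high vertex yields $|B|\le\sum_{k\ge5}k\,v_k$ (a degree-$k$ vertex lies in only $k$ tiles). Combining these with the excess identity doubled, $2\sum_{k\ge4}(k-3)v_k=f-12$, collapses after cancellation to $v_5\ge 12$; then the identity gives $\tfrac12 f-6\ge 2v_5\ge 24$, so $f\ge 60$. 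The equality case forces $v_4=0$, $v_k=0$ for $k\ge 6$, and $v_5=12$, so every tile is a $3^45$-tile.

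I expect the genuinely hard part to be not these inequalities, which are routine once the excess identity is isolated, but the final geometric step in each part: upgrading the purely combinatorial conclusion that every tile is a $3^5$-, $3^44$-, or $3^45$-tile to the statement that the tiling is the \emph{specific} pentagonal subdivision of a platonic solid. This requires the neighborhood analysis of \cite[Lemmas 2 and 3]{wy1} together with the rigidity coming from the tiles being congruent, in the spirit of \cite[Proposition 5]{gsy}, and I would present it as the concluding part of the proof.
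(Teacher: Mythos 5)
Your proposal is correct, and it is essentially the argument the paper relies on: the paper gives no proof of Lemma \ref{basic} itself, deferring entirely to \cite[Lemmas 1--3]{wy1} and the idea of \cite[Proposition 5]{gsy}, and those cited lemmas are built on exactly the excess identity $\sum_{k\ge4}(k-3)v_k=\tfrac12 f-6$ that you isolate, with the geometric identification of the extremal cases handled by the neighborhood analysis you correctly defer. I checked the one non-routine step, your part (3): the incidence bounds $\sum_{k\ge4}kv_k\ge 2f-|B|$ and $|B|\le\sum_{k\ge5}kv_k$ combined with the doubled identity do collapse to $\sum_{k\ge5}(6-k)v_k\ge 12$, hence $v_5\ge 12$ and $f\ge 60$, and the equality analysis forcing $v_4=0$, $v_5=12$, and $|B|=f$ is sound.
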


Pentagonal tilings of the sphere with the minimal number $f=12$ of tiles has been classified in \cite{ay1,gsy}. As explained in \cite[Section 2.1]{wy1}, we may assume $f$ is an even integer $\ge 16$ throughout this paper. In fact, only $f>12$ is used in this paper.

The collection of all possible combinations of angles at vertices is the {\em anglewise vertex combination}, or AVC. The AVC is introduced in \cite[Section 2.4]{wy1}. The AVC we derive initially may contain some combinations that actually do not appear, and may become more refined (i.e., smaller) after further argument.

To derive the AVC, we use various geometrical and combinatorial constraints. The first constraint is given by Lemmas \ref{geometry} and \ref{geometry1}, which implies three possibilities:
\begin{enumerate}
\item $\beta=\gamma$ and $\delta=\epsilon$.
\item $\beta>\gamma$ and $\delta<\epsilon$.
\item $\beta<\gamma$ and $\delta>\epsilon$.
\end{enumerate}
The first is the symmetric case. In the non-symmetric case (i.e., $\beta\ne\gamma$ and $\delta\ne\epsilon$), we have 
\[
\beta>\gamma
\iff
\delta<\epsilon.
\]
This implies how two degree $3$ vertices with only one $b$-edge can appear simultaneously. We note that, by the edge length consideration, such vertices are $\beta\gamma\delta,\beta\gamma\epsilon,\beta^2\delta,\beta^2\epsilon,\gamma^2\delta,\gamma^2\epsilon$.  

\begin{lemma}\label{geometry3}
If the spherical pentagon in Figure \ref{pentagon} is simple and not symmetric, then we cannot have three different degree $3$ vertices with only one $b$-edge in a tiling by the pentagon. Moreover, the following are the only possible combinations of two degree $3$ vertices with only one $b$-edge:
\begin{enumerate}
\item $\beta\gamma\delta,\gamma^2\epsilon$.
\item $\beta\gamma\epsilon,\beta^2\delta$.
\item $\beta^2\delta,\gamma^2\epsilon$.
\end{enumerate}
\end{lemma}

The following is \cite[Lemma 8]{wy1}, about an angle not appearing at degree $3$ vertices.

\begin{lemma}\label{hdeg}
Suppose an angle $\theta$ does not appear at degree $3$ vertices in a tiling of the sphere by pentagons with the same angle combination. Then one of $\theta^3\rho,\theta^4,\theta^5$ is a vertex, where $\rho\ne\theta$.
\end{lemma}

\begin{lemma}\label{geometry4}
Suppose two spherical pentagons in Figure \ref{pentagon} share the $a$-edge opposite to $\alpha$. If the two shared vertices have degree $3$, then the arrangement of one pentagon implies the arrangement of the other pentagon.
\end{lemma} 

\begin{figure}[htp]
\centering
\begin{tikzpicture}[>=latex,scale=1]

%% pentagon

\foreach \a in {-1,1}
{
\begin{scope}[xshift=0.566*\a cm,scale=\a]

\draw
	(-72:0.7) -- (0:0.7) -- (72:0.7) -- (144:0.7) -- (-144:0.7) -- (-72:0.7);

\draw[line width=1.5]
	(-72:0.7) -- (0:0.7) -- (72:0.7);

\end{scope}
}

\node at (1,0) {\small $\alpha$};
\node at (0.7,0.4) {\small $\beta$};
\node at (0.15,0.25) {\small $\delta$};
\node at (0.15,-0.3) {\small $\epsilon$};
\node at (0.7,-0.45) {\small $\gamma$};

\node at (-1,0) {\small $\alpha$};
\node at (-0.7,-0.4) {\small $\beta$};
\node at (-0.15,-0.25) {\small $\delta$};
\node at (-0.15,0.3) {\small $\epsilon$};
\node at (-0.7,0.45) {\small $\gamma$};

\end{tikzpicture}
\caption{One pentagon determines the other.}
\label{classify0}
\end{figure}

\begin{proof}
The arrangement of the angles in a pentagon is determined by the locations of its $\delta,\epsilon$.  If the arrangements of the two pentagons are not related as in Figure \ref{classify0}, then the two vertices shared by the two pentagons are $\delta^2\cdots$ and $\epsilon^2\cdots$. These are different combinations of three angles from $\delta,\epsilon$. By comparing the angle sums of the two vertices, we get $\delta=\epsilon$. By Lemma \ref{geometry}, the pentagon is symmetric, and we still get Figure \ref{classify0}.
\end{proof}

\begin{lemma}\label{klem4}
In a spherical tiling by congruent pentagons in Figure \ref{pentagon}, if a vertex has no $\beta,\gamma$, then the vertex is $\alpha^k$ or $\delta^k\epsilon^l$.
\end{lemma}

\begin{proof}
If a vertex has both $a$ and $b$, then it has $ab$-angle $\beta$ or $\gamma$. If a vertex has only $a$, then it has only $a^2$-angles $\delta,\epsilon$. Therefore the vertex is $\delta^k\epsilon^l$. If a vertex has only $b$, then it has only $b^2$-angle $\alpha$. Therefore the vertex is $\alpha^k$.
\end{proof}

\begin{lemma}\label{klem5}
Let $\theta,\rho$ be $ab$-angles in a spherical tiling by congruent pentagons in Figure \ref{pentagon}.
\begin{enumerate}
\item If a vertex is $\theta\thin\rho\cdots$, and the remainder has no $ab$-angle, then the vertex has only $\alpha$. In other words, $\theta\thin\rho\cdots=\alpha^k\theta\rho$.
\item If a vertex is $\theta\thick\rho\cdots$, and the remainder has no $ab$-angle, then the vertex has only $\delta,\epsilon$. In other words, $\theta\thick\rho\cdots=\theta\rho\delta^k\epsilon^l$.
\end{enumerate}
\end{lemma}

\begin{proof}
We have $\theta\thin\rho\cdots=\thick\theta\thin\rho\thick\cdots$. If the remainder has no $ab$-angle, then there is no $a$ in the remainder. Then the remainder has only $b^2$-angle $\alpha$.

We have $\theta\thick\rho\cdots=\thin\theta\thick\rho\thin\cdots$. If the remainder has no $ab$-angle, then there is no $b$ in the remainder. Then the remainder has only $a^2$-angles $\delta,\epsilon$.
\end{proof}

\begin{lemma}[Parity Lemma]\label{beven}
In a spherical tiling by congruent pentagons in Figure \ref{pentagon}, the total number of $ab$-angles $\beta$ and $\gamma$ at any vertex is even. 
\end{lemma}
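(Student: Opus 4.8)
The plan is to track how the edge length (either $a$ or $b$) changes as one walks around a vertex. Fix any vertex $v$ of the tiling. Because the tiling is edge-to-edge by congruent copies of the pentagon in Figure \ref{geom1}, every edge emanating from $v$ inherits a well-defined label, $a$ or $b$, from the congruence. List the edges at $v$ in cyclic order as $e_1,e_2,\dots,e_k,e_{k+1}=e_1$. Between each consecutive pair $e_i,e_{i+1}$ sits exactly one corner of a tile, and that corner is one of the five angles $\alpha,\beta,\gamma,\delta,\epsilon$.

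\textbf{Key observation.} The type of corner angle records whether the two edges bounding it carry the same label or different labels. Reading off Figure \ref{geom1}: the angle $\alpha$ lies between two $b$-edges, the angles $\delta,\epsilon$ each lie between two $a$-edges, while the angles $\beta,\gamma$ each lie between one $a$-edge and one $b$-edge. Thus a corner is an $ab$-angle (i.e.\ $\beta$ or $\gamma$) \emph{precisely} when the labels of its two bounding edges differ; for $\alpha,\delta,\epsilon$ the labels agree. Consequently the number of $ab$-angles at $v$ equals the number of indices $i$ at which the label switches between $e_i$ and $e_{i+1}$.

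\textbf{Parity closure.} First I would note that the labels $e_1,e_2,\dots,e_k$ form a cyclic binary sequence, and after one full loop we return to the starting label $e_{k+1}=e_1$. In any cyclic binary sequence the number of positions where the label changes is necessarily even, since each switch toggles the label and an even number of toggles is required to restore the original value. Combining this with the key observation, the number of $\beta$- and $\gamma$-corners at $v$ is even, which is exactly the claim.

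\textbf{Main point to be careful about.} There is no genuine obstacle here; the only thing to verify is that the setup is legitimate, namely that the edge labeling is consistent (guaranteed by edge-to-edgeness and congruence of tiles) and that going once around $v$ genuinely closes up into a cycle, so that the first and last edges coincide. Once these are in place the parity of a closed binary loop does all the work, and the proof is immediate.
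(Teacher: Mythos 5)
Your proof is correct and is essentially the paper's own argument: the paper observes that the number of switches between $a$ and $b$ going once around a vertex must be even, and that $\beta,\gamma$ are exactly the corners where such a switch occurs. Your write-up merely spells out the cyclic-binary-sequence parity argument that the paper leaves implicit.
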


The lemma is used so often that we choose to give it a name.

\begin{proof}
When we go around a vertex, the number of times the edges switch between $a$ and $b$ must be even. The $ab$-angles $\beta$ or $\gamma$ appear exactly when we have such a switch. 
\end{proof}

In the non-symmetric case, the two $ab$-angles $\beta,\gamma$ can be distinguished by their values (same for the two $a^2$-angles $\delta,\epsilon$). Then it makes sense to say that the total number of times the $ab$-angle $\beta$ appears in the tiling is $f$, and the similar statements for all the other angles. Such kind of counting argument and the parity lemma have the following consequence. Again the lemma is used so often, that we give it a name. 

\begin{lemma}[Balance Lemma]\label{balance}
In a spherical tiling by congruent non-symmetric pentagons in Figure \ref{pentagon}, if either $\beta^2\cdots$ or $\gamma^2\cdots$ is not a vertex, then any vertex either has no $\beta,\gamma$, or is of the form $\beta\gamma\cdots$ with no more $\beta,\gamma$ in the remainder. 
\end{lemma}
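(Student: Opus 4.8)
The plan is to run a global counting argument over all vertices, using the Parity Lemma to control the local situation at each vertex and a balanced total count of $\beta$ and $\gamma$ to control the global picture. First I would record the two ingredients. Each tile in Figure \ref{geom1} carries exactly one $\beta$ and exactly one $\gamma$, so in the whole tiling the total number of occurrences of $\beta$ equals the total number of occurrences of $\gamma$, both equal to the number $f$ of tiles. Next, the Parity Lemma \ref{beven} says that at every vertex the number of $ab$-angles appearing there, i.e. the number of $\beta$'s plus the number of $\gamma$'s, is even.

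Now fix a vertex $v$ and let $p_v$ and $q_v$ denote the number of $\beta$'s and the number of $\gamma$'s at $v$; thus $p_v+q_v$ is even. Without loss of generality assume $\beta^2\cdots$ is not a vertex (if instead $\gamma^2\cdots$ is not a vertex, exchange the roles of $\beta$ and $\gamma$ throughout). The hypothesis then says no vertex carries two $\beta$'s, i.e. $p_v\le 1$ for every $v$. The key step is to examine the signed quantity $p_v-q_v$ under these two constraints. If $p_v=0$ then $q_v$ is even; either $q_v=0$, meaning $v$ has no $ab$-angle, or $q_v\ge 2$, in which case $p_v-q_v\le -2$. If $p_v=1$ then $q_v$ is odd; either $q_v=1$, so that $v$ is of the form $\beta\gamma\cdots$ with no further $ab$-angle, or $q_v\ge 3$, in which case again $p_v-q_v\le -2$. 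In every case $p_v-q_v\le 0$, with equality precisely when $v$ is one of the two allowed types.

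Finally I would sum over all vertices. By the balanced count, $\sum_v(p_v-q_v)=f-f=0$, while each summand is $\le 0$ by the previous step. Hence every summand vanishes, which forces each vertex to be one of the two allowed types: either it contains no $ab$-angle, or it is of the form $\beta\gamma\cdots$ with no more $ab$-angle in the remainder. This is exactly the assertion. The argument is short, and the step I would be most careful about is identifying $p_v-q_v$ as the correct signed weight: the hypothesis $p_v\le 1$ together with the parity constraint must make this quantity nonpositive vertex by vertex, while the equal global counts of $\beta$ and $\gamma$ make its total vanish. I would also verify that the two equality cases are exactly the two admissible vertex shapes, and that the $\beta\leftrightarrow\gamma$ symmetry genuinely reduces the alternative hypothesis to the one treated.
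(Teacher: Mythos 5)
Your proof is correct and follows essentially the same route as the paper: bound the number of $\beta$'s at each vertex by the number of $\gamma$'s using the hypothesis together with the Parity Lemma, then use the equal global counts ($f$ each) to force equality at every vertex. Your version merely makes the signed count $p_v-q_v$ and the equality-case analysis slightly more explicit.
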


\begin{proof}
If $\beta^2\cdots$ is not a vertex, then any vertex is  of the form $\beta^k\gamma^l\cdots$, $k=0,1$, with no $\beta,\gamma$ in the remainder. If $k=0$, then $l\ge k$. If $k=1$, then by the parity lemma, we know $1+l$ is even, which implies $l\ge 1=k$. Therefore at each vertex, the number of times $\beta$ appears is always no bigger than the number of times $\gamma$ appears. On the other hand, the total number of times the two angles appear in the whole tiling is the same $f$. This implies $l=k$ at every vertex. 
\end{proof}

\begin{lemma}\label{klem6}
In a spherical tiling by congruent non-symmetric pentagons in Figure \ref{pentagon}, if $\theta\beta\gamma$ ($\theta=\alpha,\delta,\epsilon$) is a vertex, then $\theta^2\cdots$ is $\alpha^k$ or $\delta^k\epsilon^l$.
\end{lemma}

\begin{proof}
If $\theta^2\beta^2\cdots$ is a vertex, then the angle sum implies $\theta+\beta\le\pi$. By (the angle sum of) $\theta\beta\gamma$, this implies $\gamma\ge \pi$. Therefore $\gamma^2\cdots$ is not a vertex. This contradicts the vertex  $\theta^2\beta^2\cdots$ and the balance lemma. By the same argument, we also know $\theta^2\gamma^2\cdots$ is not a vertex. By $\theta\beta\gamma$, we know $\theta^2\beta\gamma\cdots$ is not a vertex. Then by the parity lemma, we conclude that $\theta^2\cdots$ has no $\beta,\gamma$. Then by Lemma \ref{klem4}, we know $\theta^2\cdots$ is $\alpha^k$ or $\delta^k\epsilon^l$.
\end{proof}

We introduced the very useful tool {\em adjacent angle deduction} (abbreviated as AAD) in \cite[Section 2.5]{wy1}. The following is \cite[Lemma 10]{wy1}.

\begin{lemma}\label{aadlemma}
Suppose $\lambda$ and $\mu$ are the two angles adjacent to $\theta$ in a pentagon.
\begin{itemize}
\item If $\lambda\thin\lambda\cdots$ is not a vertex, then $\theta^n$ has the unique AAD $\thin^{\lambda}\theta^{\mu}\thin^{\lambda}\theta^{\mu}\thin^{\lambda}\theta^{\mu}\thin\cdots$.
\item If $n$ is odd, then we have the AAD $\thin^{\lambda}\theta^{\mu}\thin^{\lambda}\theta^{\mu}\thin$ at $\theta^n$.
\end{itemize}
\end{lemma}

For the edge combination $a^3b^2$, we may apply the lemma to $\thick^{\beta}\alpha^{\gamma}\thick$, $\thin^{\beta}\delta^{\epsilon}\thin$, $\thin^{\gamma}\epsilon^{\delta}\thin$. The next result is specific to the edge combination $a^3b^2$, and the proof uses the reciprocity property of AAD. 

\begin{lemma}\label{klem2}
Consider a spherical tiling by congruent non-symmetric pentagons in Figure \ref{pentagon}.
\begin{enumerate}
\item If $\delta\thin\delta\cdots$ is not a vertex, then there is no consecutive $\beta\epsilon\cdots\epsilon\beta$ ($\cdots$ consists of only $\epsilon$) at a vertex. In case the sequence is $\beta\beta$, this means that $\beta\thin\beta\cdots$ is not a vertex.
\item If $\gamma\thin\delta\cdots,\delta\thin\delta\cdots$ are not vertices, then there is no consecutive $\beta\epsilon,\epsilon\epsilon\epsilon$ at a vertex.
\item If $\gamma\thin\delta\cdots,\delta\thin\delta\cdots$ are not vertices, and $\delta\epsilon\cdots=\delta\epsilon^2$, then there is no consecutive $\gamma\epsilon\gamma,\gamma\epsilon\epsilon$ at a vertex. Moreover, $\thick\beta\thin\cdots$ must be $\thick\beta\thin\delta\thin\cdots$.
\end{enumerate}
\end{lemma}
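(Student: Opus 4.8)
My plan is to run all three parts through the adjacent angle deduction along $a$-edges. Recall from Figure \ref{geom1} that the boundary of the pentagon reads $\alpha\,b\,\beta\,a\,\delta\,a\,\epsilon\,a\,\gamma\,b\,\alpha$, so the angle opposite each $a$-edge is fixed: across the $a$-edge at $\beta$ sits $\delta$, across the $a$-edge at $\gamma$ sits $\epsilon$, while $\delta$ has $a$-opposites $\beta,\epsilon$ and $\epsilon$ has $a$-opposites $\delta,\gamma$. The principle I would apply repeatedly is: if two angles meet as $\theta_1\thin\theta_2$ (consecutive, sharing an $a$-edge $e$) at a vertex, then the far endpoint of $e$ is a vertex whose two angles along $e$ are the $a$-opposites of $\theta_1$ and $\theta_2$, again meeting across $e$. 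Each forbidden string is then attacked by walking along its $a$-edges and reading the forced angle pairs at the far endpoints, aiming to land on an excluded vertex.

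For (1), since $\beta$'s $a$-opposite is $\delta$, the base case $\beta\thin\beta$ already yields a far vertex $\delta\thin\delta\cdots$. For a longer $\beta\thin\epsilon\thin\cdots\thin\epsilon\thin\beta$ I would propagate left to right: avoiding $\delta\thin\delta\cdots$ at the first $\beta\thin\epsilon$ edge forces that $\epsilon$ to face $\gamma$ across its edge with $\beta$ and $\delta$ across its other edge, and the same avoidance at each successive $\epsilon\thin\epsilon$ edge pushes a $\delta$ downstream, so the final $\epsilon\thin\beta$ edge again produces $\delta\thin\delta\cdots$. For (2), also excluding $\gamma\thin\delta\cdots$, a lone $\beta\thin\epsilon$ dies immediately (far vertex $\delta\thin\delta\cdots$ or $\delta\thin\gamma\cdots$). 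I would then isolate the sub-fact that \emph{any} consecutive $\epsilon\thin\epsilon$ has far vertex exactly $\gamma\thin\gamma\cdots$, since its two far angles lie in $\{\gamma,\delta\}$ and the other three pairings are excluded; applied to both edges of the middle $\epsilon$ in $\epsilon\thin\epsilon\thin\epsilon$, this forces both its $a$-edges to face $\gamma$, impossible as its $a$-opposites are $\gamma,\delta$.

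That sub-fact drives (3). In $\gamma\thin\epsilon\thin\gamma$ the middle $\epsilon$ faces $\delta$ on one side, and there the far vertex carries $\delta$ (from $\epsilon$) together with $\epsilon$ (the $a$-opposite of that $\gamma$), hence is a $\delta\epsilon\cdots$ vertex, forced to be $\delta\epsilon^2$. In $\delta\epsilon^2$ the two $\epsilon$'s are consecutive, and one of them is exactly the $\epsilon$ contributed by the $\gamma$-tile; that $\epsilon$ faces $\gamma$ along its back edge (toward the original vertex), so its other edge must face $\delta$ — but that edge is the $\epsilon\thin\epsilon$ edge of $\delta\epsilon^2$, which the sub-fact forces to face $\gamma$, a contradiction. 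The pattern $\gamma\thin\epsilon\thin\epsilon$ runs the same way: the $\epsilon\thin\epsilon$ sub-fact fixes the first $\epsilon$ so its $\delta$-edge meets the $\gamma$, producing a $\delta\epsilon^2$ far vertex and the identical clash. Finally, in $\thick\beta\thin\cdots$ the angle on $\beta$'s $a$-edge is not $\beta$ (by (1)) nor $\epsilon$ (by (2)); if it were $\gamma$, then $\beta\thin\gamma$ prolongs to $\delta\thin\epsilon\cdots=\delta\epsilon^2$ and the same $\delta\epsilon^2$/$\epsilon\epsilon$ incompatibility applies, leaving only $\delta$.

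I expect the difficulty to be organizational rather than conceptual: tracking the near/far orientation of each $\epsilon$'s two $a$-edges through the propagation in (1), and, in (3), confirming in every pattern that the $\epsilon$ landing at the forced $\delta\epsilon^2$ vertex really is one of that vertex's two consecutive $\epsilon$'s, with its back edge facing $\gamma$, so that the $\epsilon\thin\epsilon$ sub-fact can be invoked. Once this bookkeeping is pinned down, each case collapses onto one of the excluded vertices $\delta\thin\delta\cdots$, $\gamma\thin\delta\cdots$, or the $\delta\epsilon^2$ incompatibility.
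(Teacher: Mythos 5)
Your proposal is correct and follows essentially the same route as the paper: every forbidden pattern is killed by adjacent angle deduction along its $a$-edges, reading off the forced angle pair at the far endpoint of each edge and landing on an excluded vertex ($\delta\thin\delta\cdots$, $\gamma\thin\delta\cdots$, or a $\delta\epsilon^2$ whose $\epsilon\thin\epsilon$ edge then fails). The only difference is packaging — you isolate the sub-fact that an $\epsilon\thin\epsilon$ edge must have far vertex $\gamma\thin\gamma\cdots$, where the paper instead pins down the deduction $\thick\gamma\thin\epsilon\thin\to\thick\alpha\epsilon\thin\gamma\delta\thin$ — and these are interchangeable.
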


\begin{proof}
By $\thick^{\alpha}\beta^{\delta}\thin$ and $\thin^{\gamma}\epsilon^{\delta}\thin$, the AAD of consecutive $\beta\epsilon\cdots\epsilon\beta$ gives $\delta\thin\delta\cdots$. This proves the first part.

The AAD of $\thick\beta\thin\epsilon\thin$ is $\thick^{\alpha}\beta^{\delta}\thin^{\gamma}\epsilon^{\delta}\thin$ or $\thick^{\alpha}\beta^{\delta}\thin^{\delta}\epsilon^{\gamma}\thin$. We also have $\thin^{\gamma}\epsilon^{\delta}\thin\epsilon\thin=\thin^{\gamma}\epsilon^{\delta}\thin^{\gamma}\epsilon^{\delta}\thin$ or $\thin^{\gamma}\epsilon^{\delta}\thin^{\delta}\epsilon^{\gamma}\thin$. Therefore the assumption of the second part implies no $\beta\epsilon$ and no $\thin^{\gamma}\epsilon^{\delta}\thin\epsilon\thin$. Then no $\thin^{\gamma}\epsilon^{\delta}\thin\epsilon\thin$ further implies no $\epsilon\epsilon\epsilon=\thin\epsilon\thin^{\gamma}\epsilon^{\delta}\thin\epsilon\thin$.

For the third part, we still do not have  $\thin^{\gamma}\epsilon^{\delta}\thin\epsilon\thin$.

The AAD of $\thick\gamma\thin\epsilon\thin$ is $\thick^{\alpha}\gamma^{\epsilon}\thin^{\delta}\epsilon^{\gamma}\thin$ or $\thick^{\alpha}\gamma^{\epsilon}\thin^{\gamma}\epsilon^{\delta}\thin$. We know $\thick^{\alpha}\gamma^{\epsilon}\thin^{\delta}\epsilon^{\gamma}\thin$ gives $\thin^{\beta}\delta^{\epsilon}\thin^{\gamma}\epsilon^{\delta}\thin\cdots$. By $\delta\epsilon\cdots=\delta\epsilon^2$, we have $\thin^{\beta}\delta^{\epsilon}\thin^{\gamma}\epsilon^{\delta}\thin\cdots=\thin^{\beta}\delta^{\epsilon}\thin^{\gamma}\epsilon^{\delta}\thin\epsilon\thin$, contradicting no $\thin^{\gamma}\epsilon^{\delta}\thin\epsilon\thin$. This proves the unique AAD $\thick^{\alpha}\gamma^{\epsilon}\thin^{\gamma}\epsilon^{\delta}\thin$ of $\thick\gamma\thin\epsilon\thin$.

The unique AAD $\thick^{\alpha}\gamma^{\epsilon}\thin^{\gamma}\epsilon^{\delta}\thin$ implies $\thick\gamma\thin\epsilon\thin\gamma\thick=\thick^{\alpha}\gamma^{\epsilon}\thin^{\gamma}\epsilon^{\delta}\thin^{\epsilon}\gamma^{\alpha}\thick$, and $\thick\gamma\thin\epsilon\thin\epsilon\thin=\thick^{\alpha}\gamma^{\epsilon}\thin^{\gamma}\epsilon^{\delta}\thin\epsilon\thin$. The first AAD contains $\thin^{\gamma}\epsilon^{\delta}\thin^{\epsilon}\gamma^{\alpha}\thick$, contradicting the unique AAD $\thick^{\alpha}\gamma^{\epsilon}\thin^{\gamma}\epsilon^{\delta}\thin$. The second AAD contradicts no $\thin^{\gamma}\epsilon^{\delta}\thin\epsilon\thin$. This proves no consecutive $\gamma\epsilon\gamma,\gamma\epsilon\epsilon$.

Consider a vertex $\thick\beta\thin\theta\cdots$. By the first part, we have $\theta\ne\beta$. By the second part, we have $\theta\ne\epsilon$. If $\theta=\gamma$, then the AAD $\thick^{\alpha}\beta^{\delta}\thin^{\epsilon}\gamma^{\alpha}\thick$ gives $\thin^{\epsilon}\delta^{\beta}\thin^{\gamma}\epsilon^{\delta}\thin\cdots=\delta\epsilon^2=\thin^{\epsilon}\delta^{\beta}\thin^{\gamma}\epsilon^{\delta}\thin\epsilon\thin$, contradicting no $\thin^{\gamma}\epsilon^{\delta}\thin\epsilon\thin$. This proves $\theta=\delta$.
\end{proof}

\begin{lemma}\label{klem3}
In a spherical tiling by congruent non-symmetric pentagons in Figure \ref{pentagon}, if there is a vertex with unequal number of $\beta$ and $\gamma$, then one of $\beta^2\delta\cdots,\delta^2\cdots$ is a vertex, and one of $\gamma^2\epsilon\cdots,\epsilon^2\cdots$ is a vertex.
\end{lemma}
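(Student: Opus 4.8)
The plan is to exploit the reflection symmetry of the pentagon, which interchanges $\beta\leftrightarrow\gamma$ and $\delta\leftrightarrow\epsilon$ and preserves the whole tiling framework. Under this symmetry the two assertions are mirror images, so it suffices to produce a $\beta^2\delta\cdots$ or $\delta^2\cdots$ vertex; the mirrored proof then produces a $\gamma^2\epsilon\cdots$ or $\epsilon^2\cdots$ vertex. First I would turn the hypothesis into a local statement. Each tile has exactly one $\beta$ and one $\gamma$, so the totals over all vertices satisfy $\sum_V(\#\beta-\#\gamma)=f-f=0$. Hence a vertex with unequal $\beta,\gamma$ forces both a vertex $V$ with more $\beta$ than $\gamma$ and a vertex with more $\gamma$ than $\beta$. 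Fix $V$, and let $p,q$ be the numbers of $\beta,\gamma$ at $V$. By the Parity Lemma \ref{beven}, $p+q$ is even, so $p>q\ge 0$ upgrades to $p\ge q+2\ge 2$.

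Next I would argue by contradiction, assuming that neither $\beta^2\delta\cdots$ nor $\delta^2\cdots$ is a vertex. Since $\delta^2\cdots$ is not a vertex, $\delta\thin\delta\cdots$ is not a vertex, so Lemma \ref{klem2}(1) shows $\beta\thin\beta\cdots$ is not a vertex. Since $p\ge 2$ but $\beta^2\delta\cdots$ is not a vertex, $V$ contains no $\delta$, so every angle at $V$ lies in $\{\alpha,\beta,\gamma,\epsilon\}$. Reading the edges around $V$ as a cyclic word in $a$ and $b$, the $ab$-angles $\beta,\gamma$ occur precisely at the junctions between maximal $a$-runs (whose interior angles are all $\epsilon$, as $\delta$ is absent) and maximal $b$-runs (interior angles $\alpha$). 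These runs alternate, so the $ab$-angles split into pairs, the two ends of each $a$-run, and there are as many $a$-runs as $b$-runs, say $s$ of each.

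The crux is to rule out an $a$-run bounded by $\beta$ at both ends. An empty $a$-run would be $\beta\thin\beta$, already excluded. For a nonempty run $\beta\thin\epsilon_1\thin\cdots\thin\epsilon_n\thin\beta$, I would record for each $\epsilon_i$ which of its two $a$-edges is the $\epsilon\delta$-edge and which is the $\epsilon\gamma$-edge. At the far vertex across each interior edge, avoidance of $\delta\thin\delta$ forbids an $\epsilon\delta$-edge meeting another $\epsilon\delta$-edge, pinning down the allowed transitions; and a bounding $\beta$, whose $a$-edge is a $\beta\delta$-edge contributing a $\delta$ at the far vertex, forces the neighboring $\epsilon$ into a definite orientation. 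A short orientation count then shows the two ends cannot both be $\beta$ (the single-$\epsilon$ run being an immediate clash). Granting this, each of the $s$ $a$-runs contains at most one $\beta$ among its two ends, whence $p\le s\le q$, contradicting $p>q$. This establishes the first conclusion, and applying the same argument to the vertex with more $\gamma$ than $\beta$, via Lemma \ref{klem2}(1) in its $\gamma,\epsilon$ form, establishes the second.

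The delicate point, and the only place where real care is needed, is the orientation bookkeeping: one must state precisely the two placements of the $\epsilon\delta$- and $\epsilon\gamma$-edges at each $\epsilon$, check that $\delta\thin\delta$-avoidance forbids exactly the one transition that would otherwise let both ends of the run be $\beta$, and dispose of the degenerate single-$\epsilon$ run separately. Everything else is a counting argument resting on the Parity Lemma and Lemma \ref{klem2}(1).
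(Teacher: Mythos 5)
Your proposal is correct and follows essentially the same route as the paper: decompose the angles at a vertex into blocks separated by $b$-edges, use the absence of $\beta^2\delta\cdots$ to force any $\beta\cdots\beta$ block to be $\beta\epsilon\cdots\epsilon\beta$, rule that out, and conclude $\#\beta\le\#\gamma$ at every vertex, which the global balance of $\beta$ and $\gamma$ upgrades to equality. The only inefficiency is that the ``orientation bookkeeping'' you flag as the delicate point is precisely the full statement of Lemma \ref{klem2}(1) (which covers consecutive $\beta\epsilon\cdots\epsilon\beta$ with any number of $\epsilon$'s, not just $\beta\thin\beta$), so you may cite it outright instead of re-deriving it.
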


\begin{proof}
The lemma is equivalent to the following statement: If $\beta^2\delta\cdots$ and $\delta^2\cdots$ are not vertices, then the numbers of $\beta$ and $\gamma$ at any vertex are equal. We prove this statement.

At any vertex, the $b$-edges divide all the angles  into consecutive sequences $\beta\theta_1\cdots\theta_k\beta$, $\beta\theta_1\cdots\theta_k\gamma$, $\gamma\theta_1\cdots\theta_k\gamma$, $\alpha$, where $\theta_i$ are $a^2$-angles $\delta$ or $\epsilon$. See Figure \ref{vertex_bedge}. By no $\beta^2\delta\cdots$, the sequence $\beta\theta_1\cdots\theta_k\beta$ must be $\beta\epsilon\cdots\epsilon\beta$. By no $\delta^2\cdots$, the sequence $\beta\epsilon\cdots\epsilon\beta$ contradicts Lemma \ref{klem2}. Therefore there is no consecutive $\beta\theta_1\cdots\theta_k\beta$ at any vertex. This implies that the number of $\beta$ is no bigger than the number of $\gamma$ at any vertex. Since the total numbers of $\beta$ and $\gamma$ in the tiling are equal, this further implies that the numbers of $\beta$ and $\gamma$ at any vertex are equal. 
\end{proof}

\begin{figure}[htp]
\centering
\begin{tikzpicture}[>=latex,scale=1]

% divide by b-edge

\foreach \a in {20,50,90,120,180,210}
\draw
	(0,0) -- (\a:1.4);

\foreach \b in {0,70,140,160,230}
\draw[line width=1.5]
	(0,0) -- (\b:1.4);

\fill (0,0) circle (0.1);

\foreach \x in {10,60,220}
\node at (\x:1) {\footnotesize $\beta$};

\foreach \y in {80,130,170}
\node at (\y:1) {\footnotesize $\gamma$};

\node at (150:1) {\footnotesize $\alpha$};

\foreach \z in {35,105,195}
\node at (\z:1) {\footnotesize $\theta_*^k$};

\draw[dotted]
	(240:1) -- (-10:1);

\end{tikzpicture}
\caption{Consecutive angles between $b$-edges at a vertex.}
\label{vertex_bedge}
\end{figure}

For the calculation in spherical trigonometry, we often use the cosine law for triangles. The following extends the cosine law to quadrilaterals. The lemma is proved in \cite[Lemma 3]{ay1}, and is also stated as \cite[Lemma 11]{wy1}.

\begin{lemma}\label{fourth}
For the spherical quadrilateral in Figure \ref{quad2}, we have
\begin{align*}
\cos x
&=\cos a\cos b\cos c
+\sin a\sin b\cos c\cos\theta
+\cos a\sin b\sin c\cos\rho \nonumber \\
&\quad 
+\sin a\sin c\sin\theta\sin\rho
-\sin a\cos b\sin c\cos \theta\cos\rho.
\end{align*}
\end{lemma}

\begin{figure}[htp]
\centering
\begin{tikzpicture}[scale=0.8]

\draw
	(-1,1.8) -- node[fill=white,inner sep=1] {\small $a$}
	(0,0) -- node[fill=white,inner sep=1] {\small $b$}
	(2,0) -- node[fill=white,inner sep=1] {\small $c$}
	(2.8,1.5);
\draw[dashed]
	(-1,1.8) -- node[fill=white,inner sep=1] {\small $x$} 
	(2.8,1.5);
	
\node at (0.15,0.2) {\small $\theta$};
\node at (1.9,0.2) {\small $\rho$};

\end{tikzpicture}
\caption{The length of fourth edge in a quadrilateral.}
\label{quad2}
\end{figure}

\section{Tiling with Vertices $\alpha^3,\beta\gamma\delta,\delta\epsilon^2$}
\label{special1tiling}

We adopt the notation in \cite{wy1} for the construction of tilings. We denote by $T_i$ the tile labeled $i$, by $E_{ij}$ the edge shared by $T_i,T_j$, and by $V_{ijk}$ the vertex shared by $T_i,T_j,T_k$. We denote by $\theta_i$ the angle $\theta$ in $T_i$. When we say by $\alpha\beta\gamma$, we mean by the angle sum $\alpha+\beta+\gamma=2\pi$ of the vertex. When we say by $\theta_i$, we mean by the angles adjacency to $\theta_i$. When we say a tile is {\em determined}, we mean that we know all the edges and angles of the tile.

We will use Lemma/Proposition $n'$ to denote the use of Lemma/Proposition $n$ after exchanging $\beta\leftrightarrow \gamma$ and $\delta\leftrightarrow \epsilon$.

\begin{proposition}\label{special1}
Tilings of the sphere by congruent non-symmetric pentagons in Figure \ref{pentagon}, such that $\alpha^3,\beta\gamma\delta$ are vertices and $\delta+2\epsilon=2\pi$, are the following:
\begin{enumerate}
\item $f=24$: $T(6\epsilon^4)$, $T(4\beta\gamma\epsilon^2,2\epsilon^4)$, $T(4\beta^2\gamma^2,2\epsilon^4)$. The pentagon for the three tilings is the same unique one.
\item $f=60$: $T(12\epsilon^5)$, $T(5\beta\gamma\epsilon^3,7\epsilon^5)$, $T(10\beta\gamma\epsilon^3,2\epsilon^5)$, $T(2\beta^2\gamma^2\epsilon, 6\beta\gamma\epsilon^3,4\epsilon^5)$, 
$T(6\beta^2\gamma^2\epsilon, 3\beta\gamma\epsilon^3,3\epsilon^5)$. The pentagon for the five tilings is the same unique one.
\end{enumerate}
\end{proposition}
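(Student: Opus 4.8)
The plan is to pin the five angles as explicit functions of $f$, then determine which vertices are admissible (the AVC), and only after that construct the tilings; the genuinely hard part is the last step.

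\emph{Angles.} Feeding the three hypotheses into the angle sum equation for the pentagon gives at once $\alpha=\frac{2\pi}{3}$, and then $\epsilon=(\frac13+\frac4f)\pi$, $\delta=(\frac43-\frac8f)\pi$, and $\beta+\gamma=(\frac23+\frac8f)\pi$. Since we always assume $f>12$, these force $\delta>\epsilon$, so by Lemma~\ref{geometry1} the pentagon is genuinely non-symmetric with $\beta<\gamma$. The $\beta$--$\gamma$ split is \emph{not} determined at this stage, so the pentagon still varies in a one-parameter family (equivalently, the edge ratio $a:b$ is free).

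\emph{Possible vertices.} Next I would run through every combination of angles summing to $2\pi$ allowed by the Parity Lemma~\ref{beven}. A direct computation shows that for $f>12$ the only degree-$3$ vertices are $\alpha^3$, $\beta\gamma\delta$, and $\delta\epsilon^2$ (every other degree-$3$ combination either forces $f=12$ or violates $\beta<\gamma$), while each admissible higher vertex pins $f$: $\epsilon^4,\beta\gamma\epsilon^2,\beta^2\gamma^2$ occur only at $f=24$, and $\epsilon^5,\beta\gamma\epsilon^3,\beta^2\gamma^2\epsilon$ only at $f=60$. The catch is that this arithmetic does \emph{not} finish the classification: for instance $\alpha\beta\gamma\epsilon$ is edge-compatible (arranged $\gamma\thick\alpha\thick\beta\thin\epsilon\thin\gamma$) and sums to $2\pi$ exactly when $f=36$, and the resulting count system $8\alpha^3,24\beta\gamma\delta,12\delta\epsilon^2,12\alpha\beta\gamma\epsilon$ even satisfies Euler's formula. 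So counting alone admits impostors, and the real content is combinatorial.

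\emph{Nailing $f$ and the refined AVC.} Here I would invoke the special tile of Lemma~\ref{basic}, whose four degree-$3$ corners must be drawn from $\{\alpha^3,\beta\gamma\delta,\delta\epsilon^2\}$; the degree ($4$ or $5$) of its fifth vertex then narrows $f$ toward $24$ or $60$, after which the surviving impostors must be killed by hand. To discard $f=36$, say, I would propagate angles by adjacent angle deduction: at $\alpha\beta\gamma\epsilon$ the consecutive $\beta\thin\epsilon$ deduces a neighboring $\delta\thin\delta$ or $\delta\thin\gamma$, and since $2\delta>2\pi$ whenever $f>24$ the first is impossible, forcing a chain into which Lemmas~\ref{klem2} and~\ref{klem3} should run a contradiction; Lemma~\ref{geometry3} is used repeatedly to transport angle data across shared $a$-edges.

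\emph{Construction and exhaustion.} Finally, for each of $f=24,60$ I would solve the balance equations (via the Balance Lemma~\ref{balance}) to list the numerically possible vertex-count vectors, then realize exactly the stated tilings: the reduced pentagonal subdivision $c=b\ne a$ of Figure~\ref{reduction} yields $T(6\epsilon^4)$ and $T(12\epsilon^5)$ for the whole family, and for one special value of the free parameter, fixed by a spherical-trigonometric congruence, half of the subdivision can be \emph{twisted} to give the remaining tilings, pinning the pentagon to the unique one named in the statement. The main obstacle is precisely this exhaustion: the balance equations possess many spurious solutions (only three of the count vectors at $f=24$, and five at $f=60$, are actually realized), so one must argue combinatorially---building outward from the special tile and closing up every neighborhood---that no other numerically consistent configuration extends to a tiling of the whole sphere, together with the spherical trigonometry verifying that the twisted pentagons exist.
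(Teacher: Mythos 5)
Your overall architecture (angles $\to$ AVC $\to$ construction) matches the paper's three steps, and you correctly identify both the $f=36$ impostor $\alpha\beta\gamma\epsilon$ and the need for spherical trigonometry plus a neighborhood-by-neighborhood exhaustion at the end. But there is a genuine gap in your second step. Your claim that ``every other degree-$3$ combination either forces $f=12$ or violates $\beta<\gamma$'' is false, and the higher-degree vertices do not all pin $f$ to $24$ or $60$ with your stated list. Concretely: $\alpha\gamma^2$ (giving $\gamma=\tfrac{2}{3}\pi$, $\beta=\tfrac{8}{f}\pi$) and $\gamma^2\epsilon$ (giving $\beta=\tfrac14\pi$, $\gamma=\tfrac34\pi$ at $f=24$) are arithmetically consistent with $\beta<\gamma$; $\alpha\beta^2\delta$ has angle sum $2\pi$ for \emph{every} $f$ (it just fixes $\beta=\tfrac{4}{f}\pi$); and $\beta^2\delta\epsilon$ is consistent for every $f$ and, via adjacent angle deduction, forces companion vertices $\gamma\gamma\cdots$ that single out a second admissible angle set at each of $f=24$ and $f=60$ (namely $\beta=\tfrac14\pi,\gamma=\tfrac34\pi$ and $\beta=\tfrac15\pi,\gamma=\tfrac35\pi$). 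None of these is eliminated by parity, by $\beta<\gamma$, or by the angle-sum arithmetic you invoke.

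Excluding them is a substantial part of the actual proof and requires three separate ideas your proposal does not supply: (i) one must first show $\delta^2\cdots$ and then $\beta^2\delta\cdots$ cannot occur so that Lemma \ref{klem3} forces equal numbers of $\beta$ and $\gamma$ at every vertex (this is what kills $\alpha\gamma^2$ and reduces everything else to $\alpha^a\beta^b\gamma^b\epsilon^e$, whence $f\in\{24,36,60\}$); (ii) $\alpha\beta^2\delta$ is killed by a global propagation argument --- the forced chain $\alpha\beta^2\delta-\alpha\beta\gamma\epsilon-\alpha\beta\gamma\epsilon-\cdots$ along $b$-edges must eventually self-intersect, identifying two distinct vertices; (iii) the residual angle sets coming from $\beta^2\delta\epsilon$ (equivalently from $\gamma^2\epsilon$, $\beta\gamma^3$, etc.) are only eliminated by the spherical-trigonometric computation of the pentagon, which shows $\beta\approx0.3883\pi\ne\tfrac14\pi$ for $f=24$ and $\gamma\approx0.5515\pi\ne\tfrac35\pi$ for $f=60$. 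Without (i)--(iii) your AVC for $f=24,60$ is not established, so the balance-equation and gluing analysis in your last step would not be exhaustive. Your sketch for killing $f=36$ also differs in mechanism from the paper's (which builds the neighborhood of $\alpha\beta\gamma\epsilon$ until a tile acquires two copies of $\epsilon$), but that is a detail; the missing exclusions above are the real hole.
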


The condition $\delta+2\epsilon=2\pi$ is satisfied if $\delta\epsilon^2$ is a vertex (and hence the title of the section). In fact, in the proof of the proposition, whenever we say ``the angle sum of $\delta\epsilon^2$'', we really mean the equality $\delta+2\epsilon=2\pi$, and never imply that $\delta\epsilon^2$ appears as a vertex. The catch here is that the degree $3$ vertices in $T(6\epsilon^4)$ and $T(12\epsilon^5)$ are exactly $\alpha^3,\beta\gamma\delta$, and $\delta\epsilon^2$ appears only in the other six tilings. 

We remark that $\gamma^2\epsilon$ is not a vertex in the tilings in the proposition.

The notation $T(4\beta\gamma\epsilon^2,2\epsilon^4)$ means the tiling has $4$ vertex $\beta\gamma\epsilon^2$ and $2$ vertex $\epsilon^4$, and these are all the vertices of degree $>3$. The tilings $T(6\epsilon^4)$ and $T(12\epsilon^5)$ are the pentagonal subdivision tilings (and satisfy the additional equality $\delta+2\epsilon=2\pi$), and are the third and fifth of Figure \ref{subdivision_tiling}. The other tilings are the two tilings in Figure \ref{subdivision_tiling_modify24}, and the first four tilings in Figure \ref{subdivision_tiling_modify60A}. The third and fourth of Figure \ref{subdivision_tiling_modify60A} have alternative drawings as the third and fourth of Figure \ref{subdivision_tiling_modify60B}.

The proof is divided into three steps.

\subsubsection*{Step 1: Determine Angles and AVC}

The angle sums of $\alpha^3,\beta\gamma\delta,\delta\epsilon^2$ (actually $\delta+2\epsilon=2\pi$) and the angle sum for pentagon imply
\[
\alpha=\tfrac{2}{3}\pi,\;
\beta+\gamma=(\tfrac{2}{3}+\tfrac{8}{f})\pi,\;
\delta=(\tfrac{4}{3}-\tfrac{8}{f})\pi,\;
\epsilon=(\tfrac{1}{3}+\tfrac{4}{f})\pi.
\]
By $f>12$ , we have $\delta>\alpha>\epsilon$. By Lemma \ref{geometry1}, this implies $\beta<\gamma$. By $\beta+\gamma=2\epsilon$, we get $\beta<\epsilon<\gamma$. 

By $\beta\gamma\delta$ and Lemma \ref{klem6}, we know $\delta^2\cdots=\delta^k\epsilon^l$. By $\delta\epsilon^2$ and $\delta>\epsilon$, this implies $\delta^2\cdots$ is not a vertex. Then by the first part of Lemma \ref{klem2}, this implies no consecutive $\beta\epsilon\cdots\epsilon\beta$.

By $\beta<\gamma$, the parity lemma, and $R(\alpha^2\cdots)=\alpha<\beta+\gamma,\delta$, we get $\alpha^2\cdots=\alpha^3,\alpha^2\beta^k\epsilon^l$. By no consecutive $\beta\epsilon\cdots\epsilon\beta$, we know $\alpha^2\beta^k\epsilon^l$ is not a vertex. Therefore $\alpha^2\cdots=\alpha^3$. 

If $\delta\cdots$ has no $\beta,\gamma$, then by $\delta\epsilon^2$, no $\delta^2\cdots$, and Lemma \ref{klem4}, we get $\delta\cdots=\delta\epsilon^2$. If $\delta\cdots$ has $\beta,\gamma$, then by $\beta\gamma\delta$, $\beta<\gamma$, no $\delta^2\cdots$, and the parity lemma, we get $\delta\cdots=\beta\gamma\delta,\beta^k\delta\cdots$, with $k\ge 2$ and even, and only $\alpha,\epsilon$ in $R(\beta^k\delta\cdots)$. Then by $\delta\epsilon^2$ and $\alpha>\epsilon$, we get $\beta^2\delta\cdots=\alpha\beta^k\delta,\beta^k\delta,\beta^k\delta\epsilon$. If $k\ge 4$, then we have consecutive $\beta\epsilon\cdots\epsilon\beta$ in $\beta^2\delta\cdots$, a contradiction. Therefore $\beta^2\delta\cdots=\alpha\beta^2\delta,\beta^2\delta,\beta^2\delta\epsilon$. By $\beta\gamma\delta$ and $\beta<\gamma$, we conclude $\delta\cdots=\beta\gamma\delta,\delta\epsilon^2,\alpha\beta^2\delta,\beta^2\delta\epsilon$.

\subsubsection*{Case. $\alpha\beta^2\delta$ is a vertex}

The angle sum of $\alpha\beta^2\delta$ further implies
\[
\alpha=\tfrac{2}{3}\pi,\;
\beta=\tfrac{4}{f}\pi,\;
\gamma=(\tfrac{2}{3}+\tfrac{4}{f})\pi,\;
\delta=(\tfrac{4}{3}-\tfrac{8}{f})\pi,\;
\epsilon=(\tfrac{1}{3}+\tfrac{4}{f})\pi.
\] 
We have the AAD $\thin^{\delta}\beta^{\alpha}\thick^{\beta}\alpha^{\gamma}\thick^{\alpha}\beta^{\delta}\thin$ at $\alpha\beta^2\delta$. This gives a vertex $\alpha\gamma\cdots$. By $R(\alpha\gamma\cdots)=(\tfrac{2}{3}-\tfrac{4}{f})\pi<\alpha,\gamma,\delta,2\epsilon$, we get $\alpha\gamma\cdots=\alpha\beta^k\gamma,\alpha\beta^k\gamma\epsilon$. By $\alpha+\beta+\gamma<2\pi$, and no consecutive $\beta\epsilon\cdots\epsilon\beta$, we get $\alpha\gamma\cdots=\alpha\beta\gamma\epsilon$. 

In Figure \ref{special1A}, we consider four tiles around $\alpha\beta\gamma\epsilon$. We may first determine $T_2,T_4$. By $\epsilon_1$ and no $\delta^2\cdots$, we get $\delta_2\cdots=\gamma_1\delta_2\cdots=\beta_5\gamma_1\delta_2$. Then $\gamma_1,\epsilon_1$ determine $T_1$, and $\beta_5$ determines $T_5$. 

On the other hand, by $\alpha_3$ and $\alpha^2\cdots=\alpha^3$, we know $R(\alpha_2\cdots)$ has no $\alpha$. This implies $\alpha_6$, and we get $\alpha_6\gamma_2\cdots=\alpha\beta\gamma\epsilon$. Then by $\delta_5\epsilon_2\cdots=\delta\epsilon^2,\beta^2\delta\epsilon$, we find that either two $\epsilon$ adjacent, or $\beta,\epsilon$ adjacent, both contradictions.

\begin{figure}[htp]
\centering
\begin{tikzpicture}[>=latex,scale=1]

\foreach \a in {0,1,2,3}
\draw[rotate=90*\a]
	(0,0) -- (0,0.8) -- (0.6,1.2) -- (1.2,0.6) -- (0.8,0);

\draw
	(0.6,1.2) -- (0.6,1.8) -- (-0.6,1.8) -- (-0.6,1.2)
	(-1.8,1.8) -- (-1.2,0.6) -- (-1.8,0.6) -- (-1.8,-0.4) -- (-1.5,-0.4) -- (-0.8,0);
	
\draw[line width=1.5]
	(0,0.8) -- (0.6,1.2) -- (1.2,0.6)
	(0.6,1.2) -- (0.6,1.8)
	(-1.8,0.6) -- (-1.2,0.6) -- (-0.8,0) -- (0,0) -- (0,-0.8) -- (0.6,-1.2);

\node at (-0.7,0.2) {\small $\alpha$}; % T2
\node at (-0.2,0.2) {\small $\beta$};
\node at (-1,0.55) {\small $\gamma$};
\node at (-0.2,0.7) {\small $\delta$};
\node at (-0.6,1) {\small $\epsilon$};

\node at (-1.3,0.4) {\small $\alpha$};
\node at (-1.5,0.8) {\small $\beta$};
\node at (-1.15,0.9) {\small $\epsilon$};

\node at (-0.2,-0.2) {\small $\alpha$}; % T3

\node at (0.2,-0.7) {\small $\alpha$}; % T4
\node at (0.6,-0.95) {\small $\beta$};
\node at (0.2,-0.2) {\small $\gamma$};
\node at (1,-0.6) {\small $\delta$};
\node at (0.7,-0.2) {\small $\epsilon$}; 

\node at (0.6,0.95) {\small $\alpha$}; % T1
\node at (0.95,0.55) {\small $\beta$}; 
\node at (0.2,0.7) {\small $\gamma$};
\node at (0.7,0.2) {\small $\delta$};
\node at (0.2,0.2) {\small $\epsilon$};

\node at (0.4,1.3) {\small $\alpha$}; % T5
\node at (0,1) {\small $\beta$};  
\node at (0.4,1.6) {\small $\gamma$};
\node at (-0.4,1.3) {\small $\delta$};
\node at (-0.4,1.65) {\small $\epsilon$};

%\node at (-0.75,1.25) {\small $\epsilon$}; 

\node[inner sep=1,draw,shape=circle] at (0.55,0.55) {\small $1$};
\node[inner sep=1,draw,shape=circle] at (-0.55,0.55) {\small $2$};
\node[inner sep=1,draw,shape=circle] at (-0.55,-0.55) {\small $3$};
\node[inner sep=1,draw,shape=circle] at (0.55,-0.55) {\small $4$};
\node[inner sep=1,draw,shape=circle] at (0,1.45) {\small $5$};
\node[inner sep=1,draw,shape=circle] at (-1.45,0) {\small $6$};

\end{tikzpicture}
\caption{$\{\alpha^3,\beta\gamma\delta,\delta\epsilon^2\}$:  $\alpha\beta\gamma\epsilon$, $f=36$.}
\label{special1A}
\end{figure}

\subsubsection*{Case. $\beta^2\delta\epsilon$ is a vertex}

The angle sum of $\beta^2\delta\epsilon$ further implies
\[
\alpha=\tfrac{2}{3}\pi,\;
\beta=(\tfrac{1}{6}+\tfrac{2}{f})\pi,\;
\gamma=(\tfrac{1}{2}+\tfrac{6}{f}),\;
\delta=(\tfrac{4}{3}-\tfrac{8}{f})\pi,\;
\epsilon=(\tfrac{1}{3}+\tfrac{4}{f})\pi.
\]
We have the AAD $\thin^{\delta}\beta^{\alpha}\thick^{\alpha}\beta^{\delta}\thin$ at $\beta^2\delta\epsilon$. This gives a vertex $\thick^{\gamma}\alpha^{\beta}\thick^{\beta}\alpha^{\gamma}\thick\cdots=\alpha^3$. By Lemma \ref{aadlemma}, this implies $\gamma\thick\gamma\cdots$ is a vertex. By the parity lemma and $R(\alpha\gamma^2\cdots)=(\frac{1}{3}-\frac{12}{f})\pi<\alpha,2\beta,\gamma,\delta,\epsilon$, we get $\alpha\gamma^2\cdots=\alpha\gamma^2=\thin\gamma\thick\alpha\thick\gamma\thin$. This implies $\gamma\thick\gamma\cdots$ has no $\alpha$. Then by $R(\gamma^2\cdots)=(1-\frac{12}{f})\pi<\delta$, we know $R(\gamma\thick\gamma\cdots)$ has only $\beta,\gamma,\epsilon$. By the angle values, no $\beta\epsilon\cdots\epsilon\beta$, and the parity lemma, we get $\gamma\thick\gamma\cdots=\beta^2\gamma^2,\gamma^2\epsilon, \beta\gamma^3,\beta^2\gamma^2\epsilon,\gamma^2\epsilon^2$. Moreover, any of the first two vertices implies $f=24$, and any of the last three implies $f=60$. Then we get the corresponding angles 
\begin{align}
f=24 &\colon 
\alpha=\tfrac{2}{3}\pi,\;
\beta=\tfrac{1}{4}\pi,\;
\gamma=\tfrac{3}{4}\pi,\;
\delta=\pi,\;
\epsilon=\tfrac{1}{2}\pi; \label{special1_24B} \\
f=60 &\colon 
\alpha=\tfrac{2}{3}\pi,\;
\beta=\tfrac{1}{5}\pi,\;
\gamma=\tfrac{3}{5}\pi,\;
\delta=\tfrac{6}{5}\pi,\;
\epsilon=\tfrac{2}{5}\pi. \label{special1_60B}
\end{align}

\subsubsection*{Case. $\delta\cdots=\beta\gamma\delta$ or $\delta\epsilon^2$}

The assumption implies that $\beta^2\delta\cdots,\delta^2\cdots$ are not vertices. By Lemma \ref{klem3}, besides $\delta\cdots=\beta\gamma\delta,\delta\epsilon^2$, the only other vertices are $\alpha^a\beta^b\gamma^b\epsilon^e$. The angle sum of $\alpha^a\beta^b\gamma^b\epsilon^e$ gives
\[
\tfrac{2}{3}a+(\tfrac{1}{3}+\tfrac{4}{f})(2b+e)=2.
\]
Besides $\alpha^3$, we look for non-negative integer solutions satisfying $a<3$, and $a>0\implies b>0$ (by the edge length consideration). The solutions we get are $\beta^2\gamma^2,\beta\gamma\epsilon^2,\epsilon^4$ for $f=24$, $\alpha\beta\gamma\epsilon$ for $f=36$, and $\beta^2\gamma^2\epsilon,\beta\gamma\epsilon^3,\epsilon^5$ for $f=60$. Moreover, we get the corresponding angles and AVCs
\begin{align}
f=24 &\colon 
\alpha=\tfrac{2}{3}\pi,\;
\beta+\gamma=\pi,\;
\delta=\pi,\;
\epsilon=\tfrac{1}{2}\pi, \nonumber \\
&\text{AVC}
=\{\alpha^3,\beta\gamma\delta,\delta\epsilon^2,\beta^2\gamma^2,\beta\gamma\epsilon^2,\epsilon^4\}; \label{special1_24A} \\
f=36 &\colon 
\alpha=\tfrac{2}{3}\pi,\;
\beta+\gamma=\tfrac{8}{9}\pi,\;
\delta=\tfrac{10}{9}\pi,\;
\epsilon=\tfrac{4}{9}\pi, \nonumber  \\
&\text{AVC}
=\{\alpha^3,\beta\gamma\delta,\delta\epsilon^2,\alpha\beta\gamma\epsilon\}; \label{special1_36} \\
f=60 &\colon 
\alpha=\tfrac{2}{3}\pi,\;
\beta+\gamma=\tfrac{4}{5}\pi,\;
\delta=\tfrac{6}{5}\pi,\;
\epsilon=\tfrac{2}{5}\pi, \nonumber \\
&\text{AVC}
=\{\alpha^3,\beta\gamma\delta,\delta\epsilon^2,\beta^2\gamma^2\epsilon,\beta\gamma\epsilon^3,\epsilon^5\}. \label{special1_60A}
\end{align}

The case \eqref{special1_36} has the property $\alpha\gamma\cdots=\alpha\beta\gamma\epsilon$. By the same argument (actually simpler argument due to simpler AVC) for the case $\alpha\beta^2\delta$ is a vertex, and using Figure \ref{special1A}, we get the same contradiction.

\subsubsection*{Step 2: Calculate the Pentagon}

The pentagonal subdivision tiling has the edge combination $a^2b^2c$. The right of Figure \ref{reduction} shows that the reduction $c=b$ gives a tiling of the sphere by congruent pentagons with the edge combination $a^3b^2$. The angles of the reduced tiling satisfy
\begin{equation}\label{rightdivision}
\alpha=\tfrac{2}{3}\pi,\;
\beta+\gamma+\delta=2\pi,\;
\epsilon=\tfrac{2}{n}\pi.
\end{equation}
We note that \eqref{special1_24A} (with $n=4$) and \eqref{special1_60A} (with $n=5$) are special cases of \eqref{rightdivision}. Moreover, \eqref{special1_24B} and \eqref{special1_60B} are further specialisations of \eqref{special1_24A} and \eqref{special1_60A}. Therefore the pentagons we are interested in can tile the pentagonal subdivision tilings. We calculate the pentagons by using this fact.

The general pentagonal subdivision tiling allows two free parameters. The reduction $c=b$ and the equality $\delta=\pi$ in \eqref{special1_24A} and \eqref{special1_60A} are the two additional conditions that should uniquely determine the pentagon. It would be coincidental for the more restrictive conditions \eqref{special1_24B} and \eqref{special1_60B} to be satisfied.

Now we calculate for \eqref{special1_24A}. A triangular face of the regular octahedron has angle $\epsilon=\frac{1}{2}\pi$. The pentagonal subdivision divides each face into three pentagons, as in the first of Figure \ref{calculation_division}. The condition $\delta=\pi$ means that $B,D$ lie on an edge of the triangle, and actually divides the edge into three equal parts. This implies $l=\frac{1}{2}\pi=3a$ and $AD=AB=b$. Then by
\begin{align*}
\cos BC
&=\cos a\cos 2a+\sin a\sin 2a\cos\epsilon
=\tfrac{\sqrt{3}}{4}, \\
\cos BC
&=\cos^2b+\sin^2b\cos\alpha
=\cos^2b-\tfrac{1}{2}\sin^2b,
\end{align*}
we get ($b=0.21076\pi$ means $0.21076\pi\le b<0.21077\pi$)
\[
\cos b=\frac{3+\sqrt{3}}{6},\quad
\sin b=\frac{\sqrt{4-\sqrt{3}}}{\sqrt{6}},\quad
b=0.21076\pi.
\]
By $AB=AD=b$, we get
\[
\cos b=\cos a\cos b+\sin a\sin b\cos\beta.
\]
Substituting the sine and cosine of $a$ and $b$, we get
\[
\cos\beta=-\cos\gamma=\frac{1}{\sqrt{5+2\sqrt{3}}},\quad
\beta=0.38831\pi.
\]
This implies $\beta\ne\frac{1}{4}\pi$, and the case \eqref{special1_24B} is impossible. Therefore we only need to consider \eqref{special1_24A}.

\begin{figure}[htp]
\centering
\begin{tikzpicture}[>=latex,scale=1]

% subdivision f=24

\foreach \a in {0,1,2}
\draw[rotate=120*\a, dotted]
	(-30:2) -- (90:2);	

\draw
	(-0.577,-1) -- (-30:2) -- (1.154,0);

\draw[line width=1.5]
	(1.154,0) -- (0,0) -- (-0.577,-1);

\draw[dashed]
	(-0.577,-1) -- (1.154,0);

\draw[dotted]
	(0,0) -- (0.577,-1);
		
\fill
	(0.577,-1) circle (0.05);

\node at (0.1,-0.15) {\small $\alpha$};	
\node at (-0.3,-0.8) {\small $\beta$};
\node at (1.05,-0.2) {\small $\gamma$};
\node at (0.577,-0.75) {\small $\delta$};
\node at (-30:1.7) {\small $\epsilon$};

\node at (-0.15,0.2) {\small $A$};
\node at (-0.6,-1.2) {\small $B$};
\node at (1.3,0.17) {\small $C$};
\node at (0.6,-1.2) {\small $D$};
\node at (-30:2.2) {\small $E$};

\node at (1.55,-0.4) {\small $a$};
\node at (0.55,0.2) {\small $b$};
%\node[fill=white, inner sep=1] at (0.5,-0.4) {\footnotesize $x$};
\node at (150:1.2) {\small $l$};

\node at (90:1.45) {\small $\tfrac{1}{2}\pi$};

% subdivision f=60

\begin{scope}[xshift=4.5cm]

\foreach \a in {0,1,2}
\draw[rotate=120*\a,dotted]
	(-30:2) -- (90:2);	
	
\draw
	(-30:2) -- ++(168:1.2) -- ++(204:1.2)
	(-30:2) -- ++(108:1.2)
	(210:2) -- (246:1.37);

\draw[line width=1.5]
	(6:1.37) -- (0,0) -- (246:1.37);
	
% \draw[dotted]
% 	(0,0) -- (-30:2);

% \draw[dashed]
% 	(6:1.37) -- (246:1.37);
	
\node at (0.15,-0.15) {\small $\alpha$};	
\node at (-0.25,-0.92) {\small $\beta$};
\node at (1.23,-0.1) {\small $\gamma$};
\node at (0.57,-0.57) {\small $\delta$};
\node at (-27:1.7) {\small $\epsilon$};

\node at (-0.15,0.2) {\small $A$};
\node at (-0.6,-1.43) {\small $B$};
\node at (1.5,0.3) {\small $C$};
\node at (0.55,-0.95) {\small $D$};
\node at (-30:2.2) {\small $E$};

\node at (1.7,-0.4) {\small $a$};
\node at (0.6,0.27) {\small $b$};
%\node[fill=white, inner sep=1] at (0.8,-0.3) {\footnotesize $x$};
\node at (150:1.2) {\small $l$};

\node at (90:1.45) {\small $\tfrac{2}{5}\pi$};

\end{scope}

% subdivision f=60, case 2

\begin{scope}[xshift=9cm]

\foreach \a in {0,1,2}
{
\begin{scope}[rotate=120*\a]

\draw[dotted]
	(-30:2) -- (90:2);	
	
\draw
	(-30:2) -- ++(168:1.2) -- ++(204:1.2)
	(210:2) -- (246:1.37);

\draw[line width=1.5]
	(0,0) -- (246:1.37);
	
\draw[dashed]
	(0,0) -- (-53.5:0.93);

%\node at (-0.6,-1.4) {\small $B$};
%\node at (0.55,-0.95) {\small $D$};

%\node at (1.7,-0.4) {\small $a$};
%\node at (0.5,0.3) {\small $b$};

\end{scope}
}

\node[fill=white, inner sep=1.5] at (-0.15,-0.25) {\small $\alpha'$};
\node at (0.25,-0.7) {\small $\beta'$};
\node at (-0.9,-0.35) {\small $\gamma'$};
\node[fill=white, inner sep=1] at (-0.47,-0.95) {\small $\delta'$};
\node at (-1.4,-0.8) {\small $\epsilon'$};

\node[fill=white, inner sep=1] at (-0.45,-0.05) {\small $b'$};

\end{scope}

\end{tikzpicture}
\caption{$\{\alpha^3,\beta\gamma\delta,\delta\epsilon^2\}$: Calculation of the pentagon.}
\label{calculation_division}
\end{figure}

For $f=60$, we carry out the similar calculation. The second of Figure \ref{calculation_division} describes the pentagonal subdivision of a regular face of the regular icosahedron. The angle of the face is $\epsilon=\frac{2}{5}\pi$, and the side length of the face is given by $\cos l=\frac{1}{\sqrt{5}}$. 

In fact, we also calculate the companion pentagon (see \cite[Section 3.1]{wy1}) in the third of Figure \ref{calculation_division}, bounded by two dashed edges $b'$ and three normal edges $a$, and with angles $\alpha'=\alpha=\frac{2}{3}\pi,\; \beta',\; \gamma',\; \delta'=2\pi-\delta,\; \epsilon'=\epsilon=\frac{2}{5}\pi$. We calculate the companion because it will appear in the next Proposition \ref{special2}. 

We do not calculate the companion for $f=24$ because $\delta=\pi$ implies the pentagon is its own companion.

The two ends of the side length $l$ can be reached by three segments of length $a$ at alternating angles $\delta,\delta$ (at $B,D$). By Lemma \ref{fourth}, we get
\[
\cos l
=\cos^3a(1-\cos\delta)^2
+\cos^2a\sin^2\delta
+\cos a(2\cos\delta-\cos^2\delta)
-\sin^2\delta.
\]
Substituting $\cos l=\frac{1}{\sqrt{5}}$ and $\delta=\tfrac{6}{5}\pi$, we get a cubic equation for $\cos a$. By \cite{wy3}, for the pentagon to be simple, we need  $a,b<\frac{1}{2}\pi$. Among the three real roots of the cubic equation above, the only positive solution is 
\[
\cos a
=\tfrac{1}{2\sqrt{5}}
(3-\sqrt{5}+\sqrt{2}{\textstyle \sqrt{3\sqrt{5}-1}}),\quad
a=0.12293\pi.
\]

To find the length $b$, we use Lemma \ref{fourth} and the cosine law to get
\begin{align*}
\cos BC
&=\cos^3a(1-\cos\delta)(1-\cos\epsilon)
-\cos^2a\sin\delta\sin\epsilon \\
&\quad +\cos a(\cos\delta+\cos\epsilon-\cos\delta\cos\epsilon)
+\sin\delta\sin\epsilon \\
&=\tfrac{1}{\sqrt{10}}(\sqrt{2}(\sqrt{5}-2)+(3-\sqrt{5}){\textstyle \sqrt{3\sqrt{5}-1}}), \\
\cos BC
&=\cos^2b+\sin^2b\cos\alpha
=\cos^2b-\tfrac{1}{2}\sin^2b.
\end{align*}
By $b<\frac{1}{2}\pi$, we get
\[
\cos b
=\sqrt{\tfrac{1}{3\sqrt{5}}(3\sqrt{5}-4+\sqrt{2}(3-\sqrt{5}){\textstyle \sqrt{3\sqrt{5}-1}})}, \quad
b= 0.15211\pi.
\]
By the similar method, and $\alpha'=\tfrac{2}{3}\pi,\delta'=\tfrac{4}{5}\pi,\epsilon'=\tfrac{2}{5}\pi$, we get
\[
\cos b'=\sqrt{\tfrac{1}{3\sqrt{10}}(\sqrt{2}(3\sqrt{5}-2)+(3-\sqrt{5}){\textstyle \sqrt{3\sqrt{5}-1}})},\quad
b'=0.10543\pi.
\]

Figure \ref{calculate_pentagon} is the scheme relating the pentagon $\pentagon ABDCE$ and its companion $\pentagon A'B'D'C'E'$. They are different unions of the same three triangles. The isosceles triangle $\triangle CDE$ gives 
\begin{align*}
\cos d
&=\cos^2a+\sin^2a\cos\epsilon \\
&=\tfrac{1}{2\sqrt{5}}(1+\sqrt{5}
+\sqrt{2}(\sqrt{5}-2){\textstyle \sqrt{3\sqrt{5}-1}}),\quad
d=0.14212\pi.
\end{align*}

\begin{figure}[htp]
\centering
\begin{tikzpicture}[>=latex]

\draw
	(-2.4,2) -- (-3.6,0) -- (1.2,0) -- (3.6,0) -- (2.4,2);

\draw[line width=1.5]
	(-1.2,0) -- (0,2) -- (2.4,2);

\draw[dashed]
	(1.2,0) -- (0,2) -- (-2.4,2);

\draw[dotted]
	(2.4,2) -- (1.2,0)
	(-2.4,2) -- (-1.2,0);

\node at (0,2.25) {\small $A=A'$};
\node at (-1.2,-0.2) {\small $B=D'$};
\node at (1.2,-0.2) {\small $B'=D$};
\node at (3.8,0) {\small $E$};
\node at (-3.8,0) {\small $E'$};
\node at (2.6,2.1) {\small $C$};
\node at (-2.6,2.1) {\small $C'$};

\node[fill=white, inner sep=1.5] at (0,0) {\small $a$};
\node[fill=white, inner sep=1.5] at (2.4,0) {\small $a$};
\node[fill=white, inner sep=1.5] at (-2.4,0) {\small $a$};
\node[fill=white, inner sep=1.5] at (3,1) {\small $a$};
\node[fill=white, inner sep=1.5] at (-3,1) {\small $a$};
\node[fill=white, inner sep=1.5] at (-0.6,1) {\small $b$};
\node[fill=white, inner sep=1] at (0.6,1.05) {\small $b'$};
\node[fill=white, inner sep=1.5] at (1.2,2) {\small $b$};
\node[fill=white, inner sep=1.5] at (-1.2,2) {\small $b'$};
\node[fill=white, inner sep=1.5] at (1.8,1) {\small $d$};
\node[fill=white, inner sep=1.5] at (-1.8,1) {\small $d$};

\node at (0,1.55) {\small $\alpha_1$};
\node at (-0.45,1.75) {\small $\alpha_2$};
\node at (0.45,1.75) {\small $\alpha_2$};
\node at (2,1.75) {\small $\rho$};
\node at (-1.2,0.4) {\small $\rho$};
\node at (-2,1.75) {\small $\rho'$};
\node at (1.25,0.4) {\small $\rho'$};
\node at (2.4,1.65) {\small $\theta$};
\node at (1.5,0.2) {\small $\theta$};
\node at (-1.5,0.2) {\small $\theta$};
\node at (2.4,1.65) {\small $\theta$};
\node at (-2.4,1.65) {\small $\theta$};
\node at (-0.85,0.2) {\small $\beta$};
\node at (0.8,0.2) {\small $\beta'$};
\node at (-3.3,0.2) {\small $\epsilon$};
\node at (3.3,0.2) {\small $\epsilon$};

\end{tikzpicture}
\caption{$\{\alpha^3,\beta\gamma\delta,\delta\epsilon^2\}$ and $\{\alpha^3,\beta\gamma\delta,\delta^2\epsilon\}$: Companion pentagons.}
\label{calculate_pentagon}
\end{figure}

We note that $A$ is the center of a triangular face, and $E$ is a vertex of the face. We know the distance satisfies $\cos AE=\frac{\sqrt{5}+1}{\sqrt{6(5-\sqrt{5})}}$. Using this and the exact values of all the edge lengths in Figure \ref{calculate_pentagon}, we may calculate the exact values of all the angles. For example, we may use
\begin{align*}
\cos b'
&=\cos a\cos b+\sin a\sin b\cos\beta, \\
\cos AE
&=\cos a\cos b
+\sin a\sin b \cos\gamma,
\end{align*}
to get

\begin{align*}
\cos\beta
&=\tfrac{1}{32}(9+3\sqrt{5}+\sqrt{2}{\textstyle \sqrt{3\sqrt{5}-1}})\sqrt{4+(\sqrt{5}-3)\sqrt{2}{\textstyle \sqrt{3\sqrt{5}-1}}}, \\
\cos\gamma
&=\tfrac{1}{16}(\sqrt{5}-1+\sqrt{2}(2\sqrt{5}-5){\textstyle \sqrt{3\sqrt{5}-1}}){\textstyle \sqrt{11+5\sqrt{5}}}.\end{align*}
Similarly, we get
\begin{align*}
\cos\beta'
&=\tfrac{1}{304}(61+27\sqrt{5})(-39+19\sqrt{5}-\sqrt{2}{\textstyle \sqrt{3\sqrt{5}-1}}) \\
&\qquad \sqrt{-4+3\sqrt{5}+\sqrt{2}(3-\sqrt{5}){\textstyle \sqrt{3\sqrt{5}-1}}},  \\
\cos\gamma'
&=\tfrac{1}{4}(2-\sqrt{5}){\textstyle \sqrt{3\sqrt{5}-1}}{\textstyle \sqrt{11+5\sqrt{5}}}.
\end{align*}
The approximate values are
\[
\beta =0.24847\pi, \;
\gamma =0.55152\pi, \;
\beta' =0.46882\pi, \;
\gamma' =0.73117\pi.
\]
In particular, we have $\beta\ne \frac{1}{5}\pi$. This means the case \eqref{special1_60B} is impossible. Therefore we only need to consider \eqref{special1_60A}. 

The pentagons $\pentagon ABDCE$ and $\pentagon A'B'D'C'E'$ exist by the pentagonal subdivision construction. The problem is to show that the pentagons are simple, so that they are suitable for tiling. This can be argued by the general way in \cite{wy3}. For example, according to the notation in \cite{wy3}, we have $D$ in the region $\Omega_1$, which means $C$ (which is $V$ for $\pentagon ABDCE$ in \cite{wy3}) is in the region $\Omega_7$. This implies $\pentagon ABDCE$ is simple. Similarly, the vertex $V$ for $\pentagon A'B'D'C'E'$ is $C'$. We see $C'$ lies in the region $\Omega_1$, which also implies $\pentagon A'B'D'C'E'$ is simple.

Alternatively, we may use the method in \cite[Section 3.2]{wy1} to give another explanation that the pentagons are simple. In Figure \ref{calculate_pentagon}, we may use the known values of all the edges to get approximate values
\[
\alpha_1=0.3033\pi,\;
\alpha_2=0.3633\pi,\;
\theta=0.3114\pi,\;
\rho=0.2400\pi,\;
\rho'=0.4197\pi.
\]
This implies $\delta'=\beta+\theta+\rho<0.81\pi<\pi$. Then by the known values $\alpha',\beta',\gamma',\epsilon'<\pi$, we know the companion pentagon $\pentagon A'B'D'C'E'$ is convex. Therefore the pentagon is simple. 

We also have $\beta'+\rho'<0.89\pi<\pi$. Then by $\alpha,\beta,\rho<\pi$, we know $\square ABDC$ is convex. Then $\square ABDC$ and $\triangle CDE$ are convex and lie in the separate hemispheres divided by the great circle $\bigcirc CD$. This implies the pentagon $\pentagon ABDCE$ is simple.

\subsubsection*{Step 3: Construct the Tiling}

We construct tilings for \eqref{special1_24A} and  \eqref{special1_60A}. By the first step of the proof, we do not have consecutive $\beta\epsilon\cdots\epsilon\beta$. By the AVC (we will henceforth omit mentioning AVC), this implies $\gamma\thin\gamma\cdots$ is not a vertex.

Figure \ref{special1B} describes the neighbourhood of a $3^5$-tile, with $T_1$ arranged as in Figure \ref{pentagon}. The five degree $3$ vertices in the neighbourhood can only be $\alpha^3,\beta\gamma\delta,\delta\epsilon^2$. We have $\alpha_1\cdots=\alpha_1\alpha_2\alpha_6$, $\beta_1\cdots=\beta_1\gamma_6\delta_5$, $\gamma_1\cdots=\beta_2\gamma_1\delta_3$. Then $\alpha_2,\beta_2$ determine $T_2$, and $\alpha_6,\gamma_6$ determine $T_6$. By $\delta_3$, we get $\epsilon_1\cdots=\delta_4\epsilon_1\epsilon_3$. Then $\delta_3,\epsilon_3$ determine $T_3$. By $\delta_4,\delta_5$, we know $\delta_1\cdots\ne\beta\gamma\delta$. Therefore $\delta_1\cdots=\delta_1\epsilon_4\epsilon_5$. Then $\delta_4,\epsilon_4$ determine $T_4$, and $\delta_5,\epsilon_5$ determine $T_5$. Then we have $\gamma_4\thin\gamma_5\cdots$, a contradiction.

\begin{figure}[htp]
\centering
\begin{tikzpicture}[>=latex,scale=1]

\foreach \x in {1,...,5}
\draw[rotate=72*\x]
	(90:0.7) -- (18:0.7) -- (18:1.3) -- (54:1.7) -- (90:1.3);

\draw[line width=1.5]
	(162:1.3) -- (198:1.7) -- (234:1.3) -- (-90:1.7) -- (-54:1.3) -- (-18:1.7) -- (18:1.3)
	(18:0.7) -- (90:0.7) -- (162:0.7)
	(90:0.7) -- (90:1.3);
	
\node at (90:0.45) {\small $\alpha$}; % T1
\node at (162:0.45) {\small $\beta$};
\node at (18:0.45) {\small $\gamma$};
\node at (234:0.45) {\small $\delta$};
\node at (-54:0.45) {\small $\epsilon$};

\node at (76:0.8) {\small $\alpha$}; % T2
\node at (30:0.85) {\small $\beta$};
\node at (28:1.15) {\small $\delta$};
\node at (82:1.15) {\small $\gamma$};
\node at (54:1.5) {\small $\epsilon$};

\node at (-18:1.45) {\small $\alpha$}; % T3
\node at (7:1.15) {\small $\beta$};
\node at (-46:1.15) {\small $\gamma$};
\node at (6:0.75) {\small $\delta$};
\node at (-40:0.75) {\small $\epsilon$};

\node at (-68:0.8) {\small $\delta$}; % T4
\node at (-116:0.8) {\small $\epsilon$};
\node at (-116:1.18) {\small $\gamma$};
\node at (-90:1.45) {\small $\alpha$};
\node at (-64:1.2) {\small $\beta$};

\node at (198:1.45) {\small $\alpha$}; % T5
\node at (173:1.2) {\small $\beta$}; 
\node at (226:1.15) {\small $\gamma$};
\node at (174:0.8) {\small $\delta$};
\node at (220:0.75) {\small $\epsilon$};

\node at (104:0.8) {\small $\alpha$}; % T6
\node at (148:0.8) {\small $\gamma$};
\node at (154:1.2) {\small $\epsilon$};
\node at (99:1.1) {\small $\beta$};
\node at (128:1.5) {\small $\delta$};

\node[draw,shape=circle, inner sep=1] at (0,0) {\small 1};

\foreach \x in {2,...,6}
\node[draw,shape=circle, inner sep=1] at (198-72*\x:1) {\small \x};

\end{tikzpicture}
\caption{$\{\alpha^3,\beta\gamma\delta,\delta\epsilon^2\}$: Neighborhood of a $3^5$-tile.}
\label{special1B}
\end{figure}

We conclude there is no $3^5$-tile. For $f=24$, by Lemma \ref{basic}, the tiling is combinatorially the pentagonal subdivision of the octahedron. For $f=60$, by Lemma \ref{basic} and the lack of degree $4$ vertex in the AVC, the tiling is combinatorially the pentagonal subdivision of the icosahedron. 

Since we already know the combinatorial structure of the tiling, the construction of the tiling means assigning edges and angles to the combinatorial tiling in the way compatible with the AVC. We do this in two steps. First, we find all the possible pentagonal subdivisions of a triangular face of the octahedron or icosahedron. Second, we find how these pieces fit together to form the whole tiling.

Figure \ref{special1C} gives two pentagonal subdivisions of a triangular face. We use $\bullet$ to indicate the three vertices (of degree $4$ or $5$) of the face, which are also vertices of the octahedron or icosahedron. All other vertices have degree $3$. By the AVC, therefore, the center vertex is $\alpha^3,\beta\delta\gamma,\delta\epsilon^2$, and $\alpha,\delta$ do not appear at $\bullet$.

Suppose the center vertex is $\alpha^3$. Then the angles at $\bullet$ are not adjacent to $\alpha$, which means $\delta$ or $\epsilon$. By no $\delta$ at $\bullet$, all three angles at $\bullet$ are $\epsilon$. This determines $n(\alpha^3)$ in Figure \ref{special1C}.

Suppose the center vertex is $\beta\delta\gamma$. Then by no $\delta$ at $\bullet$, we determine the two tiles containing $\beta,\gamma$, including $\beta$ and $\epsilon$ at two $\bullet$. See the second of Figure \ref{special1C}. The angle at the third $\bullet$ is not adjacent to $\delta$. Therefore the angle is $\alpha$ or $\gamma$. By no $\alpha$ at $\bullet$, the angle is $\gamma$. This determines the tile containing $\delta$, and then determines $n(\beta\gamma\delta)$ in Figure \ref{special1C}.

Suppose the center vertex is $\delta\epsilon^2$. By the similar reason of not adjacent to $\delta$ or $\epsilon$, and no $\alpha$ at $\bullet$, we may determine all angles at three $\bullet$, and then determine all three tiles. Then we get a degree $3$ vertex $\gamma\epsilon\cdots$, contradicting the AVC. Therefore $\delta\epsilon^2$ cannot be the center vertex.

\begin{figure}[htp]
\centering
\begin{tikzpicture}[>=latex,scale=1]

\foreach \b in {0,1}
\foreach \a in {0,1,2}
{
\begin{scope}[xshift=6.5*\b cm, rotate=120*\a]

\draw
	(-30:1.6) -- (90:1.6)
	(0,0) -- (0.93,0);

\fill
	(-30:1.6) circle (0.1);
	
\end{scope}
}

% 1

\foreach \a in {0,1,2}
{
\begin{scope}[rotate=120*\a]

\draw[line width=1.5]
	(0,0) -- (0.93,0);

%\draw[densely dashed] (0.46,0.8) -- ++(30:0.4);

\node at (0,1.3) {\small $\epsilon$};
\node at (0.12,0.2) {\small $\alpha$};
\node at (0.6,0.2) {\small $\beta$};
\node at (0.8,-0.2) {\small $\gamma$};
\node at (0.45,-0.63) {\small $\delta$};

%\node at (1.05,0.1) {\small $\delta$};

\end{scope}
}

\node at (1.5,-1.3) {$n(\alpha^3)$};

% 2

\begin{scope}[xshift=6.5 cm]

% \draw
% 	(0.47,0.8) -- ++(30:0.4) ;

% \draw[densely dashed]
% 	(0.45,-0.8) -- ++(0,-0.4);

\draw[line width=1.5]
	(-0.455,0.8) -- (210:1.6)
	(0.455,0.8) -- (-30:1.6)
	(0,0) -- (0.93,0)
	% (-0.93,0) -- ++(150:0.4)
	;

\node at (0,1.3) {\small $\epsilon$}; % 90
\node at (0.6,0.2) {\small $\alpha$};
\node at (0.12,0.2) {\small $\beta$};
\node at (0.3,0.7) {\small $\gamma$};
\node at (-0.3,0.8) {\small $\delta$}; 

\node at (1,-0.6) {\small $\beta$}; % -30
\node at (0.8,-0.2) {\small $\alpha$};
\node at (0.1,-0.2) {\small $\gamma$};
\node at (0.45,-0.6) {\small $\delta$};
\node at (-0.2,-0.63) {\small $\epsilon$};

\node at (-1.05,-0.6) {\small $\gamma$}; % 210
\node at (-0.75,-0.05) {\small $\alpha$};
\node at (-0.48,0.45) {\small $\beta$};
\node at (-0.2,0) {\small $\delta$}; 
\node at (-0.55,-0.63) {\small $\epsilon$};

% \node at (1.1,0.1) {\small $\alpha$}; % boundary
% \node at (0.75,0.75) {\small $\beta$};
% \node at (0.5,1.05) {\small $\delta$};
% \node at (-0.65,0.8) {\small $\gamma$};
% \node at (-1,0.25) {\small $\alpha$};
% \node at (-1.2,-0.05) {\small $\alpha$};
% \node at (-0.45,-1) {\small $\delta$};

\node at (1.5,-1.3) {$n(\beta\gamma\delta)$};

\end{scope}

%% alternative

\begin{scope}[shift={(3cm,0.3 cm)}]

\foreach \a in {0,1,2}
{
\begin{scope}[rotate=120*\a]

\draw
	(30:0.7) -- (70:1.2) -- (110:1.2) -- (150:0.7);

\draw[line width=1.5]
	(0,0) -- (30:0.7);

\node at (90:0.25) {\small $\alpha$};
\node at (50:0.6) {\small $\beta$};
\node at (10:0.6) {\small $\gamma$};
\node at (75:1) {\small $\delta$};
\node at (105:1) {\small $\epsilon$};

\end{scope}
}

\begin{scope}[xshift=6.5cm]

\foreach \a in {0,1,2}
\draw[rotate=120*\a]
	(0,0) -- (30:0.7) -- (70:1.2) -- (110:1.2) -- (150:0.7);

\draw[line width=1.5]
	(-10:1.2) -- (30:0.7) -- (70:1.2)
	(0,0) -- (30:0.7)
	(150:0.7) -- (190:1.2) -- (230:1.2);

\node at (50:0.6) {\small $\alpha$};
\node at (90:0.25) {\small $\beta$};
\node at (75:1) {\small $\gamma$};
\node at (130:0.6) {\small $\delta$};
\node at (105:1) {\small $\epsilon$};

\node at (10:0.6) {\small $\alpha$};
\node at (-17:1) {\small $\beta$};
\node at (-30:0.25) {\small $\gamma$};
\node at (-45:1) {\small $\delta$};
\node at (-70:0.6) {\small $\epsilon$};

\node at (170:0.6) {\small $\beta$};
\node at (195:1) {\small $\alpha$};
\node at (210:0.25) {\small $\delta$};
\node at (225:1) {\small $\gamma$};
\node at (250:0.6) {\small $\epsilon$};

\end{scope}

\end{scope}

\end{tikzpicture}
\caption{$\{\alpha^3,\beta\gamma\delta,\delta\epsilon^2\}$: Subdivision tilings of  a triangular face.}
\label{special1C}
\end{figure}

We conclude that, up to the symmetry of rotation and flip, Figure \ref{special1C} gives all the possible pentagonal subdivisions of a triangular face of the octahedron or icosahedron. We may use the fact that all non-$\bullet$-vertices have degree $3$ to determine how adjacent triangular faces are glued together. For example, if we glue one $n(\alpha^3)$ to the bottom of a non-flipped $n(\beta\gamma\delta)$, then $n(\alpha^3)$ is also non-flipped. Such argument shows that, if one triangular face is not flipped, then all triangular faces are not flipped. Therefore all the pentagonal subdivisions of triangular faces in the subsequence discussion are exactly the ones in Figure \ref{special1C}, i.e., all are not flipped. 

For $n(\beta\gamma\delta)$, we also use $b$-edges to determine the adjacent triangular faces. For example, to the left or right of $n(\beta\gamma\delta)$, we can only glue another $n(\beta\gamma\delta)$. 

Each vertex $V$ of the octahedron or icosahedron has a neighbourhood $N(V)$ consisting of $4$ or $5$ triangular faces. Each face is either $n(\alpha^3)$ or $n(\beta\gamma\delta)$. Moreover, by the AVC, we know $V=\beta^2\gamma^2,\beta\gamma\epsilon^2,\epsilon^4$ for $f=24$, and $V=\beta^2\gamma^2\epsilon,\beta\gamma\epsilon^3,\epsilon^5$ for $f=60$. This gives us all the possible $N(V)$ in Figure \ref{special1D}. We remark that $N(\epsilon^4)$ and $N(\epsilon^5)$ are the extended neighbourhood tilings in 
Figure \ref{e-nd}, and $N'(\epsilon^4)$ and $N'(\epsilon^5)$ are the flips of $N(\epsilon^4)$ and $N(\epsilon^5)$. 
 
\begin{figure}[htp]
\centering
\begin{tikzpicture}[>=latex,scale=1]

%% degree 4 

\foreach \b in {0,...,3}
\foreach \a in {0,...,3}
{
\begin{scope}[xshift=3*\b cm,rotate=90*\a]

\draw
	(0,0) -- (-0.9,0.9) -- (0.9,0.9);

\fill
	(0,0) circle (0.1)
	(0.9,0.9) circle (0.1);

\end{scope}
}

% N(b^2c^2)

\foreach \a in {1,-1}
{
\begin{scope}[scale=\a]

\draw[line width=1.5]
	(-0.3,-0.9) -- (0.9,-0.9) -- (0.9,0.3)
	(-0.6,-0.6) -- (0.6,0.6);
	
\node at (0.3,0) {\small $\gamma$};
\node at (0,0.3) {\small $\beta$};
\node at (0.4,-0.7) {\small $\gamma$};
\node at (0.7,-0.4) {\small $\beta$};
\node at (0.75,0.5) {\small $\epsilon$};
\node at (0.5,0.75) {\small $\epsilon$};

\end{scope}
}

\node at (0,-1.3) {$N(\beta^2\gamma^2)$};

% N(bce^2)

\begin{scope}[xshift=3cm]

\draw[line width=1.5]
	(0.9,-0.9) -- (0.9,0.3)
	(0,0) -- (0.6,0.6)
	(-0.9,0.9) -- (0.3,0.9);
	
\node at (0.3,0) {\small $\gamma$};
\node at (0,0.27) {\small $\beta$};
\node at (-0.4,0.7) {\small $\gamma$};
\node at (0.7,-0.4) {\small $\beta$};
\node at (0.75,0.5) {\small $\epsilon$};
\node at (0.5,0.75) {\small $\epsilon$};
\node at (-0.75,-0.5) {\small $\epsilon$};
\node at (-0.5,-0.75) {\small $\epsilon$};
\node at (-0.75,0.5) {\small $\epsilon$};
\node at (0.5,-0.75) {\small $\epsilon$};
\node at (-0.27,0) {\small $\epsilon$};
\node at (0,-0.27) {\small $\epsilon$};

\node at (0,-1.3) {$N(\beta\gamma\epsilon^2)$};

\end{scope}

% N(e^4)

\foreach \a in {0,1,2,3}
{
\begin{scope}[xshift=6cm, rotate=90*\a]
	
\node at (0.75,0.5) {\small $\epsilon$};
\node at (0.5,0.75) {\small $\epsilon$};
\node at (-0.27,0) {\small $\epsilon$};

\end{scope}
}

\node at (6,-1.3) {$N(\epsilon^4)$};

% N'(e^4)

\foreach \a in {0,1,2,3}
{
\begin{scope}[xshift=9cm, rotate=90*\a]

\draw[line width=1.5]
	(0.3,0.3) -- (0.9,0.9);
	
\node at (0.4,-0.7) {\small $\beta$};
\node at (0.7,-0.4) {\small $\gamma$};
\node at (-0.27,0) {\small $\epsilon$};

\end{scope}
}

\node at (9,-1.3) {$N'(\epsilon^4)$};

%% degree 5

\begin{scope}[yshift=-3cm]

\foreach \b in {0,...,3}
\foreach \a in {0,...,4}
{
\begin{scope}[xshift=3*\b cm,rotate=72*\a]

\draw
	(0,0) -- (18:1.2) -- (90:1.2);

\fill
	(0,0) circle (0.1)
	(18:1.2) circle (0.1);

\end{scope}
}

% b^2c^2e

\draw[line width=1.5]
	(140:1) -- (90:1.2) -- (40:1)
	(-54:1.2) -- (-4:1)
	(234:1.2) -- (184:1)
	(18:0.8) -- (0,0) -- (162:0.8);

\node at (54:0.3) {\small $\beta$};
\node at (126:0.3) {\small $\gamma$};
\node at (210:0.3) {\small $\beta$};
\node at (-24:0.3) {\small $\gamma$};
\node at (-90:0.25) {\small $\epsilon$};

\node at (78:0.85) {\small $\gamma$};
\node at (102:0.85) {\small $\beta$};
\node at (225:0.9) {\small $\gamma$};
\node at (-40:0.9) {\small $\beta$};

\foreach \a in {1,2,4}
\node at (100+72*\a:0.9) {\small $\epsilon$};

\foreach \a in {1,3,4}
\node at (80+72*\a:0.9) {\small $\epsilon$};

\node at (0,-1.4) {$N(\beta^2\gamma^2\epsilon)$};

% bce^3

\begin{scope}[xshift=3cm]

\draw[line width=1.5]
	(18:1.2) -- (67:1)
	(162:1.2) -- (113:1)
	(0,0) -- (90:0.8);

\node at (54:0.3) {\small $\gamma$};
\node at (126:0.3) {\small $\beta$};

\node at (150:0.85) {\small $\gamma$};
\node at (30:0.8) {\small $\beta$};

\foreach \a in {-1,0,1}
\node at (-90+72*\a:0.25) {\small $\epsilon$};

\foreach \a in {0,2,3,4}
\node at (80+72*\a:0.9) {\small $\epsilon$};

\foreach \a in {0,1,2,3}
\node at (100+72*\a:0.9) {\small $\epsilon$};

\node at (0,-1.4) {$N(\beta\gamma\epsilon^3)$};

\end{scope}

% e^5

\begin{scope}[xshift=6cm]

\foreach \a in {0,...,4}
{
\begin{scope}[rotate=72*\a]

\node at (54:0.25) {\small $\epsilon$};
\node at (80:0.9) {\small $\epsilon$};
\node at (100:0.9) {\small $\epsilon$};

\end{scope}
}

\node at (0,-1.4) {$N(\epsilon^5)$};

\end{scope}

% e^5

\begin{scope}[xshift=9cm]

\foreach \a in {0,...,4}
{
\begin{scope}[rotate=72*\a]

\draw[line width=1.5]
	(18:1.2) -- (18:0.4);

\node at (54:0.25) {\small $\epsilon$};

\end{scope}
}

\node at (30:0.85) {\small $\gamma$};
\node at (102:0.85) {\small $\gamma$};
\node at (176:0.85) {\small $\gamma$};
\node at (-40:0.9) {\small $\gamma$};
\node at (247:0.85) {\small $\gamma$};

\node at (4:0.9) {\small $\beta$};
\node at (78:0.85) {\small $\beta$};
\node at (147:0.85) {\small $\beta$};
\node at (223:0.85) {\small $\beta$};
\node at (-67:0.85) {\small $\beta$};

\node at (0,-1.4) {$N'(\epsilon^5)$};

\end{scope}

\end{scope}
	
\end{tikzpicture}
\caption{$\{\alpha^3,\beta\gamma\delta,\delta\epsilon^2\}$: Extended neighbourhood tilings.}
\label{special1D}
\end{figure}

All the vertices $\bullet$ on the boundary of $N(V)$ are vertices of the octahedron or icosahedron. Therefore they are the centers of the other $N(V')$. For example, the boundary vertices $\beta\thin\gamma\cdots$ of $N(\beta^2\gamma^2)$ must be the center of other two  $N(\beta^2\gamma^2)$. This adds more $\epsilon$ to the boundary vertices $\epsilon^2\cdots$ of $N(\beta^2\gamma^2)$. Therefore the boundary vertices $\epsilon^2\cdots=\epsilon^4$, and the lower left half and the upper right half of $N(\beta^2\gamma^2)$ are parts of two respective $N'(\epsilon^4)$, and the tiling is two $N'(\epsilon^4)$ glued together. Note that the pentagonal subdivision of the octahedron is two $N(\epsilon^4)$ glued together. Since $N'(\epsilon^4)$ is the flip of $N(\epsilon^4)$, the tiling we get from one $N(\beta^2\gamma^2)$ is obtained by flipping both halves of the pentagonal subdivision of the octahedron. This is the second tiling $T(4\beta^2\gamma^2,2\epsilon^4)$ in Figure \ref{subdivision_tiling_modify24}. 

Applying the similar argument to $N(\beta\gamma\epsilon^2)$, we find the lower left half of $N(\beta\gamma\epsilon^2)$ is part of $N(\epsilon^4)$, and upper right half of $N(\beta\gamma\epsilon^2)$ is part of $N'(\epsilon^4)$. Therefore the tiling is one $N(\epsilon^4)$ and one $N'(\epsilon^4)$ glued together. This is obtained by flipping one half of the pentagonal subdivision of the octahedron, and is the first tiling $T(4\beta\gamma\epsilon^2,2\epsilon^4)$ in Figure \ref{subdivision_tiling_modify24}. 

For $f=24$, it remains to consider the case of no  $N(\beta^2\gamma^2)$ and no $N(\beta\gamma\epsilon^2)$. In other words, the extended neighbourhoods are only $N(\epsilon^4)$ and $N'(\epsilon^4)$. This implies that $N'(\epsilon^4)$ cannot appear, and only $N(\epsilon^4)$ remains. Then the tiling is two $N(\epsilon^4)$ glued together. This is the pentagonal subdivision $T(6\epsilon^4)$ of the octahedron, or the third of Figure \ref{subdivision_tiling}. 

For $f=60$, by looking at the boundary vertex $\beta\thin\gamma\cdots$, we know that one $N(\beta^2\gamma^2\epsilon)$ must be paired with another $N(\beta^2\gamma^2\epsilon)$. This gives eight heavily shaded triangles in the first and second of Figure \ref{special1G}. Then only $N'(\epsilon^5)$ fits the three triangles at the two vertices $\epsilon^3\cdots$ on the boundary of the region. This adds four lightly shaded triangles above and below the region. The result is the hexagon in the middle. Then we have four vertices $\beta\gamma\epsilon\cdots$ on the boundary of the combined shaded region. If one of these is $\beta^2\gamma^2\epsilon$, then we have $N(\beta^2\gamma^2\epsilon)$ around the vertex, and it is easy to further derive the first tiling in Figure \ref{special1G}. If all four vertices are $\beta\gamma\epsilon^3$, then we have four $N(\beta\gamma\epsilon^3)$ around these vertices, and we can further get the second in Figure \ref{special1G}. 

The first of Figure \ref{special1G} has three non-overlapping $N'(\epsilon^5)$. The complement of these consists of five $n(\epsilon^3)$. If we flip all three $N'(\epsilon^5)$ to $N(\epsilon^5)$, then the tiling consists of twenty $n(\epsilon^3)$, which is the pentagonal subdivision of the icosahedron. Therefore the tiling is the triple flip modification of the pentagonal subdivision tiling. This is $T(6\beta^2\gamma^2\epsilon,3\beta\gamma\epsilon^3,3\epsilon^5)$, the fourth of Figure \ref{subdivision_tiling_modify60A} and the fourth of Figure \ref{subdivision_tiling_modify60B}.

Similarly, the second of Figure \ref{special1G} has two non-overlapping $N'(\epsilon^5)$, and the complement consists of ten $n(\epsilon^3)$. Therefore the tiling is the double flip modification of the pentagonal subdivision tiling. This is $T(2\beta^2\gamma^2\epsilon,6\beta\gamma\epsilon^3,4\epsilon^5)$, the third of Figure \ref{subdivision_tiling_modify60A} and the third of Figure \ref{subdivision_tiling_modify60B}.

Now we may assume that $\beta^2\gamma^2\epsilon$ is not a vertex. Then the tiling has no $N(\beta^2\gamma^2\epsilon)$. This implies that an $N(\beta\gamma\epsilon^3)$ must be paired with an $N'(\epsilon^5)$, and all five boundary vertices $\beta\gamma\cdots$ of $N'(\epsilon^5)$ are $\beta\gamma\epsilon^3$. The union of the five corresponding $N(\beta\gamma\epsilon^3)$ is a disk tiling ${\mc T}$ by fifteen triangles. See the third and fourth of Figure \ref{special1G}. 

The disk tiled by ${\mc T}$ is bounded by the shaded circle, and we have five vertices $\epsilon^3\cdots$ on the boundary. If one of the five is $\beta\gamma\epsilon^3$, then we find that all five should be $\beta\gamma\epsilon^3$. Therefore the whole tiling is completed by glueing one $N'(\epsilon^5)$ to ${\mc T}$. This is the third of Figure \ref{special1G}. If none of the five is $\beta\gamma\epsilon^3$, then all five are $\epsilon^5$. Therefore the whole tiling is completed by glueing one $N(\epsilon^5)$ to ${\mc T}$. This is the fourth of Figure \ref{special1G}.

The third of Figure \ref{special1G} is the flip modification according to the second of Figure \ref{mod60A}, and is $T(10\beta\gamma\epsilon^3,2\epsilon^5)$ in the second of Figure \ref{subdivision_tiling_modify60A}. The fourth of Figure \ref{special1G} is the flip modification according to the first of Figure \ref{mod60A}, and is $T(5\beta\gamma\epsilon^3,7\epsilon^5)$ in the first of Figure \ref{subdivision_tiling_modify60A}.  

\begin{figure}[htp]
\centering
\begin{tikzpicture}[>=latex,scale=1]

\foreach \a in {0,1}
\foreach \b in {1,-1}
\foreach \c in {1,-1}
{
\fill[gray!50,xshift=6*\a cm, xscale=\b, yscale=\c]
	(0,0) -- (0,0.9) -- (1.6,0.7) -- (1.6,0);

\fill[gray!20,xshift=6*\a cm, xscale=\b, yscale=\c]	
	(0,0.9) -- (0,1.6) -- (1.6,0.7);	
}

\foreach \a in {0,1}
{
\begin{scope}[xshift=6*\a cm]

\foreach \b in {1,-1}
\foreach \c in {1,-1}
{
\begin{scope}[xscale=\b, yscale=\c]

\draw
	(0,0) -- (0.8,0) -- (0,0.9) -- (1.6,0.7) -- (1.6,0)
	(0.8,0) -- (1.6,0.7)
	(0,0.9) -- (0,1.6) -- (1.6,0.7) -- (2.4,0)
	(0,0) ellipse (2.4 and 1.6);

\draw[line width=1.5]
	(0.8,0) -- (0.267,0.6)
	(0.533,0.833) -- (1.6,0.7)
	(0,1.6) -- (0,1.133);

\end{scope}
}

\foreach \b in {1,-1}
{
\begin{scope}[scale=\b]

\node at (-0.35,0.2) {\small $\gamma$};
\node at (0.35,0.17) {\small $\beta$};
\node at (0,0.7) {\small $\epsilon$};

\node at (0.8,0.25) {\small $\gamma$};
\node at (1.15,0.5) {\small $\beta$};
\node at (0.35,0.7) {\small $\epsilon$};

\node at (1.15,-0.55) {\small $\gamma$};
\node at (0.8,-0.25) {\small $\beta$};
\node at (0.35,-0.7) {\small $\epsilon$};

\node at (1.45,0.4) {\small $\epsilon$};
\node at (1.45,-0.4) {\small $\epsilon$};
\node at (1,0) {\small $\epsilon$};

\node at (0.8,0.95) {\small $\gamma$};
\node at (0.2,1.25) {\small $\beta$};
\node at (0.2,1) {\small $\epsilon$};

\node at (0.2,-1.3) {\small $\gamma$};
\node at (0.8,-1) {\small $\beta$};
\node at (0.2,-1) {\small $\epsilon$};

\end{scope}
}

\end{scope}
}

%%% 1

\draw
	(2.4,0) -- (2.8,0);

\foreach \b in {1,-1}
\draw[line width=1.5, yscale=\b]
	(1.6,0.7) -- ++(0.533,-0.467)
	(0,1.6) arc (90:30: 2.4 and 1.6)
	(-2.4,0) -- (-2.8,0);

% right

\node at (1.8,0.3) {\small $\gamma$};
\node at (1.8,-0.3) {\small $\beta$};
\node at (2.2,0) {\small $\epsilon$};

\node at (1.7,0.9) {\small $\beta$};
\node at (0.75,1.35) {\small $\gamma$};
\node at (2.2,0.35) {\small $\epsilon$};

\node at (1.7,-0.85) {\small $\gamma$};
\node at (0.75,-1.35) {\small $\beta$};
\node at (2.2,-0.35) {\small $\epsilon$};

\node at (0,1.8) {\small $\beta$};
\node at (2.55,0.15) {\small $\epsilon$};
\node at (2.55,-0.15) {\small $\epsilon$};

% left

\node at (-1.75,0.35) {\small $\epsilon$};
\node at (-1.75,-0.35) {\small $\epsilon$};
\node at (-2.2,0) {\small $\epsilon$};

\node at (-1.65,0.85) {\small $\epsilon$};
\node at (-0.75,1.35) {\small $\epsilon$};
\node at (-2.2,0.35) {\small $\epsilon$};

\node at (-1.65,-0.85) {\small $\epsilon$};
\node at (-0.75,-1.35) {\small $\epsilon$};
\node at (-2.2,-0.35) {\small $\epsilon$};

\node at (0,-1.8) {\small $\gamma$};
\node at (-2.55,0.2) {\small $\gamma$};
\node at (-2.55,-0.25) {\small $\beta$};

\node at (0,-2.2) {$T(6\beta^2\gamma^2\epsilon, 3\beta\gamma\epsilon^3,3\epsilon^5)$};

%%% 2

\foreach \b in {1,-1}
\foreach \c in {1,-1}
{
\begin{scope}[xshift=6cm, xscale=\b, yscale=\c]

\draw
	(2.4,0) -- (2.8,0);

\node at (1.65,0.85) {\small $\epsilon$};
\node at (0.75,1.35) {\small $\epsilon$};
\node at (2.2,0.35) {\small $\epsilon$};

\node at (1.75,0.35) {\small $\epsilon$};
\node at (1.75,-0.35) {\small $\epsilon$};
\node at (2.2,0) {\small $\epsilon$};

\node at (2.55,0.15) {\small $\epsilon$};
\node at (0,1.75) {\small $\epsilon$};

\end{scope}
}

\node at (6,-2.2) {$T(2\beta^2\gamma^2\epsilon, 6\beta\gamma\epsilon^3,4\epsilon^5)$};

%%% 3,4,5

\begin{scope}[xshift=-1.2cm, yshift=-4.5cm]

\foreach \b in {0,1}
\draw[gray!50, line width=3, xshift=4.2*\b cm]
	(0,0) circle (1.6);

\foreach \a in {0,...,4}
{

\foreach \b in {0,1,2}
{
\begin{scope}[xshift=4.2*\b cm, rotate=72*\a]

\draw
	(0,0) -- (18:1) -- (90:1)
	(90:1) -- (54:1.6) -- (18:1)
	(0,0) circle (1.6)
	(54:1.6) -- (54:2);

\node at (54:0.25) {\small $\epsilon$};
\node at (34:1) {\small $\epsilon$};
\node at (74:1) {\small $\epsilon$};
\node at (54:1.4) {\small $\epsilon$};
\node at (18:1.15) {\small $\epsilon$};
\node at (40:1.45) {\small $\epsilon$};
\node at (66:1.45) {\small $\epsilon$};

\end{scope}
}	

% 3

\begin{scope}[rotate=72*\a]

\draw[line width=1.5]
	(18:0.333) -- (18:1)
	(54:1.6) -- (54:1.85);

\node at (74:0.65) {\small $\beta$};
\node at (32:0.65) {\small $\gamma$};
\node at (48:1.8) {\small $\gamma$};
\node at (61:1.8) {\small $\beta$};

\end{scope}

\node at (0,-2.3) {$T(10\beta\gamma\epsilon^3,2\epsilon^5)$};

% 4

\begin{scope}[xshift=4.2cm, rotate=72*\a]
	
\draw[line width=1.5]
	(18:0.333) -- (18:1);

\node at (74:0.65) {\small $\beta$};
\node at (32:0.65) {\small $\gamma$};
\node at (48:1.75) {\small $\epsilon$};
\node at (60:1.75) {\small $\epsilon$};

\end{scope}

\node at (4.2,-2.3) {$T(5\beta\gamma\epsilon^3,7\epsilon^5)$};

% 5

\begin{scope}[xshift=8.4cm, rotate=72*\a]

\node at (78:0.7) {\small $\epsilon$};
\node at (28:0.7) {\small $\epsilon$};
\node at (48:1.75) {\small $\epsilon$};
\node at (60:1.75) {\small $\epsilon$};

\end{scope}

\node at (8.4,-2.3) {$T(12\epsilon^5)$};

}

\end{scope}

\end{tikzpicture}
\caption{$\{\alpha^3,\beta\gamma\delta,\delta\epsilon^2\}$: Five tilings for $f=60$.}
\label{special1G}
\end{figure}

It remains to consider the case all vertices are $\epsilon^5$. This is the fifth of Figure \ref{special1G}, and is the pentagonal subdivision $T(12\epsilon^5)$ of the icosahedron, or the fifth of Figure \ref{subdivision_tiling}.

\section{Tiling with Vertices $\alpha^3,\beta\gamma\delta,\delta^2\epsilon$}
\label{special2tiling}

\begin{proposition}\label{special2}
Tilings of the sphere by congruent non-symmetric pentagons in Figure \ref{pentagon}, such that $\alpha^3,\beta\gamma\delta$ are vertices and $2\delta+\epsilon=2\pi$, are the following:
\begin{enumerate}
\item $f=24$: $T(6\epsilon^4)$. The pentagon for the tiling is unique.
\item $f=60$: $T(12\epsilon^5)$, 
$T(5\beta\gamma\epsilon^2,5\delta\epsilon^3,7\epsilon^5)$, $T(10\beta\gamma\epsilon^2,10\delta\epsilon^3,2\epsilon^5)$, 
\newline
$T(10\beta\gamma\epsilon^2,6\delta\epsilon^3,4\epsilon^5)$, $T(15\beta\gamma\epsilon^2,3\delta\epsilon^3,3\epsilon^5)$. The pentagon for the five tilings is the same unique one.
\end{enumerate}
\end{proposition}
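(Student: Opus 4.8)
The plan is to reproduce the three-step scheme of Proposition \ref{special1}, since the present hypotheses differ only in that $\delta+2\epsilon=2\pi$ is replaced by $2\delta+\epsilon=2\pi$: first fix the angles and derive the AVC, then compute the pentagon by spherical trigonometry, and finally assemble every tiling from neighborhood tilings of a platonic subdivision. The equations $3\alpha=2\pi$, $\beta+\gamma+\delta=2\pi$, $2\delta+\epsilon=2\pi$, and the pentagon angle sum $\alpha+\beta+\gamma+\delta+\epsilon=(3+\tfrac{4}{f})\pi$ immediately give
\[
\alpha=\tfrac{2}{3}\pi,\quad
\beta+\gamma=(\tfrac{7}{6}+\tfrac{2}{f})\pi,\quad
\delta=(\tfrac{5}{6}-\tfrac{2}{f})\pi,\quad
\epsilon=(\tfrac{1}{3}+\tfrac{4}{f})\pi,
\]
and non-symmetry together with $f>12$ forces $\delta>\epsilon$, hence $\beta<\gamma$ by Lemma \ref{geometry1}.

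The decisive structural difference from Proposition \ref{special1} is that here $\delta<\pi$ for \emph{every} $f$, so $\delta$ may occur twice at a vertex and $\delta^2\epsilon$ is a genuine degree $3$ vertex, whereas $\delta^2\cdots$ was excluded in Proposition \ref{special1}. I would therefore re-run the AVC analysis: enumerate the admissible $\delta\cdots$ vertices and prune the spurious ones using the Parity Lemma, the Balance Lemma, Lemmas \ref{klem2} and \ref{klem3}, and adjacent angle deduction, in close analogy with the elimination of $\alpha\beta^2\delta$ and $\beta^2\delta\epsilon$ there. I also expect an intermediate $f=36$ case to survive the combinatorics, now carried by the vertex $\alpha\epsilon^3$ (whose angle sum is $2\pi$ precisely when $f=36$, just as $\alpha\beta\gamma\epsilon$ was for Proposition \ref{special1}), which must then be killed by a neighborhood/propagation argument. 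The target output is the AVC $\{\alpha^3,\beta\gamma\delta,\delta^2\epsilon,\epsilon^4\}$ for $f=24$ and $\{\alpha^3,\beta\gamma\delta,\delta^2\epsilon,\beta\gamma\epsilon^2,\delta\epsilon^3,\epsilon^5\}$ for $f=60$.

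In the second step the reduction $a^2b^2c\to a^3b^2$ together with $2\delta+\epsilon=2\pi$ removes the one remaining free parameter of the pentagonal subdivision and determines the pentagon; for $f=60$ this realizes the complementary trisection choice \eqref{special2_60A} indicated by the dashed construction in the third panel of Figure \ref{calculation_division}. Following Proposition \ref{special1}, I would compute $a$, then $b$, then the exact value of $\gamma$ from the sine and cosine laws and \cite[Lemma 3]{ay1}, and use these explicit values to discard any competing ``special'' angle solution thrown up in the first step, exactly as \eqref{special1_24B} and \eqref{special1_60B} were ruled out in favor of \eqref{special1_24A} and \eqref{special1_60A}. In the third step I would first exclude a $3^5$-tile by testing its neighborhood against the AVC (as in Figure \ref{special1B}), so that Lemma \ref{basic} forces the tiling to be combinatorially the pentagonal subdivision of the octahedron ($f=24$) or icosahedron ($f=60$); the tilings then assemble from the two ways of tiling a subdivided triangle and from the analogous neighborhood tilings built around the degree $4$ and $5$ vertices ($N(\epsilon^4)$ for $f=24$; $N(\epsilon^5),N(\beta\gamma\epsilon^2),N(\delta\epsilon^3)$ for $f=60$), exactly as in Figures \ref{special1C}--\ref{special1G}, and enumerating all orientation-compatible gluings yields the listed tilings. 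That $f=24$ admits only $T(6\epsilon^4)$ is then transparent from the AVC: the twisting vertices $\beta^2\gamma^2$ and $\beta\gamma\epsilon^2$ have angle sum exceeding $2\pi$ when $f=24$, so no twist is available, whereas for $f=60$ the vertices $\beta\gamma\epsilon^2$ and $\delta\epsilon^3$ exist and support the four non-subdivision tilings.

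The hardest part will be the first step. Because $\delta<\pi$ enlarges the pool of admissible $\delta$-rich vertices, proving that the AVC is both complete and minimal is longer and more delicate than in Proposition \ref{special1}, and the elimination of the surviving $f=36$ case --- together with the trigonometric consistency check feeding back from the second step --- is where the genuine bookkeeping lies.
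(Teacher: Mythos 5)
Your Steps 1 and 2 are broadly in the right spirit, but two things in Step 1 are off. First, $\alpha\epsilon^3$ can never be a vertex: $\alpha$ is the $b^2$-angle and $\epsilon$ an $a^2$-angle, so they cannot be adjacent at a vertex, and in fact no $f=36$ case arises anywhere in this proposition --- the effort you budget for killing it is misdirected. Second, the three degree $3$ vertices $\alpha^3,\beta\gamma\delta,\delta^2\epsilon$ impose no condition on $f$ (the angle sum at $\delta^2\epsilon$ is automatic), so pinning down $f\in\{24,60\}$ already requires excluding the $3^5$-tile and invoking Lemma \ref{basic} to produce a degree $4$ or $5$ vertex $H$; that argument cannot be deferred to Step 3 as you propose.

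The genuine gap is in Step 3. For $f=60$ the AVC you correctly predict contains the degree $4$ vertices $\beta\gamma\epsilon^2$ and $\delta\epsilon^3$, so tiles may be $3^44$-tiles and Lemma \ref{basic} does \emph{not} force the tiling to be combinatorially the pentagonal subdivision of the icosahedron. Indeed four of the five $f=60$ tilings in the statement have degree $4$ vertices (e.g.\ $T(5\beta\gamma\epsilon^2,5\delta\epsilon^3,7\epsilon^5)$ has ten of them), hence a vertex-degree distribution different from that subdivision. Consequently the assembly scheme of Figures \ref{special1C}--\ref{special1G} --- two ways of subdividing each icosahedral triangle, then orientation-compatible gluing of vertex neighborhoods --- is simply unavailable, and there are no neighborhood tilings ``$N(\beta\gamma\epsilon^2)$'' or ``$N(\delta\epsilon^3)$'' of icosahedral vertices to enumerate. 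The paper's actual construction rests on different units: the forced three-tile cluster $n(\alpha^3)$ (available because every vertex with an $ab$-angle is $\beta\gamma\cdots$ with no further $ab$-angle), the two inequivalent local configurations $\beta\gamma\epsilon^2[1]$ and $\beta\gamma\epsilon^2[2]$ of the degree $4$ vertex, the $15$-tile extended neighborhood $N(\epsilon^5)$, and a ten-triangle ``ribbon'' whose two sides can each be capped in two ways; the case analysis on how many pairs of $\beta\gamma\epsilon^2[1]$ lie on the boundary of one $N(\epsilon^5)$ is what yields the five tilings. Your proposal contains no substitute for this, so the enumeration of the $f=60$ tilings --- the heart of the proposition --- is missing. (For $f=24$ your plan does go through, since $\epsilon^4$ is the only degree $>3$ vertex there.)
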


Similar to Proposition \ref{special1}, the condition $2\delta+\epsilon=2\pi$ is satisfied if $\delta^2\epsilon$ is a vertex, and ``the angle sum of $\delta^2\epsilon$'' in the proof never implies that $\delta^2\epsilon$ appears as a vertex. 

We remark that $\beta^2\cdots$ and $\gamma^2\cdots$ are not vertices in the tilings in the proposition.

The tilings $T(6\epsilon^4)$ and $T(12\epsilon^5)$ are the pentagonal subdivision tilings. The other tilings are the last two tilings in Figure \ref{subdivision_tiling_modify60A}, and the first two of Figure \ref{subdivision_tiling_modify60B}.

The proof is also divided into three steps.

\subsubsection*{Step 1: Determine Angles and AVC}

The angle sums of $\alpha^3,\beta\gamma\delta,\delta^2\epsilon$ and the angle sum for pentagon imply 
\[
\alpha=\tfrac{2}{3}\pi,\;
\beta+\gamma=(\tfrac{7}{6}+\tfrac{2}{f})\pi,\;
\delta=(\tfrac{5}{6}-\tfrac{2}{f})\pi,\;
\epsilon=(\tfrac{1}{3}+\tfrac{4}{f})\pi.
\]
By $f>12$, we get $\delta>\alpha>\epsilon$. By Lemma \ref{geometry1}, we get $\beta<\gamma$. By $\beta+\gamma=(\frac{7}{6}+\frac{2}{f})\pi$, we get $\gamma> (\frac{7}{12}+\frac{1}{f})\pi$.

By the parity lemma, we have $\alpha\gamma\cdots=\alpha\beta\gamma\cdots,\alpha\gamma^2\cdots$. By $\beta<\gamma$, we have $R(\alpha\gamma^2\cdots)<R(\alpha\beta\gamma\cdots)= (\frac{1}{6}-\frac{2}{f})\pi<\alpha,\gamma,\delta,\epsilon$. Therefore the remainders have only $\beta$. If one of the remainders is not empty, then $\beta\le (\frac{1}{6}-\frac{2}{f})\pi$. This implies $\gamma=(\frac{7}{6}+\frac{2}{f})\pi-\beta\ge \pi$, and $\gamma^2\cdots$ is not a vertex. Then $\alpha\gamma\cdots=\alpha\beta\gamma\cdots$, and the remainder has $\beta$. Therefore $\beta^2\cdots$ is a vertex, while $\gamma^2\cdots$ is not a vertex, contradicting the balance lemma. This proves that the remainders are empty, and we get $\alpha\gamma\cdots=\alpha\beta\gamma,\alpha\gamma^2$. Since the angle sum of $\alpha\beta\gamma$ further implies $f=12$, we get $\alpha\gamma\cdots=\alpha\gamma^2$. 

The angle sum of $\alpha\gamma^2$ further implies
\[
\alpha=\gamma=\tfrac{2}{3}\pi,\;
\beta=(\tfrac{1}{2}+\tfrac{2}{f})\pi,\;
\delta=(\tfrac{5}{6}-\tfrac{2}{f})\pi,\;
\epsilon=(\tfrac{1}{3}+\tfrac{4}{f})\pi.
\]
The unique AAD $\thin^{\epsilon}\gamma^{\alpha}\thick^{\beta}\alpha^{\gamma}\thick^{\alpha}\gamma^{\epsilon}\thin$ of $\alpha\gamma^2$ gives $\alpha\beta\cdots$. By $\alpha\gamma\cdots=\alpha\gamma^2$ and the parity lemma, we get $\alpha\beta\cdots=\alpha\beta^2\cdots$. By $0\ne R(\alpha\beta^2\cdots)=(\tfrac{1}{3}-\tfrac{4}{f})\pi<$ all angles, we get a contradiction. This proves that $\alpha\gamma\cdots$ is not a vertex.

Consider a vertex $\gamma^2\cdots$. By no $\alpha\gamma\cdots$, the vertex has no $\alpha$. By $\beta<\gamma$, $\beta+\gamma>\pi$, and the parity lemma, we know $R(\gamma^2\cdots)$ has no $\beta,\gamma$. By $\beta\gamma\delta$ and $\beta<\gamma$, we have $R(\gamma^2\cdots)<\delta<3\epsilon$. This implies $\gamma^2\cdots=\gamma^2\epsilon,\gamma^2\epsilon^2$.

If $\gamma^2\epsilon$ is a vertex, then the angle sum of $\gamma^2\epsilon$ further implies
\[
\alpha=\tfrac{2}{3}\pi,\;
\beta=\epsilon=(\tfrac{1}{3}+\tfrac{4}{f})\pi,\;
\gamma=\delta=(\tfrac{5}{6}-\tfrac{2}{f})\pi.
\]
By the balance lemma, we know $\beta^2\cdots$ is a vertex. If the vertex has $\alpha$, then $0\ne R(\alpha\beta^2\cdots)=(\tfrac{2}{3}-\tfrac{8}{f})\pi<\alpha,2\beta,\gamma,\delta,2\epsilon$. By the parity lemma, we get $\alpha\beta^2\cdots=\alpha\beta^2\epsilon$. We have the AAD $\thin^{\delta}\beta^{\alpha}\thick^{\beta}\alpha^{\gamma}\thick^{\alpha}\beta^{\delta}\thin\cdots$ at $\alpha\beta^2\epsilon$, contradicting no $\alpha\gamma\cdots$. Therefore $\beta^2\cdots$ has no $\alpha$. By the values of $\beta,\gamma$, and the parity lemma, we then get $\beta^2\cdots=\beta^3\gamma\cdots,\beta^4\cdots,\beta^2\cdots$, with only $\delta,\epsilon$ in the remainders. By $R(\beta^3\gamma\cdots)=(\tfrac{1}{6}-\tfrac{10}{f})\pi<\delta,\epsilon$, we get $\beta^3\gamma\cdots=\beta^3\gamma$. By $R(\beta^4\cdots)=(\tfrac{2}{3}-\tfrac{16}{f})\pi<\delta,2\epsilon$, we get $\beta^4\cdots=\beta^4,\beta^4\epsilon$. By $R(\beta^2\cdots)=(\tfrac{4}{3}-\tfrac{8}{f})\pi<2\delta,4\epsilon,\delta+2\epsilon$, we get $\beta^2\cdots=\beta^2\delta,\beta^2\delta\epsilon,\beta^2\epsilon,\beta^2\epsilon^2,\beta^2\epsilon^3$. The angle sum of any of $\beta^2\delta,\beta^2\epsilon$ implies $f=12$, the angle sum of any of $\beta^4,\beta^2\epsilon^2$ implies $f=24$, and the angle sum of any of $\beta^3\gamma,\beta^4\epsilon,\beta^2\delta\epsilon,\beta^2\epsilon^3$ implies $f=60$. Then we dismiss the case $f=12$ and get the corresponding angles
\begin{align}
f=24 &\colon
\alpha=\tfrac{2}{3}\pi,\;
\beta=\epsilon=\tfrac{1}{2}\pi,\;
\gamma=\delta=\tfrac{3}{4}\pi;  \label{special2_24B} \\
f=60 &\colon
\alpha=\tfrac{2}{3}\pi,\;
\beta=\epsilon=\tfrac{2}{5}\pi,\;
\gamma=\delta=\tfrac{4}{5}\pi. \label{special2_60B}
\end{align}

If $\gamma^2\epsilon^2$ is a vertex, then the angle sum of $\gamma^2\epsilon^2$ further implies
\[
\alpha=\tfrac{2}{3}\pi,\;
\beta=(\tfrac{1}{2}+\tfrac{6}{f})\pi,\;
\gamma=(\tfrac{2}{3}-\tfrac{4}{f})\pi,\;
\delta=(\tfrac{5}{6}-\tfrac{2}{f})\pi,\;
\epsilon=(\tfrac{1}{3}+\tfrac{4}{f})\pi.
\]
By $\beta<\gamma$, we get $f>60$. By the balance lemma, we know $\beta^2\cdots$ is a vertex. We have 
$R(\beta^2\cdots)=(1-\frac{12}{f})\pi<\alpha+\epsilon,2\beta,\delta+\epsilon,3\epsilon$. By $f>60$, we also have $R(\beta^2\cdots)>\delta,2\epsilon$. By $\beta<\gamma$, $R(\beta^2\cdots)<2\beta$, and the parity lemma, we know $R(\beta^2\cdots)$ has no $\beta,\gamma$. By $0<R(\alpha\beta^2\cdots)=(\frac{1}{3}-\frac{12}{f})\pi<\alpha,\delta,\epsilon$, we know $\beta^2\cdots$ has no $\alpha$. Therefore $R(\beta^2\cdots)$ has only $\delta,\epsilon$. Then by $\delta>\epsilon$ and $\delta,2\epsilon<R(\beta^2\cdots)<\delta+\epsilon,3\epsilon$, we conclude $\beta^2\cdots$ is not a vertex. By the balance lemma, this implies $\gamma^2\epsilon^2$ is not a vertex. 

Finally, it remains to consider the case that $\gamma^2\cdots$ is not a vertex. By the balance lemma, a vertex involving $\beta,\gamma$ is $\beta\gamma\cdots$, with no $\beta,\gamma$ in the remainder. Then by no $\alpha\gamma\cdots$, we know $R(\beta\gamma\cdots)$ has only $\delta,\epsilon$. Then by $\epsilon<R(\beta\gamma\cdots)=\delta<3\epsilon$, we conclude that $\beta\gamma\delta,\beta\gamma\epsilon^2$ are the only vertices involving $\beta,\gamma$. This implies no $\beta\thin\gamma\cdots$. Moreover, by $\delta^2\epsilon$ and $\delta>\epsilon$, the only degree $3$ vertex not involving $b$-edge is $\delta^2\epsilon$. Therefore the only degree $3$ vertices are $\alpha^3,\beta\gamma\delta,\delta^2\epsilon$.

If both vertices $\gamma\cdots,\epsilon\cdots$ of a tile have degree $3$, then $\gamma\cdots=\beta\gamma\delta$, and $\epsilon\cdots=\delta^2\epsilon$. This implies two $\delta$ in the adjacent tile across the $\gamma\epsilon$-edge. The contradiction implies that there is no $3^5$-tile. 

By Lemma \ref{basic} and no $3^5$-tile, the tiling has a vertex $H$ of degree $4$ or $5$. If $H$ has $\beta,\gamma$, which we know are $\beta\gamma\delta,\beta\gamma\epsilon^2$, then $H=\beta\gamma\epsilon^2$. If $H$ has no $\beta,\gamma$, then by $\alpha^3$ and Lemma \ref{klem4}, we know $H=\delta^k\epsilon^l$. Then by the values of $\delta,\epsilon$, we get $H=\delta\epsilon^3,\epsilon^4,\epsilon^5$. The angle sum of $\epsilon^4$ implies $f=24$, and the angle sum of and of  $\beta\gamma\epsilon^2,\delta\epsilon^3,\epsilon^5$ implies $f=60$. Then we get the corresponding angles, and further get the AVCs by using $\beta\gamma\delta,\beta\gamma\epsilon^2$ being the only vertices involving $\beta,\gamma$
\begin{align}
f=24 &\colon
\alpha=\tfrac{2}{3}\pi,\;
\beta+\gamma=\tfrac{5}{4}\pi,\;
\delta=\tfrac{3}{4}\pi,\;
\epsilon=\tfrac{1}{2}\pi,  \nonumber \\
&\text{AVC}
=\{\alpha^3,\beta\gamma\delta,\delta^2\epsilon,\epsilon^4\}; \label{special2_24A} \\
f=60 &\colon
\alpha=\tfrac{2}{3}\pi,\;
\beta+\gamma=\tfrac{6}{5}\pi,\;
\delta=\tfrac{4}{5}\pi,\;
\epsilon=\tfrac{2}{5}\pi, \nonumber \\
&\text{AVC}
=\{\alpha^3,\beta\gamma\delta,\delta^2\epsilon,\beta\gamma\epsilon^2,\delta\epsilon^3,\epsilon^5\}. \label{special2_60A}
\end{align}

\subsubsection*{Step 2: Calculate the Pentagon}

The calculation is the same as the second step of the proof of Proposition \ref{special1}. The idea is that any pentagon satisfying \eqref{special2_24A} or \eqref{special2_60A} is the tile of a pentagonal subdivision tiling. Moreover, \eqref{special2_24B} and \eqref{special2_60B} are special cases of \eqref{special2_24A} and \eqref{special2_60A}.

For \eqref{special2_24A}, we may use the idea for \eqref{special1_60A} (\eqref{special1_24A} is too special because $B$ and $D$ exactly trisect the edge $l$) to get 
\[
\cos^3a+(3-2\sqrt{2})\cos^2a+(5-4\sqrt{2})\cos a-3+2\sqrt{2}=0, \quad
a=0.1973\pi.
\]
Then we get 
\[
\cos b
=\sqrt{-\tfrac{2}{3}\cos^2a+\tfrac{2}{3}(\sqrt{2}-1)\cos a+1},\quad
b= 0.1640\pi,
\]
and
\[
\tfrac{1}{\sqrt{3}}
=\cos a\cos b
+\sin a\sin b \cos\gamma,\quad
\gamma= 0.7051\pi.
\]
This implies $\gamma\ne\frac{3}{4}\pi$. Therefore \eqref{special2_24B} is impossible, and we only need to consider \eqref{special2_24A} and the associated AVC. We do not calculate the exact values, because we will prove in the third step that the pentagon only leads to the ordinary pentagonal subdivision tiling.

For \eqref{special2_60A}, we note that the sum of $\delta=\frac{4}{5}\pi$ in \eqref{special2_60A} and $\delta=\frac{6}{5}\pi$ in \eqref{special1_60A} is $2\pi$. This means that the pentagon for \eqref{special1_60A} and the pentagon for \eqref{special2_60A} are the companions as defined in \cite[Section 3.1]{wy1}. The calculation has been done in the second step of the proof of Proposition \ref{special1}, with $a,b,\alpha,\beta,\gamma,\delta,\epsilon$ labeled $a,b',\alpha',\beta',\gamma',\delta',\epsilon'$ in the earlier calculation. In particular, we have $\beta=0.46882\pi\ne\frac{2}{5}\pi$ here. Therefore \eqref{special2_60B} is impossible, and we only need to consider \eqref{special2_60A} and the associated AVC.

\subsubsection*{Step 3: Construct the Tiling}

We construct tilings for \eqref{special2_24A} and  \eqref{special2_60A}. The AVC says that the only vertices involving $\beta,\gamma$ are $\beta\gamma\delta,\beta\gamma\epsilon^2$ (and $\beta\gamma\epsilon^2$ is not a vertex for \eqref{special2_24A}). This implies that the neighbourhood of $\alpha^3$ is given by $n(\alpha^3)$ in Figure \ref{special1C}. By $\alpha\cdots=\alpha^3$, any one tile in an $n(\alpha^3)$ determines all three tiles.

For \eqref{special2_24A}, we consider the neighbourbhood of $\epsilon^4$ in the first of Figure \ref{special2C}. We may assume $T_1$ is arranged as indicated. Then $T_1$ determines one $n(\alpha^3)$ consisting of $T_1,T_5,T_6$. We have $\beta_6\gamma_1\cdots=\beta_6\gamma_1\delta_2$. Then $\delta_2,\epsilon_2$ determine $T_2$, and further determine another $n(\alpha^3)$, including $T_7,T_8$. Repeating the argument with $T_2$ in place of $T_1$, we determine $T_3$ and get the third $n(\alpha^3)$. Repeating again, we determine $T_4$ and get the fourth $n(\alpha^3)$. The four $n(\alpha^3)$ form an extended neighborhood $N(\epsilon^4)$ in Figure \ref{e-nd}. We also find four vertices $\epsilon^2\cdots=\epsilon^4$ on the boundary of the extended neighbourhood, such as $\epsilon_6\epsilon_7\cdots$ in the picture (and also in Figure \ref{e-nd}). The argument can then be repeated around the four new $\epsilon^4$. More repetitions of the argument gives the pentagonal subdivision $T(6\epsilon^4)$ of the octahedron.

\begin{figure}[htp]
\centering
\begin{tikzpicture}[>=latex,scale=1]

%% pent subdivision, f=24

\foreach \a in {0,...,3}
{
\begin{scope}[rotate=90*\a]

\draw
	(0,0) -- (0.8,0) -- (1.2,0.6) -- (0.6,1.2) -- (0,0.8);
	
\draw[line width=1.5]
	(0.8,0) -- (1.2,0.6) -- (0.6,1.2);

\node at (0.95,0.55) {\small $\alpha$};   % T1
\node at (0.6,0.95) {\small $\beta$};
\node at (0.7,0.2) {\small $\gamma$};
\node at (0.2,0.7) {\small $\delta$};
\node at (0.2,0.2) {\small $\epsilon$};
	
\end{scope}
}

\draw 
	(0.6,1.2) -- (0.6,2.2)
	(-1.2,0.6) -- (-1.8,0.6) -- (-1.8,1.8) --  (1.8,1.8) -- (1.8,-0.6) -- (1.2,-0.6);

\draw[line width=1.5]
	(-0.6,1.2) -- (-0.6,1.8)
	(1.2,0.6) -- (1.8,0.6);
	
\fill (0,0) circle (0.1);

\node[inner sep=1,draw,shape=circle] at (-0.55,0.55) {\small $1$};
\node[inner sep=1,draw,shape=circle] at (0.55,0.55) {\small $2$};
\node[inner sep=1,draw,shape=circle] at (0.55,-0.55) {\small $3$};
\node[inner sep=1,draw,shape=circle] at (-0.55,-0.55) {\small $4$};
\node[inner sep=1,draw,shape=circle] at (-1.3,1.3) {\small $5$};
\node[inner sep=1,draw,shape=circle] at (0,1.45) {\small $6$};
\node[inner sep=1,draw,shape=circle] at (1.3,1.3) {\small $7$};
\node[inner sep=1,draw,shape=circle] at (1.45,0) {\small $8$};

\node at (-0.8,1.6) {\small $\beta$};  % T5
\node at (-1.6,1.6) {\small $\delta$};
\node at (-0.8,1.3) {\small $\alpha$};
\node at (-1.3,0.8) {\small $\gamma$};
\node at (-1.6,0.8) {\small $\epsilon$};

\node at (0.4,1.65) {\small $\epsilon$};  % T6
\node at (-0.4,1.65) {\small $\gamma$};
\node at (0.4,1.3) {\small $\delta$};
\node at (-0.4,1.3) {\small $\alpha$};
\node at (0,1.05) {\small $\beta$};

\node at (0.8,1.65) {\small $\epsilon$};  % T7
\node at (1.6,1.6) {\small $\delta$};
\node at (0.8,1.3) {\small $\gamma$};
\node at (1.3,0.8) {\small $\alpha$};
\node at (1.6,0.8) {\small $\beta$};

\node at (1.6,-0.4) {\small $\epsilon$};  % T8
\node at (1.6,0.4) {\small $\gamma$};
\node at (1.3,-0.4) {\small $\delta$};
\node at (1.05,0) {\small $\beta$};
\node at (1.3,0.4) {\small $\alpha$};

\node at (0.4,2) {\small $\epsilon$};
\node at (0.8,2) {\small $\epsilon$};

%% e^2 and d^2

\begin{scope}[xshift=4cm]

\foreach \a in {1,-1}
{
\begin{scope}[xscale=\a]

\draw
	(0.6,1.2) -- (0,0.8) -- (0,0) -- (0.8,0);

\draw[dotted]
	(0.6,1.2) -- (0.6,1.8) -- (0,1.8);

\draw[line width=1.5]
	(0.8,0) -- (1.2,0.6) -- (0.6,1.2);

\node at (0.55,0.95) {\small $\beta$};  % P1
\node at (0.2,0.7) {\small $\delta$};
\node at (0.95,0.6) {\small $\alpha$};
\node at (0.2,0.2) {\small $\epsilon$};
\node at (0.7,0.2) {\small $\gamma$};

\end{scope}
}

\node at (0.4,1.3) {\small $\theta$};
\node at (-0.4,1.3) {\small $\rho$};

\node at (0,1) {\small $\epsilon$};

\node[inner sep=1,draw,shape=circle] at (0.55,0.55) {\small $1$};
\node[inner sep=1,draw,shape=circle] at (-0.55,0.55) {\small $2$};
\node[inner sep=1,draw,shape=circle] at (0,1.4) {\small $3$};

\end{scope}

\end{tikzpicture}
\caption{$\{\alpha^3,\beta\gamma\delta,\delta^2\epsilon\}$: Tiling for \eqref{special2_24A}, and the AAD of $\thin\epsilon\thin\epsilon\thin$.}
\label{special2C}
\end{figure}

For \eqref{special2_60A}, by no $\gamma^2\cdots$, the AAD of $\thin\epsilon\thin\epsilon\thin$ is $\thin^{\gamma}\epsilon^{\delta}\thin^{\gamma}\epsilon^{\delta}\thin$ or $\thin^{\gamma}\epsilon^{\delta}\thin^{\delta}\epsilon^{\gamma}\thin$. The second of Figure \ref{special2C} describes the AAD $\thin^{\gamma}\epsilon^{\delta}\thin^{\delta}\epsilon^{\gamma}\thin$. We have $\delta_1\delta_2\cdots=\delta_1\delta_2\epsilon_3$. Then by $\beta\cdots=\beta\gamma\delta^2,\beta\gamma\epsilon$, we get $\theta,\rho=\delta,\epsilon$. Then $T_3$ has three $a^2$-angles, a contradiction. Therefore we have the unique AAD $\thin^{\gamma}\epsilon^{\delta}\thin^{\gamma}\epsilon^{\delta}\thin$.

By the unique AAD $\thin^{\gamma}\epsilon^{\delta}\thin^{\gamma}\epsilon^{\delta}\thin$ and no $\beta\thin\gamma\cdots$, we get the unique AAD $\thin^{\beta}\delta^{\epsilon}\thin^{\gamma}\epsilon^{\delta}\thin^{\gamma}\epsilon^{\delta}\thin^{\gamma}\epsilon^{\delta}\thin$ of $\delta\epsilon^3$. The unique AAD $\thin^{\gamma}\epsilon^{\delta}\thin^{\gamma}\epsilon^{\delta}\thin$ also implies that the AAD of $\beta\gamma\epsilon^2$ is $\thick^{\alpha}\beta^{\delta}\thin^{\delta}\epsilon^{\gamma}\thin^{\delta}\epsilon^{\gamma}\thin^{\epsilon}\gamma^{\alpha}\thick$ or $\thick^{\alpha}\beta^{\delta}\thin^{\gamma}\epsilon^{\delta}\thin^{\gamma}\epsilon^{\delta}\thin^{\epsilon}\gamma^{\alpha}\thick$. See the left of Figure \ref{special2D}. We label the two AADs by $\beta\gamma\epsilon^2[1]$ and $\beta\gamma\epsilon^2[2]$, and indicate them by $\bullet$ and $\circ$.

\begin{figure}[htp]
\centering
\begin{tikzpicture}[>=latex,scale=1]

%% \beta\gamma\epsilon^2

\begin{scope}[shift={(-6.5cm,2.2cm)}]

\foreach \b in {0,1}
{
\begin{scope}[yshift=-2.8*\b cm]

\foreach \a in {0,...,3}
\draw[rotate=90*\a]
	(0,0) -- (0.8,0) -- (1.2,0.6) -- (0.6,1.2) -- (0,0.8);

\draw[line width=1.5]
	(0.6,1.2) -- (1.2,0.6) -- (0.8,0)
	(-1.2,0.6) -- (-0.6,1.2) -- (0,0.8)
	(0.6,-1.2) -- (0,-0.8) -- (-0.6,-1.2)
	(0,-0.8) -- (0,0);

\node at (0.55,0.95) {\small $\beta$};  % T1
\node at (0.2,0.7) {\small $\delta$};
\node at (0.2,0.2) {\small $\epsilon$};
\node at (0.95,0.55) {\small $\alpha$};
\node at (0.7,0.2) {\small $\gamma$};

\node at (-0.6,0.95) {\small $\alpha$};  % T2
\node at (-0.2,0.7) {\small $\gamma$};
\node at (-0.2,0.2) {\small $\epsilon$};
\node at (-0.95,0.6) {\small $\beta$};
\node at (-0.7,0.2) {\small $\delta$};

\end{scope}
}

% type 1 nd of bce^2

\fill (0,0) circle (0.1);

\node at (-0.55,-0.95) {\small $\gamma$};  % T3
\node at (-0.2,-0.7) {\small $\alpha$};
\node at (-0.2,-0.2) {\small $\beta$};
\node at (-0.97,-0.6) {\small $\epsilon$};
\node at (-0.7,-0.2) {\small $\delta$};

\node at (0.6,-0.9) {\small $\beta$};  % T4
\node at (0.2,-0.7) {\small $\alpha$};
\node at (0.2,-0.2) {\small $\gamma$};
\node at (0.97,-0.55) {\small $\delta$};
\node at (0.75,-0.2) {\small $\epsilon$};

\node at (1.8,0) {$\beta\gamma\epsilon^2[1]$};

% type 2 nd of bce^2

\begin{scope}[yshift=-2.8cm]

\filldraw[fill=white] (0,0) circle (0.1);

\node at (0.55,-0.95) {\small $\gamma$};  % P1
\node at (0.2,-0.7) {\small $\alpha$};
\node at (0.2,-0.2) {\small $\beta$};
\node at (0.97,-0.6) {\small $\epsilon$};
\node at (0.7,-0.2) {\small $\delta$};

\node at (-0.6,-0.9) {\small $\beta$};  % P2
\node at (-0.2,-0.7) {\small $\alpha$};
\node at (-0.2,-0.2) {\small $\gamma$};
\node at (-0.97,-0.55) {\small $\delta$};
\node at (-0.75,-0.2) {\small $\epsilon$};
	
\node at (1.8,0) {$\beta\gamma\epsilon^2[2]$};

\end{scope}

\end{scope}

%% beyond \beta\gamma\epsilon^2

\draw[gray!50, line width=3]
	(-1.2,3.2) -- (-3.1,3.2) -- (-3.1,0.6) -- (-1.7,0.6) -- (-1.3,0) -- (1.3,0) -- (1.7,0.6) -- (2.4,1.5) -- (3.1,1.5) -- (3.1,2.5) -- (1.8,3.2) -- (1.8,3.9) -- (-1.2,3.9) -- cycle
	(-1.3,0) -- (-1.7,-0.6) -- (-2.4,-1.5) -- (-3.1,-1.5) -- ++(0,-0.3)
	(1.3,0) -- (1.7,-0.6) -- (3.1,-0.6) -- ++(0,-0.3);

\foreach \a in {-1,1}
{
\begin{scope}[scale=\a]

\draw
	(-0.6,-0.7) -- (-0.6,0.7)
	(0,0) -- (-1.3,0) -- (-1.7,0.6) -- (-1.2,1.1) -- (-0.6,0.7) -- (-0,1.1) -- (0.6,0.7) -- (1.2,1.1) -- (1.7,0.6) -- (1.3,0)
	(0,1.1) -- (0,1.8)
	(-1.2,1.1) -- (-1.2,1.8)
	(1.2,1.1) -- (1.2,1.8) 
	(2.4,1.5) -- (1.7,1.8) -- (1.2,1.8) -- (-2.4,1.8) -- (-2.4,-0.6)
	(1.7,0.6) -- (3.1,0.6) -- (3.5,0) -- (3.1,-0.6) -- (1.7,-0.6)
	(1.7,0.6) -- (2.4,1.5) -- (3.1,1.5) -- (3.1,0.6) 
	;

\draw[line width=1.5]
	(0,1.1) -- (0.6,0.7) -- (1.2,1.1)
	(0.6,0.7) -- (0.6,0)
	(-1.7,0.6) -- (-1.2,1.1) -- (-0.6,0.7)
	(-1.2,1.1) -- (-1.2,1.8)
	(1.7,0.6) -- (3.1,0.6)
	(2.4,-0.6) -- (2.4,0.6)
	(1.2,1.8) -- (1.8,1.8) -- (2.4,1.5);

\node at (-1.2,0.9) {\small $\alpha$}; % T1
\node at (-1.45,0.55) {\small $\beta$};
\node at (-0.75,0.55) {\small $\gamma$};	
\node at (-1.2,0.2) {\small $\delta$};	
\node at (-0.75,0.2) {\small $\epsilon$};

\node at (0.4,0.6) {\small $\alpha$};  % T2
\node at (-0,0.85) {\small $\beta$};
\node at (-0.45,0.6) {\small $\delta$};
\node at (-0.45,0.2) {\small $\epsilon$};	
\node at (0.4,0.2) {\small $\gamma$};

\node at (0.8,0.6) {\small $\alpha$}; % T3
\node at (1.2,0.85) {\small $\gamma$}; 
\node at (1.5,0.6) {\small $\epsilon$};	
\node at (1.2,0.2) {\small $\delta$};	
\node at (0.8,0.2) {\small $\beta$};

\node at (-1.4,1.2) {\small $\alpha$}; % T7
\node at (-1.4,1.6) {\small $\beta$};
\node at (-1.75,0.8) {\small $\gamma$};	
\node at (-2.2,1.6) {\small $\delta$};	
\node at (-2.2,0.8) {\small $\epsilon$};

\node at (-1,1.2) {\small $\alpha$}; % T8
\node at (-0.6,0.95) {\small $\beta$};	
\node at (-1,1.6) {\small $\gamma$};
\node at (-0.15,1.25) {\small $\delta$};	
\node at (-0.15,1.6) {\small $\epsilon$};

\node at (0.15,1.2) {\small $\gamma$}; % T9
\node at (0.6,0.95) {\small $\alpha$};	
\node at (0.15,1.6) {\small $\epsilon$};
\node at (1,1.2) {\small $\beta$};	
\node at (1,1.6) {\small $\delta$};

\node at (2.2,0.4) {\small $\alpha$}; % T11
\node at (2.2,-0.4) {\small $\beta$};	
\node at (1.8,0.35) {\small $\gamma$};
\node at (1.8,-0.4) {\small $\delta$};	
\node at (1.5,0) {\small $\epsilon$};

\node at (2.4,0.8) {\small $\alpha$}; 
\node at (2.05,0.8) {\small $\beta$};	
\node at (2.9,0.8) {\small $\gamma$};
\node at (2.4,1.3) {\small $\delta$};	
\node at (2.9,1.3) {\small $\epsilon$};

\node at (1.75,1.6) {\small $\alpha$};
\node at (1.4,1.6) {\small $\beta$};
\node at (2.1,1.4) {\small $\gamma$};	
\node at (1.7,0.85) {\small $\epsilon$};	
\node at (1.4,1.2) {\small $\delta$};

\node at (2.6,0.4) {\small $\alpha$}; 
\node at (3,0.4) {\small $\beta$};
\node at (2.6,-0.4) {\small $\gamma$};
\node at (3,-0.4) {\small $\epsilon$};	
\node at (3.3,-0.05) {\small $\delta$};

\end{scope}
}

\node at (-2.6,2.3) {\small $\alpha$}; % T14
\node at (-2.9,2.25) {\small $\beta$};
\node at (-2.6,1.8) {\small $\gamma$};
\node at (-2.9,0.8) {\small $\delta$};
\node at (-2.6,0.8) {\small $\epsilon$};

\node at (1.6,2) {\small $\alpha$}; % T15
\node at (1.2,2) {\small $\gamma$};	
\node at (1.6,2.3) {\small $\beta$};
\node at (0.15,2.3) {\small $\delta$};	
\node at (0.15,2) {\small $\epsilon$};

\node at (-2.2,2.3) {\small $\alpha$}; % T16
\node at (-2.2,2) {\small $\beta$};	
\node at (-1.25,2.3) {\small $\gamma$};
\node at (-1.2,2) {\small $\delta$};	
\node at (-0.5,1.95) {\small $\epsilon$};

\node at (2,1.9) {\small $\alpha$};
\node at (2,2.3) {\small $\gamma$};	
\node at (2.4,1.7) {\small $\beta$};
\node at (2.9,2.3) {\small $\epsilon$};	
\node at (2.9,1.7) {\small $\delta$};	

\node at (-2.4,2.7) {\small $\alpha$};
\node at (-1.4,2.7) {\small $\beta$};
\node at (-2.9,2.7) {\small $\gamma$};
\node at (-1.4,3.05) {\small $\delta$};
\node at (-2.9,3.05) {\small $\epsilon$};

\node at (-0.2,3) {\small $\alpha$}; % T17
\node at (-1,2.95) {\small $\beta$};
\node at (-0.2,2.5) {\small $\gamma$};
\node at (-1,2.6) {\small $\delta$};
\node at (-0.15,2.1) {\small $\epsilon$};

\node at (0.2,3) {\small $\alpha$}; 
\node at (0.2,2.7) {\small $\beta$};
\node at (1.75,3) {\small $\gamma$};
\node at (1.8,2.7) {\small $\delta$};
\node at (2.5,2.65) {\small $\epsilon$};

\node at (0,3.4) {\small $\alpha$};
\node at (1.6,3.4) {\small $\beta$};
\node at (-1,3.4) {\small $\gamma$};
\node at (1.6,3.7) {\small $\delta$};
\node at (-1,3.7) {\small $\epsilon$};

\node at (0,-2) {\small $\epsilon^3$};

\draw
	(3.1,1.5) -- (3.1,2.5) -- (0,2.5) -- (0,1.8)  -- (-1.2,2.5) -- (-1.2,3.2) -- (-3.1,3.2) -- (-3.1,0.6)
	(-1.2,3.2) -- (-1.2,3.9) -- (1.8,3.9) -- (1.8,3.2) -- (3.1,2.5);

\draw[line width=1.5]
	(1.8,1.8) -- (1.8,2.5)
	(-2.4,1.8) -- (-2.4,2.5)
	(-1.2,2.5) -- (-3.1,2.5)
	(-1.2,3.2) -- (1.8,3.2)
	(0,3.2) -- (0,2.5);

\node[draw,shape=circle, inner sep=0.5] at (-1.1,0.55) {\small 1};
\node[draw,shape=circle, inner sep=0.5] at (0,0.4) {\small 2};
\node[draw,shape=circle, inner sep=0.5] at (1.1,0.55) {\small 3};
\node[draw,shape=circle, inner sep=0.5] at (-1.1,-0.55) {\small 4};
\node[draw,shape=circle, inner sep=0.5] at (0,-0.4) {\small 5};
\node[draw,shape=circle, inner sep=0.5] at (1.1,-0.55) {\small 6};
\node[draw,shape=circle, inner sep=0.5] at (-1.9,1.3) {\small 7};
\node[draw,shape=circle, inner sep=0.5] at (-0.6,1.4) {\small 8};
\node[draw,shape=circle, inner sep=0.5] at (0.6,1.4) {\small 9};
\node[draw,shape=circle, inner sep=0] at (1.9,-1.3) {\small 10};
\node[draw,shape=circle, inner sep=0] at (2,0) {\small 11};
\node[draw,shape=circle, inner sep=0] at (-2,0) {\small 12};
\node[draw,shape=circle, inner sep=0] at (1.75,1.2) {\small 13};
\node[draw,shape=circle, inner sep=0] at (-2.75,1.3) {\small 14};
\node[draw,shape=circle, inner sep=0] at (0.6,2.15) {\small 15};
\node[draw,shape=circle, inner sep=0] at (-1.7,2.15) {\small 16};
\node[draw,shape=circle, inner sep=0] at (-0.6,2.7) {\small 17};

\foreach \a in {-1,1}
{
\begin{scope}[scale=\a]

\fill 
	(0.6,0) circle (0.1);

\filldraw[fill=white] 
	(1.7,0.6) circle (0.1)
	(2.4,-0.6) circle (0.1);
	
\end{scope}
}

\end{tikzpicture}
\caption{$\{\alpha^3,\beta\gamma\delta,\delta^2\epsilon\}$: $\beta\gamma\epsilon^2[1]$, $\beta\gamma\epsilon^2[2]$, and tiling starting from $\beta\gamma\epsilon^2[1]$.}
\label{special2D}
\end{figure}

\subsubsection*{Case. There is $\beta\gamma\epsilon^2[1]$}

On the right of Figure \ref{special2D}, we start with $T_1,T_2,T_4,T_5$ around $\beta\gamma\epsilon^2[1]$. This determines three $n(\alpha^3)$, including $T_3,T_7,T_8,T_9$. Then $\thin^{\delta}\beta_3^{\alpha}\thick^{\alpha}\gamma_2^{\epsilon}\thin^{\gamma}\epsilon_5^{\delta}\thin\cdots=\beta\gamma\epsilon^2[1]$ determines $T_6$. This further determines one more $n(\alpha^3)$, including $T_{10}$. We find that $\beta\gamma\epsilon^2[1]$ appear in pairs, and $T_1,T_2,T_3,T_4,T_5,T_6$ has $180^{\circ}$ rotation symmetry. Therefore further development of the tiling has the same symmetry. Any new tile automatically gives another tile by $180^{\circ}$ rotation.

We have $\delta_3\delta_6\cdots=\delta^2\epsilon=\delta_3\delta_6\epsilon_{11}$. By $\epsilon_{11}$, we know $\beta_6\gamma_{10}\cdots=\beta\gamma\delta,\beta\gamma\epsilon^2$ is $\beta_6\gamma_{10}\delta_{11}$. Then $\delta_{11},\epsilon_{11}$ determine $T_{11}$, and further determines one more $n(\alpha^3)$. By the $180^{\circ}$ rotation symmetry, we get $T_{12}$ and the associated $n(\alpha^3)$. Then $\thin^{\delta}\beta^{\alpha}\thick^{\alpha}\gamma_{11}{}^{\epsilon}\thin^{\delta}\epsilon_3^{\gamma}\thin\cdots=\beta\gamma\epsilon^2[2]$ and  $\thin^{\epsilon}\gamma^{\alpha}\thick^{\alpha}\beta_{12}{}^{\delta}\thin^{\gamma}\epsilon_7^{\delta}\thin\cdots=\beta\gamma\epsilon^2[2]$ determine $T_{13}, T_{14}$, and further determine two $n(\alpha^3)$, including $T_{15},T_{16}$. Then we get $\epsilon_8\epsilon_9\epsilon_{15}\epsilon_{16}\cdots=\epsilon^5=\epsilon_8\epsilon_9\epsilon_{15}\epsilon_{16}\epsilon_{17}$. By $\epsilon_{17}$, we know $\gamma_{16}\cdots=\beta\gamma\delta,\beta\gamma\epsilon^2$ is $\beta\gamma_{16}\delta_{17}$. Then $\delta_{17},\epsilon_{17}$ determine $T_{17}$, and further determines one more $n(\alpha^3)$. The five $n(\alpha^3)$ around $\epsilon^5=\epsilon_8\epsilon_9\epsilon_{15}\epsilon_{16}\epsilon_{17}$ form the extended neighborhood $N(\epsilon^5)$ in Figure \ref{e-nd}. The boundary of $N(\epsilon^5)$ is given by shaded edges in Figure \ref{special2D}.

By the $180^{\circ}$ rotation symmetry, we have another $\epsilon^5$. Then we have another five $n(\alpha^3)$ around this $\epsilon^5$ that form another extended neighborhood $N(\epsilon^5)$. Part of the boundary of this $N(\epsilon^5)$ is given by shaded edges in Figure \ref{special2D}. 

The two $N(\epsilon^5)$ share the pair of $\beta\gamma\epsilon^2[1]$ along the common boundary part. Moreover, we have four $\beta\gamma\epsilon^2[2]$ along the non-common boundary part of two $N(\epsilon^5)$. The four $\beta\gamma\epsilon^2[2]$ also belong to two $n(\alpha^3)$ (at $\alpha_{11}\cdots$ and $\alpha_{12}\cdots$) between the two $N(\epsilon^5)$. See the first of Figure \ref{special2F}, where the two $n(\alpha^3)$ between the two $N(\epsilon^5)$ are shaded.

\begin{figure}[htp]
\centering
\begin{tikzpicture}[>=latex,scale=1]

%%% bce^2[1] between two e^5

\begin{scope}[xshift=-5cm]

\foreach \a in {1,-1}
\fill[gray!50, scale=\a]
	(1.5,1.2) -- (1.5,-1.2) -- (0.6,0);

\foreach \a in {1,-1}
{
\begin{scope}[scale=\a]

\draw
	(0,0) -- (-0.6,0) -- (-1.5,1.2) -- (0,1.8) -- (1.5,1.2) -- (0.6,0)
	(1.5,1.2) -- (1.5,-1.2);

\draw[line width=1.5]
	(1.2,-0.8) -- (1.2,0) -- (0.9,0.4)
	(1.2,0) -- (1.5,0.4);
	
\fill 
	(0.2,0) circle (0.1);

\node at (0,0.9) {$N(\epsilon^5)$};
	
\end{scope}
}

\foreach \a in {1,-1}
\filldraw[scale=\a,fill=white]
	(1.2,-0.8) circle (0.1)
	(0.9,0.4) circle (0.1);

\end{scope}

%%% two bce^2[1] between three e^5

\foreach \a in {0,1,2}
\fill[gray!50, rotate=120*\a]
	(0,0) -- (90:0.9) -- (-30:0.9)
	(90:1.8) -- (30:1.8) to[out=90, in=0] 
	(90:2.2) to[out=180, in=90] 
	(150:1.8);
	
\foreach \a in {0,1,2}
{
\begin{scope}[rotate=120*\a]

\draw
	(30:1.8) -- (90:1.8) -- (90:0.9) -- (-30:0.9) -- (-30:1.8) -- (30:1.8)
	(30:1.8) to[out=90, in=0] 
	(90:2.2) to[out=180, in=90] 
	(150:1.8);

\draw[line width=1.5]
	(0,0) -- (180:0.52)
	(50:1.58) to[out=100, in=-20] 
	(80:1.9) to[out=160, in=80] (110:1.58)
	(80:1.9) to[out=100, in=-70] (110:2.18)
	(70:2.18) -- (70:2.5);

\fill 
	(90:1.2) circle (0.1)
	(90:1.5) circle (0.1);

\node at (90:0.65) {\scriptsize $\epsilon$};
\node at (80:0.9) {\scriptsize $\delta$};
\node at (100:0.9) {\scriptsize $\delta$};

\node at (30:1.6) {\scriptsize $\delta$};
\node at (22:1.8) {\scriptsize $\epsilon$};
\node at (38:1.8) {\scriptsize $\epsilon$};

\node at (30:2) {\scriptsize $\epsilon$};
	
\end{scope}
}

\foreach \a in {0,1,2}
\filldraw[rotate=120*\a, fill=white]
	(180:0.52) circle (0.1);

\foreach \a in {0,...,5}
{
	
\filldraw[rotate=60*\a, fill=white]
	(-10:1.58) circle (0.1);
}	
	
\node at (-90:1) {\small $N_1$};
\node at (30:1) {\small $N_2$};
\node at (150:1) {\small $N_3$};

\end{tikzpicture}
\caption{$\{\alpha^3,\beta\gamma\delta,\delta^2\epsilon\}$: Pairs of $\beta\gamma\epsilon^2[1]$ on the boundary of $N(\epsilon^5)$.}
\label{special2F}
\end{figure}

If two pairs of $\beta\gamma\epsilon^2[1]$ appear on the boundary of one $N(\epsilon^5)$, then we get the second of Figure \ref{special2F}, where $N_1(\epsilon^5)$ has two pairs of $\beta\gamma\epsilon^2[1]$, shared with $N_2(\epsilon^5)$ and $N_3(\epsilon^5)$. For the moment, we do not yet know that $N_2(\epsilon^5)$ and $N_3(\epsilon^5)$ touch each other. However, one $n(\alpha^3)$ between $N_1(\epsilon^5),N_2(\epsilon^5)$ is also the one $n(\alpha^3)$ between $N_1(\epsilon^5),N_3(\epsilon^5)$. This $n(\alpha^3)$ is the shaded triangle in the center, and forces $N_2(\epsilon^5)$ and $N_3(\epsilon^5)$ to be glued together. Moreover, $N_2(\epsilon^5)$ and $N_3(\epsilon^5)$ also share a pair of $\beta\gamma\epsilon^2[1]$. 

In the second of Figure \ref{special2F}, we also see the other $n(\alpha^3)$ between $N_1,N_2$, between $N_1,N_3$, and between $N_2,N_3$. They are the three shaded triangles outside the hexagon. Then we get three vertices $\delta\epsilon^2\cdots=\delta\epsilon^3$. They give three ``outside'' $\epsilon$, which are the three angles of the outside triangle. This triangle should be divided into $60-3\times 15 - 4\times 3=3$ pentagons. Therefore the tiling is completed by the pentagonal subdivision $n(\alpha^3)$ of this outside triangle. What we get is the tiling constructed according to the fourth of Figure \ref{mod60B}, or the tiling $T(15\beta\gamma\epsilon^2,3\delta\epsilon^3,3\epsilon^5)$ in the second of Figure \ref{subdivision_tiling_modify60B}. 

Now we may assume no two pairs of $\beta\gamma\epsilon^2[1]$ on the boundary of any $N(\epsilon^5)$. This means that there is no $\beta\gamma\epsilon^2[1]$ on the top two and bottom two edges of the first of Figure \ref{special2F}. We draw the boundary of the first of Figure \ref{special2G}, as the inside out of the first of Figure \ref{special2F}. Then $\beta\thick\gamma\cdots$ on the top two and bottom two edges of the first of Figure \ref{special2G} cannot be $\beta\gamma\epsilon^2[1]$. We label some $\delta$ on the boundary by numbers for further use.

\begin{figure}[htp]
\centering
\begin{tikzpicture}[>=latex,scale=1]

%%% complement of two nd of e^5

\foreach \a in {1,-1}
{
\begin{scope}[scale=\a]

\fill[gray!20]
	(0,-1.8) -- (0.9,0) -- (2.4,-1.2);

\fill[gray!50]
	(0,1.8) -- (0.9,0) -- (2.4,-1.2) -- (2.4,1.2);

\end{scope}
}

\foreach \a in {1,-1}
{
\begin{scope}[scale=\a]

\draw
	(0,1.8) -- (-0.9,0) -- (0.9,0) -- (0,1.8)	
	(0,1.8) -- (2.4,1.2) -- (2.4,-1.2) -- (0,-1.8)
	(2.4,1.2) -- (0.9,0) -- (2.4,-1.2)
	(2.4,1.2) -- ++(0.3,0.3)
	(2.4,-1.2) -- ++(0.3,-0.3)
	(-1.6,1.4) -- ++(-0.1,0.4)
	(0.8,1.6) -- ++(0.1,0.4);

\draw[line width=1.5]
	(0.3,0) -- (0,0.6) -- (0.3,1.2)
	(-0.6,0.6) -- (0,0.6)
	(0.6,0.6) -- (1.1,1) -- (1.9,0.8)
	(1.1,1) -- (0.8,1.6)
	(-0.3,1.2) -- (-1.1,1) -- (-1.4,0.4)
	(-1.1,1) -- (-1.6,1.4)
	(1.4,0.4) -- (1.9,0) -- (1.9,-0.8)
	(1.9,0) -- (2.4,0.4)
	(2.4,-0.4) -- ++(0.4,0)
	(1.6,1.4) -- ++(0.1,0.4)
	(-0.8,1.6) -- ++(-0.1,0.4);

\node at (2.6,0.5) {\small $\delta$};
\node at (2.6,-0.15) {\small $\beta$};
\node at (2.6,-0.6) {\small $\gamma$};

\node at (1,1.75) {\small $\epsilon$};
\node at (0.7,1.8) {\small $\epsilon$};

\node at (1.85,1.55) {\small $\beta$};
\node at (1.45,1.65) {\small $\gamma$};

\node at (-1.05,1.75) {\small $\gamma$};
\node at (-0.65,1.85) {\small $\beta$};

\node at (-1.8,1.55) {\small $\epsilon$};
\node at (-1.5,1.6) {\small $\epsilon$};

\node at (0,1.5) {\small $\epsilon$};
\node at (1.15,0) {\small $\epsilon$};

\end{scope}
}

\foreach \a in {1,-1}
\foreach \b in {1,-1}
{
\begin{scope}[xscale=\a, yscale=\b]

\node at (2.6,1.15) {\small $\epsilon$};
\node at (2.25,0.9) {\small $\epsilon$};
\node at (2.05,1.1) {\small $\epsilon$};
\node at (0.3,1.55) {\small $\epsilon$};
\node at (0.95,0.3) {\small $\epsilon$};
\node at (0.65,0.2) {\small $\epsilon$};

\end{scope}
}

\node at (2.35,1.5) {\small $\delta_1$};
\node at (-2.35,-1.5) {\small $\delta_2$};
\node at (-2.35,1.5) {\small $\delta_3$};
\node at (2.35,-1.5) {\small $\delta_4$};
\node at (0,2) {\small $\delta_5$};
\node at (0,-2) {\small $\delta_6$};

%%% two nd of e^5

\begin{scope}[xshift=5cm]

%% left

\foreach \a in {0,...,4}
\draw
	(0,0) -- (72*\a:0.5);

\foreach \a in {1,2,3}
{
\draw[rotate=72*\a]
	(0:0.5) -- (20:0.75) -- (52:0.75) -- (72:0.5)
	(20:0.75) -- (16:1.1) -- (36:1.3) -- (56:1.1);
	
\draw[rotate=72*\a,line width=1.5]
	(20:0.75) -- (52:0.75) -- (72:0.5)
	(52:0.75) -- (56:1.1);
}

\foreach \a in {2,3}
\draw[rotate=72*\a]
	(16:1.1) -- (-16:1.1);
	
\draw
	(0.5,0) -- (0.7,0.3)
	(0.7,0.9) -- (88:1.1)
	(0.7,0.9) -- (1.05,0.78)
	(1.15,1.4) -- (88:1.1);
	
\draw[line width=1.5]
	(72:0.5) -- (0.5,0.6) -- (0.7,0.3)
	(0.5,0.6) -- (0.7,0.9);
	
%% right	

\begin{scope}[shift={(1.2cm,0.3cm)}, xscale=-1]

\foreach \a in {0,...,4}
\draw
	(0,0) -- (72*\a:0.5);

\foreach \a in {1,2,3}
{
\draw[rotate=72*\a]
	(0:0.5) -- (20:0.75) -- (52:0.75) -- (72:0.5)
	(20:0.75) -- (16:1.1) -- (36:1.3) -- (56:1.1)
	(52:0.75) -- (56:1.1);
	
\draw[rotate=72*\a,line width=1.5]
	(0:0.5) -- (20:0.75) -- (52:0.75)
	(20:0.75) -- (16:1.1);
}

\foreach \a in {2,3}
\draw[rotate=72*\a]
	(16:1.1) -- (-16:1.1);
		
\foreach \a in {0,...,4}
\draw
	(0,0) -- (72*\a:0.5);
	
\draw
	(0.5,0) -- (0.7,-0.3)
	(0.7,-0.9) -- (-88:1.1)
	(0.7,-0.9) -- (1.05,-0.78)
	(1.15,-1.4) -- (-88:1.1);
		
\draw[line width=1.5]
	(-72:0.5) -- (0.5,-0.6) -- (0.7,-0.3)
	(0.5,-0.6) -- (0.7,-0.9);
			
\filldraw[fill=white]
	(232:1.1) circle (0.1)
	(88:1.1) circle (0.1);
				
\end{scope}

\filldraw[fill=white]
	(128:1.1) circle (0.1)
	(272:1.1) circle (0.1);

\end{scope}

\end{tikzpicture}
\caption{$\{\alpha^3,\beta\gamma\delta,\delta^2\epsilon\}$: Complete the tiling for the first of Figure \ref{special2F}.}
\label{special2G}
\end{figure}

We have $\delta_1\cdots=\delta\epsilon\cdots=\delta^2\epsilon,\delta\epsilon^3$. If $\delta_1\cdots=\delta^2\epsilon$, then the vertex $\beta\thick\gamma\cdots=\beta\gamma\delta,\beta\gamma\epsilon^2$ next to $\delta_1\cdots$ is $\beta\gamma\epsilon^2=\thin^{\gamma}\epsilon^{\delta}\thin^{\delta}\beta^{\alpha}\thick^{\alpha}\gamma^{\epsilon}\thin\cdots=\beta\gamma\epsilon^2[1]$. By the assumption $\beta\thick\gamma\cdots\ne\beta\gamma\epsilon^2[1]$, therefore, we conclude $\delta_1\cdots=\delta\epsilon^3$. The unique AAD $\thin^{\beta}\delta^{\epsilon}\thin^{\gamma}\epsilon^{\delta}\thin^{\gamma}\epsilon^{\delta}\thin^{\gamma}\epsilon^{\delta}\thin$ of $\delta\epsilon^3$ determines the tiles around $\delta_1\cdots$, and further determines two $n(\alpha^3)$. The same argument shows $\delta_2\cdots=\delta\epsilon^3$ and determines two more $n(\alpha^3)$. The four $n(\alpha^3)$ are the heavily shaded tiles.

Then we find $\delta_3\cdots,\delta_4\cdots=\delta\epsilon^2\cdots=\delta\epsilon^3$. Again the unique AAD of $\delta\epsilon^3$ determines the tiles around $\delta_3\cdots$, $\delta_4\cdots$, and further determines two more $n(\alpha^3)$, which are the lightly shaded tiles. 

Then we find $\delta_5\cdots,\delta_6\cdots=\delta\epsilon^2\cdots=\delta\epsilon^3$. The unique AAD of $\delta\epsilon^3$ determines the tiles around $\delta_5\cdots$, $\delta_6\cdots$, and further determines two more $n(\alpha^3)$, which are the remaining white tiles.

The tiling in the first of Figure \ref{special2G} is the union of two overlapping $N(\epsilon^5)$, illustrated by the second of Figure \ref{special2G}. The whole tiling is then obtained by glueing Figure \ref{special2G} and the first of Figure \ref{special2F} together. This is the tiling constructed according to the third of Figure \ref{mod60B}, or the tiling $T(10\beta\gamma\epsilon^2,6\delta\epsilon^3,4\epsilon^5)$ in the first of Figure \ref{subdivision_tiling_modify60B}.

\subsubsection*{Case. There is no $\beta\gamma\epsilon^2[1]$}

In Figure \ref{special2H}, we start with $T_1,T_2,T_3,T_4$ around $\beta\gamma\epsilon^2[2]$. They determine three $n(\alpha^3)$, including $T_5,T_6,T_7$. We have $\delta_2\epsilon_4\cdots=\delta^2\epsilon,\delta\epsilon^3$. If $\delta_2\epsilon_4\cdots=\delta^2\epsilon$, then we have $\beta_2\gamma_7\cdots=\thin^{\epsilon}\gamma_7^{\alpha}\thick^{\alpha}\beta_2^{\delta}\thin^{\delta}\epsilon^{\gamma}\thin\cdots=\beta\gamma\epsilon^2[1]$. By no $\beta\gamma\epsilon^2[1]$, this implies $\delta_2\epsilon_4\cdots=\delta\epsilon^3$. The unique AAD of $\delta\epsilon^3$ determines $T_8,T_9$. This further determines two more $n(\alpha^3)$, including $T_{10}$. Then $\beta_9\gamma_{10}\epsilon_7\cdots=\beta\gamma\epsilon^2[2]$ determines $T_{11}$. Now our argument that started with $\beta_3\gamma_4\epsilon_1\epsilon_2=\beta\gamma\epsilon^2[2]$ can be repeated at the new $\beta_9\gamma_{10}\epsilon_7\epsilon_{11}=\beta\gamma\epsilon^2[2]$. More repetitions give us an extended neighborhood $N(\epsilon^5)$ around $\epsilon_5\epsilon_6\cdots=\epsilon^5$, and an annulus ${\mc A}$ which is the five times repetition of the two $n(\alpha^3)$ around $\alpha_3\alpha_4\cdots$ and $\alpha_8\cdots$. The annulus consists of ten $n(\alpha^3)$.

Note that the tiles in ${\mc A}$ are positively oriented, and the tiles in $N(\epsilon^5)$ are negatively oriented. Therefore the extended neighborhood in Figure \ref{special2H} is the flip (with respect to $L_{\delta=2\epsilon}$) of the extended neighborhood in Figure \ref{e-nd}.

\begin{figure}[htp]
\centering
\begin{tikzpicture}[>=latex,scale=1]

\draw[gray!50, line width=3]
	(-2,0) -- (8.5,0)
	(-2,-2.4) -- (8.5,-2.4);

\draw
	(-2,0) -- (8.5,0)
	(-2,-2.4) -- (8.5,-2.4)
	(1,-2.4) -- (5,-2.4);

\draw[line width=1.5]
	(1,-1.6) -- ++(-0.4,0)
	(5.5,-0.8) -- ++(0.25,-0.4);

\foreach \a in {0,1}
{
\begin{scope}[xshift=3*\a cm]
	
\draw[line width=1.5]
	(2,0) -- (1.5,-0.8) -- (0.5,-0.8)
	(1.5,-0.8) -- (2,-1.6)
	;

\node at (1.4,-0.6) {\small $\alpha$};
\node at (1.7,-0.2) {\small $\beta$};
\node at (0.6,-0.6) {\small $\gamma$};
\node at (1,-0.2) {\small $\delta$};

\node at (1.7,-0.8) {\small $\alpha$};
\node at (2.05,-1.2) {\small $\beta$};
\node at (2.1,-0.25) {\small $\gamma$};
\node at (2.3,-0.8) {\small $\delta$};

\node at (1.4,-1) {\small $\alpha$};
\node at (0.9,-1.05) {\small $\beta$};
\node at (1.8,-1.65) {\small $\gamma$};
\node at (1.25,-1.6) {\small $\delta$};
\node at (1.5,-2.1) {\small $\epsilon$};

\end{scope}
}

\foreach \a in {0,1,2}
{
\begin{scope}[xshift=3*\a cm]

\draw
	(-1.5,-2.4) -- (0,0) -- (1.5,-2.4);
	
\draw[line width=1.5]
	(-1,0.8) -- (1,0.8) -- (1,0)
	(1,0.8) -- (1.5,1.2)
	(-0.5,-0.8) -- (0,-1.6) -- (-0.5,-2.4)
	(0,-1.6) -- (1,-1.6);
			
\node at (0.9,1) {\small $\alpha$};
\node at (-0.9,1) {\small $\beta$};
\node at (1.3,1.3) {\small $\gamma$};
\node at (-1.3,1.3) {\small $\delta$};
\node at (0,1.5) {\small $\epsilon$};

\node at (1.2,0.7) {\small $\alpha$};
\node at (1.5,0.9) {\small $\beta$};
\node at (1.2,0.2) {\small $\gamma$};
\node at (1.8,0.7) {\small $\delta$};
\node at (1.8,0.2) {\small $\epsilon$};

\node at (0.8,0.6) {\small $\alpha$};
\node at (0.8,0.2) {\small $\beta$};
\node at (-0.8,0.6) {\small $\gamma$};
\node at (0,0.25) {\small $\delta$};
\node at (-0.8,0.2) {\small $\epsilon$};

\node at (0.1,-1.4) {\small $\alpha$};
\node at (0.65,-1.4) {\small $\beta$};
\node at (-0.3,-0.8) {\small $\gamma$};
\node at (0.3,-0.8) {\small $\delta$};

\node at (0.1,-1.8) {\small $\alpha$};
\node at (-0.2,-2.2) {\small $\beta$};
\node at (0.9,-1.8) {\small $\gamma$};
\node at (0.5,-2.2) {\small $\delta$};
\node at (1.2,-2.25) {\small $\epsilon$};

\node at (-0.2,-1.6) {\small $\alpha$};
\node at (-0.55,-1.15) {\small $\beta$};
\node at (-0.6,-2.2) {\small $\gamma$};
\node at (-0.85,-1.6) {\small $\delta$};
\node at (-1.2,-2.25) {\small $\epsilon$};

\node at (0,-0.3) {\small $\epsilon$};
\node at (0.3,-0.15) {\small $\epsilon$};
\node at (-0.3,-0.15) {\small $\epsilon$};

\end{scope}
}

\foreach \a in {0,1,2,3}
{
\draw[xshift=3*\a cm]
	(-1,0) -- (-1,0.8) -- (-1.5,1.2) -- (-1.5,1.6);
		
\fill 
	(-1.5+3*\a,1.6) -- ++(-54:0.1) arc (-54:-126:0.1);
}

\foreach \a in {0,1,2}
{
\filldraw[fill=white] 
	(2+3*\a,0) circle (0.1);

\filldraw[fill=gray] 
	(3*\a,0) circle (0.1);
}

\node[draw,shape=circle, inner sep=0.5] at (1.5,0.45) {\small 1};
\node[draw,shape=circle, inner sep=0.5] at (2.6,0.4) {\small 2};
\node[draw,shape=circle, inner sep=0.5] at (1,-0.55) {\small 3};
\node[draw,shape=circle, inner sep=0.5] at (2,-0.6) {\small 4};
\node[draw,shape=circle, inner sep=0.5] at (0,1.1) {\small 5};
\node[draw,shape=circle, inner sep=0.5] at (3,1.1) {\small 6};
\node[draw,shape=circle, inner sep=0.5] at (4.5,0.45) {\small 7};
\node[draw,shape=circle, inner sep=0.5] at (3,-1) {\small 8};
\node[draw,shape=circle, inner sep=0.5] at (4,-0.55) {\small 9};
\node[draw,shape=circle, inner sep=0] at (5,-0.6) {\small 10};
\node[draw,shape=circle, inner sep=0] at (5.6,0.4) {\small 11};

\node at (-1.7,0.6) {\small $N(\epsilon^5)$};
\node at (-1.7,-1.2) {\small ${\mc A}$};

\end{tikzpicture}
\caption{$\{\alpha^3,\beta\gamma\delta,\delta^2\epsilon\}$: Tiling in the absence of $\beta\gamma\epsilon^2[1]$.}
\label{special2H}
\end{figure}

The annulus ${\mc A}$ has two boundaries, indicated by the shaded lines. The lower boundary consists of repetitions of $\beta\thick\gamma,\delta,\epsilon\thin\epsilon\thin\epsilon$, the same as the upper boundary. If one $\beta\thick\gamma\cdots=\beta\gamma\epsilon^2=\beta\gamma\epsilon^2[2]$, then the argument that gives us flipped $N(\epsilon^5)$ glued to the upper boundary of ${\mc A}$ can be applied to give another flipped $N(\epsilon^5)$ glued to the lower boundary of ${\mc A}$. If one $\beta\thick\gamma\cdots=\beta\gamma\delta$, then the adjacent $\epsilon\thin\epsilon\thin\epsilon\cdots=\delta\epsilon^3,\epsilon^5$ must be $\epsilon^5$, and we get the standard (i.e., non-flipped) $N(\epsilon^5)$ around $\epsilon^5$. This implies the next $\beta\thick\gamma\cdots=\beta\gamma\delta$, and the argument continues. More repetition gives us the standard $N(\epsilon^5)$ glued to the lower boundary of ${\mc A}$. 

In summary, if there is no $\beta\gamma\epsilon^2[1]$, and there is $\beta\gamma\epsilon^2[2]$, then we have two extended neighborhoods of $\epsilon^5$ glued to the two boundaries of the annulus ${\mc A}$. If both extended neighborhoods are flipped, then we get the double flip modification $T(10\beta\gamma\epsilon^2,10\delta\epsilon^3,2\epsilon^5)$ of the pentagonal subdivision tiling (the sixth of Figure \ref{subdivision_tiling_modify60A}). If one extended neighborhood is flipped, and the other is not, then we get the single flip modification $T(5\beta\gamma\epsilon^2,5\delta\epsilon^3,7\epsilon^5)$ of the pentagonal subdivision tiling (the fifth of Figure \ref{subdivision_tiling_modify60A}). 

Finally, we assume $\beta\gamma\epsilon^2$ is not a vertex. Since the unique AAD of $\delta\epsilon^3$ gives $\gamma\epsilon\cdots=\beta\gamma\epsilon^2$, this implies $\delta\epsilon^3$ is not a vertex. Therefore the AVC \eqref{special2_60A} is reduced to $\{\alpha^3,\beta\gamma\delta,\delta^2\epsilon,\epsilon^5\}$. Then the argument that uses Figure \ref{special2C} to derive the pentagonal subdivision $T(6\epsilon^4)$ of the octahedron for \eqref{special2_24A} can be used to gives us the pentagonal subdivision $T(12\epsilon^5)$ (the fifth of Figure \ref{subdivision_tiling}) of the icosahedron.

\section{General Cases}
\label{general}

\begin{proposition}\label{special3}
There is no tiling of the sphere by congruent non-symmetric pentagons in Figure \ref{pentagon}, such that $\alpha^3,\beta^2\delta,\gamma^2\epsilon$ are vertices. 
\end{proposition}

\begin{proof}
The angle sums of $\alpha^3, \beta^2\delta, \gamma^2\epsilon$ and the angle sum for pentagon imply 
\[
\alpha=\tfrac{2}{3}\pi,\;
\beta+\gamma=(\tfrac{5}{3}-\tfrac{4}{f})\pi,\;
\delta+\epsilon=(\tfrac{2}{3}+\tfrac{8}{f})\pi.
\]
Due to the symmetry of exchanging $\beta \leftrightarrow \gamma$ and $\delta \leftrightarrow \epsilon$, by Lemma \ref{geometry1}, we may assume $\beta<\gamma$ and $\delta>\epsilon$. By $f>12$, we have $\alpha+\beta+\gamma>2\pi$, $\beta+\gamma>\frac{4}{3}\pi>\delta+\epsilon$.

By $\beta^2\delta$, we have $3\beta+\gamma>2\beta+\delta+\epsilon>2\pi$. By $\beta<\gamma$ and the parity lemma, this implies $R(\beta\gamma\cdots)$ and $R(\gamma^2\cdots)$ have no $\beta,\gamma$. By $\beta^2\delta$, $\beta<\gamma$, and the parity lemma, we know $\gamma\delta\cdots$ is not a vertex. Therefore $R(\beta\gamma\cdots)$ and $R(\gamma^2\cdots)$ have no $\beta,\gamma,\delta$.

By Lemma \ref{aadlemma}, the vertex $\alpha^3$ implies $\beta\thick\gamma\cdots$ is a vertex. Then by no $\beta,\gamma,\delta$ in $R(\beta\gamma\cdots)$, and Lemma \ref{klem5}, we get $\beta\thick\gamma\cdots=\beta\gamma\epsilon^k$. By $\gamma^2\epsilon$ and $\beta<\gamma$, we get $k\ge 2$. Therefore we have $\thick\beta\thin\epsilon\thin\epsilon\thin$ at $\beta\gamma\epsilon^k$. By no $\gamma\delta\cdots$, the AAD of $\thick\beta\thin\epsilon\thin\epsilon\thin$ is $\thick^{\alpha}\beta^{\delta}\thin^{\delta}\epsilon^{\gamma}\thin^{\gamma}\epsilon^{\delta}\thin$. Therefore $\gamma\thin\gamma\cdots$ is a vertex. By no $\beta,\gamma$ in $R(\gamma^2\cdots)$, and Lemma \ref{klem5}, we get $\gamma\thin\gamma\cdots=\alpha^k\gamma^2$. This contradicts $\alpha+2\gamma>\alpha+\beta+\gamma>2\pi$. 
\end{proof}

\begin{proposition}\label{special4}
There is no tiling of the sphere by congruent non-symmetric pentagons in Figure \ref{pentagon}, such that $\alpha^3,\beta^2\delta,\delta^2\epsilon$ are vertices. 
\end{proposition}

\begin{proof}
The angle sums of $\alpha^3,\delta^2\epsilon$ imply $\alpha=\frac{2}{3}\pi$ and $\delta<\pi$. Then the angle sum of $\beta^2\delta$ further implies $\frac{1}{2}\pi<\beta<\pi$. Moreover, the angle sums of $\alpha^3,\beta^2\delta,\delta^2\epsilon$ and the angle sum for pentagon imply $\gamma=(\frac{5}{6}+\tfrac{4}{f})\pi-\frac{3}{4}\epsilon$. By Lemma \ref{geometry1}, we need to consider two cases: $\beta<\gamma,\delta>\epsilon$, and $\beta>\gamma,\delta<\epsilon$.

\subsubsection*{Case. $\beta<\gamma$ and $\delta>\epsilon$}

By $\gamma>\beta>\frac{1}{2}\pi$ and the parity lemma, there are at most two $ab$-angles at a vertex. By $\beta^2\delta$, $\beta<\gamma$, and the parity lemma, we know $\gamma\delta\cdots$ is not a vertex.

By $\alpha^3$, we have $R(\alpha^2\cdots)=\alpha<2\beta$. By $\beta<\gamma$ and the parity lemma, this implies $\alpha^2\cdots$ has no $\beta,\gamma$. Then by Lemma \ref{klem4}, we get $\alpha^2\cdots=\alpha^3$. 

The AAD $\thin^{\delta}\beta^{\alpha}\thick^{\alpha}\beta^{\delta}\thin$ at $\beta^2\delta$ gives $\thick^{\gamma}\alpha^{\beta}\thick^{\beta}\alpha^{\gamma}\thick\cdots=\alpha^3$. By Lemma \ref{aadlemma}, this implies $\beta\thick\gamma\cdots$ and $\gamma\thick\gamma\cdots$ are vertices. By at most two $ab$-angles at a vertex, we know $R(\beta\thick\gamma\cdots)$ and $R(\gamma\thick\gamma\cdots)$ have no $\beta,\gamma$. By $\beta^2\delta$ and $\beta<\gamma$, we have $R(\gamma\thick\gamma\cdots)<R(\beta\thick\gamma\cdots)<\delta$. Therefore the remainders have no $\delta$. By Lemma \ref{klem5}, we get $\beta\thick\gamma\cdots=\beta\gamma\epsilon^k$ and $\gamma\thick\gamma\cdots=\gamma^2\epsilon^l$, with $1\le l<k$. 

If $k=2$, then $l=1$. The angle sums of $\alpha^3,\beta^2\delta,\delta^2\epsilon,\beta\gamma\epsilon^2,\gamma^2\epsilon$ and the angle sum for pentagon imply $f=\tfrac{84}{5}$, a contradiction. (Another way to show $l\ne 1$ is by using Proposition \ref{special3}.) Therefore $k\ge 3$, and we have $\thin\epsilon\thin\epsilon\thin\epsilon\thin$ at $\beta\gamma\epsilon^k$. By no $\gamma\delta\cdots$, the AAD of $\thin\epsilon\thin\epsilon\thin\epsilon\thin$ implies a vertex $\gamma\thin\gamma\cdots$. By at most two $ab$-angles at a vertex, we may apply Lemma \ref{klem5} to $\gamma\thin\gamma\cdots$. Then by $\alpha^2\cdots=\alpha^3$, we have $\gamma\thin\gamma\cdots=\alpha\gamma^2$. Then the angle sums of $\alpha^3,\alpha\gamma^2,\beta^2\delta,\delta^2\epsilon$ and the angle sum for pentagon imply 
\[
\alpha=\gamma=\tfrac{2}{3}\pi,\;
\beta=(\tfrac{5}{9} + \tfrac{4}{3f})\pi, \;
\delta=(\tfrac{8}{9} - \tfrac{8}{3f})\pi,  \;
\epsilon=(\tfrac{2}{9} + \tfrac{16}{3f})\pi. 
\]
By $f>12$, we have $\epsilon<\tfrac{2}{3}\pi<3\epsilon$. This implies $\gamma\thick\gamma\cdots=\gamma^2 \epsilon^l=\gamma^2\epsilon^2$. The angle sum of $\gamma^2\epsilon^2$ further implies $\beta=\tfrac{7}{12}\pi$, $\epsilon=\tfrac{1}{3}\pi$, and $f=48$. Since the solution of $\beta+\gamma+k\epsilon=2\pi$ is $k=\frac{9}{4}$, we know $\beta\thick\gamma\cdots=\beta\gamma\epsilon^k$ is not a vertex. This is a contradiction.

\subsubsection*{Case. $\beta>\gamma$ and $\delta<\epsilon$}

The angle sums of $\beta^2\delta,\delta^2\epsilon$ imply $2\beta=\delta+\epsilon$. By $\delta<\epsilon$, this implies $\delta<\beta<\epsilon$. By $\beta^2\delta$ and $\beta>\delta$, we get $\beta>\frac{2}{3}\pi$. By $\delta^2\epsilon$ and $\delta<\epsilon$, we get $\delta<\frac{2}{3}\pi=\alpha<\epsilon$.

By $\delta^2\epsilon$ and $\delta<\epsilon$, we have $R(\epsilon^2\cdots)<\delta<\alpha,\beta,\epsilon$. By the parity lemma, this implies $\epsilon^2\cdots=\gamma^k\epsilon^2$ for even $k\ge 2$. Since this contradicts $\gamma+\epsilon
=(\tfrac{5}{6}+\tfrac{4}{f})\pi+\tfrac{1}{4}\epsilon
>\tfrac{5}{6}\pi+\tfrac{1}{4}\cdot\tfrac{2}{3}\pi
=\pi$, we conclude $\epsilon^2\cdots$ is not a vertex. By the unique AAD
$\thick^{\alpha}\gamma^{\epsilon}\thin^{\epsilon}\gamma^{\alpha}\thick$, this implies $\gamma\thin\gamma\cdots$ is not a vertex.

By $\beta^2\delta$, we have $R(\beta^2\cdots)=\delta<\alpha,\beta,\epsilon$. Therefore $\beta^2\cdots=\beta^2\gamma^k$. By $\beta^2\delta=\beta\thick\beta\cdots$ and no $\gamma\thin\gamma\cdots$, this implies no $\beta\thin\beta\cdots$.

By no $\beta\thin\beta\cdots$ and no $\epsilon^2\cdots$, we get the AAD $\thin^{\beta}\delta^{\epsilon}\thin^{\beta}\delta^{\epsilon}\thin$ at $\delta^2\epsilon$. This gives a vertex $\beta\thin\epsilon\cdots$. By $\beta^2\delta$, $\delta<\epsilon$, and the parity lemma, we get $\beta\thin\epsilon\cdots=\beta\gamma\epsilon\cdots$. By the angle sum for pentagon, we have $\beta+\gamma+\delta+\epsilon=(3+\frac{4}{f})\pi-\alpha=(\frac{7}{3}+\frac{4}{f})\pi>2\pi$. By $\delta<\alpha,\beta,\epsilon$, this implies $\beta\thin\epsilon\cdots=\beta\gamma^k\epsilon$ for odd $k$. By no $\gamma\thin\gamma\cdots$, we have $k=1$. Therefore $\alpha^3,\beta\gamma\epsilon,\delta^2\epsilon$ are vertices, and we may apply Proposition \ref{special1}'. As remarked after Proposition \ref{special1}, we know $\beta^2\delta$ should not be a vertex, contradicting the assumption of the current proposition.
\end{proof}

\begin{proposition}\label{special5}
There is no tiling of the sphere by congruent non-symmetric pentagons in Figure \ref{pentagon}, such that $\alpha^3,\beta^2\delta,\delta\epsilon^2$ are vertices and $f\ge 24$.
\end{proposition}

\begin{proof}
The angle sums of $\alpha^3,\beta^2\delta,\delta\epsilon^2$ and the angle sum for pentagon imply 
\[
\alpha=\tfrac{2}{3}\pi,\;
\beta=\epsilon,\;
\gamma=(\tfrac{1}{3}+\tfrac{4}{f})\pi,\;
\delta+2\epsilon=2\pi.
\]
By $f\ge 24$, we have $\gamma\le \frac{1}{2}\pi<\alpha<2\gamma$. By $\alpha^3$ and Lemma \ref{aadlemma}, we know $\beta\thick\gamma\cdots$ is a vertex. By Lemma \ref{geometry1}, we need to consider two cases: $\gamma>\beta=\epsilon<\delta$, and $\gamma<\beta=\epsilon>\delta$.

\subsubsection*{Case. $\gamma>\beta=\epsilon<\delta$}

By $\delta\epsilon^2$ and $\epsilon=\beta<\gamma\le \frac{1}{2}\pi$, we get $\delta>\pi$. This implies $\delta^2\cdots$ is not a vertex. By $\delta\epsilon^2$, we have $R(\delta\epsilon\cdots)=\epsilon=\beta<\alpha,\gamma,\delta$. By the parity lemma, this implies $\delta\epsilon\cdots=\delta\epsilon^2$. This verifies the conditions in the third part of Lemma \ref{klem2}, and we get $\thick\beta\thin\cdots=\thick\beta\thin\delta\thin\cdots$. This implies $\beta\thick\gamma\cdots=\delta\thin\beta\thick\gamma\cdots$ is a vertex, contradicting $\beta^2\delta$ and $\beta<\gamma$.

\subsubsection*{Case. $\gamma<\beta=\epsilon>\delta$}

By $\delta\epsilon^2$ and $\delta<\epsilon$, we have $\delta<\frac{2}{3}\pi=\alpha<\epsilon$. By $\delta\epsilon^2$ and $\beta=\epsilon$, we have $R(\beta^2\cdots)=R(\epsilon^2\cdots)=R(\beta\epsilon\cdots)=\delta<\alpha,\beta,2\gamma,\epsilon$. Then by the parity lemma, we have $\beta^2\cdots=\beta^2\delta$, $\epsilon^2\cdots=\delta\epsilon^2$, $\beta\epsilon\cdots=\beta\gamma\epsilon$. 

If $\beta\epsilon\cdots=\beta\gamma\epsilon$ is a vertex, then by $\alpha^3,\delta\epsilon^2$, we may apply Proposition \ref{special2}'. As remarked after Proposition \ref{special2}, we know $\beta^2\cdots$ should not be a vertex, contradicting the assumption that $\beta^2\delta$ is a vertex. Therefore $\beta\epsilon\cdots$ is not a vertex. 

By no $\beta\epsilon\cdots$, we get the unique AADs $\thick^{\alpha}\gamma^{\epsilon}\thin^{\epsilon}\delta^{\beta}\thin^{\beta}\delta^{\epsilon}\thin$ and $
\thin^{\beta}\delta^{\epsilon}\thin^{\epsilon}\delta^{\beta}\thin^{\beta}\delta^{\epsilon}\thin$, and we know there is no consecutive $\gamma\delta\gamma$. The unique AADs give a vertex $\beta\thin\beta\cdots$, contradicting $\beta^2\cdots=\beta^2\delta=\beta\thick\beta\cdots$. We conclude no consecutive $\gamma\delta\gamma,\gamma\delta\delta,\delta\delta\delta$.

By $\delta\epsilon^2$, $\delta<\epsilon$, and no consecutive $\delta\delta\delta$, we know a vertex without $b$-edge is $\delta\epsilon^2$. 

We know $\beta\thick\gamma\cdots$ is a vertex. If it has $\alpha$, then by Lemma \ref{klem5}, we know $R(\beta\thick\gamma\cdots)$ has at least two $ab$-angles. This contradicts $\beta>\gamma$ and $\alpha+4\gamma>2\pi$. Therefore $\beta\thick\gamma\cdots$ has no $\alpha$. By $\beta^2\cdots=\beta^2\delta$ and no $\beta\epsilon\cdots$, we know $R(\beta\thick\gamma\cdots)$ has no $\beta,\epsilon$. By $\beta+5\gamma>6\gamma>2\pi$ and the parity lemma, we get $\beta\thick\gamma\cdots=\beta\gamma\delta^k,\beta\gamma^3\delta^k$. By $\beta^2\delta$ and $\beta>\gamma$, we have $k\ge 2$ in $\beta\gamma\delta^k$, contradicting no consecutive $\gamma\delta\delta$. Therefore $\beta\gamma^3\delta^k$ is a vertex. If $k\ge 1$ in $\beta\gamma^3\delta^k$, then by $\beta^2\delta$, we get $\beta=3\gamma+(k-1)\delta>\pi$. However, this contradicts the vertex $\beta^2\delta$. Therefore $\beta\thick\gamma\cdots=\beta\gamma^3$. The angle sum of $\beta\gamma^3$ further implies 
\[
\alpha=\tfrac{2}{3}\pi,\;
\beta=\epsilon=(1-\tfrac{12}{f})\pi,\;
\gamma=(\tfrac{1}{3}+\tfrac{4}{f})\pi,\;
\delta=\tfrac{24}{f}\pi.
\]

If $\beta\cdots=\thin\beta\thick\theta\cdots$ is not $\beta^2\cdots=\beta^2\delta$ and not $\beta\thick\gamma\cdots=\beta\gamma^3$, then $\theta\ne\beta,\gamma$. By the parity lemma, we have $\beta\cdots=\thin\beta\thick\alpha\cdots=\alpha\beta\gamma\cdots$. By $0\ne R(\alpha\beta\gamma\cdots)=\frac{8}{f}\pi<$ all angles, we know $\alpha\beta\gamma\cdots$ is  not a vertex. We conclude $\beta\cdots=\beta^2\delta,\beta\gamma^3$. 

Consider a vertex $\gamma\cdots$ with no $\beta$. By $\alpha=\tfrac{2}{3}\pi$, $\gamma>\tfrac{1}{3}\pi$, and the parity lemma, the vertex is $\alpha\gamma^2\cdots,\gamma^4\cdots,\gamma^2\cdots$, with only $\delta,\epsilon$ in the remainders. We have $R(\gamma^4\cdots)<R(\alpha\gamma^2\cdots)=(\frac{2}{3}-\frac{8}{f})\pi<\epsilon$, and $R(\alpha\gamma^2\cdots)\ne 0$. We also have $\epsilon<R(\gamma^2\cdots)=(\frac{4}{3}-\frac{8}{f})\pi<2\epsilon$. This implies $\alpha\gamma^2\cdots=\alpha\gamma^2\delta^k(k\ge 1)$, $\gamma^4\cdots=\gamma^4\delta^k$, and $\gamma^2\cdots=\gamma^2\delta^k(k\ge 1),\gamma^2\delta^k\epsilon(k\ge 1)$. By no consecutive $\gamma\delta\gamma,\gamma\delta\delta$, we know $\alpha\gamma^2\cdots$ is not a vertex, and $\gamma^4\cdots=\gamma^4$, and  $\gamma^2\cdots=\gamma^2\delta\epsilon, \gamma^2\delta^2\epsilon$. The angle sum of $\gamma^4$ further implies $f=24$ and $\beta=\gamma$, a contradiction.

All the discussion leads to 
\[
\text{AVC}
=\{\alpha^3,\beta^2\delta,\delta\epsilon^2,\beta\gamma^3, 
\gamma^2\delta^k\epsilon \; (k=1,2)
\}.
\]
If $\gamma^2\delta^k\epsilon$ is not a vertex, then $\frac{f}{2}-6=v_4$ is the number of $\beta\gamma^3$. By counting the number of $\gamma$ in the whole tiling, we get $f=3\left(\frac{f}{2}-6\right)$. This implies $f=36$, a contradiction to $\delta<\epsilon$. Therefore $\gamma^2\delta^k\epsilon$ must be a vertex. 

We have $\gamma^2\delta^k\epsilon=\thin^{\epsilon}\gamma^{\alpha}\thick^{\alpha}\gamma^{\epsilon}\thin\cdots$. This gives $\thick^{\beta}\alpha^{\gamma}\thick^{\gamma}\alpha^{\beta}\thick\cdots=\alpha^3=\thick^{\beta}\alpha^{\gamma}\thick^{\gamma}\alpha^{\beta}\thick^{\gamma}\alpha^{\beta}\thick$, as in Figure \ref{special5A}. By the AVC, this gives $\thin^{\delta}\beta^{\alpha}\thick^{\alpha}\gamma^{\epsilon}\thin\cdots=\beta\gamma^3=\thin^{\delta}\beta^{\alpha}\thick^{\alpha}\gamma^{\epsilon}\thin^{\epsilon}\gamma^{\alpha}\thick^{\alpha}\gamma^{\epsilon}\thin$. Then the AAD $\thin^{\epsilon}\gamma^{\alpha}\thick^{\alpha}\gamma^{\epsilon}\thin$ at $\beta\gamma^3$ gives $\thick^{\beta}\alpha^{\gamma}\thick^{\gamma}\alpha^{\beta}\thick\cdots=\alpha^3=\thick^{\beta}\alpha^{\gamma}\thick^{\gamma}\alpha^{\beta}\thick^{\gamma}\alpha^{\beta}\thick$, and the argument continues. We end up with a sequence that starts from $\gamma^2\delta^k\epsilon$
\[
\delta^k\epsilon|\gamma^2
-\alpha^3
-\beta\gamma|\gamma^2
-\alpha^3
-\beta\gamma|\gamma^2
-\alpha^3
-\beta\gamma|\gamma^2
-\cdots
\]

\begin{figure}[htp]
\centering
\begin{tikzpicture}[>=latex,scale=1]

% propogation

\foreach \a in {0,1,2}
\foreach \b in {0,1,2}
{
\begin{scope}[shift=(60+60*\b:1.732),rotate=60*\b,rotate=120*\a]

\draw
	(-30:1) -- (30:1) -- (90:1);	
\draw[line width=1.5]
	(0,0) -- (-0.866,0);

\node at (0:0.22) {\small $\alpha$};

\end{scope}
}

% 1

\begin{scope}[shift=(180:1.732)]

\node at (45:0.7) {\small $\gamma$};
\node at (73:0.67) {\small $\beta$};

\node at (-45:0.75) {\small $\beta$};
\node at (-75:0.75) {\small $\beta$};

\node at (165:0.75) {\small $\gamma$};
\node at (195:0.75) {\small $\gamma$};

\node at (-1.25,0.05) {\small $\delta^k\epsilon$};

\fill (-0.866,0) circle (0.1);

\end{scope}

% 2

\begin{scope}[shift=(120:1.732)]

\node at (105:0.72) {\small $\beta$};
\node at (137:0.72) {\small $\beta$};

\node at (-105:0.72) {\small $\gamma$};
\node at (-133:0.72) {\small $\gamma$};

\node at (15:0.75) {\small $\gamma$};
\node at (-17:0.75) {\small $\beta$};

\end{scope}

% 3

\begin{scope}[shift=(60:1.732)]

\node at (45:0.67) {\small $\beta$};
\node at (73:0.67) {\small $\beta$};

\node at (-47:0.75) {\small $\gamma$};
\node at (-75:0.75) {\small $\beta$};

\node at (165:0.75) {\small $\gamma$};
\node at (195:0.75) {\small $\gamma$};

\node at (-52:1.05) {\small $\gamma$};
\node at (-72:1.05) {\small $\gamma$};

\draw[line width=1.5]
	(-60:0.6) -- (-60:1.2);

\draw[line width=1.5,dotted]
	(-60:1.2) -- (-60:1.8);

\end{scope}

\end{tikzpicture}
\caption{Proposition \ref{special5}: A sequence starting from $\gamma^2\delta^k\epsilon$.}
\label{special5A}
\end{figure}

The sequence is a path in a finite graph. Therefore it must intersect itself, in the sense that two $\alpha^3$ are identified or two $\beta\gamma|\gamma^2$ are identified. Note that the identification also forces the identification of the neighbourhood tiles. Therefore the identification of the $m$-th and $n$-th $\alpha^3$ in the sequence implies the identification of the $(m-1)$-st and $(n-1)$-st $\beta\gamma|\gamma^2$ in the sequence. Similarly, the identification of the $m$-th and $n$-th $\beta\gamma|\gamma^2$ in the sequence implies the identification of the $(m-1)$-st and $(n-1)$-st $\alpha^3$ in the sequence. Eventually this leads to the identification of the first and the $n$-th $\alpha^3$ in the sequence, and then the identification of $\delta^k\epsilon|\gamma^2$ and the $(n-1)$-st $\beta\gamma|\gamma^2$. Since the two vertices are distinct, we get a contradiction. 
\end{proof}

\begin{proposition}\label{special6}
There is no tiling of the sphere by congruent non-symmetric pentagons in Figure \ref{pentagon}, such that $\alpha^3, \beta^2\delta,\beta\gamma\epsilon$ are vertices and $f\ge 24$. 
\end{proposition}

\begin{proof}

The angle sums of $\alpha^3, \beta^2\delta,\beta\gamma\epsilon$ and the angle sum for pentagon imply 
\[
\alpha=\tfrac{2}{3}\pi, \;
\beta=(\tfrac{5}{6}-\tfrac{2}{f})\pi,  \;
\delta=(\tfrac{1}{3}+\tfrac{4}{f})\pi,  \;
\gamma+\epsilon=(\tfrac{7}{6}+\tfrac{2}{f})\pi.
\]
We have $\alpha<\beta,2\delta$. 

By $\beta^2\delta$ and the balance lemma, we know $\gamma^2\cdots$ is a vertex. If $\beta<\gamma$, then by $\beta^2\delta,\beta\gamma\epsilon$, we have $R(\gamma^2\cdots)<\delta,\epsilon$. By $\gamma>\beta>\alpha=\frac{2}{3}\pi$, we also have $R(\gamma^2\cdots)<\alpha,\beta,\gamma$. Therefore $\gamma^2\cdots$ is not a vertex. The contradiction implies $\beta>\gamma$. By Lemma \ref{geometry1}, this implies $\delta<\epsilon$.

By $\beta\gamma\epsilon$, we have $R(\beta\epsilon\cdots)=\gamma<\beta$. By the parity lemma, this implies $\beta\epsilon\cdots=\beta\gamma\epsilon$.

By Lemma \ref{klem4}, the parity lemma, and $R(\alpha^2\cdots)=\alpha<\beta$, we have $\alpha^2\cdots=\alpha^3,\alpha^2\gamma^2\cdots$. If $\alpha^2\gamma^2\cdots$ is a vertex, then $\gamma\le \pi-\alpha=\frac{1}{3}\pi$, and $\epsilon=(\tfrac{7}{6}+\tfrac{2}{f})\pi-\gamma\ge (\tfrac{5}{6}+\tfrac{2}{f})\pi$. Then $R(\alpha^2\gamma^2\cdots)<\alpha<\beta,2\delta,\epsilon$, and $\alpha^2\gamma^2\cdots=\alpha^2\gamma^k,\alpha^2\gamma^k\delta$. Then we have $\thick\gamma\thin\gamma\thick$ or $\thick\gamma\thin\delta\thin\gamma\thick$ at $\alpha^2\gamma^2\cdots$, and their AAD imply $\epsilon\thin\epsilon\cdots$ is a vertex. By $\epsilon \ge (\tfrac{5}{6}+\tfrac{2}{f})\pi$, we have $R(\epsilon^2\cdots)\le (\tfrac{1}{3}-\tfrac{4}{f})\pi<\alpha,\beta,\delta,\epsilon$. By $\gamma+\epsilon>\pi$, we also have $R(\epsilon^2\cdots)<2\gamma$. Then by the parity lemma, we know $\epsilon^2\cdots$ is not a vertex. The contradiction proves $\alpha^2\cdots=\alpha^3$.

The unique AAD $\thin^{\delta}\beta^{\alpha}\thick^{\alpha}\beta^{\delta}\thin^{\epsilon}\delta^{\beta}\thin$ of $\beta^2\delta$ gives vertices $\thick^{\gamma}\alpha^{\beta}\thick^{\beta}\alpha^{\gamma}\thick\cdots=\alpha^3=\thick^{\gamma}\alpha^{\beta}\thick^{\beta}\alpha^{\gamma}\thick^{\beta}\alpha^{\gamma}\thick$ and $\thin^{\epsilon}\delta^{\beta}\thin^{\delta}\epsilon^{\gamma}\thin\cdots$. The AAD of $\alpha^3$ shows $\beta\thick\gamma\cdots$ and $\gamma\thick\gamma\cdots$ are vertices.

We divide further discussion into two cases: $\epsilon>\alpha=\tfrac{2}{3}\pi$ and $\epsilon\le \alpha=\tfrac{2}{3}\pi$.

\subsubsection*{Case. $\epsilon>\alpha=\frac{2}{3}\pi$}

We have $R(\epsilon^2\cdots)<\alpha<\beta,2\delta,\epsilon$. By $\gamma+\epsilon>\pi$ and the parity lemma, this implies $\epsilon^2\cdots=\delta \epsilon^2$. By Proposition \ref{special5} and $f\ge 24$, there is no tiling with vertices $\alpha^3,\beta^2\delta,\delta\epsilon^2$. Therefore $\epsilon^2\cdots$ is not a vertex. By the first part of Lemma \ref{klem2}', this implies no consecutive $\gamma\delta\cdots\delta\gamma$.

Recall that $\thin^{\epsilon}\delta^{\beta}\thin^{\delta}\epsilon^{\gamma}\thin\cdots$ is a vertex. If the vertex has no $\beta,\gamma$, then by Lemma \ref{klem4} and no $\epsilon^2\cdots$, we get $\thin^{\epsilon}\delta^{\beta}\thin^{\delta}\epsilon^{\gamma}\thin\cdots=\delta^k \epsilon$. By $\epsilon>\tfrac{2}{3}\pi$ and 
$\delta>\tfrac{1}{3}\pi$, we get $k=2,3$. By Proposition \ref{special4}, there is no tiling with vertices $\alpha^3,\beta^2\delta,\delta^2\epsilon$. Therefore $k=3$, and the angle sum of $\delta^3\epsilon$ further implies 
\[
\alpha=\tfrac{2}{3}\pi, \;
\beta=(\tfrac{5}{6}-\tfrac{2}{f})\pi,  \;
\gamma=(\tfrac{1}{6}+\tfrac{14}{f})\pi, \;
\delta=(\tfrac{1}{3}+\tfrac{4}{f})\pi,  \;
\epsilon=(1-\tfrac{12}{f})\pi.
\]

By $\beta^2\delta$, no $\epsilon^2\cdots$, and Lemma \ref{klem3}, we know $\gamma^2\epsilon\cdots$ is a vertex. Then by $R(\gamma^2\epsilon\cdots)=(\frac{2}{3}-\frac{16}{f})\pi<\alpha<\beta,2\delta,\epsilon$, we get $\gamma^2\epsilon\cdots=\gamma^k\epsilon,\gamma^k\delta\epsilon$. By no consecutive $\gamma\delta\cdots\delta\gamma$, we get $k=2$ in both cases. By Proposition \ref{special3}, there is no tiling with vertices $\alpha^3,\beta^2\delta,\gamma^2\epsilon$. Therefore $\gamma^2\epsilon\cdots=\gamma^2\delta\epsilon$. The angle sum of $\gamma^2\delta\epsilon$ further implies
\[
f=60\colon
\alpha=\tfrac{2}{3}\pi,\;
\beta=\epsilon=\tfrac{4}{5}\pi,\;
\gamma=\delta=\tfrac{2}{5}\pi.
\]
After exchanging $\beta \leftrightarrow \gamma$ and $\delta\leftrightarrow  \epsilon$, this is the pentagon with angles \eqref{special2_60B}. The proof of Proposition \ref{special2} shows that such pentagon does not exist. 

We conclude that $\thin^{\epsilon}\delta^{\beta}\thin^{\delta}\epsilon^{\gamma}\thin\cdots$ must have $\beta,\gamma$. By $\beta>\gamma$, $\gamma+\epsilon=\beta+\delta>\pi$ and the parity lemma, we have $\delta\epsilon\cdots=\gamma^2\delta\epsilon\cdots$. By $(\gamma+\epsilon)+\delta=(\frac{3}{2}+\frac{6}{f})\pi$, we have $R(\gamma^2\delta\epsilon\cdots)<(\frac{1}{2}-\frac{6}{f})\pi<\alpha<\beta,2\delta,\epsilon$. Therefore $\delta\epsilon\cdots=\gamma^k\delta\epsilon,\gamma^k\delta^2\epsilon$. By no consecutive $\gamma\delta\cdots\delta\gamma$, we get $k=2$. Then
\[
\thin^{\epsilon}\delta^{\beta}\thin^{\delta}\epsilon^{\gamma}\thin\cdots
=\begin{cases}
\gamma^2\delta\epsilon \\
\gamma^2\delta^2\epsilon
\end{cases}
=\begin{cases}
\thick^{\alpha}\gamma^{\epsilon}\thin^{\epsilon}\delta^{\beta}\thin^{\delta}\epsilon^{\gamma}\thin^{\epsilon}\gamma^{\alpha}\thick, \\
\thick^{\alpha}\gamma^{\epsilon}\thin\delta\thin^{\epsilon}\delta^{\beta}\thin^{\delta}\epsilon^{\gamma}\thin^{\epsilon}\gamma^{\alpha}\thick, \\
\thick^{\alpha}\gamma^{\epsilon}\thin^{\epsilon}\delta^{\beta}\thin^{\delta}\epsilon^{\gamma}\thin\delta\thin^{\epsilon}\gamma^{\alpha}\thick.
\end{cases}
\]
We find $\epsilon^2\cdots$ is always a vertex, a contradiction.

\subsubsection*{Case. $\epsilon\le \alpha=\frac{2}{3}\pi$}

We have $\gamma=(\tfrac{7}{6}+\tfrac{2}{f})\pi-\epsilon\ge(\frac{1}{2}+\frac{2}{f})\pi$. 

By $\beta\gamma\epsilon$, $\beta>\gamma$, and $\gamma+\epsilon>\pi$, we know $R(\gamma^2\epsilon\cdots)\ne 0$ and has no $\epsilon$. By $2\gamma+\epsilon
=\gamma+(\gamma+\epsilon)
\ge (\tfrac{1}{2}+\tfrac{2}{f})\pi+(\tfrac{7}{6}+\tfrac{2}{f})\pi\ge (\tfrac{5}{3}+\tfrac{4}{f})\pi$, we know $R(\gamma^2\epsilon\cdots)\le (\tfrac{1}{3}-\tfrac{4}{f})\pi<\alpha,\beta,\gamma,\delta$. Therefore $\gamma^2\epsilon\cdots$ is not a vertex.

Recall that $\gamma\thick\gamma\cdots$ is a vertex. By $\beta>\gamma>\frac{1}{2}\pi$ and the parity lemma, we know $R(\gamma\thick\gamma\cdots)$ has no $\beta,\gamma$. By Lemma \ref{klem5}, and no $\gamma^2\epsilon\cdots$, this implies $\gamma\thick\gamma\cdots=\gamma^2\delta^k$. By $\beta^2\delta$, $\gamma>\tfrac{1}{2}\pi$, $\delta>\tfrac{1}{3}\pi$, we get $k=2$. Then the angle sum of $\gamma^2\delta^2$ further implies 
\[
\alpha=\tfrac{2}{3}\pi, \;
\beta=(\tfrac{5}{6}-\tfrac{2}{f})\pi,  \;
\gamma = (\tfrac{2}{3}-\tfrac{4}{f})\pi, \; 
\delta=(\tfrac{1}{3}+\tfrac{4}{f})\pi,  \;
\epsilon = (\tfrac{1}{2}+\tfrac{6}{f})\pi.
\]

By $\beta^2\delta$, no $\gamma^2\epsilon\cdots$, and Lemma \ref{klem3}, we know $\epsilon^2\cdots$ is a vertex. By $\beta>\gamma$, $\gamma+\epsilon>\pi$, and the parity lemma, we know $\epsilon^2\cdots$ has no $\beta,\gamma$. Then by Lemma \ref{klem4}, and $\delta>\frac{1}{3}\pi$, $\epsilon>\frac{1}{2}\pi$, we get $\epsilon^2\cdots = \delta\epsilon^2, \epsilon^3, \delta^2\epsilon^2, \delta\epsilon^3$. The angle sums of $\delta\epsilon^2, \epsilon^3, \delta^2\epsilon^2, \delta\epsilon^3$ further imply
\begin{align*}
f=24 &\colon 
\alpha=\tfrac{2}{3}\pi, \;
\beta=\tfrac{3}{4}\pi, \;
\gamma=\tfrac{1}{2}\pi, \;
\delta=\tfrac{1}{2}\pi, \;
\epsilon=\tfrac{3}{4}\pi; \\
f=36  &\colon 
\alpha=\tfrac{2}{3}\pi, \;
\beta=\tfrac{7}{9}\pi, \;
\gamma=\tfrac{5}{9}\pi, \;
\delta=\tfrac{4}{9}\pi, \;
\epsilon=\tfrac{2}{3}\pi; \\
f=60  &\colon 
\alpha=\tfrac{2}{3}\pi, \;
\beta=\tfrac{4}{5}\pi, \;
\gamma=\tfrac{3}{5}\pi, \;
\delta=\tfrac{2}{5}\pi, \;
\epsilon=\tfrac{3}{5}\pi; \\
f=132  &\colon
\alpha=\tfrac{2}{3}\pi, \;
\beta=\tfrac{9}{11}\pi, \;
\gamma=\tfrac{7}{11}\pi, \;
\delta=\tfrac{4}{11}\pi, \;
\epsilon=\tfrac{6}{11}\pi. 
\end{align*}

The case $f=24$ has $\delta\epsilon^2$. By Proposition \ref{special5}, there is no tiling.

For the case $f=36$, by the angle values and the edge length consideration, we may calculate all the possible angle combinations
\[
\text{AVC}
=\{\alpha^3,  
\alpha\beta\gamma,  
\beta^2\delta, 
\beta\gamma\epsilon,
\epsilon^3,   
\gamma^2\delta^2,
\delta^3 \epsilon\}.
\]
This implies $\beta\thin\gamma\cdots=\alpha\beta\gamma$, $\beta\thick\gamma\cdots=\beta\gamma\epsilon$, and $\beta\thin\beta\cdots$ is not a vertex. 

The unique AAD $\thin^{\delta}\beta^{\alpha}\thick^{\alpha}\beta^{\delta}\thin^{\epsilon}\delta^{\beta}\thin$ determines $T_1,T_2,T_3$ around $\beta^2\delta$ in the first of Figure \ref{special6A}. Then $\beta_2\delta_3\cdots=\beta^2\delta$ determines $T_4$, and $T_1,T_2,T_3,T_4$ has $180^{\circ}$ rotation symmetry. Then 
$\alpha_1\alpha_3\cdots=\alpha_1\alpha_3\alpha_5$ and $\delta_4\epsilon_3\cdots=\delta_4\delta_6\delta_7\epsilon_3$. By $\alpha_5$, we have $\gamma_3\cdots=\beta\thick\gamma\cdots,\gamma\thick\gamma\cdots$, which is $\beta\gamma\epsilon,\gamma^2\delta^2$. By $\delta_6$, we get $\gamma_3\cdots=\beta\gamma\epsilon=\beta_5\gamma_3\epsilon_6$. Then $\alpha_5,\beta_5$ determine $T_5$, and $\delta_6,\epsilon_6$ determine $T_6$. By $\delta_7$ and no $\beta_6\thin\beta_7\cdots$, we determine $T_7$. By the $180^{\circ}$ rotation symmetry, we also determine $T_8,T_9$. Then $\beta_9\epsilon_1\cdots=\beta\gamma\epsilon$ and $\gamma_1\gamma_5\cdots=\gamma^2\delta^2$. This implies $\gamma,\delta$ are adjacent in a tile, a contradiction.

\begin{figure}[htp]
\centering
\begin{tikzpicture}[>=latex,scale=1]

% f=36

\foreach \a in {1,-1}
{
\begin{scope}[scale=\a]

\draw
	(0,0.3) -- (2,0.3)
	(1,1) -- (1,-1)
	(0,0) -- (0,1) -- (0.5,1.4) -- (1,1) -- (1.5,1.4) -- (2,1) -- (2,-1) -- (1.5,-1.4) -- (1,-1) -- (0,-1);

\draw[line width=1.5]
	(0,0.3) -- (0,1) -- (0.5,1.4)
	(1,1) -- (1.5,1.4) -- (2,1)
	(0,1) -- (-1,1)
	(2,0.3) -- (2,-1) -- (1.5,-1.4);

\node at (0.2,0.95) {\small $\alpha$}; % T1	
\node at (0.2,0.5) {\small $\beta$};
\node at (0.5,1.15) {\small $\gamma$};	
\node at (0.85,0.5) {\small $\delta$};	
\node at (0.85,0.95) {\small $\epsilon$};
	
\node at (0.2,-0.8) {\small $\alpha$};	 % T2
\node at (0.2,-0.3) {\small $\beta$};	
\node at (0.85,-0.8) {\small $\gamma$};
\node at (0.2,0.1) {\small $\delta$};
\node at (0.85,0.1) {\small $\epsilon$};

\node at (1.5,-1.15) {\small $\gamma$}; % T8
\node at (1.2,0.1) {\small $\delta$};
\node at (1.15,-0.9) {\small $\epsilon$};	
\node at (1.8,0.05) {\small $\beta$};	
\node at (1.8,-0.9) {\small $\alpha$};

\node at (1.5,1.15) {\small $\alpha$}; % T9
\node at (1.2,0.9) {\small $\beta$};	
\node at (1.85,0.85) {\small $\gamma$};
\node at (1.2,0.5) {\small $\delta$};	
\node at (1.85,0.5) {\small $\epsilon$};

\end{scope}
}

\draw
	(0.5,1.4) -- (-0.5,2.1) -- (-1.5,1.4);

\node at (-0.1,1.2) {\small $\alpha$}; % T5
\node at (0.2,1.4) {\small $\gamma$}; 
\node at (-0.9,1.2) {\small $\beta$}; 
\node at (-1.2,1.4) {\small $\delta$}; 
\node at (-0.5,1.9) {\small $\epsilon$}; 

\node at (0.7,1.6) {\small $\delta^2$}; 

%\node at (0.65,1.5) {\small $\epsilon$};
\node at (1,1.2) {\small $\gamma$};	

\node[draw,shape=circle, inner sep=1] at (0.5,0.75) {\small 1};
\node[draw,shape=circle, inner sep=1] at (0.5,-0.5) {\small 2};
\node[draw,shape=circle, inner sep=1] at (-0.5,0.5) {\small 3};
\node[draw,shape=circle, inner sep=1] at (-0.5,-0.75) {\small 4};
\node[draw,shape=circle, inner sep=1] at (1.5,0.75) {\small 9};
\node[draw,shape=circle, inner sep=1] at (1.5,-0.5) {\small 8};
\node[draw,shape=circle, inner sep=1] at (-1.5,-0.75) {\small 7};
\node[draw,shape=circle, inner sep=1] at (-1.5,0.5) {\small 6};
\node[draw,shape=circle, inner sep=1] at (-0.5,1.5) {\small 5};

% f=132

\begin{scope}[shift={(5cm,-0.5cm)}]

\foreach \a in {0,...,3}
{
\begin{scope}[rotate=90*\a]

\draw
	(0,0) -- (0.8,0) -- (1.2,0.6) -- (0.6,1.2) -- (0,0.8);

\draw[line width=1.5]
    (1.2,0.6) -- (0.6,1.2) -- (0,0.8);

\end{scope}
}

\draw
	(-1.2,0.6) -- (-1.8,0.6) -- (-1.8,1.8) -- (0.6,1.8) -- (1.2,2.2) -- (1.8,1.6)
	(1.2,-0.6) -- (1.8,-0.6) -- (1.8,0.6) 
	(1.2,0.6) -- (2.4,0.6) -- (2.4,1.6) 
	(1.8,1.6) -- (3,1.6) -- (3,2.6)
	(1.2,0.6) -- (2.4,2.6) -- (3,2.6)
	(1.2,2.2) -- (1.2,2.8) -- (1.8,3.2) -- (2.4,2.6)
	(-0.6,1.2) -- (-0.6,1.8) -- (-0.6,2.7) -- (0.6,2.7) -- (1.2,2.2)
	(-0.6,1.8) -- (-1.2,2.4)
	;

\draw[line width=1.5]
	(-1.2,0.6) -- (-1.8,0.6)
	(0.6,1.2) -- (0.6,1.8)
	(1.2,-0.6) -- (1.8,-0.6)
	(1.8,0.6) -- (2.4,0.6) -- (2.4,1.6)
	(1.2,2.8) -- (1.8,3.2) -- (2.4,2.6)
	(-0.6,2.7) -- (0.6,2.7) -- (1.2,2.2);

\node at (0.75,0.2) {\small $\epsilon$}; % P1
\node at (0.95,0.55) {\small $\gamma$};
\node at (0.6,0.95) {\small $\alpha$};
\node at (0.2,0.7) {\small $\beta$};
\node at (0.2,0.2) {\small $\delta$};

\foreach \a in {0,...,2}
{
\begin{scope}[rotate=90*\a]

\node at (-0.7,0.2) {\small $\gamma$}; % P2
\node at (-0.95,0.6) {\small $\alpha$};
\node at (-0.6,0.95) {\small $\beta$};
\node at (-0.2,0.7) {\small $\delta$};
\node at (-0.2,0.2) {\small $\epsilon$};

\end{scope}
}

\node at (0,1) {\small $\beta$}; % P5
\node at (0.4,1.3) {\small $\alpha$};
\node at (-0.4,1.3) {\small $\delta$};
\node at (0.4,1.55) {\small $\gamma$};
\node at (-0.4,1.6) {\small $\epsilon$};

\node at (1.6,-0.4) {\small $\gamma$};  % T6
\node at (1.6,0.4) {\small $\epsilon$};
\node at (1.3,-0.4) {\small $\alpha$};
\node at (1.05,0) {\small $\beta$};
\node at (1.3,0.4) {\small $\delta$};

\node at (-1.3,0.8) {\small $\alpha$}; % P7
\node at (-1.6,0.8) {\small $\gamma$};
\node at (-1.6,1.6) {\small $\epsilon$};
\node at (-0.8,1.6) {\small $\delta$};
\node at (-0.8,1.25) {\small $\beta$};

\node at (1.2,0.85) {\small $\gamma$}; % T8
\node at (1.6,1.55) {\small $\epsilon$};
\node at (1.15,2) {\small $\delta$};
\node at (0.8,1.7) {\small $\beta$};
\node at (0.8,1.25) {\small $\alpha$};

\node at (1.45,0.8) {\small $\delta$};  % T9
\node at (1.9,1.45) {\small $\epsilon$};
\node at (1.8,0.75) {\small $\beta$};
\node at (2.2,0.8) {\small $\alpha$};
\node at (2.2,1.4) {\small $\gamma$};

\node at (0.6,2.5) {\small $\alpha$};  % T10
\node at (-0.4,2.45) {\small $\beta$};
\node at (0.9,2.2) {\small $\gamma$};
\node at (0.6,2) {\small $\epsilon$};
\node at (-0.4,2) {\small $\delta$};

\node at (-1.05,2) {\small $\delta$};
\node at (-0.75,2.2) {\small $\delta$};
\node at (1.05,2.5) {\small $\gamma$};

\node at (1.75,3) {\small $\alpha$}; % T11
\node at (1.35,2.65) {\small $\beta$}; 
\node at (2.2,2.55) {\small $\gamma$};
\node at (1.35,2.35) {\small $\delta$}; 
\node at (1.75,1.85) {\small $\epsilon$};

\node at (2.1,1.8) {\small $\delta$}; % T12

\node[inner sep=0.5,draw,shape=circle] at (0.55,0.55) {\small $1$};
\node[inner sep=0.5,draw,shape=circle] at (-0.55,0.55) {\small $2$};
\node[inner sep=0.5,draw,shape=circle] at (-0.55,-0.55) {\small $3$};
\node[inner sep=0.5,draw,shape=circle] at (0.55,-0.55) {\small $4$};
\node[inner sep=0.5,draw,shape=circle] at (0,1.45) {\small $5$};
\node[inner sep=0.5,draw,shape=circle] at (1.45,0) {\small $6$};
\node[inner sep=0.5,draw,shape=circle] at (-1.3,1.3) {\small $7$};
\node[inner sep=0.5,draw,shape=circle] at (1.2,1.5) {\small $8$};
\node[inner sep=0.5,draw,shape=circle] at (2,1.1) {\small $9$};
\node[inner sep=0,draw,shape=circle] at (0,2.25) {\small $10$};
\node[inner sep=0,draw,shape=circle] at (1.8,2.4) {\small $11$};
\node[inner sep=0,draw,shape=circle] at (2.6,2.1) {\small $12$};

\end{scope}

\end{tikzpicture}
\caption{Proposition \ref{special6}: Tiling for $f=36$ and $132$.}
\label{special6A}
\end{figure}

For the case $f=132$, by the angle values and the edge length consideration, we may calculate all the possible angle combinations
\[
\text{AVC}
=\{\alpha^3, 
\beta^2\delta,  
\beta\gamma\epsilon,
\gamma^2\delta^2,  
\delta\epsilon^3,   
\delta^4 \epsilon
\}.
\]
The case assumes $\delta\epsilon^3$ is a vertex. By the AVC and the edge length consideration, we know $\beta\thin\gamma\cdots,\gamma\thin\gamma\cdots$ are not vertices. This implies the unique AAD $\thin^{\epsilon}\delta^{\beta}\thin^{\delta}\epsilon^{\gamma}\thin^{\delta}\epsilon^{\gamma}\thin^{\delta}\epsilon^{\gamma}\thin$ of $\delta\epsilon^3$, which determines four tiles $T_1,T_2,T_3,T_4$ around $\delta\epsilon^3$ in the second of Figure \ref{special6A}. Then $\beta_1\delta_2\cdots=\beta_1\beta_5\delta_2$ and $\gamma_4\epsilon_1\cdots=\gamma_4\beta_6\epsilon_1$ determine $T_5,T_6$. Then $\beta_2\delta_5\cdots=\beta_2\beta_7\delta_5$ and $\gamma_1\delta_6\cdots=\gamma_1\gamma_8\delta_6\delta_9$ determine $T_7,T_8$. Then $\beta_8\gamma_5\cdots=\beta_8\gamma_5\epsilon_{10}$. By $\epsilon_{10}$, we find $\delta_7\epsilon_5\cdots=\delta\epsilon^3,\delta^4\epsilon$ is $\delta^4\epsilon=\delta_7\delta_{10}\epsilon_5\cdots$. Then $\delta_{10},\epsilon_{10}$ determine $T_{10}$. Then $\gamma_{10}\delta_8\cdots=\gamma_{10}\gamma\delta_8\delta_{11}$. 

Note that we already know $\epsilon_8,\delta_9,\delta_{11}$. If $\epsilon_8\cdots$ has $\beta,\gamma$, then by the AVC, we get $\epsilon_8\cdots=\beta\gamma\epsilon$. This implies $\gamma$ at $\epsilon_8\cdots$ is adjacent to either $\delta_9$ or $\delta_{11}$. The contradiction implies $\epsilon_8\cdots$ has no $\beta,\gamma$. This determines $\beta_9,\beta_{11}$, and then determines $T_9,T_{11}$. Then $\epsilon_8\epsilon_9\epsilon_{11}\cdots=\delta_{12}\epsilon_8\epsilon_9\epsilon_{11}$. Then we have either $\beta_{12}\thin\gamma_9\cdots$ or $\beta_{12}\thin\gamma_{11}\cdots$. Since $\beta\thin\gamma\cdots$ is not a vertex, we get a contradiction.

It remains to deal with the case $f=60$. It turns out that a tiling is combinatorially possible, in which all $\alpha^3, \beta^2\delta,\beta\gamma\epsilon,\gamma^2\delta^2,\delta^2\epsilon^2,\delta^5$ appear as vertices. We will eliminate the case by using the spherical trigonometry to show that the pentagon with the edge combination $a^3b^2$ and the prescribed angles does not exist. 

By
\[
\alpha=\tfrac{2}{3}\pi,\;
\beta+\gamma+\epsilon=2\pi,\;
\delta=\tfrac{2}{5}\pi,
\]
the pentagon would tile a pentagonal subdivision of the regular icosahedron. Therefore we may apply the idea for calculating \eqref{special1_60A} and \eqref{special2_60A}. We notice that $\delta$ and $\epsilon$ are exchanged (and therefore $\beta$ and $\gamma$ are also exchanged) compared with the earlier calculations. We get 
\[
\cos^3 a + (5-2\sqrt{5})\cos^2 a + (13-6\sqrt{5})\cos a - \tfrac{4}{5}\sqrt{5}+1 =0.
\]
The cubic equation has only one real root $\cos a\approx 0.9023$, which implies $a \approx 0.1418\pi$. Then we get $\beta \approx 0.8100\pi \ne \frac{4}{5}\pi$. This implies that the case $f=60$ is not possible.
\end{proof}

\begin{proposition}\label{special7}
There is no tiling of the sphere by congruent non-symmetric pentagons in Figure \ref{pentagon}, such that $\alpha\beta\gamma,\delta\epsilon^2$ are vertices, and there is a vertex of degree $4$ or $5$.
\end{proposition}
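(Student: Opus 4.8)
The plan is to follow Step~1 of the earlier propositions: first solve for the angles, then cut down the list of possible vertex types, and finally show that no vertex can have degree $4$ or $5$. The angle sums at $\alpha\beta\gamma$, at $\delta\epsilon^2$ (i.e.\ $\delta+2\epsilon=2\pi$), together with the pentagon angle sum give
\[
\alpha+\beta+\gamma=2\pi,\quad
\delta=\tfrac{8}{f}\pi,\quad
\epsilon=(1-\tfrac{4}{f})\pi.
\]
Since the pentagon is non-symmetric and $f\ge 12$, we get $f>12$, hence $\delta<\epsilon$, and Lemma \ref{geometry1} then forces $\beta>\gamma$. Two elementary facts drive the whole argument: $\epsilon>\tfrac23\pi$, so that at most two $\epsilon$'s occur at any vertex and (by edge length) $\alpha$ is never adjacent to $\delta$ or $\epsilon$; and the rewriting of $\beta>\gamma$ as $\alpha+2\beta>2\pi$.

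The key structural reduction is this last inequality combined with the Parity Lemma (Lemma \ref{beven}), which forces an even number of $ab$-angles at every vertex. A vertex carrying $\alpha$ together with any $\beta$ must already be exactly $\alpha\beta\gamma$ of degree $3$: a second $ab$-angle equal to $\beta$ gives $\alpha+2\beta>2\pi$, while any $\alpha\beta\gamma\cdots$ of degree $>3$ exhausts $2\pi$ at $\alpha+\beta+\gamma$. Hence a vertex $H$ of degree $4$ or $5$ must be of one of four types: (i) no $b$-edge, $H=\delta^a\epsilon^b$; (ii) only $\alpha$'s, $H=\alpha^n$; (iii) $\alpha\gamma^{2m}\cdots$ with $m\ge1$ and only $\delta,\epsilon$ in the remainder; or (iv) no $\alpha$ but a positive even number of $ab$-angles. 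I would dispatch these in turn. For type (i) the size of $\epsilon$ forces $b\le2$, and solving $a\delta+b\epsilon=2\pi$ shows the only high-degree solutions ($\delta^4,\delta^3\epsilon,\delta^5,\delta^4\epsilon$) pin $f$ to the finite list $\{16,20,28\}$; each value determines $\delta,\epsilon$ completely and is eliminated either by a forced vertex violating parity or the angle bounds, or, for a residual exact shape, by the spherical-trigonometry nonexistence computation used in Step~2 of Proposition \ref{special6}. For type (ii) I would run the circular adjacent angle deduction at $\alpha^n$ (Figure \ref{adjacent_deduce2}): it produces a $\beta\thick\gamma\cdots$ vertex together with equal numbers of $\beta\thick\beta\cdots$ and $\gamma\thick\gamma\cdots$; the parity count rules out $n=5$ at once, and $n=4$ is then pushed into a $\beta^2\cdots$ configuration clashing with $\alpha+2\beta>2\pi$ via the Balance Lemma (Lemma \ref{balance}).

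Types (iii) and (iv) use the same toolkit — edge-length adjacency, parity, and the inequalities $\epsilon>\tfrac23\pi$ and $\alpha+2\beta>2\pi$ — to reduce each candidate to a short normal form such as $\beta^2\delta\cdots$, $\gamma^2\epsilon\cdots$, $\beta^2\gamma^2\cdots$, or $\delta^2\epsilon$. Several of these coincide, after the relabelling $\beta\leftrightarrow\gamma$, $\delta\leftrightarrow\epsilon$, with the hypotheses of Propositions \ref{special3}--\ref{special6} whenever $\alpha^3$ is additionally forced; the ones that do not are eliminated by propagating a local contradiction through adjacent angle deductions, or, for a forced exact pentagon, by spherical trigonometry as in Proposition \ref{special6}.

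The main obstacle is that, unlike the earlier propositions, the hypotheses do \emph{not} pin down $\alpha$ individually: it remains a free parameter subject only to $\alpha+\beta+\gamma=2\pi$ and $\beta>\gamma$, so $\alpha=\tfrac23\pi$ is not available and the nonexistence propositions cannot be invoked uniformly. Consequently many candidate vertices resist a single angle-sum or integrality argument, and must instead be killed combinatorially by adjacent-angle-deduction chains or by the congruent-pentagon nonexistence arguments once a forbidden pair of vertices is shown to coexist. Keeping the analysis of types (iii)--(iv) exhaustive yet non-redundant, and correctly sorting the finitely many leftover $(f,\text{pentagon})$ pairs into those needing a combinatorial contradiction versus those needing the trigonometric calculation, is the delicate part of the proof.
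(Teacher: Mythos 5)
Your reduction of the degree $4$ or $5$ vertex $H$ to four types is a reasonable start (modulo the case $\alpha^2\gamma^2\cdots$, which falls outside your types (ii) and (iii) and must be excluded separately via the balance lemma), but the proposal is missing the one idea that makes the paper's proof work: a \emph{global counting argument} showing that $\alpha^k$ must be a vertex for some $k\ge 3$. The paper sorts every vertex $\alpha\cdots$ into $\alpha\beta\gamma$, $\alpha^k$, or $\alpha\gamma^2\cdots$ with no $\beta$ in the remainder, counts the total numbers of $\alpha$ and of $\gamma$ to get $kq\ge r$, and kills $q=0$ using the special tile of Lemma \ref{basic}. The conclusion $q>0$ yields $\alpha\le\frac{2}{3}\pi$, hence $\beta+\gamma\ge\frac{4}{3}\pi$ and the non-existence of $\beta\epsilon\cdots$, and these inequalities drive every subsequent elimination ($\beta^2\cdots=\beta^2\delta^k$ with $k\le 2$, no consecutive $\delta\delta\delta$, $\beta\gamma\cdots=\alpha\beta\gamma$ or $\beta\gamma\delta$, and the final contradiction at $\gamma^2\cdots$). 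Your two inequalities $\epsilon>\frac{2}{3}\pi$ and $\alpha+2\beta>2\pi$ are far too weak to substitute for them.

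Concretely, the steps you sketch do not go through as stated. In type (i), the values $f\in\{16,20,28\}$ fix only $\delta,\epsilon$; the remaining constraint $\alpha+\beta+\gamma=2\pi$ leaves a positive-dimensional family of pentagons, so the trigonometric nonexistence computation of Proposition \ref{special6} is inapplicable, and no "forced vertex violating parity" is exhibited. (The paper disposes of $\delta^4,\delta^5,\delta^3\epsilon,\delta^4\epsilon$ in one line only because it has already proved there is no consecutive $\delta\delta\delta$ at any vertex, a fact resting on the counting argument.) In type (ii), parity does \emph{not} rule out $\alpha^5$: all five $b$-edges there may carry $\beta\thick\gamma$, consistent with zero occurrences each of $\beta\thick\beta$ and $\gamma\thick\gamma$; the parity of $k$ only becomes relevant after one proves $\beta\thick\gamma\cdots$ is not a vertex, which is where the paper's $\beta\gamma\cdots=\alpha\beta\gamma=\beta\thin\gamma\cdots$ step enters. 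Likewise $\alpha^4$ is not "pushed into a $\beta^2\cdots$ configuration clashing with $\alpha+2\beta>2\pi$": the vertex $\beta\thick\beta\cdots$ produced by a $\beta\thick\beta$ adjacency need not contain $\alpha$, and indeed $\beta^2\delta$ and $\beta^2\delta^2$ are genuine candidates the paper must work to exclude. Finally, Propositions \ref{special3}--\ref{special6} all require $\alpha^3$ to be a vertex, and here $\alpha^3$ is never forced (the paper ultimately shows the exponent $k$ is even), so your plan for types (iii)--(iv) cannot invoke them; your closing paragraph concedes exactly this, which means the hard cases are identified but left unresolved.
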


\begin{proof}
The angle sums of $\alpha\beta\gamma,\delta\epsilon^2$ and the angle sum for pentagon imply 
\[
\alpha+\beta+\gamma=2\pi,\;
\delta=\tfrac{8}{f}\pi,\;
\epsilon=(1-\tfrac{4}{f})\pi.
\]
By $f>12$, we have $\delta<\frac{2}{3}\pi<\epsilon$. By Lemma \ref{geometry1}, we get $\beta>\gamma$. 

By $\alpha\beta\gamma$ and Lemma \ref{klem6}, we know $\alpha^2\cdots=\alpha^k$. By Lemma \ref{klem4}, a vertex $\alpha\cdots\ne\alpha^k$ has $ab$-angles. By $\alpha\beta\gamma$, $\beta>\gamma$, and the parity lemma, we conclude $\alpha\cdots=\alpha\beta\gamma,\alpha^k,\alpha\gamma^2\cdots$, with no $\alpha,\beta$ in the remainder. 

Let $p,q,r$ be the respective numbers of $\alpha\beta\gamma,\alpha^k,\alpha\gamma^2\cdots$ in the tiling. Then the total number of $\alpha$ in the tiling is $f=p+kq+r$. We also know that the total number of $\gamma$ is $f\ge p+2r$. Therefore $kq\ge r$.

If $q=0$, then $r=0$. This means $\alpha\cdots=\alpha\beta\gamma$ and $p=f$. Since the number of $\beta$ and $\gamma$ at $\alpha\beta\gamma$ is already $p=f$, and their total number is also $f$, we conclude that $\beta\cdots=\gamma\cdots=\alpha\beta\gamma$. By Lemma \ref{basic}, there is a tile with four degree $3$ vertices, such as $T_2$ in Figure \ref{special7A}. By $\beta_2\cdots=\gamma_2\cdots=\alpha\beta\gamma$, we determine $T_1,T_3$. Then one of $\delta_1\epsilon_2\cdots$ and $\delta_2\epsilon_3\cdots$ has degree $3$. If $\delta_1\epsilon_2\cdots$ has degree $3$, then we have a tile outside $T_1,T_2$. The tile has three vertices that cannot involve $\alpha,\beta,\gamma$, a contradiction. If $\delta_2\epsilon_3\cdots$ has degree $3$, we have the same contradiction. 

\begin{figure}[htp]
\centering
\begin{tikzpicture}[>=latex,scale=1]

\foreach \a in {0,...,3}
\draw[xshift=1.2*\a cm]
	(-0.6,1.1) -- (-0.6,0.4);

\foreach \a in {0,1,2}
{
\begin{scope}[xshift=1.2*\a cm]

\draw
	(0,0) -- (0,-0.3);
	
\draw[line width=1.5]
	(-0.6,0.4) -- (0,0) -- (0.6,0.4);

\node at (0,0.25) {\small $\alpha$};	
\node at (0.45,0.5) {\small $\beta$};
\node at (-0.45,0.5) {\small $\gamma$};
\node at (0.45,0.9) {\small $\delta$};
\node at (-0.45,0.9) {\small $\epsilon$};	

\end{scope}
}

\draw
	(-0.6,1.1) -- ++(3.6,0)
	(-0.6,1.1) -- (0.2,1.8) -- (1,1.8) -- (1.8,1.1);

\foreach \a in {0,1}
\node at (0.6 cm +1.2*\a cm,0.2) {\small $\alpha$};

\node[draw,shape=circle, inner sep=0.5] at (0,0.7) {\small $1$};
\node[draw,shape=circle, inner sep=0.5] at (1.2,0.7) {\small $2$};
\node[draw,shape=circle, inner sep=0.5] at (2.4,0.7) {\small $3$};
	
\end{tikzpicture}
\caption{$\{\alpha\beta\gamma,\delta\epsilon^2\}$: Cannot have $\alpha\cdots=\alpha\beta\gamma$.}
\label{special7A}
\end{figure}

We just proved that $q>0$. This means $\alpha^k$ is a vertex. This implies $\alpha\le\frac{2}{3}\pi$. Then $\alpha\beta\gamma$ further implies $\beta+\gamma=2\pi-\alpha\ge \frac{4}{3}\pi>\pi$. Moreover, by $\alpha\beta\gamma$, $\beta>\gamma$, $\alpha\le\frac{2}{3}\pi<\epsilon$, and the parity lemma, we know $\beta\epsilon\cdots$ is not a vertex.

By $\alpha\beta\gamma$, and $\alpha^2\cdots=\alpha^k$, we know $R(\alpha\gamma^2\cdots)$ has no $\alpha,\beta$. If $R(\alpha\gamma^2\cdots)$ has $\gamma$, then by the parity lemma, we know $\alpha\gamma^4\cdots$ is a vertex. Compared with the angle sum of $\alpha\beta\gamma$, we get $\beta\ge 3\gamma$. Then $\frac{4}{3}\pi\le \beta+\gamma\le \beta+\frac{1}{3}\beta=\frac{4}{3}\beta$. This implies $\beta\ge \pi$. Then $\beta^2\cdots$ is not a vertex, and $\alpha\gamma^4\cdots$ violates the balance lemma. Therefore $R(\alpha\gamma^2\cdots)$ has only $\delta,\epsilon$. Applying the balance lemma to $\alpha\gamma^2\cdots$, we know $\beta^2\cdots$ is a vertex. This implies $\beta<\pi$. Then by $\alpha\beta\gamma$, we have $R(\alpha\gamma^2\cdots)<\beta<\pi$. By $\delta+\epsilon>\pi$ and $\delta<\epsilon$, we conclude $\alpha\gamma^2\cdots=\alpha\gamma^2\delta^k,\alpha\gamma^2\epsilon$.

By $\alpha\beta\gamma$ and $\beta>\gamma$, we know $\beta^2\cdots$ has no $\alpha$. By $\beta>\gamma$, $\beta+\gamma>\pi$, and the parity lemma, we know $R(\beta^2\cdots)$ has no $\beta,\gamma$. By no $\beta\epsilon\cdots$, we know $R(\beta^2\cdots)$ has no $\epsilon$. Therefore $\beta^2\cdots=\beta^2\delta^k=\beta\thick\beta\cdots$. This implies $\beta\thin\beta\cdots$ is not a vertex. 

By no $\beta\thin\beta\cdots$ and no $\beta\epsilon\cdots$, the AAD shows that we cannot have consecutive $\delta\delta\delta$. Therefore $k\le 2$ in $\beta^2\delta^k$, and $\beta^2\cdots=\beta^2\delta,\beta^2\delta^2$.

The proposition assumes the existence of a vertex $H$ of degree $4$ or $5$. If $H$ has no $b$-edge, then by $\delta\epsilon^2$ and $\delta<\epsilon$, we get $H=\delta^3\epsilon,\delta^4\epsilon$. This contradicts no consecutive $\delta\delta\delta$. Therefore $H$ must have $b$-edge.

Suppose there is a vertex $\beta\gamma\cdots$ without $\alpha$. By $\beta+\gamma>\pi$ and the parity lemma, if $R(\beta\gamma\cdots)$ has $\beta,\gamma$, then $\beta\gamma\cdots=\beta\gamma^3\cdots$. This implies $\beta+3\gamma\le 2\pi=\alpha+\beta+\gamma$. Then $2\gamma\le \alpha$, $\alpha+\gamma\le \frac{3}{2}\alpha\le\pi$, and $\beta=2\pi-\alpha-\gamma\ge \pi$. This implies $\beta^2\cdots$ is not a vertex. Then $\beta\gamma^3\cdots$ violates the balance lemma. Therefore $R(\beta\gamma\cdots)$ has no $\beta,\gamma$. By no $\beta\epsilon\cdots$, we know $R(\beta\gamma\cdots)$ has no $\epsilon$. Therefore $\beta\gamma\cdots=\beta\gamma\delta^k$. If $k\ge 2$, then the AAD of $\beta\gamma\delta^k=\thick^{\alpha}\gamma^{\epsilon}\thin\delta\thin\delta\thin\cdots$ implies either $\beta\thin\beta\cdots$ or $\beta\thin\epsilon\cdots$ is a vertex, a contradiction. Therefore $\beta\gamma\cdots=\beta\gamma\delta$. By $\beta>\gamma$, this implies $\beta^2\cdots=\beta^2\delta^k$ is not a vertex. By the balance lemma, this implies that $\alpha\beta\gamma,\beta\gamma\delta$ are the only vertices involving $\beta,\gamma$. Therefore the vertex $H$ of degree $4$ or $5$ has no $\beta,\gamma$. Since $H$ must have $b$-edge, we get $H=\alpha^4,\alpha^5$. The angle sums of $\alpha\beta\gamma,\beta\gamma\delta$ and $H$ imply $\frac{8}{f}\pi=\delta=\alpha=\frac{1}{2}\pi$ or $\frac{2}{5}\pi$, contradicting $f\ge 24$. Therefore we conclude $\beta\gamma\cdots=\alpha\beta\gamma$. 

Recall that $\alpha^k$ is a vertex. There are $k$ tiles around the vertex. By $\beta\gamma\cdots=\alpha\beta\gamma=\beta\thin\gamma\cdots$, we know $\beta\thick\gamma\cdots$ is not a vertex. By the second part of Lemma \ref{aadlemma}, we know $k$ is even. Then we have $k\ge 4$, which implies $\alpha\le\frac{1}{2}\pi$. 

The total numbers of $\alpha,\beta,\gamma$ are all $f$. Then by $\alpha^k$ and $\beta\gamma\cdots=\alpha\beta\gamma$, there must be vertices other than $\beta\gamma\cdots$ that also involve $\beta,\gamma$. By the balance lemma, this implies that $\beta^2\cdots=\beta^2\delta,\beta^2\delta^2$ is a vertex, and $\gamma^2\cdots$ is also a vertex. The angle sum of $\beta^2\cdots$ further implies
\begin{align*}
\beta^2\delta&\colon
\alpha+\gamma=(1+\tfrac{4}{f})\pi,\;
\beta=(1-\tfrac{4}{f})\pi,\;
\delta=\tfrac{8}{f}\pi,\;
\epsilon=(1-\tfrac{4}{f})\pi; \\
\beta^2\delta^2&\colon
\alpha+\gamma=(1+\tfrac{8}{f})\pi,\;
\beta=(1-\tfrac{8}{f})\pi,\;
\delta=\tfrac{8}{f}\pi,\;
\epsilon=(1-\tfrac{4}{f})\pi.
\end{align*}
By $\alpha\le\frac{1}{2}\pi$, we get $\gamma>(\frac{1}{2}+\tfrac{4}{f})\pi$. 

We know $\gamma^2\cdots$ is a vertex. By $\alpha^2\cdots=\alpha^k$, we know $\gamma^2\cdots$ has at most one $\alpha$. By $\gamma>(\frac{1}{2}+\tfrac{4}{f})\pi$, we have $R(\gamma^2\cdots)<(1-\tfrac{8}{f})\pi<2\gamma,\epsilon$. By $\beta>\gamma$ and the parity lemma, we get $\gamma^2\cdots=\alpha\gamma^2\delta^k,\gamma^2\delta^k$. By $\alpha\beta\gamma$ and $\beta>\gamma$, we get $k\ge 1$. Then the AAD of $\alpha\gamma^2\delta^k$ or $\gamma^2\delta^k$ gives either $\beta\thin\beta\cdots$ or $\beta\thin\epsilon\cdots$, a contradiction.
\end{proof}

\begin{proposition}\label{special8}
There is no tiling of the sphere by congruent non-symmetric pentagons in Figure \ref{pentagon}, such that 
$\alpha\beta^2$ is a vertex, and $\delta^3$ or $\delta^2\epsilon$ is also a vertex. 
\end{proposition}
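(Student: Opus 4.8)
The plan is to combine the two angle-sum equations forced by the hypothesized vertices with the combinatorial lemmas of Section~\ref{basic_facts}, determine the anglewise vertex combination in each branch, and kill every surviving configuration either numerically or by reduction to Propositions~\ref{special3}--\ref{special7}. First I would record the equations: the vertex $\alpha\beta^2$ gives $\alpha=2\pi-2\beta$, and the hypothesis gives either $\delta=\tfrac23\pi$ (from $\delta^3$) or $2\delta+\epsilon=2\pi$ (from $\delta^2\epsilon$); together with the pentagon angle-sum equation this leaves essentially two free angles and $f$. One consequence worth isolating at once is that $\alpha\gamma^2$ can never be a vertex, since $\alpha+2\gamma=2\pi$ together with $\alpha+2\beta=2\pi$ would force $\beta=\gamma$, contradicting non-symmetry. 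By Lemma~\ref{geometry1} I would then split into the two branches $\beta>\gamma,\ \delta<\epsilon$ and $\beta<\gamma,\ \delta>\epsilon$.

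The core combinatorial input comes from three forced vertices. Since $\alpha\beta^2$ is a vertex containing $ab$-angles that is not of the form $\beta\gamma\cdots$, the Balance Lemma (Lemma~\ref{balance}) forces $\gamma^2\cdots$ to be a vertex. Writing $\alpha\beta^2=\thick\alpha\thick\beta\thin\beta\thick$, the adjacent angle deduction across the $a$-edge shared by the two $\beta$'s yields a vertex $\delta\thin\delta\cdots$, while the deduction across the two $b$-edges yields $\alpha\thick\gamma\cdots$ (and $\alpha\thick\beta\cdots=\alpha\beta^2$) as vertices. Finally, because $\alpha\beta^2$ has unequal numbers of $\beta$ and $\gamma$, Lemma~\ref{klem3} forces one of $\gamma^2\epsilon\cdots$ or $\epsilon^2\cdots$ to be a vertex as well. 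Read against the inequalities of the current branch, these forced vertices are the engine for pinning down the remaining admissible vertices.

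In each branch I would use the Parity Lemma (Lemma~\ref{beven}), the edge-length restrictions, and the angle bounds to list the admissible forms of $\gamma^2\cdots$, $\alpha\thick\gamma\cdots$, $\delta\thin\delta\cdots$, and the $\epsilon$-vertex supplied by Lemma~\ref{klem3}, then solve the resulting angle-sum equations for explicit $\beta,\gamma,\delta,\epsilon$ and $f$. Each candidate should collapse by one of the standard mechanisms: a non-integer $f$, or a forced equality $\beta=\gamma$ or $\delta=\epsilon$ contradicting non-symmetry; a reduction to one of Propositions~\ref{special3}--\ref{special6} after applying the symmetry $\beta\leftrightarrow\gamma,\ \delta\leftrightarrow\epsilon$, once $\alpha^3$ together with a companion vertex such as $\beta^2\delta$, $\gamma^2\epsilon$, $\delta^2\epsilon$, or $\delta\epsilon^2$ has been exhibited; the exclusion remarks following Propositions~\ref{special1} and~\ref{special2}; or, when a repeating chain of $b$-edges through $\alpha^3$ and $\gamma^2\cdots$ vertices appears, the ``path in a finite graph must close up'' contradiction used in Proposition~\ref{special5}.

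The hard part will be any branch that survives all the purely combinatorial tests, i.e.\ an anglewise vertex combination with integer $f$ and no companion vertex that triggers an earlier proposition. As in the $f=60$ case of Proposition~\ref{special6}, I expect such a branch to be eliminated by spherical trigonometry rather than combinatorics: with the angles pinned to explicit rational multiples of $\pi$, one solves for the edge lengths $a,b$ of the prescribed pentagon---using the edge-length formula of \cite[Lemma 3]{ay1} when the $a^2$-angles force a subdivision-type relation, exactly as in the derivation of \eqref{special1_60A} and the $f=60$ computation of Proposition~\ref{special6}---and then checks that the implied value of $\beta$ or $\gamma$ contradicts the angle equations, so that no pentagon with edge combination $a^3b^2$ and the required angles exists. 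The principal difficulty is thus twofold: keeping the branch tree finite and organized while correctly identifying which companion vertex each branch supplies, and, on the surviving numerical branch, carrying out the trigonometric non-existence argument.
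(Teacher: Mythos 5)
Your toolkit is the right one, and several of your forced vertices match the paper's: the deduction $\alpha\beta^2=\thin\beta\thick\alpha\thick\beta\thin\to\thin\delta\alpha\thick\beta\gamma\thick\alpha\delta\thin$ does yield $\delta\thin\delta\cdots$ and $\alpha\thick\gamma\cdots$ as vertices, and Lemma \ref{klem3} does apply. But the plan has a genuine gap at its center. First, a smaller point: you stop short of the payoff of the forced vertex $\alpha\thick\gamma\cdots$. By the parity lemma it is $\alpha\gamma^2\cdots$ or $\alpha\beta\gamma\cdots$, and comparing either angle sum with $\alpha+2\beta=2\pi$ gives $\gamma\le\beta$, hence $\beta>\gamma$ and, by Lemma \ref{geometry1}, $\delta<\epsilon$. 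So there is no two-branch split on Lemma \ref{geometry1} at all; your branch $\beta<\gamma$ is vacuous for essentially the reason you half-observe when ruling out $\alpha\gamma^2$. The serious omission is the pivot of the whole argument: proving that $\epsilon^2\cdots$ is \emph{not} a vertex. The paper gets this by noting that $\epsilon^2\cdots$ would force two $ab$-angles (since $\delta^3$ or $\delta^2\epsilon$ leaves no room for further $a^2$-angles in the remainder), hence $\gamma+\epsilon\le\pi$, hence $\alpha\ge(\tfrac{2}{3}+\tfrac{8}{f})\pi$ and, via the separately forced vertex $\alpha\thick\alpha\cdots$ (obtained by a second round of adjacent angle deduction from the short list of vertices where $\gamma$ can appear), $\gamma\le(\tfrac{1}{3}-\tfrac{8}{f})\pi$; these bounds kill all six candidates $\beta\gamma\delta,\beta\gamma\epsilon,\gamma^2\delta,\gamma^2\epsilon,\beta\gamma^3,\gamma^4$, a contradiction. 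Without this fact your ``list the admissible forms and collapse them'' step does not terminate: the candidate lists for $\gamma^2\cdots$ and for the vertex supplied by Lemma \ref{klem3} stay long, the key inequality $\beta+\gamma+\epsilon\ge2\pi$ is unavailable, and the parts of Lemma \ref{klem2} you would need (after the exchange $\beta\leftrightarrow\gamma$, $\delta\leftrightarrow\epsilon$) have unverified hypotheses.

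Your anticipated endgame is also off target: no spherical trigonometry occurs in this proposition. In the $\delta^3$ case the paper pins down $\beta\gamma\epsilon$ and $\delta^2\cdots=\delta^3$ and then finishes with a three-tile configuration at $\alpha\beta^2$ whose third tile forces an $\epsilon^2\cdots$ vertex; in the $\delta^2\epsilon$ case it pins down $\gamma^2\delta\epsilon$, $\beta\gamma\epsilon$, and $\alpha^3$ (hence $f=36$) and finishes with an adjacent angle deduction chain forcing a $\beta\thick\beta\cdots$ vertex that cannot exist. Neither the ``path in a finite graph must close up'' mechanism of Proposition \ref{special5} nor a trigonometric non-existence computation is used here, so planning around them would send you looking for the wrong kind of contradiction.
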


\begin{proof}
By the unique AAD $\thin^{\delta}\beta^{\alpha}\thick^{\beta}\alpha^{\gamma}\thick^{\alpha}\beta^{\delta}\thin$ of $\alpha\beta^2$, we know $\alpha\thick\gamma\cdots$ is a vertex. By the parity lemma, we have $\alpha\thick\gamma\cdots=\alpha\beta\gamma\cdots,\alpha\gamma^2\cdots$. By $\alpha\beta^2$, this implies $\beta>\gamma$. By Lemma \ref{geometry1}, we get $\delta<\epsilon$. Since $\delta^3$ or $\delta^2\epsilon$ is a vertex, we get $\delta\le\frac{2}{3}\pi<\epsilon$ and $\delta+\epsilon>\frac{4}{3}\pi$. 

By $\alpha\beta^2$, we know degree $3$ vertices involving $\gamma$ are $\beta\gamma\delta,\beta\gamma\epsilon,\gamma^2\delta,\gamma^2\epsilon$. If none of these are vertices, then by Lemma \ref{hdeg} and the parity lemma, we know one of $\beta\gamma^3,\gamma^4$ is a vertex. All six vertices are of the form $\beta\thick\gamma\cdots$ or $\gamma\thick\gamma\cdots$, and the AAD of $\thin\beta\thick\gamma\thin$ or $\thin\gamma\thick\gamma\thin$ gives a vertex $\alpha\thick\alpha\cdots$. By $\alpha\beta^2$ and Lemma \ref{klem4}, we get $\alpha^2\cdots=\alpha^k,\alpha^2\beta\gamma\cdots,\alpha^2\gamma^2\cdots$, with no $\beta$ in the remainders. 

By either $\delta^3$ or $\delta^2\epsilon$, and by $\delta<\epsilon$, we know $R(\epsilon^2\cdots)$ has no $\delta,\epsilon$. Then by Lemma \ref{klem4} and the parity lemma, we know $\epsilon^2\cdots$ has at least two $ab$-angles. Then by $\beta>\gamma$, the angle sum of $\epsilon^2\cdots$ implies $\gamma+\epsilon\le\pi$. By $\delta\le\frac{2}{3}\pi$ and the angle sum for pentagon, we get
\[
\alpha+\beta
=(3+\tfrac{4}{f})\pi-(\gamma+\epsilon)-\delta 
\ge 
(\tfrac{4}{3}+\tfrac{4}{f})\pi. 
\]
Comparing with the angle sum of $\alpha\beta^2$, we get $\beta\le(\frac{2}{3}-\frac{4}{f})\pi$ and $\alpha \ge(\frac{2}{3}+\frac{8}{f})\pi$. Therefore $\alpha^k$ is not a vertex, and $\alpha\thick\alpha\cdots=\alpha^2\beta\gamma\cdots,\alpha^2\gamma^2\cdots$ is a vertex. By $\beta>\gamma$, this implies $\alpha+\gamma\le\pi$, and 
further implies $\gamma\le \pi-\alpha \le(\frac{1}{3}-\frac{8}{f})\pi$. The vertex $\epsilon^2\cdots$ also implies $\epsilon<\pi$. By $\beta\le(\frac{2}{3}-\frac{4}{f})\pi,\gamma\le(\frac{1}{3}-\frac{8}{f})\pi,\delta<\epsilon<\pi$, all $\beta\gamma\delta,\beta\gamma\epsilon,\gamma^2\delta,\gamma^2\epsilon,\beta\gamma^3,\gamma^4$ have angle sums $<2\pi$ and therefore cannot be vertices. The contradiction proves that $\epsilon^2\cdots$ is not a vertex. Then by the first part of Lemma \ref{klem2}', we do not have consecutive $\gamma\delta\cdots\delta\gamma$. 

By no $\gamma\delta\cdots\delta\gamma$, we know $\gamma^2\delta,\beta\gamma^3,\gamma^4$ are not vertices. Then one of $\beta\gamma\delta,\beta\gamma\epsilon,\gamma^2\epsilon$ is a vertex. By $\beta>\gamma$ and $\delta<\epsilon$, this implies $\beta+\gamma+\epsilon\ge 2\pi$.

We divide the further discussion according to which of $\delta^3,\delta^2\epsilon$ is a vertex.

\subsubsection*{Case. $\alpha\beta^2,\delta^3$ are vertices}

By $\delta^3$ and Lemma \ref{aadlemma}, we know $\beta\epsilon\cdots$ is a vertex. By $\beta>\gamma$ and the parity lemma, the angle sum of $\beta\epsilon\cdots$ implies $\beta+\gamma+\epsilon\le 2\pi$. Since we already proved $\beta+\gamma+\epsilon\ge 2\pi$, we get $\beta\epsilon\cdots=\beta\gamma\epsilon$. The angle sums of $\alpha\beta^2,\delta^3,\beta\gamma\epsilon$ and the angle sum for pentagon imply
\[
\alpha=(\tfrac{1}{3}+\tfrac{4}{f})\pi, \;
\beta=(\tfrac{5}{6}+\tfrac{2}{f})\pi, \;
\delta=\tfrac{2}{3}\pi, \;
\gamma+\epsilon=(\tfrac{7}{6}+\tfrac{2}{f})\pi.
\]

By $\delta^3$, we have $R(\delta^2\cdots)=\delta<\beta,\epsilon$. This implies $\delta^2\cdots=\alpha^k\gamma^l\delta^2,\delta^3$. By no $\gamma\delta\cdots\delta\gamma$, we have $l=0$ in $\alpha^k\gamma^l\delta^2$. Then by Lemma \ref{klem4}, we know $\alpha^k\delta^2$ is not a vertex. Therefore $\delta^2\cdots=\delta^3$. Now the AAD $\thick^{\alpha}\beta^{\delta}\thin^{\delta}\beta^{\alpha}\thick$ at $\alpha\beta^2$ gives a vertex $\thin^{\epsilon}\delta^{\beta}\thin^{\beta}\delta^{\epsilon}\thin\cdots=\delta^3=\thin^{\beta}\delta^{\epsilon}\thin\delta\thin^{\epsilon}\delta^{\beta}\thin$. This implies $\epsilon^2\cdots$ is a vertex, a contradiction.

\subsubsection*{Case. $\alpha\beta^2,\delta^2\epsilon$ are vertices}

The angle sums of $\alpha\beta^2$, $\delta^2\epsilon$ and the angle sum for pentagon imply  
\[
\alpha+2\gamma+\epsilon
=2(\alpha+\beta+\gamma+\delta+\epsilon)-(\alpha+2\beta)-(2\delta+\epsilon)
=(2+\tfrac{8}{f})\pi.
\]
By $\alpha\beta^2$, no $\epsilon^2\cdots$, and
Lemma \ref{klem3}, we know $\gamma^2\epsilon\cdots$ is a vertex. By $\alpha+2\gamma+\epsilon>2\pi$ and $\beta+\gamma+\epsilon\ge 2\pi$, we know $\gamma^2\epsilon\cdots$ has no $\alpha,\beta$. By $\delta^2\epsilon$, we know $\gamma^2\epsilon\cdots$ has at most one $\delta$. By no $\epsilon^2\cdots$, we know $\gamma^2\epsilon\cdots$ has no $\epsilon$. Therefore $\gamma^2\epsilon\cdots=\gamma^k\epsilon,\gamma^k\delta\epsilon$. By no $\gamma\delta\cdots\delta\gamma$, and the parity lemma, we know $k=2$, and $\gamma^2\epsilon\cdots=\gamma^2\epsilon,\gamma^2\delta\epsilon$.

Suppose $\gamma^2\epsilon\cdots=\gamma^2\epsilon$. Then the angle sums of $\alpha\beta^2,\gamma^2\epsilon,\delta^2\epsilon$ and the angle sum for pentagon imply 
\[
\alpha=\tfrac{8}{f}\pi,\;
\beta=(1-\tfrac{4}{f})\pi,\;
\gamma=\delta=\pi-\tfrac{1}{2}\epsilon.
\]
By $\gamma^2\epsilon$, $\beta>\gamma$, and the parity lemma, we know $\beta\epsilon\cdots$ is not a vertex, and $\gamma\epsilon\cdots=\gamma^2\epsilon$. This implies $\delta\epsilon\cdots$ has no $\beta,\gamma$. Then by $\delta^2\epsilon$, $\delta<\epsilon$, and Lemma \ref{klem4}, we have $\delta\epsilon\cdots=\delta^2\epsilon$. 

By $\delta\epsilon\cdots=\delta^2\epsilon$, and no $\beta\epsilon\cdots,\epsilon^2\cdots$, we may apply the third part of Lemma \ref{klem2}' to get $\gamma\cdots=\gamma\thin\epsilon\cdots=\gamma^2\epsilon$. However, we know at the beginning of the proof that $\alpha\gamma\cdots$ is a vertex. This is a contradiction. 

We conclude that $\gamma^2\epsilon\cdots=\gamma^2\delta\epsilon$ is a vertex. By no $\epsilon^2\cdots$, we have the AAD $\thick^{\alpha}\gamma^{\epsilon}\thin^{\beta}\delta^{\epsilon}\thin$ at $\gamma^2\delta\epsilon$. This gives a vertex $\beta\epsilon\cdots$. By $\beta>\gamma$, $\beta+\gamma+\epsilon\ge 2\pi$, and the parity lemma, we get $\beta\epsilon\cdots=\beta\gamma\epsilon$. The angle sums of $\alpha\beta^2,\delta^2\epsilon,\gamma^2\delta\epsilon,\beta\gamma\epsilon$ and the angle sum for pentagon imply 
\[
\alpha=(\tfrac{1}{2}+\tfrac{6}{f})\pi,\;
\beta=(\tfrac{3}{4}-\tfrac{3}{f})\pi,\;
\gamma=(\tfrac{1}{4}-\tfrac{1}{f})\pi,\;
\delta=(\tfrac{1}{2}-\tfrac{2}{f})\pi,\;
\epsilon=(1+\tfrac{4}{f})\pi.
\]

Recall that $\alpha\thick\alpha\cdots=\alpha^k,\alpha^2\beta\gamma\cdots,\alpha^2\gamma^2\cdots$ is a vertex. The angle sum implies that $\alpha^2\beta\gamma\cdots$ is not a vertex. By $R(\alpha^2\gamma^2\cdots)=(\tfrac{1}{2}-\tfrac{10}{f})\pi<\alpha,\beta,2\gamma,\delta,\epsilon$, and the parity lemma, we get $\alpha^2\gamma^2\cdots=\alpha^2\gamma^2$. By no $\gamma\delta\cdots\delta\gamma$, this is a contradiction. Therefore $\alpha^k$ must be a vertex. By $4\alpha>2\pi$, we get $\alpha^k=\alpha^3$. This further implies 
\[
f=36\colon
\alpha=\tfrac{2}{3}\pi, \; 
\beta=\tfrac{2}{3}\pi, \; 
\gamma=\tfrac{2}{9}\pi, \; 
\delta=\tfrac{4}{9}\pi, \; 
\epsilon=\tfrac{10}{9}\pi.
\]
By specific angles and the edge length consideration, we get $\alpha^2\cdots=\alpha^3$, and $\beta\thick\beta\cdots$ is not a vertex. Now the AAD $\thin^{\epsilon}\gamma^{\alpha}\thick^{\alpha}\gamma^{\epsilon}\thin$ at $\gamma^2\delta\epsilon$ gives a vertex $\thick^{\beta}\alpha^{\gamma}\thick^{\gamma}\alpha^{\beta}\thick\cdots=\alpha^3$. By Lemma \ref{aadlemma}, this implies $\beta\thick\beta\cdots$ is a vertex, a contradiction.
\end{proof}

\begin{proposition}\label{special9}
There is no tiling of the sphere by congruent non-symmetric pentagons in Figure \ref{pentagon}, such that $\beta\gamma\delta,\delta\epsilon^2$ are vertices, and there is a vertex $H=\alpha^3\cdots$ of degree $4$ or $5$, and $f\ge 24$. 
\end{proposition}

\begin{proof}
The angle sums of $\beta\gamma\delta,\delta\epsilon^2$ and the angle sum for pentagon imply $\alpha+\epsilon=(1+\tfrac{4}{f})\pi$ and $\beta+\gamma=2\epsilon$.

If $H$ has $ab$-angle, then by the parity lemma, we have $H=\alpha^3\beta^2,\alpha^3\gamma^2,\alpha^3\beta\gamma$. If $H$ has no $ab$-angle, then by Lemma \ref{klem4}, we have $H=\alpha^4,\alpha^5$. By $2\alpha+\beta+\gamma=2\alpha+2\epsilon>2\pi$, we have $H\ne \alpha^3\beta\gamma$.

\subsubsection*{Case. $H=\alpha^4$}

The angle sums of $\beta\gamma\delta,\delta\epsilon^2,\alpha^4$ and the angle sum for pentagon imply
\[
\alpha=\tfrac{1}{2}\pi,\;
\beta+\gamma=(1+\tfrac{8}{f})\pi,\;
\delta=(1-\tfrac{8}{f})\pi,\;
\epsilon=(\tfrac{1}{2}+\tfrac{4}{f})\pi.
\]
By $f\ge 24$ and non-symmetry, we have $\delta>\epsilon$. This implies $f>24$, and by Lemma \ref{geometry1}, also implies $\beta<\gamma$. By $\beta+\gamma=2\epsilon$, we have $\beta<\epsilon<\gamma$.

By $\beta\gamma\delta$ and Lemma \ref{klem6}, we know $\delta^2\cdots=\delta^k\epsilon^l$. By $\delta\epsilon^2$ and $\delta>\epsilon$, this implies $\delta^2\cdots$ is not a vertex. By the first part of Lemma \ref{klem2}, there is no consecutive $\beta\epsilon\cdots\epsilon\beta$. 

By $\alpha<R(\beta\thin\gamma\cdots)=\delta<2\alpha$, we know $\beta\thin\gamma\cdots\ne\alpha^k\beta\gamma$. By Lemma \ref{klem5} and the parity lemma, this implies $R(\beta\thin\gamma\cdots)$ has at least two $ab$-angles. By $\beta\gamma\delta=\thin\beta\thick\gamma\thin\delta\thin$, we know $R(\beta\thin\gamma\cdots)$ has no $\delta$. Then by $\beta<\gamma$ and $\beta+\gamma>\pi$, we have $\beta\thin\gamma\cdots=\beta^3\gamma\cdots$, with no $\gamma,\delta$ in the remainder. This implies $\beta\epsilon\cdots\epsilon\beta$ at $\beta\thin\gamma\cdots$, a contradiction. Therefore $\beta\thin\gamma\cdots$ is not a vertex. 

By $\beta<\gamma$, $\beta+\gamma>\pi$, and the parity lemma, we know $R(\gamma\thin\gamma\cdots)$ has no $\beta,\gamma$. By Lemma \ref{klem5}, this implies $\gamma\thin\gamma\cdots=\alpha^k\gamma^2$. By $\alpha+\gamma>\alpha+\epsilon>\pi$, we get $k=1$. By Proposition \ref{special8}', there is no tiling with vertices $\alpha\gamma^2, \delta\epsilon^2$. Therefore $\gamma\thin\gamma\cdots$ is not a vertex. 

By no $\beta\thin\gamma\cdots,\gamma\thin\gamma\cdots$, we get the unique AAD $\thin^{\gamma}\epsilon^{\delta}\thin^{\beta}\delta^{\epsilon}\thin^{\gamma}\epsilon^{\delta}\thin$ of $\delta\epsilon^2$. This gives a vertex $\gamma\epsilon\cdots$. By the parity lemma, we have $\gamma\epsilon\cdots=\beta\gamma\epsilon\cdots,\gamma^2\epsilon\cdots$. By $\beta<\gamma$, we have  $R(\gamma^2\epsilon\cdots)<R(\beta\gamma\epsilon\cdots)= (\tfrac{1}{2}-\tfrac{12}{f})\pi<\alpha<\epsilon<\gamma,\delta$. Moreover, by $\beta+2\gamma+\epsilon>\frac{3}{2}(\beta+\gamma)+\epsilon=(2+\tfrac{16}{f})\pi>2\pi$, we have $R(\gamma^2\epsilon\cdots)<\beta$. Therefore $\gamma\epsilon\cdots=\beta^k\gamma\epsilon,\gamma^2\epsilon$. 

By no $\beta\epsilon\cdots\epsilon\beta$, we have $k=1$ in $\beta^k\gamma\epsilon$. Then $\beta\gamma\epsilon$ contradicts the existing vertex $\beta\gamma\delta$. 

If $\gamma^2\epsilon$ is a vertex, then the angle sum of $\gamma^2\epsilon$ further implies 
\[
\alpha=\tfrac{1}{2}\pi,\;
\beta=(\tfrac{1}{4}+\tfrac{10}{f})\pi,\;\gamma=(\tfrac{3}{4}-\tfrac{2}{f})\pi,\;
\delta=(1-\tfrac{8}{f})\pi,\;
\epsilon=(\tfrac{1}{2}+\tfrac{4}{f})\pi.
\]
By $\gamma^2\epsilon$, no $\delta^2\cdots$, and Lemma \ref{klem3}, we know $\beta^2\delta\cdots$ is a vertex. We have $0<R(\beta^2\delta\cdots)=(\tfrac{1}{2}-\tfrac{12}{f})\pi<\alpha<\epsilon<2\beta,\delta$. By $\beta<\gamma$ and the parity lemma, we get a contradiction. 

\subsubsection*{Case. $H=\alpha^5$}

The angle sums of $\beta\gamma\delta,\delta\epsilon^2,\alpha^5$ and the angle sum for pentagon imply
\[
\alpha=\tfrac{2}{5}\pi,\;
\beta+\gamma=(\tfrac{6}{5}+\tfrac{8}{f})\pi,\;
\delta=(\tfrac{4}{5}-\tfrac{8}{f})\pi,\;
\epsilon=(\tfrac{3}{5}+\tfrac{4}{f})\pi.
\]

By $\alpha<R(\beta\thin\gamma\cdots)=\delta<2\alpha$, we know $\beta\thin\gamma\cdots\ne\alpha^k\beta\gamma$. By Lemma \ref{klem5} and the parity lemma, this implies $R(\beta\thin\gamma\cdots)$ has at least two $ab$-angles. By $\beta\gamma\delta=\thin\beta\thick\gamma\thin\delta\thin$, we know $R(\beta\thin\gamma\cdots)$ has no $\delta$. 

If $\beta<\gamma$, then by $\beta+\gamma>\pi$, we have $\beta\thin\gamma\cdots=\beta^3\gamma\cdots$, with no $\gamma,\delta$ in the remainder. By $3\beta+\gamma\le 2\pi$ and $\beta+\gamma=(\tfrac{6}{5}+\tfrac{8}{f})\pi$, we get $\beta\le(\frac{2}{5}-\frac{4}{f})\pi$ and $\gamma\ge (\frac{4}{5}+\frac{12}{f})\pi$. By $\beta^3\gamma\cdots$ and  the balance lemma, we know $\gamma^2\cdots$ is a vertex. By $R(\gamma^2\cdots)\le(\frac{2}{5}-\frac{24}{f})\pi<\alpha,\gamma,\delta,\epsilon$ and the parity lemma, we get $\gamma^2\cdots=\beta^k\gamma^2$ ($k\ge 2$), contradicting $\beta+\gamma>\pi$. Similarly, if $\beta>\gamma$, then $\beta\thin\gamma\cdots=\beta\gamma^3\cdots$, with no $\beta,\delta$ in the remainder. Then we get the same contradiction. We conclude $\beta\thin\gamma\cdots$ is not a vertex. 

Suppose $\beta>\gamma$. By Lemma \ref{geometry1}, this implies $\delta<\epsilon$. By no $\beta\thin\gamma\cdots$, we have $\thin^{\epsilon}\delta^{\beta}\thin^{\delta}\epsilon^{\gamma}\thin$ at $\delta\epsilon^2$. This gives a vertex $\thick^{\alpha}\beta^{\delta}\thin^{\epsilon}\delta^{\beta}\thin\cdots$. By $\beta\gamma\delta$, $\beta>\gamma$, and the parity lemma, we get $\beta\delta\cdots=\beta\gamma\delta$. Therefore $\thick^{\alpha}\beta^{\delta}\thin^{\epsilon}\delta^{\beta}\thin\cdots=\thick^{\alpha}\beta^{\delta}\thin^{\epsilon}\delta^{\beta}\thin^{\epsilon}\gamma^{\alpha}\thick$. This gives a vertex $\beta\epsilon\cdots$. By the parity lemma, we have $\beta\epsilon\cdots=\beta^2\epsilon\cdots,\beta\gamma\epsilon\cdots$. By $2\beta+\epsilon>\beta+\gamma+\epsilon>\beta+\gamma+\delta=2\pi$, we get a contradiction. 

We conclude $\beta<\gamma$. By Lemma \ref{geometry1}, this implies $\delta>\epsilon$. By $\beta+\gamma=2\epsilon$, we also have $\beta<\epsilon<\gamma$. The rest of the argument is the same as the case $H=\alpha^4$. 

By $\beta\gamma\delta$, and Lemmas \ref{klem6} and \ref{klem2}, we conclude no consecutive $\beta\epsilon\cdots\epsilon\beta$. We already proved that $\beta\thin\gamma\cdots$ is not a vertex. We still have $\alpha+\epsilon>\pi$. Then the argument for $H=\alpha^4$ proves $\gamma\thin\gamma\cdots$ is not a vertex. Then we find $\gamma\epsilon\cdots=\beta\gamma\epsilon\cdots,\gamma^2\epsilon\cdots$ is a vertex. We have $R(\gamma^2\epsilon\cdots)<R(\beta\gamma\epsilon\cdots)=(\tfrac{1}{5}-\tfrac{12}{f})\pi<\alpha<\epsilon<\gamma,\delta$. We also have $\beta+2\gamma+\epsilon\ge \frac{3}{2}(\beta+\gamma)+\epsilon=(\tfrac{12}{5}+\tfrac{16}{f})\pi>2\pi$. This implies $\gamma\epsilon\cdots=\beta^k\gamma\epsilon,\gamma^2\epsilon$. 

By no $\beta\epsilon\cdots\epsilon\beta$, we have $\gamma\epsilon\cdots=\gamma^2\epsilon$. The angle sum of $\gamma^2\epsilon$ further implies 
\[
\alpha=\tfrac{2}{5}\pi,\;
\beta=(\tfrac{1}{2}+\tfrac{10}{f})\pi,\;
\gamma=(\tfrac{7}{10}-\tfrac{2}{f})\pi,\;
\delta=(\tfrac{4}{5}-\tfrac{8}{f})\pi,\;
\epsilon=(\tfrac{3}{5}+\tfrac{4}{f})\pi.
\]
By the same reason, we know $\beta^2\delta\cdots$ is a vertex, and $0<R(\beta^2\delta\cdots)=(\tfrac{1}{5}-\tfrac{12}{f})\pi<$ all angles. We get a contradiction. 

\subsubsection*{Case. $H=\alpha^3\beta^2$}

By $2\alpha+\beta+\gamma=2\alpha+2\epsilon>2\pi=3\alpha+2\beta$, we get $\gamma>\alpha+\beta$. Then $\alpha+2\gamma>3\alpha+2\beta=2\pi$, and $\alpha\gamma^2$ is not a vertex. 

If $\alpha\beta\gamma$ is a vertex, then the angle sums of $\beta\gamma\delta,\delta\epsilon^2,\alpha\beta\gamma,\alpha^3\beta^2$ and the angle sum for pentagon imply
\[
\alpha=\delta=\tfrac{8}{f}\pi,\;
\beta=(1-\tfrac{12}{f})\pi,\;
\gamma=(1+\tfrac{4}{f})\pi,\;
\epsilon=(1-\tfrac{16}{f})\pi.
\]
This implies $\gamma^2\cdots$ is not a vertex. Then $H=\alpha^3\beta^2$ contradicts the balance lemma. Therefore $\alpha\beta\gamma$ is not a vertex. 

By $H=\alpha^3\beta^2$, we know $\alpha^3,\alpha\beta^2$ are not vertices. Therefore $\alpha$ does not appear at degree $3$ vertices. By Lemma \ref{hdeg} and the edge length consideration, one of $\alpha^4,\alpha^5$ is a vertex. However, we already finished the cases $\alpha^4$ or $\alpha^5$ is a vertex. 

\subsubsection*{Case. $H=\alpha^3\gamma^2$}

The argument is the same as the case $H=\alpha^3\beta^2$, by exchanging $\beta$ and $\gamma$ (but not exchanging $\delta$ and $\epsilon$).
\end{proof}

\section{Classification of Tiling}
\label{classification}

Our classification of edge-to-edge tilings of the sphere by congruent pentagons with the edge combination $a^3b^2$ starts with the partial neighbourhood of a special tile. The vertex $\bullet$ in Figure \ref{nhd} is the fifth vertex $H$ of degree $3$, $4$ or $5$. Up to the symmetry of horizontal flip, there are three ways of arranging the $b$-edges of the center tile $T_1$. We also assume the angles $\alpha,\beta,\gamma,\delta,\epsilon$ of $T_1$ are given by the first of Figure \ref{pentagon}. We label the three cases as Cases 1, 2, 3.

\begin{figure}[htp]
\centering
\begin{tikzpicture}[>=latex,]

\foreach \a in {0,1,2}
{
\begin{scope}[xshift=3*\a cm]

\foreach \x in {0,1,2}
\draw[rotate=-72*\x]
	(18:0.5) -- (18:0.9) -- (-18:1.2) -- (-54:0.9) -- (-54:0.5);

\draw
	(90:0.5) -- (65:1) -- (35:1.2) -- (18:0.9)
	(90:0.5) -- (115:1) -- (145:1.2) -- (162:0.9);
	
\foreach \x in {0,...,4}
\draw[rotate=-72*\x]
	(18:0.5) -- (90:0.5);

\draw[rotate=72*\a,line width=1.5]
	(18:0.5) -- (90:0.5) -- (162:0.5);
	
\fill 
	(90:0.5) circle (0.1);

\node[draw,shape=circle, inner sep=0.5] at (0,0) {\small $1$};
\node[draw,shape=circle, inner sep=0.5] at (48:0.75) {\small $2$};
\node[draw,shape=circle, inner sep=0.5] at (-18:0.75) {\small $3$};
\node[draw,shape=circle, inner sep=0.5] at (-90:0.75) {\small $4$};
\node[draw,shape=circle, inner sep=0.5] at (198:0.75) {\small $5$};
\node[draw,shape=circle, inner sep=0.5] at (135:0.75) {\small $6$};

\node at (90:0.8) {\small $H$};
	
\end{scope}
}

\foreach \a in {1,2,3}
\node[xshift=-3cm + 3*\a cm] at (-54:1.3) {\a};

\end{tikzpicture}
\caption{Partial neighbourhood of the special tile.}
\label{nhd}
\end{figure}

Next we assign $b$-edges to the partial neighbourhoods, such that each tile has two adjacent $b$-edges. The assignments can be divided into 11 cases, given by Figure \ref{edge_nhd}. We have not yet assigned $b$-edges to the dotted ones, because we do not need to distinguish them in the subsequent discussion.

\begin{figure}[htp]
\centering
\begin{tikzpicture}[>=latex,scale=1]

%% Case 1.1/2/3

\foreach \a in {1,2,3}
{
\begin{scope}[xshift=-2.5cm+2.5*\a cm]

\foreach \x in {0,...,4} 
\draw[rotate=-72*\x]
	(18:0.5) -- (-54:0.5);
	
\foreach \x in {0,1,2} 
\draw[rotate=-72*\x]
	(18:0.5) -- (18:0.9) -- (-18:1.2) -- (-54:0.9) -- (-54:0.5);

\draw
	(90:0.5) -- (60:1) -- (40:1.2) -- (18:0.9)
	(90:0.5) -- (120:1) -- (140:1.2) -- (162:0.9);

\fill (0,0.5) circle (0.1);
	
\node at (0,0) {1.\a};

\end{scope}
}

\begin{scope}[line width=1.5]

% 1.1

\draw
	(18:0.5) -- (90:0.5) -- (162:0.5)
	(60:1) -- (90:0.5) -- (120:1)
	(18:0.9) -- (-18:1.2) -- (-54:0.9) -- (-90:1.2) -- (234:0.9) -- (198:1.2) -- (162:0.9);
	
% 1.2

\draw[xshift=2.5cm]
	(18:0.5) -- (90:0.5) -- (162:0.5)
	(60:1) -- (90:0.5) -- (120:1)
	(162:0.9) -- (198:1.2) -- (234:0.9)
	(-90:1.2) -- (-54:0.9) -- (-18:1.2)
	(-54:0.9) -- (-54:0.5);

% 1.3

\draw[xshift=5cm]
	(-18:1.2) -- (18:0.9) -- (18:0.5) -- (90:0.5) -- (162:0.5) -- (162:0.9) -- (198:1.2)
	(-54:0.9) -- (-90:1.2) -- (234:0.9);
	
\end{scope}

%% Case 1.4

\begin{scope}[xshift=7.5cm]

\draw
	(18:0.5) -- (-54:0.5) -- (234:0.5) -- (234:0.9) -- (198:1.2) 
	(90:0.5) -- (120:1) -- (140:1.2) -- (162:0.9) -- (162:0.8) -- (162:0.5) -- (234:0.5)
	(60:1) -- (40:1.2) -- (18:0.9) -- (18:0.5);

\draw[line width=1.5]
	(198:1.2) -- (162:0.9) -- (162:0.5) -- (90:0.5) -- (18:0.5)
	(90:0.5) -- (60:1);
		
\draw[dotted]
	(198:1.2) -- (162:0.9) -- (162:0.5) -- (90:0.5) -- (18:0.5)
	(-54:0.5) -- (-54:0.9)
	(234:0.9) -- (-90:1.2) -- (-54:0.9) -- (-18:1.2) -- (18:0.9);

\fill (0,0.5) circle (0.1);

\node at (0,0) {1.4};

\end{scope}

%% Case 2

\begin{scope}[yshift=-2.3cm]

% 2.1/2/3	
	
\foreach \a in {1,2,3}
{
\begin{scope}[xshift=-2.5cm+2.5*\a cm]

\foreach \x in {0,...,4} 
\draw[rotate=-72*\x]
	(18:0.5) -- (-54:0.5);
	
\foreach \x in {0,1,2} 
\draw[rotate=-72*\x]
	(18:0.5) -- (18:0.9) -- (-18:1.2) -- (-54:0.9) -- (-54:0.5);

\draw
	(90:0.5) -- (120:1) -- (140:1.2) -- (162:0.9);

\draw[line width=1.5]
	(90:0.5) -- (162:0.5) -- (234:0.5)
	(162:0.5) -- (162:0.9);
	
\fill (0,0.5) circle (0.1);
	
\node at (0,0) {2.\a};

\end{scope}
}

\foreach \a in {0,2}
\draw[xshift=2.5*\a cm,dotted]
	(90:0.5) -- (60:1) -- (40:1.2) -- (18:0.9);	

\draw[line width=1.5]
	(18:0.9) -- (-18:1.2) -- (-54:0.9) -- (-90:1.2) -- (234:0.9);

\draw[xshift=2.5cm]
	(90:0.5) -- (60:1) -- (40:1.2) -- (18:0.9);
	
\draw[xshift=2.5cm,line width=1.5]
	(40:1.2) -- (18:0.9) -- (-18:1.2)
	(18:0.5) -- (18:0.9)
	(-54:0.9) -- (-90:1.2) -- (234:0.9);

\draw[xshift=5cm,line width=1.5]
	(-90:1.2) -- (-54:0.9) -- (-18:1.2)
	(-54:0.5) -- (-54:0.9);

% 2.4

\begin{scope}[xshift=7.5cm]

\draw
	(90:0.5) -- (18:0.5) -- (-54:0.5) -- (234:0.5)
	(-54:0.5) -- (-54:0.9) -- (-90:1.2)
	(-54:0.5) -- (-54:0.9)
	(162:0.5) -- (162:0.9)
	(120:1) -- (140:1.2) -- (162:0.9) -- (198:1.2)  -- (234:0.9);

\draw[dotted]
	(90:0.5) -- (60:1) -- (40:1.2) -- (18:0.9) -- (-18:1.2) -- (-54:0.9)
	(18:0.5) -- (18:0.9);
	
\draw[line width=1.5]
	(120:1) -- (90:0.5) -- (162:0.5) -- (234:0.5) -- (234:0.9) -- (-90:1.2);

\fill (0,0.5) circle (0.1);

\node at (0,0) {2.4};

\end{scope}

\end{scope}

%% Case 3

\begin{scope}[yshift=-4.6cm]

\foreach \a in {1,2,3}
{
\begin{scope}[xshift=-2.5cm+2.5*\a cm]

\foreach \x in {0,...,4} 
\draw[rotate=-72*\x]
	(18:0.5) -- (-54:0.5);
	
\foreach \x in {0,1,2} 
\draw[rotate=-72*\x]
	(18:0.5) -- (18:0.9) -- (-18:1.2) -- (-54:0.9) -- (-54:0.5);

\fill (0,0.5) circle (0.1);

\node at (0,0) {3.\a};

\end{scope}
}

\draw[line width=1.5]
	(18:0.9) -- (-18:1.2) -- (-54:0.9);

\draw[xshift=2.5cm,line width=1.5]
	(40:1.2) -- (18:0.9) -- (-18:1.2)
	(18:0.5) -- (18:0.9);
	
\draw[xshift=2.5cm]
	(90:0.5) -- (60:1) -- (40:1.2) -- (18:0.9);
		
\foreach \a in {0,2}
\draw[xshift=2.5*\a cm,dotted]
	(90:0.5) -- (60:1) -- (40:1.2) -- (18:0.9);		
	
\foreach \a in {0,1}
\draw[xshift=2.5*\a cm,dotted]
	(90:0.5) -- (120:1) -- (140:1.2) -- (162:0.9);	

\foreach \a in {0,1}
\draw[xshift=2.5*\a cm,line width=1.5]
	(162:0.5) -- (234:0.5) -- (-54:0.5)
	(234:0.5) -- (234:0.9);	

% 3.3

\begin{scope}[xshift=5cm]

\draw
	(90:0.5) -- (120:1) -- (140:1.2) -- (162:0.9);

\draw[line width=1.5]
	(140:1.2) -- (162:0.9) -- (162:0.5) -- (234:0.5) -- (-54:0.5) -- (-54:0.9) -- (-18:1.2);

\end{scope}

\end{scope}	
			
\end{tikzpicture}
\caption{Edge congruent tilings of the partial neighbourhood.}
\label{edge_nhd}
\end{figure}

We will first consider the symmetric case, second the case with $3^5$-tile, and third the general case.

\subsection{Tiling by Symmetric Pentagons}
\label{symmetric}

By symmetric pentagon with edge combination $a^3b^2$, we mean $\beta=\gamma$. By Lemma \ref{geometry1}, this is equivalent to $\delta=\epsilon$. Therefore a symmetric pentagon has angles $\alpha,\beta,\beta,\delta,\delta$. Moreover, the angles are determined by the bounding edges. Therefore we know all the angles in the tiles without dotted edges in Figure \ref{edge_nhd}. 

\begin{proposition}\label{symmetric_tiling}
An edge-to-edge tiling of the sphere by congruent symmetric pentagons with the edge combination $a^3b^2$, $a\ne b$, is the reduction of the pentagonal subdivision tiling. 
\end{proposition}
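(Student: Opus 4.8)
The plan is to exploit the symmetry $\beta=\gamma$, $\delta=\epsilon$, which leaves only three distinct angle values $\alpha,\beta,\delta$, in order to classify the degree $3$ vertices and then pin the pentagon down by examining a single special tile that carries four of them at once. First I would list all edge-consistent degree $3$ vertices. Since $\alpha$ is the only $b^2$-angle and $\delta(=\epsilon)$ the only $a^2$-angle, an $\alpha$ and a $\delta$ can never be adjacent at a vertex, as the shared edge would have to be both $b$ and $a$ (using $a\ne b$). Combined with the Parity Lemma \ref{beven} (an even number of the $ab$-angles $\beta,\gamma$), a short adjacency check shows that the only degree $3$ vertices are
\[
\alpha^3,\quad \delta^3,\quad \alpha\beta\gamma,\quad \beta\gamma\delta,
\]
with angle sums $3\alpha=2\pi$, $3\delta=2\pi$, $\alpha+2\beta=2\pi$, $2\beta+\delta=2\pi$; here $\alpha\beta\gamma$ and $\beta\gamma\delta$ also absorb the $\alpha\beta^2$, $\gamma^2\delta$, etc.\ variants since $\beta=\gamma$.

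Next I would invoke Lemma \ref{basic} to produce a special tile with four degree $3$ corners. Because these corners lie on one pentagon, their vertex relations hold simultaneously for the \emph{same} $\alpha,\beta,\delta$, and coupling them with the pentagon angle sum $\alpha+2\beta+2\delta=(3+\tfrac{4}{f})\pi$ together with $f>12$ determines the angles. A case analysis on which of the four types occur (and on which corner, if any, is the higher-degree vertex $H$) should yield three outcomes. If an $\alpha^3$ occurs one is forced to $\alpha=\tfrac23\pi$, $\beta=\gamma=(\tfrac56-\tfrac2f)\pi$, $\delta=\epsilon=(\tfrac13+\tfrac4f)\pi$, the angles of the $c=b$ reduction; if a $\delta^3$ occurs one is forced to $\delta=\tfrac23\pi$, $\beta=\gamma=\tfrac23\pi$, $\alpha=(\tfrac13+\tfrac4f)\pi$, the angles of the $c=a$ reduction; in both cases $f\in\{24,60\}$ recovers $n=4,5$, while $f=12$ is the tetrahedron already settled in Lemma \ref{basic}. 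The one remaining possibility, that neither $\alpha^3$ nor $\delta^3$ appears, forces the $\alpha$-corner to be $\alpha\beta\gamma$ and the $\delta$-corner to be $\beta\gamma\delta$, giving $\alpha=\delta=(\tfrac12+\tfrac2f)\pi$, a pentagon matching no reduction.

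With the angles fixed to the two reduction values, the degree $3$ vertices are completely known, and I would then propagate the local picture to the global combinatorics. The engine here is Lemma \ref{geometry3}: across the $a$-edge opposite $\alpha$ one pentagon determines its neighbor as in Figure \ref{classify3}, so starting from the special-tile neighborhoods of Figures \ref{nhd} and \ref{edge_nhd} the arrangement spreads over the sphere and must close into the pentagonal subdivision of the octahedron ($f=24$) or icosahedron ($f=60$), that is, a reduction of pentagonal subdivision.

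I expect two genuinely hard points. The first is eliminating the exceptional branch $\alpha=\delta=(\tfrac12+\tfrac2f)\pi$; the natural routes are to show that the forced vertex combinations cannot be completed to a tiling, or to use spherical trigonometry (as in Step 2 of Propositions \ref{special1} and \ref{special2}) to show that no simple spherical pentagon with edge combination $a^3b^2$, $a\ne b$, realizes these angles. The second is the global rigidity step: making the propagation watertight, so that local edge-to-edge consistency genuinely forces the unique subdivision rather than merely constraining it locally. Everything else — the vertex enumeration and the angle-solving case analysis — should be routine once the framework above is in place.
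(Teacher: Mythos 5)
Your overall skeleton --- enumerate the degree $3$ vertex types ($\alpha^3$, $\delta^3$, $\alpha\beta^2$, $\beta^2\delta$ after using $\beta=\gamma$, $\delta=\epsilon$), read off simultaneous angle equations from the four degree $3$ corners of a special tile, and then propagate --- is essentially the paper's proof reorganized by vertex type instead of by the eleven edge configurations of Figure \ref{edge_nhd}, and your two main branches, with their identification as the two reductions, are correct. But your ``exceptional branch'' $\alpha=\delta=(\tfrac12+\tfrac2f)\pi$ is not a case that survives to need trigonometric elimination: it is vacuous already at the level of the special tile's neighborhood, and seeing this is exactly what the edge enumeration you are trying to bypass accomplishes. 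Concretely, the two $\delta$-corners of the special tile are joined by an $a$-edge; if both were $\beta^2\delta$, the tile across that $a$-edge would have two $ab$-angles joined by an $a$-edge, which the pentagon of Figure \ref{geom1} does not permit, so whenever both $\delta$-corners have degree $3$ one of them is $\delta^3$. Likewise, across each $b$-edge of the special tile the two endpoint vertices are locked together: either the $\alpha$-corner is $\alpha^3$ and the adjacent $\beta$-corner is $\beta^2\delta$, or both are $\alpha\beta^2$; and a $\beta$-corner equal to $\alpha\beta^2$ forces its adjacent $\delta$-corner, if of degree $3$, to be $\delta^3$. Chasing these implications through the three possible positions of $H$ shows that ``neither $\alpha^3$ nor $\delta^3$ occurs'' is impossible. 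Your three-outcome summary also hides several sub-branches (e.g.\ $\alpha^3$ together with $\alpha\beta^2$, or $\alpha^3$ together with $\delta^3$) that must each be checked to force $f=12$.

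The more serious gap is the propagation engine. Lemma \ref{geometry3} cannot drive the global step: its proof reaches a contradiction precisely by concluding that the pentagon is symmetric, so for a symmetric pentagon the lemma is vacuous --- the two arrangements of the neighboring tile produce the same vertex combinations, and indeed coincide because the pentagon has a mirror symmetry. The paper's propagation instead runs from the neighborhood of the degree $4$ or $5$ vertex $H$: once the angles are fixed one computes the full AVC (e.g.\ $\{\beta^2\delta,\delta^3,\alpha^4\}$ in Case 1.2 with $H=\alpha^4$), invokes Lemma \ref{basic} to conclude that every tile is a $3^44$- or $3^45$-tile, and then determines the tiles layer by layer around $H$, locating new copies of $H$ on which to repeat the argument (Figure \ref{classify1}). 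You would need to replace the appeal to Lemma \ref{geometry3} by an argument of this kind; as written, the global rigidity step does not go through.
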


\begin{proof}
For $f=12$, the proposition follows from \cite{ay1,gsy}. Therefore we assume $f>12$.

We have $\alpha\beta^2,\delta^3$ in Cases 1.3, 1.4, 2.4, 3.3. We also have $\alpha^3,\beta^2\delta,\delta^3$ in Cases 2.1, 2.2, 2.3, 3.1. The angle sums of these vertices and the angle sum for pentagon always imply $f=12$. Therefore we dismiss these cases.

We have $\beta^2\delta,\delta^3$ in Cases 1.1, 1.2. The angle sums of $\beta^2\delta,\delta^3$ and the angle sum for pentagon imply 
\[
\alpha=(\tfrac{1}{3}+\tfrac{4}{f})\pi,\;
\beta=\delta=\tfrac{2}{3}\pi.
\]
In Case 1.1, we have a vertex $\beta\thin\beta\cdots$. By the parity lemma and $\alpha<R(\beta\thin\beta\cdots)=\beta=\delta<2\alpha$, we know $R(\beta\thin\beta\cdots)$ has no $\alpha,\beta$. Therefore $\beta\thin\beta\cdots=\beta^2\delta^k$, a contradiction because $\delta$ is $a^2$-angle. In Case 1.2, if $H$ has no $a$-edge, then $H=\alpha^3\cdots=\alpha^3,\alpha^4,\alpha^5$. If $H=\alpha^3$, then $\tfrac{2}{3}\pi=\alpha=(\tfrac{1}{3}+\tfrac{4}{f})\pi$ implies $f=12$. If $H$ has $b$-edge, then $H=\alpha^3\beta^2$, contradicting $\alpha+\beta>\pi$. 

In Case 3.2, we have $\alpha^3,\beta^2\delta$. By $\beta^2\delta$ and the edge length consideration, we have $H=\delta^2\cdots=\delta^3,\delta^4,\delta^5$. If $H=\delta^3$, then the angle sums of $\alpha^3,\beta^2\delta,\delta^3$ and the angle sum for pentagon imply $f=12$.

In summary, for $f>12$ and symmetric pentagon, the following are all the partial neighborhoods of special tiles: Case 1.2, $H=\alpha^4$ or $\alpha^5$, and Case 3.2, $H=\delta^4$ or $\delta^5$. This implies no $3^5$-tile.

\subsubsection*{Case 1.2, $H=\alpha^4$} 

The angle sum of $\beta^2\delta,\delta^3,\alpha^4$ and the angle sum for pentagon imply
\[
f=24\colon
\alpha=\tfrac{1}{2}\pi,\;
\beta=\delta=\tfrac{2}{3}\pi.
\]
By no $3^5$-tile and the second part of Lemma \ref{basic}, every tile is a $3^44$-tile. By the angle values and the edge length consideration, we get AVC$=\{\beta^2\delta,\delta^3,\alpha^4\}$. 

The first of Figure \ref{classify1} shows three tiles $T_1,T_2,T_3$ around $\alpha^4$. Their edges determine the angles. Since every tile is a $3^44$-tile, all the vertices of $T_1,T_2,T_3$ except $\alpha^4$ have degree $3$. This gives $T_4,T_5,T_6$ as indicated. 

By the AVC, we have $V_{124}=V_{236}=\beta^2\cdots=\beta^2\delta$. This gives one $\delta$ in each of $T_4,T_6$. Up to the symmetry of $\alpha^4$ (horizontal flip) and using this $\delta$ in $T_4$, we may assume that $T_4$ is arranged as indicated. This determines $T_5$. Then $V_{256}=\delta^2\cdots=\delta^3$ gives another $\delta$ in $T_6$. The two $\delta$ in $T_6$ determine $T_6$.

We see that $T_6$ can be derived from $T_4$. The process may continue with $T_6$ as the new $T_4$. Eventually we get all the ``second layer'' of tiles $T_4,T_5,T_6,\dots$ around $\alpha^4$, in addition to the ``first layer'' $T_1,T_2,T_3,\dots$. Moreover, we find $\alpha^2\cdots=\alpha^4$ shared by $T_4,T_5$, and the other similar $\alpha^4$ shared between tiles in the second layer. Then our argument around the initial $\alpha^4$ may be repeated at these new $\alpha^4$. More repetitions lead to the pentagonal subdivision of the octahedron, given by the second of Figure \ref{subdivision_tiling}.

\subsubsection*{Case 1.2, $H=\alpha^5$} 

The angle sums of $\beta^2\delta,\delta^3,\alpha^5$ and the angle sum for pentagon imply
\[
f=60\colon
\alpha=\tfrac{2}{5}\pi,\;
\beta=\delta=\tfrac{2}{3}\pi.
\]
Then we get AVC=$\{\beta^2\delta,\delta^3,\alpha^5\}$ similar to the case $H=\alpha^4$. This implies no vertex of degree $4$. Combined with no $3^5$-tile and the third part of Lemma \ref{basic}, we know every tile is a $3^45$-tile. Then the further proof for the case $H=\alpha^4$ is still valid because only three consecutive $\alpha$ at $H$ was used. The same argument gives the pentagonal subdivision of the icosahedron, given by the fourth of Figure \ref{subdivision_tiling}.

\begin{figure}[htp]
\centering
\begin{tikzpicture}[>=latex,scale=1]

\foreach \b in {0,1}
{
\begin{scope}[xshift=4*\b cm]

\foreach \a in {0,...,2}
\draw[rotate=90*\a]
	(0,0) -- (0.8,0) -- (1.2,0.6) -- (0.6,1.2) -- (0,0.8) -- (0,0);

\draw 
	(-1.2,-0.6) -- (-1.8,-0.6) -- (-1.8,1.8) -- (0.6,1.8) -- (0.6,1.2)
	(-0.6,1.8) -- (-0.6,1.2)
	(-1.2,0.6) -- (-1.8,0.6);

\node[inner sep=1,draw,shape=circle] at (0.55,0.55) {\small $1$};
\node[inner sep=1,draw,shape=circle] at (-0.55,0.55) {\small $2$};
\node[inner sep=1,draw,shape=circle] at (-0.55,-0.55) {\small $3$};
\node[inner sep=1,draw,shape=circle] at (0,1.45) {\small $4$};
\node[inner sep=1,draw,shape=circle] at (-1.3,1.3) {\small $5$};
\node[inner sep=1,draw,shape=circle] at (-1.45,0) {\small $6$};

\end{scope}	
}

%% Case 1.3

\foreach \a in {0,...,2}
{
\begin{scope}[rotate=90*\a]
	
\node at (0.2,0.2) {\small $\alpha$};  % P1
\node at (0.7,0.2) {\small $\beta$};
\node at (0.2,0.7) {\small $\beta$};
\node at (1,0.6) {\small $\delta$};
\node at (0.6,0.95) {\small $\delta$};
	
\end{scope}
}

\draw[line width=1.5]
	(-0.8,0) -- (0.8,0)
	(0,-0.8) -- (0,0.8)
	(-0.6,1.2) -- (-0.6,1.8)
	(-1.8,1.8) -- (0.6,1.8)
	(-1.8,0.6) -- (-1.8,-0.6) -- (-1.2,-0.6);

\node at (-0.4,1.6) {\small $\alpha$};  % P4
\node at (0.4,1.55) {\small $\beta$};
\node at (-0.4,1.3) {\small $\beta$};
\node at (0,1) {\small $\delta$};
\node at (0.4,1.3) {\small $\delta$};

\node at (-0.8,1.6) {\small $\alpha$};  % P5
\node at (-1.6,1.55) {\small $\beta$};
\node at (-0.8,1.25) {\small $\beta$};
\node at (-1.25,0.8) {\small $\delta$};
\node at (-1.6,0.8) {\small $\delta$};

\node at (-1.6,-0.4) {\small $\alpha$};  % P6
\node at (-1.6,0.4) {\small $\beta$};
\node at (-1.3,-0.4) {\small $\beta$};
\node at (-1.3,0.4) {\small $\delta$};
\node at (-1,0) {\small $\delta$};

%% Case 3.3

\begin{scope}[xshift=4cm]

\foreach \a in {0,...,2}
{
\begin{scope}[rotate=90*\a]

\draw[line width=1.5]
	(1.2,0.6) -- (0.6,1.2) -- (0,0.8);

\node at (0.2,0.2) {\small $\delta$};  % P1
\node at (0.7,0.2) {\small $\delta$};
\node at (0.2,0.7) {\small $\beta$};
\node at (0.95,0.55) {\small $\beta$};
\node at (0.6,0.95) {\small $\alpha$};
	
\end{scope}
}

\draw[line width=1.5]
	(0.6,1.2) -- (0.6,1.8)
	(-1.8,0.6) -- (-1.2,0.6);

\node at (-0.4,1.6) {\small $\delta$};  % P4
\node at (0.4,1.55) {\small $\beta$};
\node at (-0.4,1.3) {\small $\delta$};
\node at (0,1.05) {\small $\beta$};
\node at (0.4,1.3) {\small $\alpha$};

\node at (-0.8,1.6) {\small $\delta$};  % P5
\node at (-1.6,1.6) {\small $\delta$};
\node at (-0.8,1.25) {\small $\beta$};
\node at (-1.25,0.8) {\small $\alpha$};
\node at (-1.6,0.8) {\small $\beta$};

\node at (-1.6,-0.4) {\small $\delta$};  % P6
\node at (-1.6,0.35) {\small $\beta$};
\node at (-1.3,-0.4) {\small $\delta$};
\node at (-1.3,0.4) {\small $\alpha$};
\node at (-1.05,-0.05) {\small $\beta$};

\end{scope}

\end{tikzpicture}
\caption{Tiling by congruent symmetric pentagons.}
\label{classify1}
\end{figure}

\subsubsection*{Case 3.2, $H=\delta^4$} 

The angle sums of $\alpha^3,\beta^2\delta,\delta^4$ and the angle sum for pentagon imply
\[
f=24\colon
\alpha=\tfrac{2}{3}\pi,\;
\beta=\tfrac{3}{4}\pi,\;
\delta=\tfrac{1}{2}\pi.
\]
By no $3^5$-tile and the second part of Lemma \ref{basic}, every tile is a $3^44$-tile. By the angle values and the edge length consideration, we get AVC=$\{\alpha^3,\beta^2\delta,\delta^4\}$. 

The second of Figure \ref{classify1} shows three tiles $T_1,T_2,T_3$ around $\delta^4$. Since every tile is a $3^44$-tile, we have tiles $T_4,T_5,T_6$ as indicated. 

Without loss of generality, we may assume the edges of $T_1$ are given as indicated. This determines the angles of $T_1$. By the AVC, we have $V_{124}=\beta\cdots=\beta^2\delta$. This gives one $\delta$ of $T_2$ and one $\beta$ of $T_4$, and further determines $T_2,T_4$. The way $T_1$ determines $T_2,T_4$ can be repeated on $T_2$, and we determine $T_3,T_6$. Then the edges of $T_5$ shared with $T_2,T_4,T_6$ determine $T_5$. The process continues and determines the first and second layers around $\delta^4$. Moreover, we find $\delta^2\cdots=\delta^4$ shared by $T_4,T_5$, and the other similar $\delta^4$ shared between tiles in the second layer. Then our argument around the initial $\delta^4$ may be repeated at these new $\delta^4$. More repetitions lead to the pentagonal subdivision of the octahedron.

\subsubsection*{Case 3.2, $H=\delta^5$}

The angle sums of $\alpha^3,\beta^2\delta,\delta^5$ and the angle sum for pentagon imply
\[
f=60\colon
\alpha=\tfrac{2}{3}\pi,\;
\beta=\tfrac{4}{5}\pi,\;
\delta=\tfrac{2}{5}\pi.
\]
We can similarly get AVC=$\{\alpha^3,\beta^2\delta,\delta^5\}$. Then the same argument for $H=\delta^4$ can be used to prove that the tiling is the pentagonal subdivision of the icosahedron. This is similar to that the argument for Case 1.2, $H=\alpha^4$ can be applied to Case 1.2, $H=\alpha^5$.

\subsubsection*{Calculate the Pentagon}

The tilings are the reductions of the pentagonal subdivision tilings by congruent pentagons with the edge combination $\bar{a}^2\bar{b}^2\bar{c}$ in \cite[Section 3.1]{wy1}. In the first of Figure \ref{classify6}, we draw part of the pentagonal subdivision of the octahedron ($n=4$) or icosahedron ($n=5$), with angle notations from \cite{wy1}. The pentagon in the general subdivision tiling is the second of Figure \ref{classify6}. The four tilings are reductions in the following way:
\begin{enumerate}
\item Case 1.2: $\bar{a}=\bar{c}$, $\bar{\gamma}=\frac{2}{n}\pi$, $\bar{\alpha}=\bar{\beta}=\bar{\delta}=\bar{\epsilon}=\frac{2}{3}\pi$. Moreover, $\bar{a},\bar{b},\bar{\gamma},\bar{\alpha},\bar{\beta}$ in $\bar{a}^2\bar{b}^2\bar{c}$ become $a,b,\alpha,\beta,\delta$ in the symmetric $a^3b^2$.
\item Case 3.2: $\bar{b}=\bar{c}$, $\bar{\beta}=\frac{2}{3}\pi$, $\bar{\alpha}=\bar{\delta}=(1-\frac{1}{n})\pi$, $\bar{\gamma}=\bar{\epsilon}=\frac{2}{n}\pi$. Moreover, $\bar{b},\bar{a},\bar{\beta},\bar{\alpha},\bar{\gamma}$ in $\bar{a}^2\bar{b}^2\bar{c}$ become $a,b,\alpha,\beta,\delta$ in the symmetric $a^3b^2$.
\end{enumerate}
We also recall the key values about the regular octahedron and icosahedron.
\renewcommand{\arraystretch}{1.3}
\begin{center}
 \begin{tabular}{|c| c|| c c c| c c |} 
 \hline
 $f$ & $n$ & $\cos x$ & $\cos y$ & $\cos z$ & $\cos 2y$ & $\cos 2z$ \\   
 \hline\hline
 24 & 4 
 & $\frac{1}{\sqrt{3}}$
 & $\frac{\sqrt{2}}{\sqrt{3}}$ 
 & $\frac{1}{\sqrt{2}}$
 & $\frac{1}{3}$
 & $0$ \\
 \hline
 60 & 5  
 & $\frac{\sqrt{5}+1}{\sqrt{6(5-\sqrt{5})}}$
 & $\frac{\sqrt{5}+1}{2\sqrt{3}}$
 & $\frac{\sqrt{2}}{\sqrt{5-\sqrt{5}}}$ 
 & $\frac{\sqrt{5}}{3}$
 & $\frac{1}{\sqrt{5}}$ \\ % [1ex] 
 \hline
\end{tabular}
\end{center}

\begin{figure}[htp]
\centering
\begin{tikzpicture}[>=latex]

\begin{scope}[scale=1.4]

\draw[gray!50]	
	(0,1) -- ++(30:1);

\foreach \x in {1,-1}
{
\begin{scope}[scale=\x, yshift=1 cm]

\draw[dotted]
	(30:1) -- (-30:2)
	(150:1) -- (210:2);

\draw[gray!50]	
	(0,0) -- (-30:2) -- (210:2) -- (0,0) -- (0,-1);
	
\draw
	(0,0) -- (224:0.86)
	(0,0) -- (-16:0.86);
	
\draw[line width=1.5]
	(-30:2) -- ++(140:1.2)
	(-30:2) -- ++(200:1.2);

\node at (0.75,-0.35) {\small $\bar{\alpha}$};
\node at (0.05,-0.25) {\small $\bar{\beta}$};
\node at (1.4,-0.95) {\small $\bar{\gamma}$};	
\node at (-0.4,-0.55) {\small $\bar{\delta}$};
\node at (0.65,-1.25) {\small $\bar{\epsilon}$};

\node at (0.65,-1.55) {\small $\bar{\alpha}$};

\end{scope}
}	

\draw[dashed,yshift=1 cm]
	(-66:1.53) -- (-136:0.86);

\fill
	 (1.732,0) circle (0.07)
	 (-1.732,0) circle (0.07); 

\filldraw[fill=white]
	 (0,1) circle (0.07)
	 (0,-1) circle (0.07);
	 
\node[fill=white,inner sep=1] at (1,0.4) {\small $x$};
\node[fill=white,inner sep=1] at (0,0.4) {\small $y$};
\node[fill=white,inner sep=1] at (0.8,0) {\small $z$};

%\node at (0.9,0.9) {\small $V$};

\end{scope}

\filldraw[fill=gray]
	 (0,0) circle (0.1);
	 
\node at (0.6,0.5) {\small $\triangle$};

%% a^2b^2c

\begin{scope}[xshift=4.5cm]

\draw
	(234:1) -- (162:1) -- (90:1);

\draw[line width=1.5]
	(-54:1) -- (18:1) -- (90:1);

\draw[densely dashed]
	(234:1) -- (-54:1);

\node at (54:1) {\small $\bar{b}$};
\node at (-18:1) {\small $\bar{b}$};
\node at (126:1) {\small $\bar{a}$};
\node at (198:1) {\small $\bar{a}$};
\node at (-90:1) {\small $\bar{c}$};

\node at (90:0.7) {$\bar{\alpha}$};
\node at (162:0.7) {$\bar{\beta}$};
\node at (15:0.7) {$\bar{\gamma}$};
\node at (234:0.7) {$\bar{\delta}$};
\node at (-54:0.7) {$\bar{\epsilon}$};

\end{scope}

% calculate

\begin{scope}[shift={(6.5cm,-1cm)}]

\draw[gray!50]
	(0,0) -- (0,2.6) -- (4,0) -- (0,0);
	
\draw[line width=1.5]
	(1.2,0.8) -- (4,0);

\draw[dashed]
	(0,0) -- (1.2,0.8);

\draw
	(0,2.6) -- (1.2,0.8);

\fill
	 (4,0) circle (0.1);
	 
\filldraw[fill=white]
	 (0,2.6) circle (0.1);
	 
\filldraw[fill=gray]
	 (0,0) circle (0.1);
	 
\node[fill=white,inner sep=1] at (2,1.3) {\small $x$};
\node[fill=white,inner sep=1] at (0,1.3) {\small $y$};
\node[fill=white,inner sep=1] at (2,0) {\small $z$};

\node[gray] at (0.35,0.35) {\small $\frac{1}{2}\pi$};
\node[gray] at (0.35,2.05) {\small $\frac{1}{3}\pi$};
\node[gray] at (3,0.35) {\small $\frac{1}{n}\pi$};

\node at (1.4,1.4) {\small $\triangle_x$};
\node at (0.5,1) {\small $\triangle_y$};
\node at (1.6,0.3) {\small $\triangle_z$};

\node at (1.35,1) {\small $\bar{\alpha}$};
\node at (0.9,0.85) {\small $\bar{\delta}$};
\node at (1.2,0.55) {\small $\bar{\epsilon}$};

\end{scope}

\end{tikzpicture}
\caption{Calculate symmetric pentagon.}
\label{classify6}
\end{figure}

\subsubsection*{Case 1.2. $a=\bar{a}=\bar{c}$, $b=\bar{b}$}

For the case $\bar{a}=\bar{c}$, we see the edge $2y$ between two $\circ$ is connected by three edges of length $\bar{a}$ at alternating angles $\bar{\delta},\bar{\delta}$. By Lemma \ref{fourth}, we get
\[
\cos 2y
=(1-\cos\bar{\delta})^2\cos^3\bar{a}
+\sin^2\bar{\delta}\cos^2\bar{a}
+(2\cos\bar{\delta}-\cos^2\bar{\delta})\cos \bar{a}
-\sin^2\bar{\delta}.
\]
This gives the equation for the edge length $a=\bar{a}$ in Case 2.1
\begin{align*}
f=24 &\colon
\tfrac{1}{3}
=\tfrac{9}{4}\cos^3a+\tfrac{3}{4}\cos^2a-\tfrac{5}{4}\cos a-\tfrac{3}{4};  \\
f=60 &\colon
\tfrac{\sqrt{5}}{3}
=\tfrac{9}{4}\cos^3a+\tfrac{3}{4}\cos^2a-\tfrac{5}{4}\cos a-\tfrac{3}{4}.
\end{align*}
The cubic equation has single root
\begin{align*}
f=24 &\colon 
\cos a
=\tfrac{1}{9}(2R^{\frac{1}{3}}+8R^{-\frac{1}{3}}-1),
\quad a=0.14869\pi; \\
f=60 &\colon 
\cos a
=\tfrac{1}{9}(R^{\frac{1}{3}}+16R^{-\frac{1}{3}}-1),
\quad a=0.08791\pi,
\end{align*}
where
\[
R=\begin{cases}
19+3\sqrt{33}, & f=24; \\
2(3+\sqrt{5})(3+8\sqrt{5}+3{\textstyle \sqrt{48\sqrt{5}-63}}), & f=60.
\end{cases}
\]

Note that $\bar{a},\bar{b},x$ form a triangle, and the angle between $\bar{a},\bar{b}$ is $\bar{\alpha}=\frac{2}{3}\pi$. Therefore we get
\begin{equation}\label{symmetric_eq1}
\cos x=\cos a\cos b+\sin a\sin b\cos\bar{\alpha}
=\cos a\cos b-\tfrac{1}{2}\sin a\sin b.
\end{equation}
Using the value of $a,x$, the equation determines $b$
\begin{align*}
f=24 &\colon
\cos b=\tfrac{1}{12}((3\sqrt{3}-\sqrt{11})R^{\frac{1}{3}}
+4(3\sqrt{3}+\sqrt{11})R^{-\frac{1}{3}}
-4\sqrt{3}), \\
&\qquad b=0.20564\pi;  \\
f=60 &\colon
\cos b=\tfrac{1}{48\sqrt{30}}(3+\sqrt{5}){\textstyle \sqrt{5-\sqrt{5}}}
\left((9-{\textstyle \sqrt{48\sqrt{5}-63}})R^{\frac{1}{3}} \right. \\
&\qquad\qquad
\left. +16(9+{\textstyle \sqrt{48\sqrt{5}-63}})R^{-\frac{1}{3}} -24\right), \\
&\qquad  b=0.15038\pi.
\end{align*}

In \cite{wy1}, we remarked that the double pentagonal subdivision of the tetrahedron (corresponding to $f=24$ and $n=3$) is a pentagonal subdivision of the octahedron (corresponding to $f=24$ and $n=4$) with the edge combination $a^3b^2$. This exactly the case $f=24$ here. In particular, the values here for $f=24$ are the same as the values in \cite[Section 3.2]{wy1}.

\subsubsection*{Case 3.2. $a=\bar{b}=\bar{c}$, $b=\bar{a}$}

For the case $\bar{b}=\bar{c}$, we see the edge $2z$ between two $\bullet$ is connected by three edges of length $\bar{b}$ at alternating angles $\bar{\epsilon},\bar{\epsilon}$. By Lemma \ref{fourth}, we get
\[
\cos 2z
=(1-\cos\bar{\epsilon})^2\cos^3\bar{b}
+\sin^2\bar{\epsilon}\cos^2\bar{b}
+(2\cos\bar{\epsilon}-\cos^2\bar{\epsilon})\cos \bar{b}
-\sin^2\bar{\epsilon}.
\]
This gives the following equations for the edge length $a$ ($=\bar{b}$) in Case 3.2
\begin{align*}
f=24 &\colon
0
=\cos^3a+\cos^2a-1;  \\
f=60 &\colon
\tfrac{1}{\sqrt{5}}
=\tfrac{5}{8}(3-\sqrt{5})\cos^3a
+\tfrac{1}{8}(5+\sqrt{5})\cos^2a \\
& \quad\quad +\tfrac{1}{8}(5\sqrt{5}-7)\cos a-\tfrac{1}{8}(5+\sqrt{5}).
\end{align*}
The cubic equations have single roots
\begin{align*}
f=24 &\colon 
\cos a
=\tfrac{1}{3}(R^{\frac{1}{3}}+R^{-\frac{1}{3}}-\sqrt{3}),
\quad a=0.22769\pi; \\
f=60 &\colon 
\cos a
=\tfrac{1}{3\sqrt{5}}(R^{\frac{1}{3}}
+2(3-\sqrt{5})R^{-\frac{1}{3}}
-2-\sqrt{5}),
\quad a=0.18417\pi.
\end{align*}
where
\[
R=\begin{cases}
\tfrac{1}{2}(25+3\sqrt{69}), & f=24; \\
4(2+\sqrt{5})(9+8\sqrt{5}+3\sqrt{48\sqrt{5}-27}), & f=60.
\end{cases}
\]
Then we get precise values of $b$ by the an equation similar to \eqref{symmetric_eq1}
\begin{align*}
f=24 &\colon 
\cos b
=\tfrac{1}{3}((3\sqrt{3}-\sqrt{23})R^{\frac{1}{3}}+(3\sqrt{3}+\sqrt{23})R^{-\frac{1}{3}}-\sqrt{3}), \\
&\qquad b=0.09870\pi; \\
f=60 &\colon 
\cos b
=\tfrac{1}{12\sqrt{30}}(2+\sqrt{5}){\textstyle \sqrt{5-\sqrt{5}}}
\left(
((3+\sqrt{5})(9-{\textstyle \sqrt{48\sqrt{5}-27}})R^{\frac{1}{3}} \right. \\
&\qquad \qquad 
\left. +8(9+{\textstyle \sqrt{48\sqrt{5}-27}})R^{-\frac{1}{3}}
-6(\sqrt{5}-1)\right), \\
&\qquad b=0.02829\pi. \qedhere
\end{align*}
\end{proof}

\subsubsection*{Verify the pentagon}

The right triangle $\triangle$ with vertices $\circ$, ${\color{gray} \bullet}$, $\bullet$ in the first and third of Figure \ref{classify6} is one sixth of a regular triangular face of the regular octahedron or icosahedron. It is supposed to be divided into three triangles $\triangle_x,\triangle_y,\triangle_z$. Therefore we reconstruct the picture by first using the known values of $\bar{a},\bar{b},\frac{1}{2}\bar{c},\bar{\alpha},\bar{\delta},\bar{\epsilon}$ ($\bar{c}=\bar{a}$ or $\bar{c}=\bar{b}$) to construct $\triangle_x,\triangle_y,\triangle_z$. since the three triangles share the same edge lengths, we may glue three together as in the third of Figure \ref{classify6}. 

For Case 1.2, we use the equality \eqref{symmetric_eq1} to get the precise value of $b=\bar{b}$. This means the third edge of $\triangle_x$ constructed above is $x$. We may symbolically verify the similar equalities for $\triangle_y,\triangle_z$, and conclude their third edges are respectively $y,z$. For Case 3.2, we may use the similar argument to symbolically verify that the third edges of $\triangle_x,\triangle_y,\triangle_z$ are respectively $x,y,z$. This means that the union of the three triangles is exactly the right triangle $\triangle$, which is the one sixth of a regular triangular face of the octahedron or icosahedron.

The first of Figure \ref{classify6} shows that the pentagon is obtained by attaching copies of $\triangle_x,\triangle_y,\triangle_z$ to the three sides $x,y,z$ of the right triangle $\triangle$. Since the copies of $\triangle_x,\triangle_y,\triangle_z$ are in three non-overlapping copies of $\triangle$, we conclude the pentagon is simple.

\subsection{Tiling with $3^5$-tile}

After Proposition \ref{symmetric_tiling}, we may assume the pentagon is not symmetric. 

\begin{proposition}\label{3to5_tiling}
There is no tiling of the sphere by congruent non-symmetric pentagons in Figure \ref{pentagon}, such that $f>12$ and there is a $3^5$-tile. 
\end{proposition}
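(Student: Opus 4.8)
The plan is to run through the eleven edge-congruent partial neighborhoods of Figure \ref{edge_nhd} one configuration at a time, in each case reading off the degree-$3$ vertices forced at the five corners of the special tile $P_1$ and then reducing to one of the already-established Propositions \ref{special1}--\ref{special9}. First I would record the general constraints. Since $P_1$ is a $3^5$-tile, each of its five vertices is a degree-$3$ vertex, hence a triple of angles summing to $2\pi$. By the counterclockwise labelling of Figure \ref{geom1} the dotted vertex $H$ carries $\alpha$ in Case $1$, $\gamma$ in Case $2$, and $\epsilon$ in Case $3$; the horizontal-flip symmetry is exactly what lets us ignore the $\beta$- and $\delta$-versions. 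In every case the corner of $P_1$ bearing $\alpha$ is itself a degree-$3$ vertex, so it is one of $\alpha^3,\alpha\beta^2,\alpha\gamma^2,\alpha\beta\gamma$ (the parity lemma, Lemma \ref{beven}, together with the edge-length bookkeeping forbids an $a^2$-angle from meeting a $b$-edge, which cuts the list down to these). A second useful ingredient is Lemma \ref{geometry3}: the $a$-edge of $P_1$ opposite $\alpha$ is shared with $P_4$, and since both of its endpoints ($\delta$ and $\epsilon$) are degree $3$, the arrangement of $P_1$ determines that of $P_4$, producing two further degree-$3$ vertices for free.

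Next, for each configuration I would feed the forced degree-$3$ vertices into the pentagon angle sum $\alpha+\beta+\gamma+\delta+\epsilon=(3+\tfrac{4}{f})\pi$ to solve for the angles, or at least to produce linear relations among them, and then test consistency against Lemma \ref{geometry1}, which forces $\beta>\gamma\iff\delta<\epsilon$, and against the balance lemma. The expected outcome is that the forced vertex set always contains the hypothesis of an earlier proposition. When it contains $\{\alpha^3,\beta^2\delta,\gamma^2\epsilon\}$, $\{\alpha^3,\beta^2\delta,\delta^2\epsilon\}$, $\{\alpha^3,\beta^2\delta,\delta\epsilon^2\}$, $\{\alpha^3,\beta^2\delta,\beta\gamma\epsilon\}$, $\{\alpha\beta\gamma,\delta\epsilon^2\}$, $\{\alpha\beta^2\}$ with $\delta^3$ or $\delta^2\epsilon$, or $\{\beta\gamma\delta,\delta\epsilon^2\}$ with a high-degree $\alpha$-vertex, Propositions \ref{special3}--\ref{special9} assert outright that no tiling exists and we are done. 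When instead the corner at $\alpha$ together with a neighbouring vertex forces $\alpha^3$ and $\beta\gamma\delta$ subject to $\delta+2\epsilon=2\pi$ or $2\delta+\epsilon=2\pi$, I would invoke Propositions \ref{special1} and \ref{special2}: the tilings they classify are completely listed there, and Step $3$ of each proof shows that none of them contains a $3^5$-tile, so the presence of our $3^5$-tile is once more a contradiction.

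One bookkeeping point needs care. Several propositions I want to cite carry the hypothesis $f\ge 24$ (Propositions \ref{special5}, \ref{special6}, \ref{special7}, \ref{special9}), whereas a $3^5$-tile only guarantees $f>12$ via Lemma \ref{basic}. I would therefore verify in each such case that the angle sums at the forced vertices already force $f\ge 24$, or else produce a specific small value of $f$ which I eliminate by hand, typically because it forces $\beta=\gamma$ (equivalently $\delta=\epsilon$) and hence a symmetric pentagon, contrary to the standing non-symmetry assumption.

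The hardest part will be the handful of configurations that do not collapse immediately onto an earlier proposition. There I expect to have to propagate the neighborhood outward by one or two layers, using Lemma \ref{geometry3} to pin down the tile across the $a$-edge opposite $\alpha$ and using adjacent-angle deduction to fill in the rest, and then exhibit a concrete obstruction: most often a single tile forced to carry two $a^2$-angles (impossible, since the pentagon has exactly one $\delta$ and one $\epsilon$), or a vertex forced to contain an odd number of $ab$-angles in violation of the parity lemma. Keeping the eleven cases disciplined, and making sure each genuinely reduces to the cited result rather than to a superficially similar one, is the main organizational obstacle rather than any single deep computation.
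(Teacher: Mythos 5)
Your overall strategy --- tile the partial neighborhood of the special tile, read off the forced degree-$3$ vertices, and reduce each configuration to one of Propositions \ref{special1}--\ref{special9} --- is the paper's strategy, and your observation that Step 3 of Propositions \ref{special1} and \ref{special2} already rules out $3^5$-tiles in the tilings they classify is exactly how those cases are closed. But you miss the one structural observation that makes the proof short and, more importantly, sound: for a $3^5$-tile \emph{all five} vertices have degree $3$, so the neighborhood has combinatorial rotational symmetry and there is no distinguished fifth vertex $H$. One may therefore always rotate the labelling so that $H$ is the $\alpha$-corner, i.e.\ restrict to Case 1 of Figure \ref{nhd}; the edge-length consideration then leaves only Cases 1.1, 1.2, 1.3, and Lemma \ref{geometry2} (the vertex-combination lemma) cuts these to six subcases. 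Each of those dies either by a direct angle-sum computation forcing $f=12$, or by Proposition \ref{special1}, \ref{special1}$'$, \ref{special4}, or \ref{special4}$'$ --- none of which carries an $f\ge 24$ hypothesis.

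By instead proposing to run all eleven configurations of Figure \ref{edge_nhd}, you manufacture the very problem you flag in your third paragraph, and your proposed fix does not work. Concretely: treating Case 2.1 with $\deg H=3$ on its own terms, the forced vertices are $\alpha^3,\beta^2\delta,\delta\epsilon^2$, which points to Proposition \ref{special5}; that proposition genuinely needs $f\ge24$ (its proof uses $\gamma\le\frac{1}{2}\pi$ from the start), while a $3^5$-tile only gives $f>12$. For $f=14,\dots,22$ the angle sums at $\alpha^3,\beta^2\delta,\delta\epsilon^2$ give $\alpha=\frac{2}{3}\pi$, $\beta=\epsilon=\pi-\frac{1}{2}\delta$, $\gamma=(\frac{1}{3}+\frac{4}{f})\pi$ with $\delta$ still free, so nothing forces $\beta=\gamma$ and your ``eliminate small $f$ by hand via symmetry'' escape is not available. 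The resolution is precisely the rotation reduction: viewed from the $\alpha$-corner this same configuration is Case 1.2.4, where the neighborhood forces the additional vertex $\gamma^2\epsilon$, and Proposition \ref{special4}$'$ (no $f$ restriction) applies. Relatedly, your anticipated ``hardest part'' --- configurations needing outward propagation --- never arises in the $3^5$ case; those residual configurations (Case 1.2.1 with $H$ of higher degree, Case 3.2, etc.) belong to the $3^44$/$3^45$ analysis. So the proposal is repairable, but as written the reduction step fails for $12<f<24$ in several of your eleven branches.
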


Unlike $3^44$- and $3^45$-tiles, the neighbourhood of a $3^5$-tile has the (combinatorial) rotation symmetry. Therefore we only need to consider Case 1 for the proposition. Then by the edge length consideration, we only need to consider Cases 1.1, 1.2, 1.3. Since we need to study these cases for $3^44$- and $3^45$-tiles anyway, we first tile the partial neighbourhood for the three cases, and then specialize to $3^5$-tile.

Figure \ref{classify2} describes some partial neighbourhood tilings for Cases 1.1, 1.2, 1.3. We fix angles of $T_1$ like the first of Figure \ref{pentagon}. 

The edges in the first of Figure \ref{classify2} is Case 1.1. By Lemma \ref{geometry4}, we determine $T_4$ and get the $a^2$-angle $\theta=A_{3,14}=A_{5,14}$. Since we know all the edges, $\theta$ determines $T_3,T_5$. The first of Figure \ref{classify2} describes the case $\theta=\epsilon$.  The same angle sums of $V_{123}=\gamma_1\delta_3\cdots$ and $V_{156}=\beta_1\delta_5\cdots$ imply $V_{123}=\beta_2\gamma_1\delta_3$ and $V_{156}=\beta_1\gamma_6\delta_5$. This determines $T_2,T_6$. If $\theta=\delta$, then we get the similar result, with the arrangements of $T_3,T_5$ changed. We conclude the following two possibilities:
\begin{itemize}
\item Case 1.1.1: $V_{123}=V_{156}=\beta\gamma\delta$, $V_{134}=V_{145}=\delta\epsilon^2$.
\item Case 1.1.2: $V_{123}=V_{156}=\beta\gamma\epsilon$, $V_{134}=V_{145}=\delta^2\epsilon$.
\end{itemize}
Case 1.1.1 is the first of Figure \ref{classify2}. Case 1.1.2 is obtained from Case 1.1.1 by exchanging $\beta\leftrightarrow \gamma$ and $\delta\leftrightarrow \epsilon$ in $T_3,T_5$. 

The edges in the second and third of Figure \ref{classify2} is Case 1.2. We note that $V_{123}=\gamma\cdots, V_{156}=\beta\cdots, V_{134}=\epsilon\cdots$ are degree $3$ vertices with only one $b$-edge. If the tiling has only one such vertex, then the vertex is $\beta\gamma\epsilon$. If the tiling has more than one such vertex, then by Lemma \ref{geometry3}, we have $V_{134}=\beta\gamma\epsilon$ or $\gamma^2\epsilon$. 

Suppose $V_{134}=\beta\gamma\epsilon$. By Lemma \ref{geometry3}, the only other degree $3$ vertex with only one $b$-edge is $\beta^2\delta$. Therefore $V_{123}=\gamma\cdots=\beta_2\gamma_1\epsilon_3$. This determines $T_2,T_3$. Then we have $V_{134}=\beta_4\gamma_3\epsilon_1$, which further determines $T_4$. On the other hand, we have $V_{156}=\beta\cdots=\beta\gamma\epsilon$ or $\beta^2\delta$. Either way determines $T_5,T_6$. We conclude the following two possibilities:
\begin{itemize}
\item Case 1.2.1: $V_{134}=V_{123}=\beta\gamma\epsilon$, $V_{156}=\beta\gamma\epsilon$, $V_{145}=\delta^3$.
\item Case 1.2.2: $V_{134}=V_{123}=\beta\gamma\epsilon$, $V_{156}=\beta^2\delta$, $V_{145}=\delta^2\epsilon$.
\end{itemize}
The second of Figure \ref{classify2} is Case 1.2.1.

Suppose $V_{134}=\gamma^2\epsilon$. This determines $T_3,T_4$. By Lemma \ref{geometry3}, the other degree $3$ vertex with only one $b$-edge is $\beta\gamma\delta$ or $\beta^2\delta$. Therefore $V_{123}=\gamma_1\epsilon_3\cdots=\gamma_1\gamma_2\epsilon_3$. This determines $T_2$. Moreover, we have $V_{156}=\beta\cdots=\beta\gamma\delta$ or $\beta^2\delta$. Either way determines $T_5,T_6$. We conclude the following two possibilities:
\begin{itemize}
\item Case 1.2.3: $V_{134}=V_{123}=\gamma^2\epsilon$, $V_{156}=\beta\gamma\delta$, $V_{145}=\delta\epsilon^2$.
\item Case 1.2.4: $V_{134}=V_{123}=\gamma^2\epsilon$, $V_{156}=\beta^2\delta$, $V_{145}=\delta\epsilon^2$.
\end{itemize}
The third of Figure \ref{classify2} is Case 1.2.4.

The edges in the fourth of Figure \ref{classify2} is Case 1.3. By Lemma \ref{geometry4}, we determine $T_4$. Then the same angle sums of $\alpha_2\gamma_1\cdots$ and $\alpha_6\beta_1\cdots$ imply $\alpha_2\gamma_1\cdots=\alpha_2\beta_3\gamma_1$ and $\alpha_6\beta_1\cdots=\alpha_6\beta_1\gamma_5$. This determines $T_3,T_5$. Then the same angle sums of $\delta_3\delta_4\epsilon_1$ and $\delta_1\epsilon_4\epsilon_5$ imply $\delta=\epsilon$. Therefore the pentagon is symmetric, and we may dismiss Case 1.3, no matter there is a $3^5$-tile or not.

\begin{figure}[htp]
\centering
\begin{tikzpicture}[>=latex,scale=1]

\foreach \a in {0,1,2,3}
{
\begin{scope}[xshift=3.5*\a cm]

\foreach \x in {0,...,4} 
\draw[rotate=-72*\x]
	(18:0.7) -- (-54:0.7);
	
\foreach \x in {0,1,2} 
\draw[rotate=-72*\x]
	(18:0.7) -- (18:1.3) -- (-18:1.7) -- (-54:1.3) -- (-54:0.7);

\draw
	(90:0.7) -- (60:1.5) -- (40:1.7) -- (18:1.3)
	(90:0.7) -- (120:1.5) -- (140:1.7) -- (162:1.3);

\draw[line width=1.5]
	(18:0.7) -- (90:0.7) -- (162:0.7);

\fill (0,0.7) circle (0.1);

\node at (90:0.45) {\small $\alpha$}; % P1
\node at (162:0.45) {\small $\beta$};
\node at (18:0.45) {\small $\gamma$};
\node at (234:0.45) {\small $\delta$};
\node at (-54:0.45) {\small $\epsilon$};

\node[draw,shape=circle, inner sep=0.5] at (0,0) {\small $1$};
\node[draw,shape=circle, inner sep=0.5] at (48:1.05) {\small $2$};
\node[draw,shape=circle, inner sep=0.5] at (-18:1.05) {\small $3$};
\node[draw,shape=circle, inner sep=0.5] at (-90:1.05) {\small $4$};
\node[draw,shape=circle, inner sep=0.5] at (198:1.05) {\small $5$};
\node[draw,shape=circle, inner sep=0.5] at (135:1.05) {\small $6$};

\end{scope}
}

%%% Case 1.1.1

\draw[line width=1.5]
	(60:1.5) -- (90:0.7) -- (120:1.5)
	(162:1.3) -- (198:1.7) -- (234:1.3) -- (-90:1.7) -- (-54:1.3) -- (-18:1.7) -- (18:1.3);

\node at (70:0.78) {\small $\alpha$}; % P2
\node at (30:0.85) {\small $\beta$};
\node at (26:1.25) {\small $\delta$};
\node at (40:1.5) {\small $\epsilon$};
\node at (55:1.35) {\small $\gamma$};

\node at (-18:1.45) {\small $\alpha$}; % P3
\node at (8:1.15) {\small $\beta$};	
\node at (-45:1.15) {\small $\gamma$};
\node at (6:0.75) {\small $\delta$};
\node at (-38:0.75) {\small $\epsilon$};

\node at (-68:0.8) {\small $\delta$}; % P4
\node at (-114:0.8) {\small $\epsilon$};
\node at (-116:1.15) {\small $\gamma$};
\node at (-90:1.45) {\small $\alpha$};
\node at (-64:1.15) {\small $\beta$};	

\node at (198:1.45) {\small $\alpha$}; % P5
\node at (173:1.2) {\small $\beta$};
\node at (224:1.15) {\small $\gamma$};
\node at (175:0.8) {\small $\delta$};
\node at (220:0.75) {\small $\epsilon$};

\node at (112:0.75) {\small $\alpha$}; % P6
\node at (150:0.8) {\small $\gamma$};
\node at (155:1.2) {\small $\epsilon$};
\node at (140:1.5) {\small $\delta$};
\node at (128:1.35) {\small $\beta$};

\node at (-54:1.8) {1.1.1};

%%% 1.2

\foreach \a in {1,2}
\draw[xshift=3.5*\a cm,line width=1.5]
	(60:1.5) -- (90:0.7) -- (120:1.5)
	(234:1.3) -- (198:1.7) -- (162:1.3)
	(-54:1.3) -- (-54:0.7)	
	(-18:1.7) -- (-54:1.3) -- (-90:1.7);

%%% 1.2.1

\begin{scope}[xshift=3.5cm]

\node at (70:0.78) {\small $\alpha$}; % P2
\node at (30:0.85) {\small $\beta$};
\node at (26:1.25) {\small $\delta$};
\node at (40:1.5) {\small $\epsilon$};
\node at (55:1.35) {\small $\gamma$};

\node at (-18:1.45) {\small $\beta$}; % P3
\node at (8:1.15) {\small $\delta$};	
\node at (-45:1.15) {\small $\alpha$};
\node at (6:0.75) {\small $\epsilon$};
\node at (-42:0.8) {\small $\gamma$};

\node at (-64:1.15) {\small $\alpha$}; % P4
\node at (-70:0.8) {\small $\beta$};
\node at (-90:1.45) {\small $\gamma$};
\node at (-114:0.8) {\small $\delta$};
\node at (-118:1.15) {\small $\epsilon$};

\node at (198:1.45) {\small $\alpha$}; % P5
\node at (172:1.15) {\small $\gamma$};
\node at (224:1.15) {\small $\beta$};
\node at (175:0.8) {\small $\epsilon$};
\node at (220:0.8) {\small $\delta$};

\node at (112:0.75) {\small $\alpha$}; % P6
\node at (150:0.8) {\small $\gamma$};
\node at (155:1.2) {\small $\epsilon$};
\node at (140:1.5) {\small $\delta$};
\node at (128:1.35) {\small $\beta$};
	
\node at (-54:1.8) {1.2.1};
	
\end{scope}

%%% 1.2.2

\begin{scope}[xshift=7cm]

\node at (70:0.78) {\small $\alpha$}; % T2
\node at (30:0.8) {\small $\gamma$};
\node at (26:1.25) {\small $\epsilon$};
\node at (40:1.5) {\small $\delta$};
\node at (55:1.3) {\small $\beta$};

\node at (-18:1.45) {\small $\beta$}; % T3
\node at (8:1.15) {\small $\delta$};	
\node at (-45:1.15) {\small $\alpha$};
\node at (6:0.75) {\small $\epsilon$};
\node at (-42:0.8) {\small $\gamma$};

\node at (-70:0.8) {\small $\gamma$}; % T4
\node at (-114:0.8) {\small $\epsilon$};
\node at (-118:1.15) {\small $\delta$};
\node at (-90:1.45) {\small $\beta$};
\node at (-64:1.15) {\small $\alpha$};	

\node at (198:1.45) {\small $\alpha$}; % T5
\node at (173:1.2) {\small $\beta$};
\node at (224:1.15) {\small $\gamma$};
\node at (175:0.8) {\small $\delta$};
\node at (220:0.75) {\small $\epsilon$};

\node at (112:0.75) {\small $\alpha$}; % T6
\node at (150:0.8) {\small $\beta$};
\node at (127:1.35) {\small $\gamma$};
\node at (153:1.2) {\small $\delta$};
\node at (140:1.5) {\small $\epsilon$};
	
\node at (-54:1.8) {1.2.4};
	
\end{scope}

%%% Case 1.3

\begin{scope}[xshift=10.5 cm]

\draw[line width=1.5]
	(-18:1.7) -- (18:1.3) -- (18:0.7)
	(162:0.7) -- (162:1.3)  -- (198:1.7) 	
	(234:1.3) -- (-90:1.7) -- (-54:1.3);

\node at (-18:1.45) {\small $\gamma$}; % T3
\node at (10:1.15) {\small $\alpha$};	
\node at (-45:1.15) {\small $\epsilon$};
\node at (3:0.75) {\small $\beta$};
\node at (-42:0.8) {\small $\delta$};

\node at (-68:0.8) {\small $\delta$}; % T4
\node at (-114:0.8) {\small $\epsilon$};
\node at (-116:1.15) {\small $\gamma$};
\node at (-90:1.45) {\small $\alpha$};
\node at (-64:1.15) {\small $\beta$};

\node at (196:1.45) {\small $\beta$}; % T5
\node at (171:1.15) {\small $\alpha$};
\node at (224:1.15) {\small $\delta$};
\node at (175:0.8) {\small $\gamma$};
\node at (220:0.75) {\small $\epsilon$};

\node at (30:0.85) {\small $\alpha$}; % T2
\node at (150:0.85) {\small $\alpha$}; % T6

\node at (-54:1.8) {1.3};

\end{scope}

\end{tikzpicture}
\caption{Some partial neighbourhoods in Cases 1.1, 1.2, 1.3.}
\label{classify2}
\end{figure}

\begin{proof}[Proof of Proposition \ref{3to5_tiling}]
We only need to consider $\deg H=3$ in Cases 1.1 and 1.2. Then $H=\alpha^3$, and all six subcases are impossible for various reasons.

Case 1.1.1: $\alpha^3,\beta\gamma\delta,\delta\epsilon^2$ are vertices. Proposition \ref{special1} gives all the tilings, and none has $3^5$-tile.

Cases 1.1.2 and 1.2.2: $\alpha^3,\beta\gamma\epsilon,\delta^2\epsilon$ are vertices. Proposition \ref{special1}' gives all the tilings, and none has $3^5$-tile.

Case 1.2.1: $\alpha^3,\beta\gamma\epsilon,\delta^3$ are vertices. The angle sums of $\alpha^3,\beta\gamma\epsilon,\delta^3$ and the angle sum for pentagon imply $f=12$.

Cases 1.2.3 and 1.2.4: $\alpha^3,\gamma^2\epsilon, \delta\epsilon^2$ are vertices. There is no tiling by Proposition \ref{special4}'.
\end{proof}

\subsection{Tiling without $3^5$-tile}

After Propositions \ref{symmetric_tiling} and \ref{3to5_tiling}, we only need to deal with all eleven cases in Figure \ref{edge_nhd}, and without $3^5$-tile. By the second part of Lemma \ref{basic}, this implies $f\ge 24$. Therefore all the Propositions \ref{special1} through \ref{special9} are valid. 

For Cases 1.1, 1.2, 1.3, we already argued that Case 1.3 has no tiling. By Propositions \ref{special9} and \ref{special9}', Cases 1.1.1, 1.1.2, 1.2.2, 1.2.3 have no tiling. It remains to study Cases 1.2.1, 1.2.4, without $3^5$-tile.

For Case 1.4, we note that $T_1,T_5$ share an edge with both ends having degree $3$, as in Figure \ref{classify3}. Moreover, $T_1$ is already arranged as indicated, and $T_5$ has two possible arrangements. In the first case, we see $\beta_1\beta_5\cdots=\alpha\beta^2$ and $\delta_1\delta_5\cdots=\delta^3,\delta^2\epsilon$ are vertices. By Proposition \ref{special8}, there is no tiling. In the second case, we see $\beta_1\gamma_5\cdots=\alpha\beta\gamma$ and $\delta_1\epsilon_5\cdots=\delta^2\epsilon,\delta\epsilon^2$ are vertices. By Propositions \ref{special7}' and \ref{special7}, there is no tiling.

\begin{figure}[htp]
\centering
\begin{tikzpicture}[>=latex,scale=1]

\foreach \a in {-1,1}
\foreach \b in {0,1}
{
\begin{scope}[xshift=-0.566*\a cm, xshift=3.5*\b cm, xscale=\a]

\draw
	(36:0.7) -- (-36:0.7) -- (-108:0.7) -- (180:0.7);

\draw[line width=1.5]
	(36:0.7) -- (108:0.7) -- (180:0.7);

\end{scope}
}

\foreach \b in {0,1}
{
\begin{scope}[xshift=3.5*\b cm]

\node at (0.7,0.45) {\small $\alpha$};
\node at (0.15,0.2) {\small $\beta$};
\node at (1,0) {\small $\gamma$};
\node at (0.15,-0.25) {\small $\delta$};
\node at (0.7,-0.5) {\small $\epsilon$};

\node at (0,0.6) {\small $\alpha$};

\node[draw,shape=circle, inner sep=1] at (0.566,0) {\small 1};
\node[draw,shape=circle, inner sep=1] at (-0.566,0) {\small 5};

\end{scope}
}

\node at (-0.7,0.45) {\small $\alpha$};
\node at (-0.15,0.2) {\small $\beta$};
\node at (-1,0) {\small $\gamma$};
\node at (-0.15,-0.25) {\small $\delta$};
\node at (-0.7,-0.5) {\small $\epsilon$};

\begin{scope}[xshift=3.5 cm]

\node at (-0.7,0.45) {\small $\alpha$};
\node at (-1,0) {\small $\beta$};
\node at (-0.15,0.2) {\small $\gamma$};
\node at (-0.7,-0.45) {\small $\delta$};
\node at (-0.15,-0.25) {\small $\epsilon$};

\end{scope}

\end{tikzpicture}
\caption{Cases 1.4, 2.4, 3.3.}
\label{classify3}
\end{figure}

The argument for Case 1.4 can also be applied to Cases 2.4 and 3.3. The only modification is $T_5$ changed to $T_4$ for Case 2.4, and $T_5$ changed to $T_3$ for Case 3.3. There is no tiling for Cases 2.4 and 3.3.

The remaining cases are 2.1, 2.2, 2.3, 3.1, 3.2, all having the vertex $\alpha^3$. By Propositions \ref{special3}, \ref{special6}, \ref{special6}', the combination of $\alpha^3$ with any pair in Lemma \ref{geometry3} has no tiling. Therefore we may assume all degree $3$ vertices with one $b$-edge are the same in all the subsequent discussions.

For Case 2.1, we first determine $T_3$ by Lemma \ref{geometry4}. Then we successively consider the possibilities for $V_{145}=\beta_1\cdots$ and $V_{134}=\delta_1\epsilon_3\cdots$, subject to the constraint in Lemma \ref{geometry3}. We get the following list, together with the applicable propositions that either give the tilings or show that there is no tiling.
\begin{itemize}
\item $V_{156}=\alpha^3$, $V_{145}=\beta^2\delta$, $V_{134}=\delta\epsilon^2$: Proposition \ref{special5}.
\item $V_{156}=\alpha^3$, $V_{145}=\beta^2\epsilon$, $V_{134}=\delta^2\epsilon$: Proposition \ref{special5}'.
\item $V_{156}=\alpha^3$, $V_{145}=\beta\gamma\delta$, $V_{134}=\delta\epsilon^2$: Proposition \ref{special1}.
\item $V_{156}=\alpha^3$, $V_{145}=\beta\gamma\epsilon$, $V_{134}=\delta^2\epsilon$: Proposition \ref{special1}'.
\end{itemize}
For Case 2.2, we may assume $V_{123}=V_{145}$ and get the following:
\begin{itemize}
\item $V_{156}=\alpha^3$, $V_{123}=V_{145}=\beta\gamma\epsilon$, $V_{134}=\delta^2\epsilon$: Proposition \ref{special1}'.
\item $V_{156}=\alpha^3$, $V_{123}=V_{145}=\beta\gamma\epsilon$, $V_{134}=\delta^3$: $f=12$.
\item $V_{156}=\alpha^3$, $V_{123}=V_{145}=\beta^2\epsilon$, $V_{134}=\delta^3$. 
\end{itemize}
For Case 2.3, we may assume $V_{134}=V_{145}$ and get the following:
\begin{itemize}
\item $V_{156}=\alpha^3$, $V_{134}=V_{145}=\beta\gamma\delta$, $V_{123}=\delta\epsilon^2$: Proposition \ref{special1}.
\item $V_{156}=\alpha^3$, $V_{134}=V_{145}=\beta\gamma\delta$, $V_{123}=\epsilon^3$: $f=12$.
\item $V_{156}=\alpha^3$, $V_{134}=V_{145}=\beta^2\delta$, $V_{123}=\delta^2\epsilon$: Proposition \ref{special4}.
\item $V_{156}=\alpha^3$, $V_{134}=V_{145}=\beta^2\delta$, $V_{123}=\delta\epsilon^2$: Proposition \ref{special5}.
\end{itemize}
For Case 3.1, we may assume $V_{134}=V_{156}$ and get the following:
\begin{itemize}
\item $V_{145}=\alpha^3$, $V_{134}=V_{156}=\beta\gamma\delta$, $V_{123}=\delta\epsilon^2$: Proposition \ref{special1}.
\item $V_{145}=\alpha^3$, $V_{134}=V_{156}=\beta\gamma\delta$, $V_{123}=\delta^2\epsilon$: Proposition \ref{special2}.
\item $V_{145}=\alpha^3$, $V_{134}=V_{156}=\beta\gamma\epsilon$, $V_{123}=\delta^2\epsilon$: Proposition \ref{special1}'.
\item $V_{145}=\alpha^3$, $V_{134}=V_{156}=\beta\gamma\epsilon$, $V_{123}=\delta^3$: $f=12$.
\end{itemize}
For Case 3.2, we may assume $V_{123}=V_{134}=V_{156}$ and get the following:
\begin{itemize}
\item $V_{145}=\alpha^3$, $V_{123}=V_{134}=V_{156}=\beta\gamma\delta$. 
\end{itemize}

Case 2.2, $V_{123}=V_{145}=\beta^2\epsilon$, $V_{134}=\delta^3$ is the first of Figure \ref{classify5}. Case 3.2, $V_{123}=V_{134}=V_{156}=\beta\gamma\delta$ is the second of Figure \ref{classify5} ($T_7,T_8,T_9$ not yet included). Both require further study. Besides this, the only other cases requiring further study are Cases 1.2.1 and 1.2.4, described by the second and third of Figure \ref{classify2}.

\subsubsection*{Case 1.2.1}

We have vertices $\beta\gamma\epsilon,\delta^3$, and $H=\alpha^3\cdots=\alpha^4,\alpha^5,\alpha^3\beta^2,\alpha^3\gamma^2,\alpha^3\beta\gamma$. 

We will prove that $H=\alpha^3\beta^2,\alpha^3\gamma^2,\alpha^3\beta\gamma$ admit no tiling. We will handle $H=\alpha^4,\alpha^5$ in Section \ref{division}.

\subsubsection*{Subcase. $H=\alpha^3\gamma^2$}

The angle sums of $\beta\gamma\epsilon,\delta^3,\alpha^3\gamma^2$ and the angle sum for pentagon imply 
\[
\alpha=(\tfrac{1}{3}+\tfrac{4}{f})\pi,\;
\beta+\epsilon=(\tfrac{3}{2}+\tfrac{6}{f})\pi,\;
\gamma=(\tfrac{1}{2}-\tfrac{6}{f})\pi,\;
\delta=\tfrac{2}{3}\pi.
\]
We have $\beta+\epsilon>\gamma+\delta$. By Lemma \ref{geometry1}, this implies $\beta>\gamma$ and $\delta<\epsilon$. The AAD $\thick^{\alpha}\gamma^{\epsilon}\thin^{\epsilon}\gamma^{\alpha}\thick$ at $H$ gives a vertex $\epsilon^2\cdots$. By $\beta\gamma\epsilon$ and Lemma \ref{klem6}, we know $\epsilon^2\cdots$ has only $\delta,\epsilon$. By $\delta^3$ and $\delta<\epsilon$, we have $R(\epsilon^2\cdots)<\delta<\epsilon$. This implies $\epsilon^2\cdots$ has no $\delta,\epsilon$, a contradiction.

\subsubsection*{Subcase. $H=\alpha^3\beta\gamma$}

The angle sums of $\beta\gamma\epsilon,\delta^3,\alpha^3\beta\gamma$ and the angle sum for pentagon imply 
\[
\alpha=(\tfrac{1}{3}+\tfrac{4}{f})\pi,\;
\beta+\gamma=(1-\tfrac{12}{f})\pi,\;
\delta=\tfrac{2}{3}\pi,\;
\epsilon=(1+\tfrac{12}{f})\pi.
\]
We have $\delta<\epsilon$. By Lemma \ref{geometry1}, this implies $\beta>\gamma$, and further implies $\beta>(\frac{1}{2}-\tfrac{6}{f})\pi>\gamma$.  By $\epsilon>\pi$, we also know $\epsilon^2\cdots$ is not a vertex. By the AAD, this implies no $\gamma\thin\gamma\cdots$. 

The AAD $\thick^{\alpha}\beta^{\delta}\thin^{\epsilon}\gamma^{\alpha}\thick$ at $H$ gives a vertex $\thin^{\epsilon}\delta^{\beta}\thin^{\gamma}\epsilon^{\delta}\thin\cdots$. By $R(\thin^{\epsilon}\delta^{\beta}\thin^{\gamma}\epsilon^{\delta}\thin\cdots)=(\frac{1}{3}-\frac{12}{f})\pi<\alpha,\beta,\delta,\epsilon$, we get $\thin^{\epsilon}\delta^{\beta}\thin^{\gamma}\epsilon^{\delta}\thin\cdots=\gamma^k\delta\epsilon$. By no $\gamma\thin\gamma\cdots$ and the parity lemma, we get $k=2$. Therefore $\thin^{\epsilon}\delta^{\beta}\thin^{\gamma}\epsilon^{\delta}\thin\cdots=\gamma^2\delta\epsilon=\thick^{\alpha}\gamma^{\epsilon}\thin^{\epsilon}\delta^{\beta}\thin^{\gamma}\epsilon^{\delta}\thin^{\epsilon}\gamma^{\alpha}\thick$, contradicting no $\epsilon^2\cdots$.

\subsubsection*{Subcase. $H=\alpha^3\beta^2$}

The angle sums of $\beta\gamma\epsilon,\delta^3,\alpha^3\beta^2$ and the angle sum for pentagon imply
\[
\alpha=(\tfrac{1}{3}+\tfrac{4}{f})\pi,\;
\beta=(\tfrac{1}{2}-\tfrac{6}{f})\pi,\;
\gamma+\epsilon=(\tfrac{3}{2}+\tfrac{6}{f})\pi,\;
\delta=\tfrac{2}{3}\pi.
\]
If $\beta<\gamma$ and $\delta>\epsilon$, then $\gamma>(\frac{3}{2}+\frac{6}{f})\pi-\delta=(\frac{5}{6}+\frac{6}{f})\pi$, and $R(\gamma^2\cdots)<(\frac{1}{3}-\frac{12}{f})\pi<\alpha,\beta,\gamma,\delta$. By $\beta\gamma\epsilon$ and $\beta<\gamma$, we also have $R(\gamma^2\cdots)<\epsilon<\delta$. Therefore $\gamma^2\cdots$ is not a vertex. Then $\alpha^3\beta^2$ contradicts the balance lemma. By Lemma \ref{geometry1}, we conclude $\beta>\gamma$ and $\delta<\epsilon$. This implies $\epsilon>(\frac{3}{2}+\frac{6}{f})\pi-\beta=(1+\frac{12}{f})\pi>\pi$. Therefore $\epsilon^2\cdots$ is not a vertex. 

The AAD $\thick^{\alpha}\beta^{\delta}\thin^{\delta}\beta^{\alpha}\thick$ at $H$ gives a vertex $\thin^{\epsilon}\delta^{\beta}\thin^{\beta}\delta^{\epsilon}\thin\cdots$. By no $\epsilon^2\cdots$, the AAD implies $\thin^{\epsilon}\delta^{\beta}\thin^{\beta}\delta^{\epsilon}\thin\cdots\ne\delta^3$. Then by $R(\thin^{\epsilon}\delta^{\beta}\thin^{\beta}\delta^{\epsilon}\thin\cdots)=\delta<\epsilon$, we know $R(\thin^{\epsilon}\delta^{\beta}\thin^{\beta}\delta^{\epsilon}\thin\cdots)$ has no $\delta,\epsilon$. By Lemma \ref{klem4}, this implies $\thin^{\epsilon}\delta^{\beta}\thin^{\beta}\delta^{\epsilon}\thin\cdots=\thick^{\alpha}\beta^{\delta}\thin^{\epsilon}\delta^{\beta}\thin^{\beta}\delta^{\epsilon}\thin\cdots, \thick^{\alpha}\gamma^{\epsilon}\thin^{\epsilon}\delta^{\beta}\thin^{\beta}\delta^{\epsilon}\thin\cdots$. Then by no $\epsilon^2\cdots$, we have $\thin^{\epsilon}\delta^{\beta}\thin^{\beta}\delta^{\epsilon}\thin\cdots=\thick^{\alpha}\beta^{\delta}\thin^{\epsilon}\delta^{\beta}\thin^{\beta}\delta^{\epsilon}\thin\cdots$. This gives a vertex $\thin^{\epsilon}\delta^{\beta}\thin^{\delta}\epsilon^{\gamma}\thin\cdots$. 
 
By $\delta=\tfrac{2}{3}\pi$ and $\epsilon>(1+\frac{12}{f})\pi$, we have $R(\thin^{\epsilon}\delta^{\beta}\thin^{\delta}\epsilon^{\gamma}\thin\cdots)<(\frac{1}{3}-\frac{12}{f})\pi<\alpha,\beta,\delta,\epsilon$. This implies $\thin^{\epsilon}\delta^{\beta}\thin^{\delta}\epsilon^{\gamma}\thin\cdots=\gamma^k\delta\epsilon=\thick^{\alpha}\gamma^{\epsilon}\thin^{\epsilon}\delta^{\beta}\thin^{\delta}\epsilon^{\gamma}\thin\cdots$. This gives a vertex $\epsilon^2\cdots$, a contradiction.

\subsubsection*{Case 1.2.4}

We have vertices $\beta^2\delta,\gamma^2\epsilon,\delta\epsilon^2$, and $H=\alpha^3\cdots=\alpha^4,\alpha^5,\alpha^3\beta^2,\alpha^3\gamma^2,\alpha^3\beta\gamma$. 

By the parity lemma, the vertex $\thick^{\alpha}\gamma_5^{\epsilon}\thin^{\epsilon}\delta_4^{\beta}\thin\cdots$ in the third of Figure \ref{classify2} has one more $\beta$ or $\gamma$. By comparing the angle sums of $\gamma\delta\cdots$ and $\beta^2\delta$, we get $\beta>\gamma$. By Lemma \ref{geometry1}, this implies $\delta<\epsilon$. 

The angle sums of $\beta^2\delta,\delta\epsilon^2$ imply $\beta+\delta+\epsilon=2\pi$. Then the angle sum for pentagon imply $\alpha+\gamma=(1+\tfrac{4}{f})\pi>\pi$. By $\beta>\gamma$, this implies $\alpha^3\beta^2,\alpha^3\beta\gamma,\alpha^3\gamma^2$ are not vertices. Therefore $H=\alpha^4,\alpha^5$.

\subsubsection*{Subcase. $H=\alpha^4$}

The angle sums of $\beta^2\delta,\gamma^2\epsilon,\delta\epsilon^2,\alpha^4$ and the angle sum for pentagon imply
\[
\alpha=\tfrac{1}{2}\pi,\;
\beta=\epsilon=(1-\tfrac{8}{f})\pi,\;
\gamma=(\tfrac{1}{2}+\tfrac{4}{f})\pi,\;
\delta=\tfrac{16}{f}\pi.
\]
By $\beta>\gamma$, we have $f>24$. 

By $R(\beta\epsilon\cdots)=\tfrac{16}{f}\pi<\beta,\gamma$ ($f>24$ used), and the parity lemma, we know $\beta\epsilon\cdots$ is not a vertex. 

By $\beta>\gamma>\frac{1}{2}\pi$ and the parity lemma, we have $\gamma_5\delta_4\cdots=\beta\gamma\delta\cdots,\gamma^2\delta\cdots$, with no $\beta,\gamma$ in the remainders. By $\beta>\gamma$, we have $R(\beta\gamma\delta\cdots)<R(\gamma^2\delta\cdots )=(1-\frac{24}{f})\pi<2\alpha,\epsilon$. Therefore $\gamma_5\delta_4\cdots$ has at most one $\alpha$, and has no $\epsilon$. We conclude $\thick^{\alpha}\gamma_5^{\epsilon}\thin^{\epsilon}\delta_4^{\beta}\thin\cdots=\beta\gamma\delta^k,\gamma^2\delta^k,\alpha\beta\gamma\delta^k,\alpha\gamma^2\delta^k$. By $\alpha+\beta+\gamma+\delta=(2+\tfrac{12}{f})\pi>2\pi$, we know $\alpha\beta\gamma\delta^k$ is not a vertex. 

If $k\ge 2$, then by no $\beta\epsilon\cdots$, we have $\thick^{\alpha}\gamma^{\epsilon}\thin^{\epsilon}\delta^{\beta}\thin^{\beta}\delta^{\epsilon}\thin$ at the vertex. This gives a vertex $\beta\thin\beta\cdots$. By $f>24$, we have $R(\beta\thin\beta\cdots)=\tfrac{16}{f}\pi<2\alpha,\beta,\gamma$. Then by Lemma \ref{klem5}, we know $\beta\thin\beta\cdots=\alpha\beta^2$. The angle sum of $\alpha\beta^2$ further implies $f=32$. Then $\delta=\frac{1}{2}\pi$, and $\gamma+\delta>\pi$. By $\beta>\gamma$, this implies $\beta\gamma\delta^k,\gamma^2\delta^k,\alpha\gamma^2\delta^k$ are not vertices.

We conclude $k=1$, and $\thick^{\alpha}\gamma_5^{\epsilon}\thin^{\epsilon}\delta_4^{\beta}\thin\cdots=\beta\gamma\delta,\gamma^2\delta,\alpha\gamma^2\delta$. By $\beta^2\delta,\gamma^2\epsilon$, $\beta>\gamma$, and $\delta<\epsilon$, we know $\beta\gamma\delta,\gamma^2\delta$ are not vertices. Therefore $\thick^{\alpha}\gamma_5^{\epsilon}\thin^{\epsilon}\delta_4^{\beta}\thin\cdots=\alpha\gamma^2\delta=\thick\alpha\thick^{\alpha}\gamma^{\epsilon}\thin^{\epsilon}\delta^{\beta}\thin^{\epsilon}\gamma^{\alpha}\thick$, contradicting no $\beta\epsilon\cdots$. 

\subsubsection*{Subcase. $H=\alpha^5$}

The angle sums of $\beta^2\delta,\gamma^2\epsilon,\delta\epsilon^2,\alpha^5$ and the angle sum for pentagon imply
\[
\alpha=\tfrac{2}{5}\pi,\;
\beta=\epsilon=(\tfrac{4}{5}-\tfrac{8}{f})\pi,\;
\gamma=(\tfrac{3}{5}+\tfrac{4}{f})\pi,\;
\delta=(\tfrac{2}{5}+\tfrac{16}{f})\pi.
\]
By $\beta>\gamma$, we get $f>60$. By Lemma \ref{aadlemma}, the AAD of $\alpha^5$ gives a vertex $\beta\thick\gamma\cdots$. By $f>60$, we have $\delta<R(\beta\thick\gamma\cdots)=(\tfrac{3}{5}+\tfrac{4}{f})\pi=\gamma<\beta,2\delta,\epsilon$. By the parity lemma, this implies $R(\beta\thick\gamma\cdots)$ has no $\beta,\gamma,\delta,\epsilon$, contradicting Lemma \ref{klem5}.

\subsubsection*{Case 2.2}

We have vertices $\alpha^3,\beta^2\epsilon,\delta^3$, and $H=\gamma\delta\cdots$. The angle sums of $\alpha^3,\beta^2\epsilon,\delta^3$ and the angle sum for pentagon imply
\[
\alpha=\delta=\tfrac{2}{3}\pi,\;
\beta-\gamma=(\tfrac{1}{3}-\tfrac{4}{f})\pi.
\] 
We have $\beta>\gamma$. By Lemma \ref{geometry1}, this implies $\delta<\epsilon$.

By $\beta^2\epsilon$ and Lemma \ref{geometry3}, a degree $3$ vertex $\gamma\cdots$ cannot have only one $b$-edge. By the parity lemma, a degree $3$ vertex $\gamma\cdots=\alpha\beta\gamma,\alpha\gamma^2$. The angle sums of $\alpha^3,\beta^2\epsilon,\delta^3$, the angle sum of $\alpha\beta\gamma$ or $\alpha\gamma^2$, and the angle sum for pentagon imply
\begin{align*}
\alpha\beta\gamma &\colon
\alpha=\delta=\tfrac{2}{3}\pi,\;
\beta=(\tfrac{5}{6}-\tfrac{2}{f})\pi,\;
\gamma=(\tfrac{1}{2}+\tfrac{2}{f})\pi,\;
\epsilon=(\tfrac{1}{3}+\tfrac{4}{f})\pi; \\
\alpha\gamma^2 &\colon
\alpha=\gamma=\delta=\tfrac{2}{3}\pi,\;
\beta=(1-\tfrac{4}{f})\pi,\;
\epsilon=\tfrac{8}{f}\pi.
\end{align*}
In both cases, we have $\delta>\epsilon$, a contradiction. Therefore $\gamma$ does not appear at degree $3$ vertices.  By Lemma \ref{hdeg} and the parity lemma, one of $\beta\gamma^3,\gamma^4$ is a vertex. The angle sums of $\alpha^3,\beta^2\epsilon,\delta^3$, the angle sum of $\beta\gamma^3$ or $\gamma^4$, and the angle sum for pentagon imply
\begin{align*}
\beta\gamma^3 &\colon
\alpha=\delta=\tfrac{2}{3}\pi,\;
\beta=(\tfrac{3}{4}-\tfrac{3}{f})\pi,\;
\gamma=(\tfrac{5}{12}+\tfrac{1}{f})\pi,\;
\epsilon=(\tfrac{1}{2}+\tfrac{6}{f})\pi; \\
\gamma^4 &\colon
\alpha=\delta=\tfrac{2}{3}\pi,\;
\beta=(\tfrac{5}{6}-\tfrac{4}{f})\pi,\;
\gamma=\tfrac{1}{2}\pi,\;
\epsilon=(\tfrac{1}{3}+\tfrac{8}{f})\pi.
\end{align*}
In the first of Figure \ref{classify5}, we see $H=\beta\gamma\delta\cdots,\gamma^2\delta\cdots$. Then
\[
R(\beta\gamma\delta\cdots)
<R(\gamma^2\delta\cdots)
=\begin{cases}
(\frac{1}{2}-\frac{2}{f})\pi, 
& \text{for $\beta\gamma^3$} \\
\frac{1}{3}\pi, 
& \text{for $\gamma^4$}
\end{cases}
<\alpha,\beta,2\gamma,\delta,\epsilon.
\]
Moreover, by $\beta^2\epsilon$, $\beta>\gamma$, and $\delta<\epsilon$, we know $R(\beta\gamma\delta\cdots)>0$. Then by the parity lemma, we know $H=\beta\gamma\delta\cdots,\gamma^2\delta\cdots$ is not a vertex.

\begin{figure}[htp]
\centering
\begin{tikzpicture}[>=latex,scale=1]

% 2.3

\foreach \a in {-1,1}
\draw[xscale=\a]
	(90:0.7) -- (18:0.7) -- (-54:0.7) -- (234:0.7)
	(18:0.7) -- (18:1.3) 
	(90:0.7) -- (60:1.5) -- (40:1.7) -- (18:1.3) -- (-18:1.7) -- (-54:1.3) -- (-90:1.7)
	(-54:0.7) -- (-54:1.3);

\draw[line width=1.5]
	(90:0.7) -- (162:0.7) -- (234:0.7)
	(162:0.7) -- (162:1.3)
	(-54:1.3) -- (-90:1.7) -- (234:1.3)
	(40:1.7) -- (18:1.3) -- (-18:1.7)
	(18:0.7) -- (18:1.3);

\fill (90:0.7) circle (0.1);

\node at (90:0.45) {\small $\gamma$}; % T1
\node at (162:0.45) {\small $\alpha$};
\node at (18:0.45) {\small $\epsilon$};
\node at (234:0.45) {\small $\beta$};
\node at (-54:0.45) {\small $\delta$};

\node at (70:0.75) {\small $\delta$}; % T2
\node at (30:0.8) {\small $\beta$};
\node at (26:1.2) {\small $\alpha$};
\node at (40:1.45) {\small $\gamma$};
\node at (56:1.35) {\small $\epsilon$};

\node at (-18:1.45) {\small $\gamma$}; % T3
\node at (8:1.15) {\small $\alpha$};	
\node at (-47:1.15) {\small $\epsilon$};
\node at (2:0.75) {\small $\beta$};
\node at (-38:0.75) {\small $\delta$};

\node at (-68:0.8) {\small $\delta$}; % T4
\node at (-114:0.8) {\small $\epsilon$};
\node at (-116:1.15) {\small $\gamma$};
\node at (-90:1.45) {\small $\alpha$};
\node at (-64:1.15) {\small $\beta$};	

\node at (198:1.5) {\small $\epsilon$}; % T5
\node at (172:1.15) {\small $\gamma$};
\node at (226:1.15) {\small $\delta$};
\node at (178:0.8) {\small $\alpha$};
\node at (220:0.8) {\small $\beta$};

\node at (150:0.8) {\small $\alpha$};  % T6

\node[draw,shape=circle, inner sep=0.5] at (0,0) {\small $1$};
\node[draw,shape=circle, inner sep=0.5] at (48:1.05) {\small $2$};
\node[draw,shape=circle, inner sep=0.5] at (-18:1.05) {\small $3$};
\node[draw,shape=circle, inner sep=0.5] at (-90:1.05) {\small $4$};
\node[draw,shape=circle, inner sep=0.5] at (198:1.05) {\small $5$};
\node[draw,shape=circle, inner sep=0.5] at (135:1.05) {\small $6$};

\node at (-54:1.8) {2.2};

% 3.2

\begin{scope}[xshift=4.5cm]

\draw
	(40:1.7) -- (60:1.5) -- (90:0.7) -- (18:0.7) -- (-54:0.7) -- (-54:2.1)
	(90:0.7) -- (162:0.7) -- (162:1.3) -- (198:1.7) -- (234:1.3) -- (-90:1.7) -- (-54:1.3) -- (-18:1.7) -- (18:1.3)
	(-38:2.4) -- (-54:2.1) -- (-70:2.4)
	(198:1.7) -- (198:2.4);

\draw[line width=1.5]
	(40:1.7) -- (18:1.3) -- (-18:1.7) 
	(18:0.7) -- (18:1.3)
	(162:0.7) -- (234:0.7) -- (-54:0.7)
	(234:0.7) -- (234:1.3)
	(-90:2.4) -- (198:2.4)
	(-70:2.4) -- (-90:2.4) -- (-90:1.7)
	(-38:2.4) -- (-18:2.4) -- (-18:1.7)
	;

\draw[dotted]
	(162:1.3) -- (140:1.7) -- (120:1.5) -- (90:0.7);

\fill (90:0.7) circle (0.1);

\node at (90:0.45) {\small $\epsilon$}; % T1
\node at (162:0.45) {\small $\gamma$};
\node at (18:0.45) {\small $\delta$};
\node at (234:0.45) {\small $\alpha$};
\node at (-54:0.45) {\small $\beta$};

\node at (70:0.75) {\small $\epsilon$}; % T2
\node at (30:0.8) {\small $\gamma$};
\node at (26:1.2) {\small $\alpha$};
\node at (40:1.45) {\small $\beta$};
\node at (56:1.35) {\small $\delta$};

\node at (-18:1.45) {\small $\gamma$}; % T3
\node at (8:1.15) {\small $\alpha$};	
\node at (-47:1.15) {\small $\epsilon$};
\node at (2:0.75) {\small $\beta$};
\node at (-38:0.75) {\small $\delta$};

\node at (-68:0.8) {\small $\gamma$}; % T4
\node at (-114:0.8) {\small $\alpha$};
\node at (-116:1.15) {\small $\beta$};
\node at (-90:1.45) {\small $\delta$};
\node at (-62:1.15) {\small $\epsilon$};	

\node at (198:1.5) {\small $\epsilon$}; % T5
\node at (172:1.2) {\small $\delta$};
\node at (226:1.15) {\small $\gamma$};
\node at (178:0.8) {\small $\beta$};
\node at (220:0.8) {\small $\alpha$};

\node at (150:0.8) {\small $\delta$};  % T6

\node at (-95:2.05) {\small $\alpha$}; % T7
\node at (-97:1.8) {\small $\beta$};
\node at (203:2.05) {\small $\gamma$};
\node at (234:1.5) {\small $\delta$};
\node at (202:1.8) {\small $\epsilon$};

\node at (-60:1.5) {\small $\epsilon$}; % T8
\node at (-84:1.8) {\small $\gamma$};
\node at (-59:2) {\small $\delta$};
\node at (-85:2.2) {\small $\alpha$};
\node at (-70:2.2) {\small $\beta$};

\node at (-47:1.5) {\small $\delta$}; % T9
\node at (-26:1.75) {\small $\beta$};
\node at (-49:1.95) {\small $\epsilon$};
\node at (-23:2.2) {\small $\alpha$};
\node at (-36:2.2) {\small $\gamma$};

\node at (18:1.5) {\small $\alpha$};
\node at (-12:1.8) {\small $\alpha$};

\node at (-54:2.6) {3.2};

\node[draw,shape=circle, inner sep=0.5] at (0,0) {\small $1$};
\node[draw,shape=circle, inner sep=0.5] at (48:1.05) {\small $2$};
\node[draw,shape=circle, inner sep=0.5] at (-18:1.05) {\small $3$};
\node[draw,shape=circle, inner sep=0.5] at (-90:1.05) {\small $4$};
\node[draw,shape=circle, inner sep=0.5] at (198:1.05) {\small $5$};
\node[draw,shape=circle, inner sep=0.5] at (135:1.05) {\small $6$};
\node[draw,shape=circle, inner sep=0.5] at (220:1.7) {\small $7$};
\node[draw,shape=circle, inner sep=0.5] at (-70:1.85) {\small $8$};
\node[draw,shape=circle, inner sep=0.5] at (-38:1.85) {\small $9$};

\end{scope}

\end{tikzpicture}
\caption{Cases 2.2, and Case 3.2.}
\label{classify5}
\end{figure}

\subsubsection*{Case 3.2}

We have vertices $\alpha^3, \beta\gamma\delta$, and $H=\beta\epsilon^2\cdots,\epsilon^3\cdots$. The angle sums of $\alpha^3, \beta\gamma\delta$ and the angle sum for pentagon imply 
\[
\alpha=\tfrac{2}{3}\pi,\;
\epsilon=(\tfrac{1}{3}+\tfrac{4}{f})\pi.
\]

By $\beta\gamma\delta$ and Lemma \ref{geometry3}, a degree $3$ vertex $\epsilon\cdots=\gamma^2\epsilon,\delta^2\epsilon,\delta\epsilon^2,\epsilon^3$. Given that $\alpha^3, \beta\gamma\delta$ are vertices, for $\gamma^2\epsilon,\delta^2\epsilon,\delta\epsilon^2$, we may respectively apply Propositions \ref{special6}', \ref{special2}, \ref{special1}, to either get all the tilings or show there is no tiling. By $f\ge 24$, we have $\epsilon\le \tfrac{1}{2}\pi$. This implies $\epsilon^3$ is not a vertex.

Therefore we may assume $\epsilon$ does not appear at degree $3$ vertices. By Lemma \ref{hdeg} and the parity lemma, one of $\delta\epsilon^3,\epsilon^4,\epsilon^5$ is a vertex. We will handle the vertices $\epsilon^4,\epsilon^5$ in Section \ref{division}. We handle the vertex $\delta\epsilon^3$ below. 

The angle sums of $\alpha^3, \beta\gamma\delta,\delta\epsilon^3$ and the angle sum for pentagon imply
\[
\alpha=\tfrac{2}{3}\pi,\;
\beta+\gamma=(1+\tfrac{12}{f})\pi,\;
\delta=(1-\tfrac{12}{f})\pi,\;
\epsilon=(\tfrac{1}{3}+\tfrac{4}{f})\pi.
\]
By $f\ge 24$ and the non-symmetry, we have $\delta>\epsilon$. By Lemma \ref{geometry1}, this implies $\beta<\gamma$. By the angle values of $\delta,\epsilon$, and no $\epsilon$ at degree $3$ vertices, we know $\delta^3,\delta\epsilon^3,\epsilon^5$ are the only vertices without $b$-edge. 

By $\beta<\gamma$, $\beta+\gamma>\pi$, and the parity lemma, we know $R(\gamma^2\cdots)$ has no $\beta,\gamma$. By $2\alpha+2\gamma>2\alpha+\beta+\gamma>2\pi$, we know $\gamma^2\cdots$ has at most one $\alpha$. By $\beta<\gamma$, we have $R(\gamma^2\cdots)<R(\beta\gamma\cdots)=\delta<\alpha+\epsilon,3\epsilon$. Then by no $\epsilon$ at degree $3$ vertices, we get $\gamma^2\cdots=\alpha\gamma^2,\gamma^2\epsilon^2$. 

Suppose $\alpha\gamma^2$ is a vertex. The angle sum of $\alpha\gamma^2$ further implies
\[
\alpha=\gamma=\tfrac{2}{3}\pi,\;
\beta=(\tfrac{1}{3}+\tfrac{12}{f})\pi,\;
\delta=(1-\tfrac{12}{f})\pi,\;
\epsilon=(\tfrac{1}{3}+\tfrac{4}{f})\pi.
\]
The unique AAD $\thin^{\epsilon}\gamma^{\alpha}\thick^{\beta}\alpha^{\gamma}\thick^{\alpha}\gamma^{\epsilon}\thin$ of $\alpha\gamma^2$ gives a vertex $\alpha\beta\cdots$. By the parity lemma, we have $\alpha\beta\cdots=\alpha\beta^2\cdots,\alpha\beta\gamma\cdots$. Then $R(\alpha\beta\gamma\cdots)=(\tfrac{1}{3}-\tfrac{12}{f})\pi<$ all angles, and $R(\alpha\beta^2\cdots)=(\tfrac{2}{3}-\tfrac{24}{f})\pi<\alpha,2\beta,\delta,2\epsilon$. Moreover, by $\alpha\gamma^2$, both remainders are nonzero. Then by $\beta<\gamma$ and the parity lemma, we have $\alpha\beta\cdots=\alpha\beta^2\epsilon$. The angle sum of $\alpha\beta^2\epsilon$ further implies
\[
f=84\colon
\alpha=\gamma=\tfrac{2}{3}\pi,\;
\beta=\tfrac{10}{21}\pi,\;
\delta=\tfrac{6}{7}\pi,\;
\epsilon=\tfrac{8}{21}\pi.
\]
The AAD $\thick^{\alpha}\beta^{\delta}\thin^{\gamma}\epsilon^{\delta}\thin^{\delta}\beta^{\alpha}\thick$ at $\alpha\beta^2\epsilon$ gives a vertex $\delta^2\cdots$. By $R(\delta^2\cdots)=\tfrac{2}{7}\pi<$ all angles, we get a contradiction. 

Suppose $\gamma^2\epsilon^2$ is a vertex. The angle sum of $\gamma^2\epsilon^2$ further implies 
\[
\alpha=\tfrac{2}{3}\pi,\;
\beta=(\tfrac{1}{3}+\tfrac{16}{f})\pi,\;
\gamma=(\tfrac{2}{3}-\tfrac{4}{f})\pi,\;
\delta=(1-\tfrac{12}{f})\pi,\;
\epsilon=(\tfrac{1}{3}+\tfrac{4}{f})\pi.
\]
We have $R(\beta^2\delta\cdots)=(\tfrac{1}{3}-\tfrac{20}{f})\pi<$ all angles. By $\beta\gamma\delta$, the remainder is nonzero. Therefore $\beta^2\delta\cdots$ is not a vertex. Then the vertex $\gamma^2\epsilon^2$ and Lemma \ref{klem3} imply $\delta^2\cdots$ is a vertex. 

By $\beta\gamma\delta$ and Lemma \ref{klem6}, we know $\delta^2\cdots$ has only $\delta,\epsilon$. Since such vertices are $\delta^3,\delta\epsilon^3$, we get $\delta^2\cdots=\delta^3$. The angle sum of $\delta^3$ further implies 
\[
f=36\colon
\alpha=\delta=\tfrac{2}{3}\pi,\;
\beta+\gamma=\tfrac{4}{3}\pi,\;
\epsilon=\tfrac{4}{9}\pi.
\]
By $\beta<\gamma$, we have $\beta<\frac{2}{3}\pi<\gamma$. Then $\gamma+\epsilon>\pi$, which implies that $\gamma^2\cdots=\gamma^2\epsilon^2$ is not a vertex. 

We just completed the proof that $\gamma^2\cdots=\alpha\gamma^2,\gamma^2\epsilon^2$ is not a vertex. By the balance lemma, this implies that the only vertex involving $\beta,\gamma$ is $\beta\gamma\cdots$, with no $\beta,\gamma$ in the remainder. By $
\beta\gamma\delta$, we have $R(\beta\gamma\cdots)=\delta<2\alpha,\alpha+\epsilon,3\epsilon$. Therefore $\beta\gamma\cdots=\alpha\beta\gamma,\beta\gamma\delta,\beta\gamma\epsilon^2$. By Lemma \ref{klem4}, vertices without $\beta,\gamma$ are $\alpha^3,\delta^3,\delta\epsilon^3,\epsilon^5$. We also note that the angle sum of $\alpha\beta\gamma$ or $\delta^3$ further implies $f=36$, and the angle sum of $\beta\gamma\epsilon^2$ or $\epsilon^5$ further implies $f=60$. Therefore we get two cases
\begin{align*}
f=36 &\colon 
\alpha=\tfrac{2}{3}\pi,\;
\beta+\gamma=\tfrac{4}{3}\pi,\;
\delta=\tfrac{2}{3}\pi,\;
\epsilon=\tfrac{4}{9}\pi, \\
&\text{AVC}=\{\alpha^3,\alpha\beta\gamma,\beta\gamma\delta,\delta^3,\delta\epsilon^3\}; \\
f=60 &\colon 
\alpha=\tfrac{2}{3}\pi,\;
\beta+\gamma=\tfrac{6}{5}\pi,\;
\delta=\tfrac{4}{5}\pi,\;
\epsilon=\tfrac{2}{5}\pi, \\
&\text{AVC}=\{\alpha^3,\beta\gamma\delta,\beta\gamma\epsilon^2,\delta\epsilon^3,\epsilon^5\}.
\end{align*}

Next we use the AVCs to construct tilings for the two cases. For $f=36$, we go back to the partial neighbourhood given by the second of Figure \ref{classify5}. We already have $T_1,T_2,T_3,T_4,T_5$. By the AVC, we have $\beta_4\thick\gamma_5\cdots=\beta_4\gamma_5\delta_7$. Then by no $\beta_7\epsilon_5\cdots$, we determine $T_7$. Then $\beta_7\delta_4\cdots=\beta_7\gamma_8\delta_4$ determines $T_8$. Then $\epsilon_3\epsilon_4\epsilon_8\cdots=\delta_9\epsilon_3\epsilon_4\epsilon_8$. By no $\gamma_3\epsilon_9\cdots$, we determine $T_9$. Then $\beta_9\thin\gamma_3\cdots=\alpha\beta\gamma$ and $\alpha_2\alpha_3\cdots=\alpha^3$ give two $\alpha$ in a tile. The contradiction shows that there is no tiling for $f=36$. 

For $f=60$, we have $2\delta+\epsilon=2\pi$. Together with the existing vertices $\alpha^3,\beta\gamma\delta$, we know all the tilings from Proposition \ref{special2}.

\subsection{Pentagonal Subdivision}
\label{division}

After all the previous classifications, only the following cases remain: 
\begin{itemize}
\item Case 1.2.1 (second of Figure \ref{classify2}), $\beta\gamma\epsilon,\delta^3$ are vertices, and $H=\alpha^4$ or $\alpha^5$.
\item Case 3.2 (second of Figure \ref{classify5}, not including $T_7,T_8,T_9$), $\alpha^3,\beta\gamma\delta$ are vertices, and $\epsilon^4$ or $\epsilon^5$ is a vertex. Moreover, $\delta\epsilon^3$ is not a vertex, and $\epsilon$ does not appear at degree $3$ vertices.
\end{itemize}
In both cases, the pentagon is not symmetric, and there is no $3^5$-tile.

\subsubsection*{Case 1.2.1, $H=\alpha^4$}

The angle sums of $\beta\gamma\epsilon,\delta^3,\alpha^4$ and the angle sum for pentagon imply $f=24$. By no $3^5$-tile, and the second part of Lemma \ref{basic}, every tile is a $3^44$-tile. We start with three tiles $T_1,T_2,T_3$ around $\alpha^4$ as in the first of Figure \ref{classify4}. Then we get $T_4,T_5,T_6,T_7$ as indicated. 

Without loss of generality, we may assume the angles of $T_1$ are arranged as indicated. By $\beta\gamma\epsilon$ and Lemma \ref{geometry3}, the degree $3$ vertex $\gamma_1\cdots=\beta_2\gamma_1\epsilon_5$. Then $\beta_2$ determines $T_2$. By $\delta^3$, the degree $3$ vertex $\epsilon_1\cdots$ cannot have $\delta,\epsilon$ only. By Lemma \ref{klem4}, this implies $\epsilon_1\cdots$ has $ab$-angles. Then by $\beta\gamma\epsilon$ and $\epsilon_5$, we get $\epsilon_1\cdots=\beta\gamma\epsilon=\beta_4\gamma_5\epsilon_1$. This determines $T_4,T_5$. 

The way $T_1$ determines $T_2,T_4,T_5$ can be repeated with $T_2$ in place of $T_1$. Then we determine $T_3,T_6,T_7$. More repetitions determine all the tiles in the two layers around $\alpha^4$. 

By $\alpha^4$ and the edge length consideration, $\alpha_4\alpha_5\cdots$ is not a degree $3$ vertex. Since every tile is a $3^44$-tile, this implies $\alpha_4\alpha_5\cdots$ has degree $4$, and further implies $\beta_5\epsilon_6\cdots$ has degree $3$. By a degree $3$ vertex $\epsilon\cdots=\beta\gamma\epsilon$, we have $\beta_5\epsilon_6\cdots=\beta_5\gamma_8\epsilon_6$. This determines $T_8$. 

The degree $4$ vertex $\alpha_4\alpha_5\alpha_8\cdots$ is completed by a $b^2$-angle $\alpha$, and is $\alpha^4$. Then the argument starting from the initial $\alpha^4$ can be repeated for this new $\alpha^4$. More repetitions give the second pentagonal subdivision tiling in Figure \ref{subdivision_tiling}.

\begin{figure}[htp]
\centering
\begin{tikzpicture}[>=latex,scale=1]

\foreach \b in {0,1}
{
\begin{scope}[xshift=5*\b cm]

\foreach \a in {0,1,2}
\draw[rotate=90*\a]
	(0,0) -- (0.8,0) -- (1.2,0.6) -- (0.6,1.2) -- (0,0.8) -- (0,0);

\foreach \a in {0,1}
\draw[rotate=90*\a] 
	(-0.6,1.2) -- (-0.6,1.8) -- (1.8,1.8) -- (1.8,0.6) -- (1.2,0.6)
	(0.6,1.2) -- (0.6,1.8);
	
\draw
	(0.6,1.8) -- (0.6,2.5) -- (-1.8,2.5) -- (-1.8,1.8);

\node[inner sep=1,draw,shape=circle] at (0.55,0.55) {\small $1$};
\node[inner sep=1,draw,shape=circle] at (-0.55,0.55) {\small $2$};
\node[inner sep=1,draw,shape=circle] at (-0.55,-0.55) {\small $3$};
\node[inner sep=1,draw,shape=circle] at (1.3,1.3) {\small $4$};
\node[inner sep=1,draw,shape=circle] at (0,1.45) {\small $5$};
\node[inner sep=1,draw,shape=circle] at (-1.3,1.3) {\small $6$};
\node[inner sep=1,draw,shape=circle] at (-1.45,0) {\small $7$};
\node[inner sep=1,draw,shape=circle] at (0,2.15) {\small $8$};

\end{scope}	
}

%%% Case 1.2.1

\foreach \a in {0,1,2}
{
\begin{scope}[rotate=90*\a]
	
\node at (0.2,0.2) {\small $\alpha$};  % T1
\node at (0.7,0.2) {\small $\beta$};
\node at (0.2,0.7) {\small $\gamma$};
\node at (1,0.6) {\small $\delta$};
\node at (0.6,0.95) {\small $\epsilon$};
	
\end{scope}
}

\foreach \a in {0,1}
{
\begin{scope}[rotate=90*\a]

\draw[line width=1.5]
	(0.8,0) -- (-0.8,0)
	(0.6,1.2) -- (0.6,1.8)
	(-0.6,1.8) -- (1.8,1.8);

\node at (0.8,1.6) {\small $\alpha$};  % T4
\node at (0.8,1.3) {\small $\beta$};
\node at (1.6,1.6) {\small $\gamma$};
\node at (1.3,0.8) {\small $\delta$};
\node at (1.6,0.8) {\small $\epsilon$};
	
\node at (0.4,1.6) {\small $\alpha$};  % T5
\node at (-0.4,1.6) {\small $\beta$};
\node at (0.4,1.3) {\small $\gamma$};
\node at (-0.4,1.3) {\small $\delta$};
\node at (0,1) {\small $\epsilon$};

\end{scope}
}

\draw[line width=1.5]
	(0.6,1.8) -- (0.6,2.5);
	
\node at (0.4,2) {\small $\alpha$}; % T8
\node at (0.4,2.3) {\small $\beta$};
\node at (-0.6,2) {\small $\gamma$};
\node at (-1.6,2.3) {\small $\delta$};
\node at (-1.6,2) {\small $\epsilon$};

%%% Case 3.2

\begin{scope}[xshift=5cm]

\foreach \a in {0,1,2}
{
\begin{scope}[rotate=90*\a]

\draw[line width=1.5]
	(0.8,0) -- (1.2,0.6) -- (0.6,1.2)
	(1.2,0.6) -- (1.8,0.6);

\node at (0.2,0.2) {\small $\epsilon$};  % T1
\node at (0.7,0.2) {\small $\gamma$};
\node at (0.2,0.7) {\small $\delta$};
\node at (0.95,0.55) {\small $\alpha$};
\node at (0.55,0.95) {\small $\beta$};
	
\end{scope}
}

\foreach \a in {0,1}
{
\begin{scope}[rotate=90*\a]

\node at (0.8,1.6) {\small $\epsilon$};  % T4
\node at (1.6,1.6) {\small $\delta$};
\node at (0.8,1.3) {\small $\gamma$};
\node at (1.3,0.8) {\small $\alpha$};
\node at (1.6,0.8) {\small $\beta$};

\node at (-0.4,1.6) {\small $\gamma$};  % T5
\node at (0.4,1.6) {\small $\epsilon$};
\node at (-0.4,1.3) {\small $\alpha$};
\node at (0,1.05) {\small $\beta$};
\node at (0.4,1.3) {\small $\delta$};
	
\end{scope}
}

\draw[line width=1.5]
	(0.6,2.5) -- (-1.8,2.5) -- (-1.8,1.8);
	
\node at (0.4,2) {\small $\epsilon$}; % T8
\node at (-1.6,2) {\small $\beta$};
\node at (-1.6,2.35) {\small $\alpha$};
\node at (-0.6,2) {\small $\delta$};
\node at (0.4,2.3) {\small $\gamma$};

\end{scope}

\end{tikzpicture}
\caption{Pentagonal subdivision tiling.}
\label{classify4}
\end{figure}

\subsubsection*{Case 3.2, $\epsilon^4$ is a vertex}

The angle sums of $\alpha^3,\beta\gamma\delta,\epsilon^4$ and the angle sum for pentagon imply $f=24$. By no $3^5$-tile, and the second part of Lemma \ref{basic}, every tile is a $3^44$-tile. Then we get tiles $T_1,T_2,T_3,T_4,T_5,T_6,T_7$ around a vertex $\epsilon^4$ as in the second of Figure \ref{classify4}.

Without loss of generality, we may assume the angles of $T_3$ are arranged as indicated. If $T_2$ is not arranged as indicated, then we have the AAD $\thin^{\delta}\epsilon_2^{\gamma}\thin^{\gamma}\epsilon_3^{\delta}\thin$ at a degree $4$ vertex $\epsilon^4$. Since every tile is a $3^44$-tile, this gives a degree $3$ vertex $\thick^{\alpha}\gamma^{\epsilon}\thin^{\epsilon}\gamma^{\alpha}\thick\cdots=\alpha\gamma^2=\thin^{\epsilon}\gamma^{\alpha}\thick^{\beta}\alpha^{\gamma}\thick^{\alpha}\gamma^{\epsilon}\thin$. This induces a vertex $\alpha\thick\beta\cdots$, such that $\alpha\thick\beta\cdots$ and $\epsilon^4$ are the vertices of the same tile. Since every tile is a $3^44$-tile, we know $\alpha\thick\beta\cdots$ has degree $3$. This implies $\alpha\thick\beta\cdots=\thick\alpha\thick\beta\thin\beta\thick, \thick\alpha\thick\beta\thin\gamma\thick$. Comparing the angle sums of $\alpha\gamma^2$ and $\alpha\thick\beta\cdots$, we get $\beta=\gamma$, contradicting the non-symmetry assumption. This proves that $T_2$ must be arranged as in the second of Figure \ref{classify4}. Then by $\beta\gamma\delta$, we know the degree $3$ vertex $\gamma_3\delta_2\cdots=\beta_7\gamma_3\delta_2$ determines $T_7$. Then the degree $3$ vertex $\beta_2\delta_7\cdots=\beta_2\gamma_6\delta_7$ determines $T_6$.

The way $T_3$ determines $T_2,T_6,T_7$ can be repeated with $T_2$ in place of $T_3$. Then we determine $T_1,T_5,T_4$. More repetitions determine all the tiles in the two layers around $\epsilon^4$.  

By no $\epsilon$ at degree $3$ vertices, and every tile is a $3^44$-tile, the vertex $\epsilon_4\epsilon_5\cdots$ has degree $4$, and $\beta_6\gamma_5\cdots$ has degree $3$. By $\beta\gamma\delta$, we get $\beta_6\gamma_5\cdots=\beta_6\gamma_5\delta_8$. By no $\epsilon$ at degree $3$ vertices, this implies $\epsilon_8$ is at $\epsilon_4\epsilon_5\cdots$. Then $\delta_8,\epsilon_8$ determine $T_8$. 

By $\epsilon^4$, the degree $4$ vertex $\epsilon_4\epsilon_5\epsilon_8\cdots=\epsilon^4$. Then the argument starting from the initial $\epsilon^4$ can be repeated for this new $\epsilon^4$. More repetitions give the third pentagonal subdivision tiling in Figure \ref{subdivision_tiling}.

\subsubsection*{Case 1.2.1, $H=\alpha^5$}

The angle sums of $\beta\gamma\epsilon,\delta^3,\alpha^5$ and the angle sum for pentagon imply $f=60$. To repeat the argument above for the case $H=\alpha^4$, we need to solve two problems. The first is no $3^44$-tile. The second is to make sure the vertex $\alpha_4\alpha_5\alpha_8\cdots$ in the first of Figure \ref{classify4} is $\alpha^5$. 

For the problem of no $3^44$-tile, after all the previous works, we may assume that the partial neighbourhood of a $3^44$-tile is the second of Figure \ref{classify2}, or the second of Figure \ref{classify5} (not including $T_7,T_8,T_9$), with the possibility of exchanging $\beta\leftrightarrow\gamma$ and $\delta\leftrightarrow\epsilon$. If the vertex $H$ has degree $4$ in the second of Figure \ref{classify2}, then by the edge length consideration, we get $H=\alpha^4$, contradicting the vertex $\alpha^5$. In the second of Figure \ref{classify5}, we find $\alpha^3$ is a vertex, also contradicting the vertex $\alpha^5$. This completes the argument that we may assume there is no $3^44$-tile. Then by $f=60$ and the third part of Lemma \ref{basic}, every tile is a $3^45$-tile. 

We may repeat the argument given by the first of Figure \ref{classify4}, until we reach the vertex $\alpha_4\alpha_5\alpha_8\cdots$. The vertex has degree $5$. 

Suppose $\alpha_4\alpha_5\alpha_8\cdots\ne\alpha^5$. Then by Lemma \ref{klem4}, the vertex has two $ab$-angles. This implies the AAD ${}^{\alpha}\thick^{\gamma}\alpha_4^{\beta}\thick^{\gamma}\alpha_5^{\beta}\thick^{\gamma}\alpha_8^{\beta}\thick^{\alpha}$ at the degree $5$ vertex $\alpha_4\alpha_5\alpha_8\cdots$. Since every tile is a $3^45$-tile, all the vertices induced from this AAD have degree $3$. Therefore the AAD gives a degree $3$ vertex $\alpha\thick\gamma\cdots$. By the parity lemma, we have $\alpha\thick\gamma\cdots=\alpha\beta\gamma,\alpha\gamma^2$. The angle sums of $\beta\gamma\epsilon,\delta^3,\alpha^5$, the angle sum of $\alpha\beta\gamma$ or $\alpha\gamma^2$, and the angle sum for pentagon imply
\begin{align*}
\alpha\beta\gamma &\colon 
\alpha=\epsilon=\tfrac{2}{5}\pi,\;
\beta+\gamma=\tfrac{8}{5}\pi,\;
\delta=\tfrac{2}{3}\pi; \\
\alpha\gamma^2 &\colon
\alpha=\tfrac{2}{5}\pi,\;
\beta+\epsilon=\tfrac{6}{5}\pi,\;
\gamma=\tfrac{4}{5}\pi,\;
\delta=\tfrac{2}{3}\pi.
\end{align*}
In the first case, we have $\delta>\epsilon$. By Lemma \ref{geometry1}, this implies $\beta<\gamma$. By $\beta+\gamma=\tfrac{8}{5}\pi$, we further get $R(\alpha_4\alpha_5\alpha_8\cdots)=\frac{4}{5}\pi<\gamma$. In the second case, we have $R(\alpha_4\alpha_5\alpha_8\cdots)=\frac{4}{5}\pi=\gamma$. Since $R(\alpha_4\alpha_5\alpha_8\cdots)$ consists of two $ab$-angles, we conclude $\alpha_4\alpha_5\alpha_8\cdots=\alpha^3\beta^2$. 

The angle sum of $\alpha^3\beta^2$ further implies 
\begin{align*}
\alpha\beta\gamma &\colon 
\alpha=\beta=\epsilon=\tfrac{2}{5}\pi,\;
\gamma=\tfrac{6}{5}\pi,\;
\delta=\tfrac{2}{3}\pi; \\
\alpha\gamma^2 &\colon
\alpha=\beta=\tfrac{2}{5}\pi,\;
\gamma=\tfrac{4}{5}\pi,\;
\delta=\tfrac{2}{3}\pi,\;
\epsilon=\tfrac{4}{5}\pi.
\end{align*}
The second case violates Lemma \ref{geometry1}. Therefore we are in the first case, and a degree $3$ vertex $\alpha\thick\gamma\cdots=\alpha\beta\gamma$. Now the AAD of $\alpha_4\alpha_5\alpha_8\cdots=\alpha^3\beta^2$ is $\thin^{\delta}\beta^{\alpha}\thick^{\gamma}\alpha_4^{\beta}\thick^{\gamma}\alpha_5^{\beta}\thick^{\gamma}\alpha_8^{\beta}\thick^{\alpha}\beta^{\delta}\thin$. This gives a degree $3$ vertex $\thick^{\gamma}\alpha^{\beta}\thick^{\alpha}\gamma^{\epsilon}\thin\cdots=\alpha\beta\gamma=\thick^{\gamma}\alpha^{\beta}\thick^{\alpha}\gamma^{\epsilon}\thin^{\delta}\beta^{\alpha}\thick$. This further gives a vertex $\delta\epsilon\cdots$. However, by the angle values above for the case $\alpha\beta\gamma$, we know $\delta\epsilon\cdots$ is not a vertex.

This completes the proof that $\alpha_4\alpha_5\alpha_8\cdots=\alpha^5$. Then the argument for the case $H=\alpha^4$ may continue to give the fourth pentagonal subdivision tiling in Figure \ref{subdivision_tiling}.

\subsubsection*{Case 3.2, $\epsilon^5$ is a vertex}

After finishing Case 1.2.1, the only partial neighbourhood we need to consider is the second of Figure \ref{classify5} ($T_7,T_8,T_9$ not included). The angle sums of $\alpha^3,\beta\gamma\delta,\epsilon^5$ and the angle sum for pentagon imply $f=60$. 

Similar to Case 1.2.1, $H=\alpha^5$, we first argue for no $3^44$-tile. We only need to consider the possibility that the vertex $H$ in the second of Figure \ref{classify5} has degree $4$. These are $H=\beta^2\epsilon^2,\beta\gamma\epsilon^2,\delta\epsilon^3,\epsilon^4$. We already assumed $\delta\epsilon^3$ is not a vertex, and $H=\epsilon^4$ contradicts the vertex $\epsilon^5$. Therefore $H=\beta_6\epsilon_1\epsilon_2\cdots=\beta^2\epsilon^2,\beta\gamma\epsilon^2$. This also implies a vertex $\delta_5\epsilon_6\cdots$. 

If $H=\beta^2\epsilon^2$, then the angle sums of $\alpha^3,\beta\gamma\delta,\epsilon^5,\beta^2\epsilon^2$ and the angle sum for pentagon imply
\[
\alpha=\tfrac{2}{3}\pi,\;
\beta=\tfrac{3}{5}\pi,\;
\gamma+\delta=\tfrac{7}{5}\pi,\;
\epsilon=\tfrac{2}{5}\pi.
\]
By $\beta+\epsilon<\gamma+\delta$ and Lemma \ref{geometry1}, we get $\beta<\gamma$ and $\delta>\epsilon$. By $\beta^2\epsilon^2$ and the balance lemma, we know $\gamma^2\cdots$ is a vertex. By $\beta<\gamma$, we have $R(\gamma^2\cdots)<R(\beta^2\cdots)=\tfrac{4}{5}\pi=2\epsilon<2\alpha,\alpha+\epsilon,2\beta$. By $\beta<\gamma$ and $\delta>\epsilon$, this implies $\gamma^2\cdots=\alpha\gamma^2,\gamma^2\epsilon,\gamma^2\delta$. By $\beta\gamma\delta$, we know $\gamma^2\delta$ is not a vertex. Given $\alpha^3,\beta\gamma\delta$, if $\gamma^2\epsilon$ is a vertex, then Proposition \ref{special6}' implies no tiling. If $\alpha\gamma^2$ is a vertex, then the angle sum of $\alpha\gamma^2$ further implies
\[
\alpha=\gamma=\tfrac{2}{3}\pi,\;
\beta=\tfrac{3}{5}\pi,\;
\delta=\tfrac{11}{15}\pi,\;
\epsilon=\tfrac{2}{5}\pi.
\]
Then $R(\delta_5\epsilon_6\cdots)=\tfrac{13}{15}\pi$ is not a sum of angle values. We get a contradiction.

If $H=\beta\gamma\epsilon^2$, then the angle sums of $\alpha^3,\beta\gamma\delta,\epsilon^5,\beta\gamma\epsilon^2$ and the angle sum for pentagon imply
\[
\alpha=\tfrac{2}{3}\pi,\;
\beta+\gamma=\tfrac{6}{5}\pi,\;
\delta=\tfrac{4}{5}\pi,\;
\epsilon=\tfrac{2}{5}\pi.
\]
We have $\delta>\epsilon$. By Lemma \ref{geometry1}, this implies $\beta<\gamma$. Then $R(\delta_5\epsilon_6\cdots)=\tfrac{4}{5}\pi=\delta=2\epsilon<\beta+\gamma$. By $\beta<\gamma$ and Lemma \ref{klem4}, we get $\delta\epsilon\cdots=\delta^2\epsilon,\delta\epsilon^3,\alpha\beta^k\delta\epsilon,\beta^k\delta\epsilon$, $k$ even. Given $\alpha^3,\beta\gamma\delta$, the case $\delta^2\epsilon$ is a vertex is handled by Proposition \ref{special2}. We have also assumed $\delta\epsilon^3$ is not a vertex. 

If $k\ge 2$ in $\alpha\beta^k\delta\epsilon$, or $k\ge 4$ in $\beta^k\delta\epsilon$, then the angle sum implies $\beta\le\frac{1}{5}\pi$. This implies $\gamma=\frac{6}{5}\pi-\beta\ge \pi$, which further implies $\gamma^2\cdots$ is not a vertex. Then $\alpha\beta^k\delta\epsilon$ or $\beta^k\delta\epsilon$ contradicts the balance lemma. By Lemma \ref{klem4} and the parity lemma, we cannot have $k<2$ in $\alpha\beta^k\delta\epsilon$. By the parity lemma, therefore, it remains to consider $\beta^2\delta\epsilon$. The angle sum of $\beta^2\delta\epsilon$ further implies
\[
\alpha=\tfrac{2}{3}\pi,\;
\beta=\epsilon=\tfrac{2}{5}\pi,\;
\gamma=\delta=\tfrac{4}{5}\pi.
\]
The angles are the same as \eqref{special2_60B}, and the proof of Proposition \ref{special2} already showed that there is no pentagon with such angles.

This completes the argument that we may assume there is no $3^44$-tile. Then by $f=60$ and the third part of Lemma \ref{basic}, every tile is a $3^45$-tile.  

Similar to Case 1.2.1, $H=\alpha^5$, next we need to argue that $\epsilon_4\epsilon_5\epsilon_8\cdots=\epsilon^5$ in the second of Figure \ref{classify4}. This allows us to adopt the argument for Case 3.2, $\epsilon^4$ is a vertex, and conclude the pentagonal subdivision tiling. 

If the degree $5$ vertex $\epsilon_4\epsilon_5\epsilon_8\cdots\ne \epsilon^5$, then by the edge length consideration, we have $\epsilon_4\epsilon_5\epsilon_8\cdots=\beta^2\epsilon^3,\gamma^2\epsilon^3,\beta\gamma\epsilon^3$. We note the AAD $\thin^{\gamma}\epsilon_4^{\delta}\thin^{\gamma}\epsilon_5^{\delta}\thin^{\gamma}\epsilon_8^{\delta}\thin$ in the second of Figure \ref{classify4} and get
\begin{align*}
\beta^2\epsilon^3
&=\thin^{\gamma}\epsilon_8^{\delta}\thin^{\delta}\beta^{\alpha}\thick^{\alpha}\beta^{\delta}\thin^{\gamma}\epsilon_4^{\delta}\thin\cdots, \\
\gamma^2\epsilon^3
&=\thin^{\gamma}\epsilon_8^{\delta}\thin^{\epsilon}\gamma^{\alpha}\thick^{\alpha}\gamma^{\epsilon}\thin^{\gamma}\epsilon_4^{\delta}\thin\cdots, \\
\beta\gamma\epsilon^3
&=\thin^{\gamma}\epsilon_8^{\delta}\thin^{\delta}\beta^{\alpha}\thick^{\alpha}\gamma^{\epsilon}\thin^{\gamma}\epsilon_4^{\delta}\thin\cdots.
\end{align*}
Since every tile is a $3^45$-tile, all the vertices induced from the AAD above have degree $3$. In $\gamma^2\epsilon^3$ and $\beta\gamma\epsilon^3$, by the edge length consideration, we have a degree $3$ vertex $\gamma\epsilon\cdots=\beta\gamma\epsilon,\gamma^2\epsilon$. By $\beta\gamma\delta$, we know $\beta\gamma\epsilon$ is not a vertex. Therefore $\gamma^2\epsilon$ is a vertex. In $\beta^2\epsilon^3$, by the edge length consideration, we have a degree $3$ vertex $\thick^{\gamma}\alpha^{\beta}\thick^{\beta}\alpha^{\gamma}\thick\cdots=\alpha^3$. By Lemma \ref{aadlemma}, this implies a vertex $\gamma\thick\gamma\cdots$. Moreover, the vertex $\gamma\thick\gamma\cdots$ has degree $3$, because it and $\beta^2\epsilon^3$ are vertices of the same tile, and every tile is a $3^45$-tile. By $\beta\gamma\delta$, Lemma \ref{geometry3}, and the edge length consideration, we get $\gamma\thick\gamma\cdots=\gamma^2\epsilon$.

In summary, if $\epsilon_4\epsilon_5\epsilon_8\cdots\ne \epsilon^5$, then $\gamma^2\epsilon$ is a vertex. 
Given $\alpha^3,\beta\gamma\delta$, by Proposition \ref{special6}', the vertex $\gamma^2\epsilon$ implies no tiling. This completes the argument that we may assume $\epsilon_4\epsilon_5\epsilon_8\cdots=\epsilon^5$. Then the argument for Case 3.2, $\epsilon^4$ is a vertex, can be adopted to get the fifth pentagonal subdivision tiling in Figure \ref{subdivision_tiling}.

\end{document}